\newif\ifpaper
\numberwithin{equation}{section}
\newcounter{THMCTR}
\newtheorem{theorem}[THMCTR]{Theorem}
\newtheorem{proposition}[THMCTR]{Proposition}
\newtheorem{lemma}[THMCTR]{Lemma}
\theoremstyle{definition}
\newtheorem{remark}[THMCTR]{Remark}
\newtheorem{definition}[THMCTR]{Definition}
\newtheorem{example}[THMCTR]{Example}
\newtheorem{assumption}{Assumption}
\newtheorem{algorithm}{Algorithm}
\newcommand{\nn}{\mathbb{N}} 
\newcommand{\norm}[1]{\left\Vert {#1} \right\Vert} 
\newcommand{\erl}{\left(-\infty , +\infty\right]} 
\newcommand{\dom}[1]{\mathrm{dom}\,{#1}} 
\newcommand{\idom}[1]{\mathrm{int\,dom}\,{#1}} 
\newcommand{\act}[1]{\left\langle {#1} \right\rangle} 
\newcommand{\argmin}{\mathrm{argmin}}
\newcommand{\sgn}{\mathrm{sgn}}
\newcommand{\barc}{\overline{C}}
\newcommand{\PPP}{\mathcal{P}}
\newcommand{\SSS}{\mathcal{S}}
\newcommand{\R}{\mathbb{R}}
\newcommand{\real}{\mathbb{R}} 
\newcommand{\uL}{\bar{L}}
\newcommand{\lL}{\underline{L}}
\title{\vspace{-1.5em}{\bf Bregman Proximal Framework for\\ Deep Linear Neural Networks}}
\author{Mahesh Chandra Mukkamala \thanks{Department of Mathematics, Saarland University, Germany, E-mail: \texttt{mukkamala@math.uni-sb.de}} \quad\quad Felix Westerkamp \thanks{Department of Informatics, Technical University of Munich, Germany, Email: \texttt{felix.westerkamp@tum.de}}\\\\  Emanuel Laude \thanks{Department of Informatics, Technical University of Munich, Germany, Email: \texttt{emanuel.laude@tum.de}}  
\quad\quad Daniel Cremers \thanks{Department of Informatics, Technical University of Munich, Germany, Email: \texttt{cremers@tum.de}} \quad\quad  Peter Ochs \thanks{Department of Mathematics, Saarland University, Germany, E-mail: \texttt{ochs@math.uni-sb.de}} 
}
\date{}
\begin{document}
\ifpaper
\twocolumn[

\aistatstitle{ Bregman Proximal Framework for Deep Linear Neural Networks}




]
\else
\maketitle
\vspace{-2em}
\fi
	\begin{abstract}
	A typical assumption for the analysis of first order optimization methods is the Lipschitz continuity of the gradient of the objective function. However, for many practical applications this assumption is violated, including loss functions in deep learning. To overcome this issue, certain extensions based on generalized proximity measures known as Bregman distances were introduced. This initiated the development of the Bregman proximal gradient (BPG) algorithm and an inertial variant (momentum based) CoCaIn BPG, which however rely on problem dependent Bregman distances. In this paper, we develop Bregman distances for using BPG methods to train Deep Linear Neural Networks. The main implications of our results are strong convergence guarantees for these algorithms. We also propose several strategies for their efficient implementation, for example, closed form updates and a closed form expression for the inertial parameter of CoCaIn BPG. Moreover, the BPG method requires neither diminishing step sizes nor line search, unlike its corresponding Euclidean version. We numerically illustrate the competitiveness of the proposed methods compared to existing state of the art schemes.
	\end{abstract}
\ifpaper
\else
	\noindent {\bfseries 2010 Mathematics Subject Classification:} 90C26, 26B25, 90C30, 49M27, 47J25, 65K05, 65F22. 
	\smallskip

	\noindent {\bfseries Keywords:} Composite non-convex non-smooth minimization, non Euclidean distances, Bregman distance, Bregman proximal gradient method, inertial methods, deep learning, matrix factorization, global convergence.
\fi

\section{Introduction} \label{Sec:Intro}

The analysis of many first-order optimization methods relies on the Lipschitz continuous gradient property for the involved objective. Such a property allows for uniform quadratic upper and lower bounds at each point. These bounds enable the usage of a \emph{constant step size rule}, thus resulting in better performance compared to diminishing step sizes. However, remarkably, even the simplest (one hidden layer linear) neural network does not allow for uniform quadratic bounds. The same is true for many problems, e.g., phase retrieval and matrix factorization.  \ifpaper\else\medskip\fi

A remedy with a general class of upper and lower bounds, induced by so-called Bregman distances was proposed in \cite{BBT2016}. Such bounds can be exploited algorithmically via the Bregman Proximal Gradient (BPG) algorithm \cite{BSTV2018} and its inertial variant CoCaIn BPG \cite{MOPS2019} (based on Nesterov's momentum). In particular, BPG enables the usage of a constant step size, which is efficient to implement in practice (see Section~\ref{sec:discuss-settings-for-algs}), instead of diminishing step sizes or line search. However, in order to use BPG methods, an appropriate \emph{problem dependent Bregman distance} must be developed.\ifpaper\else\medskip\fi

\textbf{Key contribution.}  We consider deep linear neural networks with a squared loss, for which we propose a novel class of Bregman distances. This is key to illustrate the applicability and also to transfer the global convergence (to a stationary point) results of BPG and CoCaIn BPG algorithms. To enable an efficient implementation of the update step, we propose closed form analytic expressions for various practical settings. We also propose a novel variant of CoCaIn BPG, to further improve the efficiency for large scale problems. \ifpaper\else\medskip\fi



The developed Bregman distance yields a base algorithm (BPG) that allows for modifications in analogy to the development of alternating, stochastic or inertial variants of the base Proximal Gradient (PG) method. The provided BPG based algorithms are usually competitive and often superior to their Euclidean variants (PG) whenever both are applicable. We discuss several such situations in Section~\ref{sec:discuss-settings-for-algs}.

\subsection{Related Work}



\textbf{Extensions of Lipschitz Continuity of the Gradient.} For many practically relevant problems including  poisson inverse problems \cite{BBT2016}, structured low-rank matrix factorization problems \cite{MO2019a}, quadratic inverse problems \cite{BSTV2018} or cubically regularized problems \cite{MOPS2019}, the corresponding objective functions are not $L$-smooth. This hinders us from a straight application of proximal gradient related schemes, unless a line search is incorporated. However, line search typically involves multiple objective evaluations in a single iteration, which may be prohibitive in large scale setting. To overcome this limitation, in \cite{BBT2016,BSTV2018} the notion of a $L$-smooth adaptable ($L$-smad) function  is introduced which extends the classical $L$-smoothness property by means of a problem-dependent Bregman distance. This includes a much larger class of functions, in particular those that grow with a higher-order than quadratic. However, the choice of the problem-dependent Bregman distance is typically non-trivial.\ifpaper\else\medskip\fi

\textbf{Bregman Proximal Minimization.} The $L$-smad property can be characterized in terms of a generalized non-Euclidean Descent Lemma \cite{BSTV2018}. In analogy to the Euclidean case this yields a non-quadratic global upper-bound whose minimization corresponds to a generalized proximal gradient iteration called Bregman proximal gradient (BPG), see \cite{BBT2016,BSTV2018}. In \cite{MOPS2019} an inertial variant of BPG, called CoCaIn BPG has been introduced, which relies on a Nesterov's momentum like update strategy. Inertial variants were also explored in \cite{ZBMJC2019,HGP2019}. The mirror descent algorithm, (a special case of BPG when the second term in the problem is zero) has been extended to a stochastic setting under convexity in \cite{hanzely2018fastest}. Later in \cite{DDM2018} the BPG algorithm for non-convex composite problems has been generalized to a stochastic setting as well, where the smooth term is assumed to be smooth adaptable and the non-smooth term is convex.\ifpaper\else\medskip\fi

\textbf{Matrix Factorization.} Bregman distances for matrix factorization problems has become an active research area \cite{LZTW2019,AHGP2019}. In \cite{DBA2019} a low-rank semidefinite program is reformulated in terms of a symmetric matrix factorization problem which is solved with BPG. To this end the authors prove that the corresponding objective is $L$-smad relative to a quartic kernel. More recently, in \cite{MO2019a} this idea has been extended to a more general regularized matrix factorization problem, for which the authors design a novel Bregman distance to guarantee the $L$-smad property of the corresponding objective. However, such Bregman distances are not valid for deep linear neural network training (resp. deep matrix factorization) involving an arbitrary number of factors.  \ifpaper\else\medskip\fi

\textbf{Deep Linear Neural Networks.} The main contribution of this work is to derive Bregman distances suitable for training deep linear networks with a quadratic loss, which is an important and interesting optimization problem due to the following reasons: Fristly, as remarked by \cite{goodfellow2016deep} and in view of \cite{choromanska2015loss,K2016,YSJ2018,pmlr-v89-wu19b} it is well justified to first study the theoretically more tractable deep linear networks instead of the more challenging deep nonlinear networks. Secondly, even though deep linear networks essentially describe a linear model, mirror descent eventually inherits the implicit regularization bias observed for gradient descent optimization \cite{GWBNS2017,GBS2019,ACHL2019} which has turned out to be beneficial and important for practical applications, for e.g., \cite{bellkligler2019blind}.\ifpaper\else\medskip\fi


\section{Bregman Proximal Minimization}\label{sec:matrix-factor-problem}
In this section, we revisit required concepts from related works \cite{BBT2016,BSTV2018}. Most importantly this includes the definition of a smooth adaptable function ($L$-smad), originally due to \cite{BBT2016}, which builds upon the notion of a Bregman distance. We motivate with a simple one-dimensional example for which classical $L$-smoothness fails. Finally we illustrate that the $L$-smad property gives rise to a global upper bound that can be exploited algorithmically to derive an iterative minimization scheme, called Bregman proximal gradient method \cite{BSTV2018}. This generalizes the classical proximal gradient descent scheme to non-Euclidean geometry.\ifpaper\else\medskip\fi

We use the notation of \cite{RW1998-B}. 

\subsection{Smooth Adaptable Functions}
	Let $g$ be a continuously differentiable function over $\R^d$. Then $g$ is said to be (classically) $L$-smooth (has Lipschitz continuous gradient), if there exists $L>0$, such that for all $x,y \in \R^d$, we have
	\[
	\norm{\nabla g(x) - \nabla g(y)}_2 \leq L \norm{x-y}_2\,.
	\]
	This implies that the functions $L\frac{\norm{\cdot}^2}{2} - g$ and $L\frac{\norm{\cdot}^2}{2} + g$ are convex on $\R^d$, which is equivalent to the statement of the well-known Descent Lemma (as shown in \cite[Lemma 1.2.3]{N1998}). \ifpaper\else\medskip\fi
	
	However, the $L$-smoothness assumption can be too restrictive, which we illustrate by the following example.
	\begin{example} \label{Ex:Lsmooth-fail}
	The simple two dimensional function $g(x,y) = (x^2+y^2)^2$ is not $L$-smooth in $\R^2$, as it lacks a global quadratic upper bound. As long as the initialisation is unknown, this means that proximal gradient algorithms (with constant step size) cannot be used for optimization. Notably, this issue persists even if we resort to alternating, Gauss--Seidel like algorithms such as PALM \cite{BST2014}, iPALM \cite{PS2016}, BCD \cite{XY2013}, which rely on the $L$-smoothness of the objective with respect to one (block) variable. In the above example, even if we fix $y=c$, for some constant $c \in \R$, the function $g_1(x)= (x^2+c^2)^2$  fails to be $L$-smooth. 
	\end{example}

	Likewise, quadratic inverse problems, matrix factorization problems and many other practical problems, lack $L$-smoothness. To overcome this limitation, recent works \cite{BBT2016,BSTV2018,LFN2018,LOC2019} consider an extension of $L$-smooth functions called $L$-smooth adaptable functions, which relies on the concept of a Bregman distance.  Such distances are constructed from a kernel generating distance, defined below. The rest of the section introduces the concepts of \cite{BSTV2018}, specialized to our unconstrained setting.
		\begin{definition}\label{D:KernelGen}
				Let $C \neq \emptyset$ be a convex and open subset of $\real^{d}$. Associated with $C$, a function $h : \real^{d} \rightarrow \erl$ is a \textit{kernel generating distance} if: 
				\begin{itemize}
    			\item[$\rm{(i)}$] $h$ is proper, lower semicontinuous and convex, with $\dom h \subset \barc$ and $\dom \partial h = C$ .
     			\item[$\rm{(ii)}$] $h$ is $C^{1}$ on $\idom h \equiv C$.
 			\end{itemize}
		 \end{definition}
	 	Denote the class of kernel generating distances  by $\mathcal{G}(C)$. For every $h \in \mathcal{G}(C)$, the associated Bregman distance for $(x,y)\in \dom h \times \idom h$ is given by
	    \begin{equation}\label{eq:bregman-distance}
	    D_{h}\left(x , y\right) = h\left(x\right) - h\left(y\right) - \act{\nabla h\left(y\right) , x - y}\,,
	   	\end{equation}
	   	and is set to $+\infty$ otherwise. Henceforth, we assume the following.
		\begin{assumption} \label{A:AssumptionA0}
			\begin{itemize}
				\item[$\rm{(i)}$] $h \in \mathcal{G}(\real^{d})$ with $C =\real^{d}$. 
				\item[$\rm{(ii)}$] $g : \real^{d} \rightarrow \R$ is  continuously differentiable. 
			\end{itemize}
		\end{assumption}

For non-convex functions, the extension of Lipschitz continuity is referred to as the $L$-smad property which we record below. 
\begin{definition}\label{D:l-smad}
	A pair $(g,h)$ is \textit{$L$-smooth adaptable ($L$-smad)}  on $\real^{d}$ if there exists $L>0$ such that  $Lh -g$ and $Lh +g$ are convex on $\real^{d}$.
\end{definition}
\begin{remark}
Note that we can always always assume that $L=1$ by absorbing the constant $L$ into $h$. Also, if a pair $(g,h)$ is $L$-smad on $\real^{d}$, we can equivalently say that $g$ is $L$-smad  on $\real^{d}$ with respect to $h$. 
\end{remark}
The $L$-smad property can be reformulated in terms of Bregman distances, which yields the extended Descent Lemma (see \cite[Lemma 2.1, p. 2134]{BSTV2018}).

\begin{lemma}[Extended Descent Lemma] \label{L:ExtenDL}
	The pair of functions $(g , h)$ is $L$-smooth adaptable on $\real^{d}$ if and only if for all $ x , y \in \R^d$ the following holds
	\begin{equation} \label{L:NoLipsDecent:1}
		\left| g\left(x\right) - g\left(y\right) - \act{\nabla g\left(y\right) , x - y} \right| \leq LD_{h}\left(x , y\right)\,.
	\end{equation}
\end{lemma}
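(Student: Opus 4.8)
The plan is to reduce the claimed two-sided inequality to the first-order characterization of convexity, applied to the two functions $Lh-g$ and $Lh+g$ whose convexity defines the $L$-smad property. Recall that for a continuously differentiable function $\phi$ on $\real^d$, convexity is equivalent to the gradient inequality $\phi(x)\ge \phi(y)+\act{\nabla\phi(y),x-y}$ holding for all $x,y$; in Bregman notation this reads $D_\phi(x,y)\ge 0$ for all $x,y\in\real^d$. Since both $g$ and $h$ are $C^1$ by Assumption~\ref{A:AssumptionA0}, the functions $\phi_{\pm}:=Lh\pm g$ are $C^1$ as well, so this characterization applies to them.

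The key observation is that the Bregman distance is additive in its generating function. Writing out the definition~\eqref{eq:bregman-distance} for $\phi_\pm$ and using linearity of the gradient yields, for all $x,y\in\real^d$,
\[
D_{Lh\pm g}(x,y)=L\,D_h(x,y)\pm\big(g(x)-g(y)-\act{\nabla g(y),x-y}\big).
\]
I would introduce the shorthand $\Delta(x,y):=g(x)-g(y)-\act{\nabla g(y),x-y}$ for the error in the linearization of $g$, so that the two decompositions become $D_{Lh\pm g}=L\,D_h\pm\Delta$.

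With this in hand, the statement follows by a direct translation. By Definition~\ref{D:l-smad}, the pair $(g,h)$ is $L$-smad if and only if both $Lh-g$ and $Lh+g$ are convex, which by the first-order characterization is equivalent to $D_{Lh+g}(x,y)\ge 0$ and $D_{Lh-g}(x,y)\ge 0$ for all $x,y$. By the additive decomposition these two nonnegativity conditions read $\Delta(x,y)\ge -L\,D_h(x,y)$ and $\Delta(x,y)\le L\,D_h(x,y)$, respectively, which together are precisely $|\Delta(x,y)|\le L\,D_h(x,y)$, i.e.~\eqref{L:NoLipsDecent:1}. Reading the chain of equivalences in reverse gives the converse implication, so the two statements are equivalent.

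I expect no serious analytic obstacle here; the proof is essentially a bookkeeping identity combined with the standard convexity/gradient equivalence. The only points requiring genuine care are to justify that first-order convexity characterization, for which I rely on differentiability of $\phi_\pm$ on all of $\real^d$ (guaranteed by Assumption~\ref{A:AssumptionA0}) and on $D_h$ being finite on $\dom h\times\idom h$, and to note that the universal quantifier over $(x,y)$ is common to both sides, so that the per-pair inequalities assemble into the global convexity statements without any gap.
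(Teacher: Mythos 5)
Your proposal is correct and follows essentially the same argument as the proof the paper relies on (it cites \cite[Lemma 2.1]{BSTV2018} rather than proving the lemma itself): there, as in your write-up, the equivalence is obtained by applying the first-order convexity characterization to $Lh-g$ and $Lh+g$ and using the additivity $D_{Lh\pm g}=L\,D_h\pm\bigl(g(x)-g(y)-\act{\nabla g(y),x-y}\bigr)$. No gaps; the bookkeeping and the sign pairing of the two convexity conditions with the two sides of the absolute-value inequality are exactly right.
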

For $h=(1/2)\norm{\cdot}^2$, the notion of $L$-smoothness and the classical Descent Lemma are recovered.  

\subsection{Bregman Proximal Gradient}
In analogy to the Euclidean case the extended Descent Lemma motivates us to consider the following iterative majorize-minimize scheme, which minimizes the following upper bound at each iteration $k$.

Let $x^k \in \R^d$. The extended Descent Lemma yields:
\begin{align} \label{eq:upper-bound}
g(x) &\leq g(x^k) + \act{\nabla g\left(x^k \right) , x - x^k} +LD_{h}\left(x , x^k\right) \notag \\
& =: M_k(x),
\end{align}
with $M_k(x^k) = g(x^k)$. Then clearly for $x^{k+1}$ given as
\begin{align}
x^{k+1} \in \underset{x \in \real^{d}}{\argmin} ~M_k(x),
\end{align}
we have $g(x^{k+1}) \leq M_k(x^{k+1}) \leq M_k(x^k) = g(x^{k})$, i.e. a descent on the objective function. Notably, for $h=(1/2)\|\cdot\|^2$ we recover the classical gradient descent method and more generally the mirror descent \cite{BT2003} algorithm. Like in the classical proximal gradient method the majorization property of $M_k$ still holds if we add a second convex non-smooth term $f$ to both sides of the inequality~\eqref{eq:upper-bound}. Minimization of $M_k + f$ then yields the Bregman proximal gradient scheme for non-convex additive composite problems given as
		\begin{equation}\label{eq:main-problem}
			(\PPP) \qquad \inf \left\{ \Psi(x) := f\left(x\right) + g\left(x\right) : \; x \in \real^{d} \right\}\,,
		\end{equation}
where $g,h$ satisfy Assumption~\ref{A:AssumptionA0}. The complete BPG algorithm is given in Algorithm~\ref{alg:bpg}. It is formulated in terms of the Bregman Proximal Gradient (BPG) mapping given by
		\begin{align} \label{eq:proximal-gradient-map}
			T_{\lambda}\left(x\right) 
			 :=  \underset{u \in \real^{d}}{\argmin} \left\{ f\left(u\right) + \act{\nabla g\left(x\right) , u} + \frac{1}{\lambda} D_{h}\left(u , x\right) \right\}.
		\end{align}
	This generalizes the proximal gradient mapping, by replacing the Euclidean distance with a Bregman distance.  \ifpaper\else\medskip\fi

	{\centering
		\fcolorbox{black}{white}{\parbox{0.97\columnwidth}{{}
		\vspace{-0.5em}
		\begin{algorithm}[BPG: Bregman Proximal Gradient \cite{BSTV2018}]\label{alg:bpg}\leavevmode\newline
				{\textbf{Input.}} Choose $h \in \mathcal{G}(\real^{d})$  such that $g$ satisfies $L$-smad with respect to $h$ on $\real^{d}$.\\
				{\textbf{Initialization.}} $x^1 \in  \idom h$ and $0<\lambda< (1/L) $. \\
				{\textbf{General Step.}} For $k\geq 1$, compute $x^{k+1} \in  T_{\lambda}(x^k)$.
		\end{algorithm}
		\vspace{-0.5em}
		}}
	}\\
		
	For convergence and well-definedness we require the following standard assumption.

		\begin{assumption} \label{A:AssumptionA}
			\begin{itemize}
				\item[$\rm{(i)}$] $f : \real^{d} \rightarrow \erl$ is a proper, lower semicontinuous, convex function.
				\item[$\rm{(ii)}$]  $v(\PPP) := \inf \left\{ \Psi\left(x\right) : \; x \in \real^{d} \right\} > -\infty$.
				\item[$\rm{(iii)}$] $h$ is $\sigma$-strongly convex on $\real^{d}$.  
				\item[$\rm{(iv)}$] For all $\lambda>0$, the function $h+\lambda f$ is supercoercive, thus satisfying 
				\[
				\lim_{\norm{x} \to \infty} \frac{h(x) + \lambda f(x)}{\norm{x}} = \infty\,.
				\]
			\end{itemize}
		\end{assumption}

	Assumption~\ref{A:AssumptionA}$\rm{(iv)}$ ensures the well-definedness of the $T_{\lambda}$, in the sense that $T_{\lambda}$ is non-empty and compact. 

	We provide below the condensed global convergence result from \cite{BSTV2018}, which states the convergence of the full sequence generated by BPG to a stationary point. The global convergence of Bregman proximal algorithms relies on the standard non-smooth Kurdyka--{\L}ojasiewicz (KL) property \cite{BDLS2007,AB2009}. The KL property is satisfied for semi-algebraic functions (see for example \cite{ABRS2010}). Note that $g$ in \eqref{eq:prob-2} is a real polynomial function, thus semi-algebraic. For the remainder of this paper, we restrict ourselves also to semi-algebraic $f$, for e.g., standard L1 norm and squared L2 norm (see \cite{XY2013}). 
	\begin{theorem}[Global Convergence of BPG]\label{thm:thm-main-bpg} Let  Assumptions~\ref{A:AssumptionA0},\ref{A:AssumptionA} hold and let $g$ be $L$-smad with respect to $h$. Assume $\nabla g, \nabla h$ to be Lipschitz continuous on any bounded subset.  Let  $\left\{x^k\right\}_{k \in \nn}$ be a bounded sequence generated by BPG with $0<\lambda L<1$, and suppose $\Psi$ satisfies the KL property, then, such a sequence has finite length, and converges to a critical point. 
	\end{theorem}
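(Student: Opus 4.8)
The plan is to follow the now-standard three-ingredient recipe for convergence under the Kurdyka--\L ojasiewicz property (a sufficient-decrease estimate, a relative-error subgradient bound, and a continuity condition on the limit set) and then feed these into the abstract KL convergence theorem of \cite{BSTV2018}. All three ingredients flow from the optimality of the BPG update combined with the extended Descent Lemma. First I would establish sufficient decrease. Since $x^{k+1} \in T_{\lambda}(x^k)$ minimizes $f(u) + \act{\nabla g(x^k), u} + \frac{1}{\lambda}D_{h}(u,x^k)$, comparing the value at $u=x^{k+1}$ with the value at $u=x^k$ (where $D_{h}(x^k,x^k)=0$) gives $f(x^{k+1}) + \act{\nabla g(x^k), x^{k+1}-x^k} + \frac{1}{\lambda}D_{h}(x^{k+1},x^k) \le f(x^k)$. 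Adding the upper bound $g(x^{k+1}) \le g(x^k) + \act{\nabla g(x^k), x^{k+1}-x^k} + LD_{h}(x^{k+1},x^k)$ from Lemma~\ref{L:ExtenDL} and cancelling the linear term yields $\Psi(x^{k+1}) \le \Psi(x^k) - (\tfrac{1}{\lambda}-L)D_{h}(x^{k+1},x^k)$. Because $\lambda L<1$ the coefficient is positive, and the $\sigma$-strong convexity of $h$ (Assumption~\ref{A:AssumptionA}$\rm{(iii)}$) gives $D_{h}(x^{k+1},x^k) \ge \frac{\sigma}{2}\norm{x^{k+1}-x^k}^2$, so $\Psi$ drops by at least a fixed multiple of $\norm{x^{k+1}-x^k}^2$. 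Since $\Psi$ is bounded below (Assumption~\ref{A:AssumptionA}$\rm{(ii)}$), the values $\Psi(x^k)$ converge and $\sum_k \norm{x^{k+1}-x^k}^2<\infty$, hence $\norm{x^{k+1}-x^k}\to 0$.

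Next I would produce the relative-error bound. The first-order optimality condition for the BPG subproblem reads $0 \in \partial f(x^{k+1}) + \nabla g(x^k) + \frac{1}{\lambda}\bigl(\nabla h(x^{k+1}) - \nabla h(x^k)\bigr)$, where I have used that the gradient of $u\mapsto\frac{1}{\lambda}D_{h}(u,x^k)$ is $\frac{1}{\lambda}(\nabla h(u)-\nabla h(x^k))$. Rearranging and adding $\nabla g(x^{k+1})$ shows that $w^{k+1} := \nabla g(x^{k+1}) - \nabla g(x^k) - \frac{1}{\lambda}\bigl(\nabla h(x^{k+1})-\nabla h(x^k)\bigr)$ belongs to $\partial\Psi(x^{k+1})$. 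Because the generated sequence is bounded it stays in a fixed bounded set on which $\nabla g$ and $\nabla h$ are Lipschitz by hypothesis, so the triangle inequality gives $\norm{w^{k+1}} \le b\,\norm{x^{k+1}-x^k}$ for a constant $b>0$.

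For the continuity condition, along any convergent subsequence $x^{k_j}\to\bar x$ I would use $\norm{x^{k+1}-x^k}\to 0$ to deduce $x^{k_j+1}\to\bar x$, together with continuity of $g$, lower semicontinuity of $f$, and the prox inequality tested at $u=\bar x$, to conclude $\Psi(x^{k_j+1})\to\Psi(\bar x)$. Combined with $w^{k_j+1}\to 0$ and closedness of the graph of $\partial\Psi$, this shows that every cluster point is critical and that $\Psi$ is constant on the (nonempty, compact) limit set. Feeding the sufficient-decrease estimate, the relative-error bound, and this continuity property together with the KL assumption on $\Psi$ into the abstract convergence theorem then yields finite length $\sum_k \norm{x^{k+1}-x^k} < \infty$; consequently $\{x^k\}$ is a Cauchy sequence and converges to a single point, which by the previous step is a critical point of $\Psi$.

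I expect the main obstacle to be the continuity condition on the limit set rather than the first two, mechanical, estimates: establishing $\Psi(x^{k_j+1})\to\Psi(\bar x)$ requires recovering an \emph{upper} bound on $\limsup_j f(x^{k_j+1})$ from the prox optimality inequality, since $f$ is only lower semicontinuous, and one must verify that the Bregman terms $D_{h}(\cdot,\cdot)$ genuinely vanish in the limit. This is precisely where the strong convexity of $h$ and the Lipschitz-on-bounded-sets hypotheses on $\nabla g,\nabla h$ are used in tandem, and where boundedness of the sequence is essential.
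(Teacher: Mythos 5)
Your proposal is correct and takes essentially the same route by which this theorem is actually established: the paper itself does not prove Theorem~\ref{thm:thm-main-bpg} but imports it as a condensed statement from \cite{BSTV2018}, and your three-ingredient argument (sufficient decrease from the prox inequality plus the extended Descent Lemma with the $\tfrac{1}{\lambda}-L$ margin and $\sigma$-strong convexity of $h$, the relative-error subgradient bound $w^{k+1}\in\partial\Psi(x^{k+1})$ using Lipschitz continuity of $\nabla g,\nabla h$ on bounded sets, and the continuity condition on the limit set fed into the abstract KL convergence theorem) is precisely the proof given there. Your closing remark correctly identifies the only delicate step, namely recovering $\limsup_j f(x^{k_j+1})\leq f(\bar x)$ from the prox inequality tested at $\bar x$, which is also how the cited proof handles it.
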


	By a critical point, we mean a point for which the limiting subdifferential of the objective contains zero, i.e., Fermat's rule is satisfied \cite[Theorem 10.1]{RW1998-B}. The boundedness assumption in the statement is automatically satisfied, if, for example, the objective is coercive (lower level-bounded). 

\section{Bregman Distance for DLNN}\label{sec:dll}
This section is the main part of our paper, where we specialize $g$ to be a quadratic loss function with a deep linear neural network (DLNN). In view of Example \ref{Ex:Lsmooth-fail} such a cost function is not classically $L$-smooth and therefore lacks a quadratic upper bound even for the two layer case. Therefore our main goal is to derive a novel kernel generating distance $h$ that allows us to obtain a global upper bound. More precisely in the first part we show that $g$ satisfies the $L$-smad property for a certain non-trivial choice of $h$. In the second part we derive closed form solutions of the Bregman proximal gradient map~\eqref{eq:proximal-gradient-map} for popular choices of $f$ such as the L1- and the squared L2-norm.
To this end we consider the following optimization problem
\begin{equation}\label{eq:prob-2}
	 \min_{{\bf W_i}\in \mathcal W_i\, \forall i\in [N]}\,g({\bf W}) := \frac 12\norm{{\bf W_1W_2\cdots W_NX} - {\bf Y}}^2_{F} \,,
	\end{equation}
	where $N$ denotes the number of layers. Furthermore we denote by $\mathcal W_i = \R^{d_i\times d_{i+1}}$ where $d_i \in \nn$ for all $i \in [N]$. Let $d_{N+1} = d$ and ${\bf X} \in \R^{d \times n_T}$ be fixed, where $n_T \in \nn$, which typically corresponds to the number of training samples. Similarly we have fixed ${\bf Y} \in  \R^{d_1\times n_T}$, which typically corresponds to the labels of the inputs in ${\bf X}$. We denote by ${\bf W}:=({\bf W_1},\ldots,{\bf W_N})$, meaning ${\bf W}$ lies in the product space $\mathcal W:=\mathcal W_1 \times \cdots \times \mathcal W_N$, equipped with the norm $\norm{{\bf W}}_F^2 := \sum_{i=1}^N\norm{{\bf W_i}}_F^2$. We focus on $N\geq2$ in this paper.
 
 \subsection{Smooth Adaptable Property for DLNN}
To prove the $L$-smad property we consider its characterization via the Hessian. More precisely, $Lh - g$ and $g + Lh$ are convex if and only if $L \nabla^2 h(x) \succeq \nabla^2 g(x)$ and $-L \nabla^2 h(x) \preceq \nabla^2 g(x)$, i.e. the eigenvalues of the Hessian of $g$ are bounded by eigenvalues of the Hessian of $Lh$. The analysis suggests that $h$ and the corresponding Bregman distance involve polynomials of degree $2N$ and $N$. We consider the odd and the even case separately.
\subsubsection{Even Number of Layers} 
Let $N$ be even and define the following functions
\begin{equation*}
	H_1({\bf W}) := \left(\frac{\norm{{\bf W}}_F^2}{N} \right)^N,\,\quad 	H_2({\bf W}) :=  \left(\frac{\norm{{\bf W}}_F^2}{N} \right)^{\frac{N}{2}} \,.
\end{equation*}
Then, we have the following result, which shows that for an appropriate linear combination of $H_1$ and $H_2$ we obtain the $L$-smad property for $g$ in \eqref{eq:prob-2}. 
\begin{proposition}\label{prop:even-case}
	Let $H_1,H_2$ be as defined above and let $g$ be as in \eqref{eq:prob-2}. Then, for  $L= 1$, the function $g$ satisfies the $L$-smad property with respect to the following kernel generating distance
	\begin{equation}\label{eq:main-lsmad-h}
	 H_a({\bf W}) =  c_1(N)H_1({\bf W}) + c_2(N) H_2({\bf W}) \,,
	\end{equation}
	where we have 
	\ifpaper
	\begin{align*}
	c_1(N)  & =  \frac{(2N-1)N^N}{2N!}\norm{{\bf X}}_F^2 \,,\\
	 c_2(N) &= \frac{\norm{{\bf Y}}_F\norm{\bf X}_F (N-1) N^{\frac{N-2}{2}}}{ (N-2)^{\frac{N-2}{2}}}\,.
	\end{align*}
	\else
	\begin{align*}
	 c_1(N)  & =  \frac{(2N-1)N^N}{2N!}\norm{{\bf X}}_F^2 \,,\quad c_2(N) = \frac{\norm{{\bf Y}}_F\norm{\bf X}_F (N-1) N^{\frac{N-2}{2}}}{ (N-2)^{\frac{N-2}{2}}}\,.
	\end{align*}
	\fi
\end{proposition}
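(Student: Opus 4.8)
The plan is to verify the $L$-smad property (with $L=1$) through its second-order characterization. Since $N$ is even, both $H_1({\bf W}) = N^{-N}(\norm{{\bf W}}_F^2)^N$ and $H_2({\bf W}) = N^{-N/2}(\norm{{\bf W}}_F^2)^{N/2}$ are genuine polynomials, hence $C^\infty$, and so is $g$. Therefore $H_a-g$ and $H_a+g$ are convex on all of $\mathcal W$ if and only if their Hessians are positive semidefinite everywhere, i.e. if and only if
\[
\left| \act{\nabla^2 g({\bf W}){\bf V}, {\bf V}} \right| \;\le\; \act{\nabla^2 H_a({\bf W}){\bf V}, {\bf V}}
\]
holds for every ${\bf W},{\bf V}\in\mathcal W$. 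First I would establish this reduction and then verify the pointwise quadratic-form inequality.

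For the left-hand side I would differentiate $t\mapsto g({\bf W}+t{\bf V})$ twice at $t=0$. Writing $P = {\bf W_1}\cdots{\bf W_N}$, denoting by $P'$ the sum of the $N$ products in which one factor ${\bf W_i}$ is replaced by ${\bf V_i}$, and by $P''$ twice the sum over pairs $i<j$ of the products with ${\bf W_i},{\bf W_j}$ replaced by ${\bf V_i},{\bf V_j}$, one obtains $\act{\nabla^2 g({\bf W}){\bf V},{\bf V}} = \norm{P'{\bf X}}_F^2 + \act{P{\bf X}-{\bf Y},\, P''{\bf X}}$. Setting $w_i := \norm{{\bf W_i}}_F$, $v_i := \norm{{\bf V_i}}_F$ (so $r^2 := \norm{{\bf W}}_F^2 = \sum_i w_i^2$ and $\rho^2 := \norm{{\bf V}}_F^2 = \sum_i v_i^2$) and repeatedly using submultiplicativity together with $\norm{\cdot}_2\le\norm{\cdot}_F$, I would bound
\[
\norm{P'{\bf X}}_F^2 \le \norm{{\bf X}}_F^2\, e_1^2, \quad \norm{P''{\bf X}}_F \le 2\norm{{\bf X}}_F\, e_2, \quad \norm{P{\bf X}-{\bf Y}}_F \le \norm{{\bf X}}_F\textstyle\prod_j w_j + \norm{{\bf Y}}_F,
\]
where $e_1 = \sum_i v_i\prod_{j\ne i}w_j$ and $e_2 = \sum_{i<j}v_iv_j\prod_{k\ne i,j}w_k$. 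The triangle and Cauchy--Schwarz inequalities then split $|\act{\nabla^2 g\,{\bf V},{\bf V}}|$ into an $H_1$-type part $\norm{{\bf X}}_F^2\big(e_1^2 + 2\prod_j w_j\, e_2\big)$ of degree $2N-2$ in the $w_i$, and an $H_2$-type part $2\norm{{\bf X}}_F\norm{{\bf Y}}_F\, e_2$ of degree $N-2$.

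On the right-hand side, for a radial function $\phi(\norm{{\bf W}}_F^2)$ one has $\act{\nabla^2\phi\,{\bf V},{\bf V}} = 2\phi'\norm{{\bf V}}_F^2 + 4\phi''\act{{\bf W},{\bf V}}^2$; discarding the nonnegative rank-one term (here $\phi''\ge 0$ since $N\ge 2$) yields the lower bounds $\act{\nabla^2 H_1\,{\bf V},{\bf V}} \ge \frac{2N}{N^N}r^{2N-2}\rho^2$ and $\act{\nabla^2 H_2\,{\bf V},{\bf V}} \ge \frac{N}{N^{N/2}}r^{N-2}\rho^2$. It then remains to prove the two scalar inequalities
\[
e_1^2 + 2\textstyle\prod_j w_j\, e_2 \le \tfrac{2N-1}{(N-1)!}\, r^{2N-2}\rho^2, \qquad 2 e_2 \le \tfrac{N-1}{(N-2)^{(N-2)/2}}\, r^{N-2}\rho^2,
\]
which are exactly what is needed to absorb the two parts into $c_1\act{\nabla^2 H_1\,{\bf V},{\bf V}}$ and $c_2\act{\nabla^2 H_2\,{\bf V},{\bf V}}$ after substituting the stated $c_1(N),c_2(N)$.

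For the first inequality, using $w_iv_iw_jv_j\le\tfrac12(w_i^2v_j^2+w_j^2v_i^2)$ and symmetrizing collapses $e_1^2 + 2\prod_j w_j\, e_2$ to $(2N-1)\sum_m v_m^2\prod_{k\ne m}w_k^2$, after which AM--GM in the form $\prod_{k\ne m}w_k^2\le(r^2/(N-1))^{N-1}$ together with $(N-1)!\le(N-1)^{N-1}$ closes the gap exactly. The second inequality follows from $v_iv_j\le\tfrac12(v_i^2+v_j^2)$, giving $\sum_{i<j}v_iv_j\le\tfrac{N-1}{2}\rho^2$, combined with $\prod_{k\ne i,j}w_k\le(r^2/(N-2))^{(N-2)/2}$. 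Summing the two bounds yields $|\act{\nabla^2 g\,{\bf V},{\bf V}}|\le\act{\nabla^2 H_a\,{\bf V},{\bf V}}$, which proves the claim. I expect the main obstacle to be the bookkeeping of this last step: correctly extracting $P'$ and $P''$ from the matrix product, reducing the mixed symmetric functions $e_1,e_2$ to pure power sums in $r$ and $\rho$, and checking that the AM--GM slack matches the factorial constant inside $c_1$ \emph{precisely} rather than merely up to a dimensional constant.
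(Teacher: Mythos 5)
Your proposal is correct, and its skeleton coincides with the paper's own proof: both verify the Hessian inequality $\left|\act{\nabla^2 g\,{\bf V},{\bf V}}\right| \le \act{\nabla^2 H_a\,{\bf V},{\bf V}}$, both expand $g$ to second order to obtain exactly your two pieces $\norm{P'{\bf X}}_F^2$ and $\act{P{\bf X}-{\bf Y},P''{\bf X}}$, and both treat the $\norm{{\bf Y}}_F$-part identically (Cauchy--Schwarz, $\sum_{i<j}v_iv_j\le\tfrac{N-1}{2}\rho^2$, AM--GM on $\prod_{k\ne i,j}w_k$). The one genuine difference is how the degree-$(2N-2)$ part $(2N-1)\sum_m v_m^2\prod_{k\ne m}w_k^2$ is absorbed into $c_1\nabla^2 H_1$. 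The paper keeps the mixed-monomial structure on both sides: it lower-bounds $\act{\nabla^2 H_1\,{\bf V},{\bf V}}$ by $\frac{2N!}{N^N}\sum_m v_m^2\prod_{k\ne m}w_k^2$, using that the coefficient of $\prod_{k\ne m}w_k^2$ in the multinomial expansion of $\bigl(\sum_k w_k^2\bigr)^{N-1}$ is $(N-1)!$, and then matches coefficients exactly; this is precisely where the $N!$ in $c_1$ comes from. You instead reduce both sides to radial form: you keep only the pure radial part $\frac{2N}{N^N}r^{2N-2}\rho^2$ of $\act{\nabla^2 H_1\,{\bf V},{\bf V}}$, push the mixed monomials up to $\rho^2 r^{2N-2}/(N-1)^{N-1}$ by AM--GM, and close the constant gap with $(N-1)!\le(N-1)^{N-1}$. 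Both routes are valid: the paper's matching is tight for its chain of estimates and explains the exact form of $c_1$, while yours is slightly more elementary (only the radial Hessian formula is needed for $H_1$) and shows as a byproduct that the smaller constant $\frac{(2N-1)N^{N-1}}{2(N-1)^{N-1}}\norm{{\bf X}}_F^2$ would already suffice in place of $c_1(N)$, since enlarging the coefficient of the convex $H_1$ preserves both convexity statements. One small point to tidy up: for $N=2$ your AM--GM steps involve empty products and the expression $(N-2)^{(N-2)/2}$ degenerates to $0^0$; the inequalities do hold there under the convention that empty products equal $1$ (indeed $2e_2=2v_1v_2\le\rho^2$ directly), which is why the paper states the $N=2$ case separately in its Hessian bound.
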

The proof is given in Section~\ref{sec:even-case} in the appendix.\ifpaper\else\medskip\fi

Note that $H_a$ is a polynomial of order $2N$ as a linear combination of a degree $2N$ and a degree $N$ polynomial. Moreover, observe that the resulting Bregman distances are data-dependent. More precisely, the coefficients $c_1(N)$ and $c_2(N)$, are not only dependent on the number of layers but also on ${\bf X}$  and ${\bf Y}$.\ifpaper\else\medskip\fi

We remark, that for $N=2$ and $\norm{{\bf X}}_F = 1$, this matches the results from \cite{MO2019a} for the matrix factorization problems.

\subsubsection{Odd Number of Layers}
Let $N$ be odd and denote
\begin{equation}
	H_3({\bf W}) :=  \left(\frac{\norm{{\bf W}}_F^2 + 1}{N+1}  \right)^{\frac{N+1}{2}} \,.
\end{equation}

As the following proposition reveals,  the loss function for the odd case is $L$-smooth adaptable with respect to a degree $2N$ polynomial $H_b$ which is given as a linear combination of $H_1$ and $H_3$.
\begin{proposition}\label{prop:odd-case}
	Let $H_1,H_3$ be as defined above and let $g$ be as in \eqref{eq:prob-2}. Then, for $L= 1$, the function $g$ satisfies the $L$-smad property with respect to the following kernel generating distance
	\begin{equation}\label{eq:main-lsmad-h}
	 H_b({\bf W}) =  c_1(N)H_1({\bf W}) + c_3(N) H_3({\bf W}) \,,
	\end{equation}
	where we have 
		\ifpaper
	\begin{align*}
	c_1(N) &=  \frac{(2N-1)N^N}{2N!}\norm{{\bf X}}_F^2\,,\\
	c_3(N) &= \frac{\norm{{\bf Y}}_F\norm{\bf X}_F(N-1)(N+1)^{\frac{N-1}{2}}}{  (N-1)^{\frac{N-1}{2}}}\,.
	\end{align*}
		\else
		\begin{align*}
		c_1(N) &=  \frac{(2N-1)N^N}{2N!}\norm{{\bf X}}_F^2\,,\quad c_3(N) = \frac{\norm{{\bf Y}}_F\norm{\bf X}_F(N-1)(N+1)^{\frac{N-1}{2}}}{  (N-1)^{\frac{N-1}{2}}}\,.
		\end{align*}
		\fi
\end{proposition}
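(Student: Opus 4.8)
The plan is to establish the $L$-smad property with $L=1$ through the Hessian characterization recalled just before the statement: since $g$ and $H_b$ are polynomials, hence $C^2$, the convexity of $H_b-g$ and $H_b+g$ is equivalent to $-\nabla^2 H_b\preceq\nabla^2 g\preceq\nabla^2 H_b$, i.e. it suffices to show that for every ${\bf W},{\bf V}\in\mathcal W$,
\[
\left|\inner{\nabla^2 g({\bf W}){\bf V}}{{\bf V}}\right|\le\inner{\nabla^2 H_b({\bf W}){\bf V}}{{\bf V}}.
\]
Convexity of $H_1$ and $H_3$ (each a convex increasing profile composed with $\norm{\cdot}_F^2$) makes $H_b\in\GGG(\mathcal W)$ automatic, so only this two-sided Hessian bound requires work. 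First I would set $P({\bf W}):={\bf W}_1{\bf W}_2\cdots{\bf W}_N$ and differentiate twice, obtaining
\[
\inner{\nabla^2 g({\bf W}){\bf V}}{{\bf V}}=\norm{DP[{\bf V}]\,{\bf X}}_F^2+\inner{P({\bf W}){\bf X}-{\bf Y}}{D^2P[{\bf V},{\bf V}]\,{\bf X}},
\]
with $DP[{\bf V}]=\sum_{i}{\bf W}_1\cdots{\bf V}_i\cdots{\bf W}_N$ and $D^2P[{\bf V},{\bf V}]=\sum_{i\neq j}{\bf W}_1\cdots{\bf V}_i\cdots{\bf V}_j\cdots{\bf W}_N$. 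Passing to absolute values and using submultiplicativity of $\norm{\cdot}_F$, with $w_k:=\norm{{\bf W}_k}_F$ and $v_k:=\norm{{\bf V}_k}_F$, the right-hand side splits into a leading group of order $2N-2$ in ${\bf W}$ (the term $\norm{DP[{\bf V}]{\bf X}}_F^2$ together with the part of the second summand carrying $P({\bf W}){\bf X}$) and a single lower-order group of order $N-2$ (the part carrying ${\bf Y}$).

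The leading group is \emph{parity independent} and is exactly the estimate already carried out for the even case. Here one bounds $\sum_i v_i\prod_{k\ne i}w_k$ by Cauchy--Schwarz together with the Maclaurin inequality $e_{N-1}(w_1^2,\dots,w_N^2)\le N(\norm{{\bf W}}_F^2/N)^{N-1}$, and $\prod_k w_k\le(\norm{{\bf W}}_F^2/N)^{N/2}$ by AM--GM. Comparing against the radial lower bound $\inner{\nabla^2 H_1({\bf W}){\bf V}}{{\bf V}}\ge 2(\norm{{\bf W}}_F^2/N)^{N-1}\norm{{\bf V}}_F^2$ (valid because the convex increasing profile of $H_1$ already makes its purely radial Hessian term dominate) produces the same coefficient $c_1(N)$ as in Proposition~\ref{prop:even-case}, so I would simply invoke that computation rather than repeat it.

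The genuinely new part, and the main obstacle, is the order-$(N-2)$ term $\left|\inner{{\bf Y}}{D^2P[{\bf V},{\bf V}]{\bf X}}\right|$. Bounding it by $\norm{{\bf Y}}_F\norm{{\bf X}}_F\sum_{i\ne j}v_iv_j\prod_{k\ne i,j}w_k$, then using $v_iv_j\le\tfrac12(v_i^2+v_j^2)$ and the Maclaurin bound on the $(N-2)$-th elementary symmetric polynomial of the $N-1$ numbers $\{w_k:k\ne i\}$, yields
\[
\sum_{i\ne j}v_iv_j\prod_{k\ne i,j}w_k\le(N-1)\left(\frac{\norm{{\bf W}}_F^2}{N-1}\right)^{\frac{N-2}{2}}\norm{{\bf V}}_F^2.
\]
For odd $N$ the exponent $(N-2)/2$ is a half-integer, so, unlike the even case, there is no polynomial kernel of the form $(\norm{\cdot}_F^2/N)^{(N-2)/2}$ available to absorb it; this is precisely why $H_2$ must be replaced by the \emph{shifted} kernel $H_3$. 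The resolution is the elementary scalar inequality
\[
\left(\frac{t}{N-1}\right)^{\frac{N-2}{2}}\le\left(\frac{t+1}{N-1}\right)^{\frac{N-1}{2}}\qquad(t\ge0),
\]
which I would verify by locating the maximum of the ratio of the two sides at $t=N-2$; it converts the half-integer power into the integer power $(N-1)/2$, making $H_3$ a genuine polynomial. Rescaling the normalisation from $N-1$ to $N+1$ costs the explicit factor $((N+1)/(N-1))^{(N-1)/2}$, and comparing with $\inner{\nabla^2 H_3({\bf W}){\bf V}}{{\bf V}}\ge((\norm{{\bf W}}_F^2+1)/(N+1))^{(N-1)/2}\norm{{\bf V}}_F^2$ pins down $c_3(N)$ exactly as stated. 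Adding the two groups gives $|\inner{\nabla^2 g{\bf V}}{{\bf V}}|\le c_1\inner{\nabla^2 H_1{\bf V}}{{\bf V}}+c_3\inner{\nabla^2 H_3{\bf V}}{{\bf V}}=\inner{\nabla^2 H_b{\bf V}}{{\bf V}}$, which is the claim. The only delicate points are the scalar inequality above and the careful bookkeeping of the constant when passing from the $N-1$ to the $N+1$ normalisation; the rest is the same machinery as the even case.
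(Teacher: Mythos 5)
Your proposal is correct, and its skeleton is the paper's: the Hessian characterization of $L$-smad, the second-order expansion of $g$ split into a parity-independent leading group of order $2N-2$ and a ${\bf Y}$-group of order $N-2$, reuse of the even-case estimate for the leading group (the paper does exactly this, deriving one parity-uniform bound on $\act{{\bf V},\nabla^2 g({\bf W}){\bf V}}$ and a product-form lower bound on $\nabla^2 H_1$), and a radial lower bound on $\nabla^2 H_3$ (your bound $\act{{\bf V},\nabla^2 H_3({\bf W}){\bf V}}\ge\left(\frac{\norm{{\bf W}}_F^2+1}{N+1}\right)^{\frac{N-1}{2}}\norm{{\bf V}}_F^2$ is exactly the paper's, once its coefficient $\frac{2}{(N+1)^{(N+1)/2}}\binom{(N+1)/2}{(N-1)/2,\,1}$ is simplified to $(N+1)^{-(N-1)/2}$). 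The genuine difference is the key odd-case step, i.e.\ how the half-integer exponent in the ${\bf Y}$-term is removed. The paper does it in one stroke: it applies AM--GM to the $N-1$ numbers consisting of the $N-2$ squares $\norm{{\bf W}_k}_F^2$, $k\notin\{i,j\}$, \emph{together with the constant $1$}, so that $\prod_{k\notin\{i,j\}}\norm{{\bf W}_k}_F\le\left(\frac{\sum_{k\notin\{i,j\}}\norm{{\bf W}_k}_F^2+1}{N-1}\right)^{\frac{N-1}{2}}$ delivers the $+1$ shift and the integer exponent simultaneously. You instead first prove the natural half-integer bound (Maclaurin with normalization $N-1$) and then cross the scalar bridge $\left(\frac{t}{N-1}\right)^{\frac{N-2}{2}}\le\left(\frac{t+1}{N-1}\right)^{\frac{N-1}{2}}$, which does hold: it is equivalent to $(N-1)t^{N-2}\le(t+1)^{N-1}$, whose left-to-right ratio is maximized at $t=N-2$ with value $\left(\frac{N-2}{N-1}\right)^{N-2}\le1$. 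Both routes then pay the identical renormalization factor $\left(\frac{N+1}{N-1}\right)^{\frac{N-1}{2}}$ and land on exactly the stated $c_3(N)$; yours is a step longer but isolates cleanly why no kernel of the form $H_2$ can work for odd $N$ and where the $+1$ enters, while the paper's adjoined-$1$ AM--GM avoids any calculus. One loose claim, not a gap: your radial comparison for the leading group (Cauchy--Schwarz/Maclaurin against $\act{{\bf V},\nabla^2 H_1({\bf W}){\bf V}}\ge2\left(\norm{{\bf W}}_F^2/N\right)^{N-1}\norm{{\bf V}}_F^2$) does not reproduce ``the same coefficient'' $c_1(N)=\frac{(2N-1)N^N}{2N!}\norm{{\bf X}}_F^2$ but a strictly smaller sufficient constant; the paper gets $c_1(N)$ by matching the product-form bound $(2N-1)\norm{{\bf X}}_F^2\sum_i\norm{{\bf V}_i}_F^2\prod_{j\ne i}\norm{{\bf W}_j}_F^2$ against the product-form lower bound $\frac{2N!}{N^N}\sum_i\norm{{\bf V}_i}_F^2\prod_{j\ne i}\norm{{\bf W}_j}_F^2$ for $\nabla^2 H_1$. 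This is harmless---$\nabla^2 H_1\succeq0$, so the larger stated $c_1(N)$ dominates a fortiori---but if you literally ``invoke the even-case computation'' you should adopt its product-form bookkeeping rather than the radial one.
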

The proof is given in Section~\ref{sec:odd-case} in the appendix. \ifpaper\else\medskip\fi

Like in the even case $H_1$ is a polynomial of order $2N$. But, here $H_2$ is not applicable as $N$ is odd. We fix this issue using $H_3$,  a polynomial of order $N+1$. Note that the analysis of the objective results in a polynomial of degree only $N$. This is automatically resolved with $H_3$, because the constant term $1$ in $H_3$ allows for certain terms to be of order $N$, while preserving the convexity of $H_3$. Note that this is just one potential way to obtain polynomials of order $N$. Considering the practical applicability we show that the proposed Bregman distances are efficient to implement in practice.\ifpaper\else\medskip\fi

\textbf{Strong convexity of $h$.} The global convergence results of Bregman proximal algorithms, provided in the next section, rely on the strong convexity of $h$. We denote $\sigma$ as the strong convexity parameter. Notably, for $N=2$ the strong convexity is satisfied directly by $H_a$. For the general case denote $ H_4({\bf W}) = \frac{\norm{{\bf W}}_F^2}{N}$. For $N>2$ and if $N$ is even, then with any $\rho>0$, we use the following $h$ 
	\[
		h({\bf W}) = H_a({\bf W}) + \rho H_4({\bf W}) \,,
	\]
	for which $\sigma=\frac{2\rho}{N}$. For $N>2$ and $N$ being odd, we use the following $h$ 
	\[
		h({\bf W}) = H_b({\bf W}) + \rho H_4({\bf W}) \,,
	\]
	 with any $\rho\geq 0$, where $ \sigma = \frac{1}{(N+1)^{\frac{N-1}{2}}} +\frac{2\rho}{N}$.
	We fix $\rho$ in the initialization phase of the algorithms.

\subsection{Closed Form Updates for BPG}\label{ssec:closed-form}
While closed form solutions of Euclidean proximal mappings are typically available for common choices of $f$, it is in general difficult to compute the Bregman proximal mapping ($T_\lambda$ in \eqref{eq:proximal-gradient-map}) in closed form, even for common $f$. Typically this involves the computation of the convex conjugate function of the problem-dependent $h$ which can be hard to derive. In our case we show in Proposition~\ref{prop:closed-form-1}, that the computation of the Bregman proximal gradient map~\eqref{eq:proximal-gradient-map} can be reduced to a simple projection problem and a simple one-dimensional nonlinear equation, more precisely a polynomial equation with a unique real root. We remark that this closed form solution is also valid for any other Bregman proximal algorithm including, stochastic BPG \cite{DDM2018}. We denote $g = \Psi$ from \eqref{eq:prob-2} and $f:=0$ and we set $h$ as in Section~\ref{sec:dll}.

\begin{proposition}\label{prop:closed-form-1}
	In BPG, with above defined $g,f,h$, denoting ${\bf P^k_i} := \lambda\nabla_{\bf W_i} g\left({\bf W}^{{\bf k}} \right) - \nabla_{\bf W_i} h({\bf W}^{{\bf k}})$\,, the update steps in each iteration are given by 
	\[
		{\bf W_{i}^{k+1}} = -r\,\frac{\sqrt{N}\,{\bf P^k_{i}}}{\norm{\bf P}_F}\,,
	\]
	for all  $i \in [N]$, where   $\norm{{\bf P}}_F^2 = \sum_{i=1}^{N}\norm{{\bf P^k_i}}_F^2$. Then for $N=2$, $r \geq 0$ satisfies 
	\begin{equation}\label{eq:even-sol-2}
	  2c_1(2) r^{3} + c_2(2)r  -\frac{\norm{{\bf P}}_F}{\sqrt{2}} = 0\,, 
	\end{equation}
	 if $N>2$ and even, $r \geq 0$ satisfies 
	\begin{equation}\label{eq:even-sol}
		  2c_1(N) r^{2N-1} + c_2(N)r^{N - 1}  + \frac{2\rho }{N}r -\frac{\norm{{\bf P}}_F}{\sqrt{N}} = 0\,,
	\end{equation}
	and, if $N>2$ and odd, $r \geq 0$ satisfies 
	\ifpaper
	\begin{align}
		  2c_1(N) r^{2N-1} &+ c_3(N)\left(\frac{N r^2+1}{N+1}\right)^{\frac{N-1}{2}}r \nonumber \\
		  &+ \frac{2\rho }{N}r -\frac{\norm{{\bf P}}_F}{\sqrt{N}} = 0 \label{eq:odd-sol}\,.
	\end{align}
	\else
	\begin{align}
		  2c_1(N) r^{2N-1} &+ c_3(N)\left(\frac{N r^2+1}{N+1}\right)^{\frac{N-1}{2}}r + \frac{2\rho }{N}r -\frac{\norm{{\bf P}}_F}{\sqrt{N}} = 0 \label{eq:odd-sol}\,.
	\end{align}
	\fi


	\end{proposition}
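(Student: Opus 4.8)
The plan is to use that, with $f=0$, the map $T_\lambda$ in \eqref{eq:proximal-gradient-map} reduces to the unconstrained minimization of $\varphi({\bf U}) := \act{\nabla g({\bf W}^k),{\bf U}} + \tfrac1\lambda D_h({\bf U},{\bf W}^k)$. Since the $h$ chosen in Section~\ref{sec:dll} is strongly convex, $\varphi$ is strongly convex, so it has a \emph{unique} minimizer ${\bf W}^{k+1}$ characterized by $\nabla\varphi({\bf W}^{k+1})=0$. Expanding $D_h$ via \eqref{eq:bregman-distance} and differentiating in ${\bf U}$, this stationarity condition reads, blockwise for each $i\in[N]$,
\[
\nabla_{{\bf W}_i} h({\bf W}^{k+1}) = \nabla_{{\bf W}_i} h({\bf W}^k) - \lambda \nabla_{{\bf W}_i} g({\bf W}^k) = -{\bf P^k_i}\,.
\]

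First I would exploit the radial structure of $h$. Each of $H_1,H_2,H_3,H_4$ depends on ${\bf W}$ only through $\norm{{\bf W}}_F^2=\sum_{i}\norm{{\bf W}_i}_F^2$, hence so does $h$, and by the chain rule $\nabla_{{\bf W}_i}h({\bf W}) = \alpha(\norm{{\bf W}}_F)\,{\bf W}_i$ with a common positive scalar factor $\alpha$. Writing $t=\norm{{\bf W}}_F^2/N$ and differentiating termwise, I get $\alpha = 2c_1 t^{N-1} + c_2 t^{N/2-1} + 2\rho/N$ in the even case and $\alpha = 2c_1 t^{N-1} + c_3\big((Nt+1)/(N+1)\big)^{(N-1)/2} + 2\rho/N$ in the odd case (with $\rho=0$ and the middle term constant when $N=2$). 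Substituting into the stationarity condition gives $\alpha(\norm{{\bf W}^{k+1}}_F)\,{\bf W}^{k+1}_i = -{\bf P^k_i}$, so every block ${\bf W}^{k+1}_i$ is the \emph{same} negative multiple of ${\bf P^k_i}$; this collinearity is the decisive reduction, collapsing a matrix-valued optimality system to one scalar unknown.

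Next I would solve for that scalar. Setting $r:=\norm{{\bf W}^{k+1}}_F/\sqrt N$, so that the argument of $\alpha$ is $t=r^2$, and taking Frobenius norms over the stacked blocks in $\alpha\,{\bf W}^{k+1}_i=-{\bf P^k_i}$ gives $\alpha\sqrt N\,r = \norm{{\bf P}}_F$, i.e.\ $\alpha = \norm{{\bf P}}_F/(\sqrt N\,r)$; combined with ${\bf W}^{k+1}_i=-{\bf P^k_i}/\alpha$ this yields the claimed closed form ${\bf W}^{k+1}_i=-r\sqrt N\,{\bf P^k_i}/\norm{{\bf P}}_F$. Inserting the explicit $\alpha$ with $t=r^2$ and multiplying through by $r$ then turns $\alpha = \norm{{\bf P}}_F/(\sqrt N\,r)$ into exactly the equations \eqref{eq:even-sol-2}, \eqref{eq:even-sol} and \eqref{eq:odd-sol} in the three respective cases.

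Finally I would establish existence and uniqueness of the nonnegative root. Viewing the left-hand side of each equation as a function of $r\in[0,\infty)$, it is continuous, strictly increasing (all exponents are positive, the coefficients $c_1,c_2,c_3,\rho$ are nonnegative, and the bracketed factor in the odd case is increasing in $r$), equals $-\norm{{\bf P}}_F/\sqrt N\le 0$ at $r=0$, and tends to $+\infty$; hence there is a unique $r\ge 0$ solving it, consistent with the uniqueness of the strongly convex minimizer. The main obstacle I anticipate is the blockwise gradient computation — in particular the shifted argument of $H_3$ in the odd case — together with justifying the collinearity step, since it is precisely the radial symmetry of $h$ that permits the full optimality system to be reduced to a single scalar equation in $r$.
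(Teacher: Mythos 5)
Your proof is correct, but it takes a genuinely different route from the paper's. The paper (following \cite{MO2019a}) stays with the minimization formulation: it introduces auxiliary scalars $t_i=\norm{{\bf W_i}}_F$, relaxes the constraints to $\norm{{\bf W_i}}_F\le t_i$ so that the inner minimization over each block becomes a linear problem over a Frobenius ball (Lemma~\ref{lem:helper-1}), giving ${\bf W_i}=-t_i\,{\bf P^k_i}/\norm{{\bf P^k_i}}_F$, and then writes the stationarity system for the reduced problem in $(t_1,\ldots,t_N)$, which is collapsed to one polynomial equation by the ansatz $t_i = r\sqrt{N}\,\norm{{\bf P^k_i}}_F/\norm{{\bf P}}_F$. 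You instead use that, since $f=0$ and $h$ is smooth and strongly convex, the subproblem has a unique minimizer characterized by the gradient equation $\nabla_{{\bf W_i}}h({\bf W}^{k+1})=-{\bf P^k_i}$, and you exploit the radial structure $\nabla_{{\bf W_i}}h({\bf W})=\alpha\bigl(\norm{{\bf W}}_F^2/N\bigr)\,{\bf W_i}$ (with the $\alpha$ you compute matching the paper's coefficients in all three cases, including the constant middle term and $\rho=0$ at $N=2$), so collinearity and the scalar equation drop out at once. Your route buys a shorter argument and makes explicit what the paper leaves implicit: existence and uniqueness of the nonnegative root via monotonicity of the left-hand side, hence well-posedness of the update; the only loose ends are the degenerate case $\norm{{\bf P}}_F=0$ (where ${\bf W}^{k+1}={\bf 0}$ and the stated formula is $0/0$, an issue also present in the statement itself) and a one-line verification that the constructed point indeed satisfies the stationarity system, which your collinearity computation already provides. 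What the paper's splitting approach buys is generality: it transfers verbatim to nonsmooth $f$, which is exactly how Proposition~\ref{prop:closed-form-l1} handles the L1 regularizer via soft-thresholding (Lemma~\ref{lem:helper-2}) and how the L2-regularized case is treated in Proposition~\ref{prop:closed-form-2}, whereas your unconstrained-gradient argument would need to be redone with subdifferential calculus in those settings.
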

The proof is given in Section~\ref{sec:closed-form} in the appendix.
\paragraph*{Weight decay or L2-regularization.}  Consider 
	\begin{equation} \label{eq:objective-reg}
	 \min_{{\bf W_i}\in \mathcal W_i\, \forall i\in [K]}\, \left\{ \Psi_1({\bf W}) :=  \Psi({\bf W}) +\frac{\lambda_0}{2}\norm{{\bf W}}_F^2\right\}\,,
	\end{equation}
where $\lambda_0 >0$ and the term $\frac{\lambda_0}{2}\norm{{\bf W}}_F^2$ is the L2-regularizer. The closed forms are obtained by replacing $\frac{2\rho}{N}$ with $\left(\frac{2\rho}{N} + \lambda\lambda_0\right)$ in Proposition~\ref{prop:closed-form-1}, by setting $f({\bf W}) := \frac{\lambda_0}{2}\sum_{i=1}^N\norm{{\bf W_i}}_F^2$. 

\paragraph*{L1-Regularization.} It is also possible to obtain the closed form solutions when L1-regularization is used, where we set	$f({\bf W}) := \sum_{i=1}^N \mu_i \norm{{\bf W_i}}_1$.  Then using the element wise soft-thresholding operator $\SSS_{\theta}(x) = \max\{|x|-\theta,0\}\sgn(x)$, the closed form updates are obtained by replacing $-{\bf P_i^k}$ with $\SSS_{\lambda\mu_i}(-{\bf P_i^k})$ in Proposition~\ref{prop:closed-form-1}. Proof is given in Section~\ref{sec:closed-form-l1}, in the appendix.


\section{Closed Form Inertial BPG}\label{sec:cocain-main}

 In this section, we present an important contribution for efficiently using a momentum based BPG method. We focus on the recently introduced Convex-Concave Inertial (CoCaIn) BPG  \cite{MOPS2019}, which uses Nesterov-type extrapolation in BPG for non-smooth non-convex optimization problems. It is given in Algorithm~\ref{alg:cocain}. Besides inertia, the key feature of CoCaIn BPG is the usage of different constants for the upper bound $\uL h -g$ and lower bound $\lL h+g$. Since the amount of extrapolation is closely tied to the lower bound, tight approximations are desirable. \ifpaper\else\medskip\fi

\ifpaper
\begin{figure}[ht]
\centering 
		\fcolorbox{black}{white}{\parbox{0.98\columnwidth}{{}
		\vspace{-0.5em}
		\begin{algorithm}[Convex-Concave Inertial (CoCaIn) BPG \cite{MOPS2019}]\label{alg:cocain}\leavevmode\newline
			{\bf Input.} $\delta, \varepsilon > 0$ with $1>\delta > \varepsilon$. \\
			{\bf Initialization.} $x^{0} = x^{1} \in \idom h \cap \dom f$, ${\bar L}_{0} > 0$ and $\tau_{0} \leq {\bar L}_{0}^{-1}$. \\
			{\bf General Step.} For $k = 1 , 2 , \ldots$, compute $y^{k}  = x^{k} + \gamma_{k}\left(x^{k} - x^{k - 1}\right) \in \idom h,$
				where $\gamma_{k}$ satisfies
				\begin{equation} \label{CoCaIn:3}
					\frac{\left(\delta - \varepsilon\right)}{\left(1 + {\underline L_{k}}\tau_{k-1}\right)}D_{h}\left(x^{k - 1} , x^{k}\right) \geq D_{h}\left(x^{k} , y^{k}\right)
				\end{equation}
				 where $\lL_{k}$ satisfies ($D_g$ formally defined as in \eqref{eq:bregman-distance})
				 \[
				 D_g(x^k,y^k) \geq - \lL_{k}D_{h}\left(x^{k} , y^{k}\right)\,.
				 \]

				 Choose $\uL_{k} \geq \uL_{k - 1}$, set $\tau_{k} \leq \min \left\{ \tau_{k - 1} , \uL_{k}^{-1} \right\}$ and compute $x^{k+1} \in T_{\tau_k}(y^k)$ with $\uL_{k}$ fulfilling 
				 \[
				 D_g(x^{k+1},y^{k}) \leq  \uL_{k}D_{h}\left(x^{k + 1} , y^{k}\right).
				 \]
		\end{algorithm}
		\vspace{-0.5em}
				}}
\end{figure}
\else
\begin{figure}[t]
		\centering 
		\fcolorbox{black}{white}{\parbox{0.98\textwidth}{{}
		\begin{algorithm}[Convex-Concave Inertial (CoCaIn) BPG \cite{MOPS2019}]\label{alg:cocain}\leavevmode\newline
			{\bf Input.} $\delta, \varepsilon > 0$ with $1>\delta > \varepsilon$. \\
			{\bf Initialization.} $x^{0} = x^{1} \in \idom h \cap \dom f$, ${\bar L}_{0} > 0$ and $\tau_{0} \leq {\bar L}_{0}^{-1}$. \\
			{\bf General Step.} For $k = 1 , 2 , \ldots$, compute
				\begin{align*}
					y^{k} & = x^{k} + \gamma_{k}\left(x^{k} - x^{k - 1}\right) \in \idom h, \label{CoCaIn:1}
				\end{align*}
				where $\gamma_{k}$ is chosen such that
				\begin{equation} \label{CoCaIn:3}
					\left(\delta - \varepsilon\right)D_{h}\left(x^{k - 1} , x^{k}\right) \geq \left(1 + {\underline L_{k}}\tau_{k-1}\right)D_{h}\left(x^{k} , y^{k}\right)
				\end{equation}
				holds and such that $\lL_{k}$ satisfies
				\begin{equation*} \label{CoCaIn:4}
					g\left(x^{k}\right) \geq g\left(y^{k}\right) + \act{\nabla g\left(y^{k}\right) , x^{k} - y^{k}} - \lL_{k}D_{h}\left(x^{k} , y^{k}\right).
				\end{equation*}
				Now, choose $\uL_{k} \geq \uL_{k - 1}$, set $\tau_{k} \leq \min \left\{ \tau_{k - 1} , \uL_{k}^{-1} \right\}$ and compute
				\begin{equation*} \label{CoCaIn:2}
					x^{k + 1} \in  \argmin_{u} \left\{ f\left(u\right) + \act{\nabla g\left(y^{k}\right) , u - y^{k}} + \frac{1}{\tau_{k}}D_{h}\left(u , y^{k}\right) \right\} 										\end{equation*}
				with $\uL_{k}$ fulfilling
				\begin{equation*} \label{CoCaIn:5}
					g\left(x^{k + 1}\right) \leq g\left(y^{k}\right) + \act{\nabla g\left(y^{k}\right) , x^{k + 1} - y^{k}} + \uL_{k}D_{h}\left(x^{k + 1} , y^{k}\right) .
				\end{equation*}
				\end{algorithm}
				}}
	\end{figure}
\fi

Moreover, CoCaIn BPG provides the possibility to adapt the upper and lower bound locally via a backtracking line search strategy. The maximal extrapolation is restricted by the inequality in \eqref{CoCaIn:3}, which can be incorporated into the same backtracking loop. Note that CoCaIn BPG does not require nested loops to satisfy all conditions. The following convergence result analog to Theorem~\ref{thm:thm-main-bpg} holds.
	\begin{theorem}[Global Convergence of CoCaIn BPG]\label{thm:thm-main}  Let  Assumptions~\ref{A:AssumptionA0},\ref{A:AssumptionA} hold, let $g$ be $L$-smad with respect to $h$. Assume $\nabla g$ and $\nabla h$ to be Lipschitz continuous on any bounded subset in $\R^d$. Let  $\left\{x^{k}\right\}_{k \in \nn}$ be a bounded sequence generated by  CoCaIn BPG, and suppose $f,g$ satisfy the KL property, then, such a sequence has finite length, and converges to a critical point.
	\end{theorem}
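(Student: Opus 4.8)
The plan is to obtain this theorem as a direct specialization of the general global convergence result for CoCaIn BPG proved in \cite{MOPS2019}. That result guarantees finite length of the iterates and convergence to a critical point for any composite problem $(\PPP)$ whose data $(f,g,h)$ satisfy Assumptions~\ref{A:AssumptionA0} and \ref{A:AssumptionA}, for which $g$ is $L$-smad relative to $h$, whose gradients $\nabla g,\nabla h$ are Lipschitz on bounded subsets, whose generated sequence is bounded, and whose objective satisfies the KL property. Since Theorem~\ref{thm:thm-main} is stated under exactly these hypotheses, the whole argument reduces to verifying each item for the concrete $g$ in \eqref{eq:prob-2} and the kernels $h$ built in Section~\ref{sec:dll}, and then invoking the cited theorem verbatim.

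First I would confirm the structural hypotheses. The $L$-smad property is precisely the content of Proposition~\ref{prop:even-case} (even $N$) and Proposition~\ref{prop:odd-case} (odd $N$); adding the term $\rho H_4$ only enlarges the kernel and hence preserves $L$-smad. Assumption~\ref{A:AssumptionA0} holds because $g$ is a real polynomial in ${\bf W}$, hence $C^\infty$, and each $h$ is a finite, everywhere-$C^1$ convex function on $\mathcal W=\R^d$. The parts of Assumption~\ref{A:AssumptionA} are checked as follows: $f$ (taken as $0$, the squared $L2$-norm, or the $L1$-norm) is proper, lsc and convex, and the loss is bounded below by $0$; strong convexity of $h$ (part (iii)) is supplied by the $\rho H_4$ term with the parameter $\sigma$ recorded after Proposition~\ref{prop:odd-case}; and supercoercivity of $h+\lambda f$ (part (iv)) follows because the dominant contribution $c_1(N)H_1$ grows like $\norm{{\bf W}}_F^{2N}$ with $2N\geq 4$, so dividing by $\norm{{\bf W}}_F$ still diverges. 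For the KL hypothesis I would note that $g$ is polynomial and the admissible regularizers are semi-algebraic, so $\Psi=f+g$ is semi-algebraic and therefore KL by \cite{ABRS2010}. The Lipschitz-on-bounded-sets requirement on $\nabla g,\nabla h$ holds automatically, since $g$ and the kernels $H_1,H_2,H_4$ are polynomials while $H_3$ is smooth, making all gradients $C^\infty$ and thus locally Lipschitz. Boundedness of $\{x^k\}$ is assumed in the statement.

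If instead one reproduced the underlying machinery of \cite{MOPS2019} rather than quoting it, the proof would follow the three-step Attouch--Bolte--Svaiter template adapted to inertia: one works with a Lyapunov function of the form $\Psi(x^k)+\tfrac{c}{2}D_h(x^{k-1},x^k)$, establishes a sufficient-decrease estimate in which the extrapolation condition \eqref{CoCaIn:3} absorbs the momentum term, derives a relative-error (subgradient) bound of order $\norm{x^k-x^{k-1}}$ using the optimality condition of $T_{\tau_k}$ together with the local Lipschitz constants of $\nabla g,\nabla h$, and then combines these with the KL inequality to get summability of $\{\norm{x^{k+1}-x^k}\}$, hence finite length and convergence, with every limit point critical by closedness of the subdifferential.

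The hard part will be making rigorous that the line-search quantities $\{\uL_k\}$ and $\{\lL_k\}$ generated by CoCaIn BPG stay finite and uniformly bounded along the sequence, since the sufficient-decrease and relative-error estimates depend on these local $L$-smad constants. This is the one place where the specific structure must be used rather than merely cited: because $\{x^k\}$, and therefore $\{y^k\}$, remains in a bounded set on which $\nabla g$ and $\nabla h$ are Lipschitz, the backtracking conditions on $\uL_k,\lL_k$ terminate with constants bounded by the corresponding local Lipschitz moduli, so $\tau_k$ is bounded away from $0$ and the abstract descent-plus-KL argument applies. I would make this boundedness of the line-search constants explicit before appealing to \cite{MOPS2019}, since the remaining verifications are routine consequences of $g$ being polynomial and $h$ being smooth and strongly convex.
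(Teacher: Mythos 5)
Your proposal is correct and matches the paper's approach: the paper gives no separate proof of this theorem, but states it as a condensed transfer of the general global convergence result for CoCaIn BPG from \cite{MOPS2019}, whose hypotheses are exactly those listed in the statement (the paper's own contribution, carried out elsewhere, is verifying $L$-smad, strong convexity, and semi-algebraicity for the DLNN instance, just as you outline). Your additional remarks on the KL machinery and the boundedness of the backtracking constants $\uL_k,\lL_k$ are part of the cited proof in \cite{MOPS2019} and need not be reproduced here.
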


CoCaIn BPG uses an extrapolation strategy where in each iteration we need to solve \eqref{CoCaIn:3}, for a certain constant $\kappa >0$, the following condition has to be satisfied
\begin{equation}\label{eq:inertial-step}
D_{h}\left(x^{k} , y^{k}\right) \leq \kappa D_{h}\left(x^{k - 1} , x^{k}\right)\,,
\end{equation}
which involves finding $\gamma_k \in [0,1]$, where $y^k = x^k + \gamma_k (x^k - x^{k-1})$. 
For large scale applications, including deep learning, checking the condition in a backtracking loop may be expensive. Hence, we contribute to an efficient implementation of the CoCaIn BPG extrapolation step by providing closed form solution for the extrapolation parameter. For Euclidean distances, we  obtain that $0< \gamma_k \leq \sqrt{\kappa}$ satisfies \eqref{CoCaIn:3}. Such a closed form interval is non-trivial to obtain in general. But,  the structure of the proposed Bregman distances allows also for closed form inertial parameter.
\begin{proposition}\label{lem:even-extrapolation}
Denote $x^k = ({\bf W_1^k},\ldots,{\bf W_N^k})$. For $\kappa>0$, $y^k:=x^k+\gamma_k(x^k - x^{k-1})$ and $x^k \neq x^{k-1}$,  the parameter $\gamma_k$ given by
\[
	0<\gamma_k \leq \sqrt{\frac{\kappa D_{h}\left(x^{k - 1} , x^{k}\right)}{ \chi(N)}} \leq 1
\]
satisfies condition \eqref{eq:inertial-step}, where for $N=2$, we set $\chi(N) = c_1(N) \mathcal{B}_k + c_2(N)\mathcal{C}_k$, for even $N>2$, we set 
\[
	\chi(N)=\left(c_1(N) \mathcal{B}_k + c_2(N)\mathcal{C}_k + \rho \norm{\Delta_k}^2\right)\,,
\] and for odd $N>2$, we set 
\[
	\chi(N) = \left(c_1(N) \mathcal{B}_k + c_3(N)\mathcal{D}_k + \rho \norm{\Delta_k}^2\right)\,,
\]
with $\Delta_k := {x^k - x^{k-1}}$, $\Omega_k := 2\norm{x^k}^2 + 2\norm{\Delta_k}^2 $ and $
	\mathcal{B}_k := \left(\frac{(2N-1)}{N^{N-1}}\right) \norm{\Delta_k}^2 (\Omega_k)^{(N-1)}\,.
$
For even $N$ we denote 
$
 \mathcal{C}_k := \left(\frac{N-1}{N^{\frac{N}{2} -1}}\right)\norm{\Delta_k}^2(\Omega_k)^{\frac{N-2}{2}}\,.
$
For odd $N$ we denote
$
\mathcal{D}_k :=  \frac{N}{(N+1)^{\frac{N-1}{2}}} \norm{\Delta_k}^2 \left(\Omega_k + 1 \right)^{\frac{N-1}{2}}\,.
$
\end{proposition}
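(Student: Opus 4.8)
The plan is to collapse the inertial condition \eqref{eq:inertial-step} into a single scalar estimate of the shape $D_h(x^k,y^k)\le\gamma_k^2\,\chi(N)$, after which the claimed bound on $\gamma_k$ is immediate. Since the kernel $h$ is a nonnegative linear combination of $H_1,H_2$ (case $N=2$ and even $N>2$), of $H_1,H_3$ (odd $N>2$), together with $\rho H_4$ when $N>2$, and since the Bregman distance is linear in its generating kernel, I would first split
\[
D_h(x^k,y^k)=c_1(N)\,D_{H_1}(x^k,y^k)+c_2(N)\,D_{H_2}(x^k,y^k)+\rho\,D_{H_4}(x^k,y^k)
\]
in the even case (and the analogous split with $H_3$ replacing $H_2$ in the odd case), and bound each summand separately. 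Throughout I use $x^k-y^k=-\gamma_k\Delta_k$, so $\norm{x^k-y^k}^2=\gamma_k^2\norm{\Delta_k}^2$.

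For each kernel I would invoke the exact second-order (integral remainder) form of the Bregman distance, $D_{H_j}(x^k,y^k)=\int_0^1(1-t)\,\act{\nabla^2 H_j(z_t)(x^k-y^k),\,x^k-y^k}\,dt$ with $z_t=(1-t)y^k+t\,x^k$. Writing each kernel radially as $H_j=\phi_j(\norm{\cdot}^2)$ and treating the weights as a vector in the Euclidean product space, the Hessian reads $2\phi_j'(\norm{z}^2)\,I+4\phi_j''(\norm{z}^2)\,zz^{\top}$ with $\phi_j''\ge0$; hence its largest eigenvalue is attained in the radial direction and equals $2\phi_j'+4\phi_j''\norm{z}^2$. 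A short computation then gives the radial operator-norm bounds $\tfrac{2(2N-1)}{N^{N-1}}\norm{z}^{2(N-1)}$ for $H_1$, $\tfrac{N-1}{N^{N/2-1}}\norm{z}^{N-2}$ for $H_2$, $\tfrac{N}{(N+1)^{(N-1)/2}}(\norm{z}^2+1)^{(N-1)/2}$ for $H_3$ (using $\norm{z}^2=(\norm{z}^2+1)-1\le\norm{z}^2+1$), and $2/N$ for $H_4$.

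The key geometric estimate is the uniform bound $\norm{z_t}^2\le\Omega_k$ along the segment: convexity of the norm gives $\norm{z_t}\le\max\{\norm{x^k},\norm{y^k}\}$, and since $y^k=x^k+\gamma_k\Delta_k$ with $\gamma_k\le1$ we get $\norm{y^k}\le\norm{x^k}+\norm{\Delta_k}$, whence $\norm{z_t}^2\le(\norm{x^k}+\norm{\Delta_k})^2\le2\norm{x^k}^2+2\norm{\Delta_k}^2=\Omega_k$. Substituting the radial Hessian bounds evaluated at $\Omega_k$, pulling out $\gamma_k^2\norm{\Delta_k}^2$, and using $\int_0^1(1-t)\,dt=\tfrac12$ yields $D_{H_1}(x^k,y^k)\le\gamma_k^2\mathcal{B}_k$, $D_{H_2}(x^k,y^k)\le\tfrac12\gamma_k^2\mathcal{C}_k$, $D_{H_3}(x^k,y^k)\le\tfrac12\gamma_k^2\mathcal{D}_k$, and $D_{H_4}(x^k,y^k)=\tfrac1N\gamma_k^2\norm{\Delta_k}^2$. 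Absorbing the harmless factors $\tfrac12\le1$ and $\tfrac1N\le1$ and reassembling with $c_1,c_2,c_3,\rho$ gives exactly $D_h(x^k,y^k)\le\gamma_k^2\chi(N)$ in each of the three cases. Consequently $\gamma_k\le\sqrt{\kappa\,D_h(x^{k-1},x^k)/\chi(N)}$ forces $\gamma_k^2\chi(N)\le\kappa\,D_h(x^{k-1},x^k)$, which is precisely \eqref{eq:inertial-step}.

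To justify that the displayed upper bound is indeed $\le1$, I would run the same Hessian/segment estimate on the pair $(x^{k-1},x^k)$; since $\norm{x^{k-1}}^2\le\Omega_k$ as well, this yields $D_h(x^{k-1},x^k)\le\chi(N)$, and combined with $\kappa\le1$ (which holds for the CoCaIn constant $\kappa=(\delta-\varepsilon)/(1+\lL_k\tau_{k-1})$ since $\delta<1$) we obtain $\kappa\,D_h(x^{k-1},x^k)\le\chi(N)$, i.e. the square root is at most $1$; this also retroactively legitimizes the use of $\gamma_k\le1$ in the segment bound. The main obstacle is the uniform control of the Hessian operator norm along $[y^k,x^k]$ — identifying the radial direction as the maximal eigendirection and, in the odd case, converting the $\norm{z}^2+1$ growth of $H_3$ into the clean factor $(\Omega_k+1)^{(N-1)/2}$ via $\norm{z}^2\le\norm{z}^2+1$. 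The remaining bookkeeping with the constants is routine.
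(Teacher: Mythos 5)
Your proof is correct and follows essentially the same route as the paper: split $D_h$ by linearity of the Bregman distance in the kernel, write each $D_{H_j}(x^k,y^k)$ via the second-order integral remainder (the paper's Lemma~\ref{lem:basic-ftc} is just a double-integral version of your formula), bound the Hessian quadratic form of each radial kernel, bound $\norm{z_t}^2\le\Omega_k$ along the segment using $\gamma_k\le 1$, and reassemble. Your eigenvalue computation $2\phi_j'+4\phi_j''\norm{z}^2$ reproduces exactly the constants that the paper derives in Lemmas~\ref{lem:basic-h1}, \ref{lem:basic-h2} and \ref{lem:basic-h3} by multinomial expansion and Cauchy--Schwarz, and your per-kernel estimates match Lemmas~\ref{lem:h1-inert}, \ref{lem:h2-inert} and \ref{lem:h3-inert}, so this is the same argument with a cleaner derivation of the Hessian bounds.

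The only point to flag is your supplementary claim that the displayed bound is automatically $\le 1$. Your observation that $D_h(x^{k-1},x^k)\le\chi(N)$ (by running the same segment estimate on the pair $(x^{k-1},x^k)$) is correct and is not in the paper, but the step ``$\kappa\le 1$'' is not justified in general: the proposition is stated for an arbitrary $\kappa>0$, and even for the CoCaIn constant $\kappa=(\delta-\varepsilon)/(1+\lL_k\tau_{k-1})$ one needs $1+\lL_k\tau_{k-1}\ge\delta-\varepsilon$, which is clear when $\lL_k\ge 0$ but not when $\lL_k<0$, a case CoCaIn permits. This does not affect your main implication: the ``$\le 1$'' in the statement is part of the hypothesis on $\gamma_k$ (this is also how the paper reads it, and it is precisely what licenses the use of $\gamma_k\le 1$ in the segment bound without circularity); the paper makes no attempt to prove it as a conclusion.
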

The proof of Proposition~\ref{lem:even-extrapolation} is given in Section~\ref{proof:even-extrapolation}. For $N=2$ (Matrix Factorization) we provide novel tighter bounds in Section~\ref{lem:mat-fac-closed-form-inertia} in the appendix.

\section{Discussion of BPG Variants} \label{sec:discuss-settings-for-algs}

The proposed Bregman distances for DLNN allow for variants that can be adapted to specialized settings, for example, stochastic extensions.  In the following, we comprehensively discuss the applicability and performance of \emph{BPG based algorithms for DLNN} compared to several existing optimization schemes.\ifpaper\else\medskip\fi

\textbf{The base algorithm BPG.} The key advantage of BPG for DLNN compared to its Euclidean variant, the Proximal Gradient (PG) method, is the guaranteed convergence when a constant step size rule is used. This fact is enabled by validity of (global) relative smoothness (Proposition~\ref{prop:even-case} and~\ref{prop:odd-case}). On the contrary, PG, which requires a classical $L$-smoothness can only be used by the following trick. Under a coercivity assumption, all iterates generated by PG lie in a compact set, on which a global Lipschitz constant for the objective's gradient can be found. However, the compact set is usually unknown (and cannot be determined before running the algorithm), which makes the practical computation of such a global Lipschitz constant difficult. A good heuristic guess may result in PG being more efficient than BPG. Therefore, BPG and CoCaIn BPG (with $\uL=\lL$) render promising alternatives to PG when line search must be avoided due to a prohibitively expensive function evaluation. \ifpaper\else\medskip\fi

\textbf{BPG with Backtracking.} If backtracking line search variants are affordable for solving the given optimization problem, then BPG, CoCaIn BPG and their Euclidean variants PG and iPiano provide the same convergence guarantees. Intuitively, from a global perspective, the adapted upper and lower bounds given by the Bregman distance for BPG should tighter to the objective function than quadratic functions of $L$-smoothness. But, this situation can change when backtracking line search is used and only locally tight approximations are sought. We cannot claim that any of the two strategies has a clear and consistent advantage. The performance can depend significantly on the starting point and the initialization of the line search parameters and needs problem dependent exploration.\ifpaper\else\medskip\fi

\textbf{BPG vs PALM.} Proximal Alternating Linearized Minimization (PALM) \cite{BST2014} has a clear bias towards the first block of coordinates, if the update direction points into a narrow valley. This effect may be compensated by its inertial variant iPALM. For DLNN with identical regularizers, this effect cannot be observed due to the symmetry of the objective function with respect to the blocks of coordinates, resulting in an oftentimes favorable performance. We leave the exploration of alternating variants of BPG as future work. Some of the related works include \cite{LZTW2019,HGP2019}.\ifpaper\else\medskip\fi

\textbf{Alternating vs non-alternating strategies.} We would like to stress two important advantages of non-alternating schemes such as BPG over alternating minimization strategies. Firstly, BPG allows for block-wise parallelization, and, secondly, there are interesting settings for which alternating minimization is not applicable. The obvious example is symmetric Matrix Factorization, for which BPG is studied in \cite{DBA2019}. In the context of DLNN ($N>2$ in \eqref{eq:prob-2}) requiring $W_1=W_2=\ldots=W_N$ (upto a transpose) can be considered as a prototype for an unrolled recurrent neural network architecture, where weights are shared across layers. Here, there is no natural way to apply alternating minimization schemes and the objective is not classically $L$-smooth.\ifpaper\else\medskip\fi

\textbf{Stochastic setting extensions.} A stochastic version of BPG was developed recently in \cite{DDM2018}. The proposed Bregman distances are also valid here and can be applied for training DLNN. Furthermore, several popular stochastic variants such as Adam \cite{KB2014}, Adagrad \cite{DHS2011}, SC-Adagrad \cite{MH2017} can potentially be extended with Bregman proximal framework.

\section{Experiments}\label{experiment1}

\begin{figure*}[hbt!]
  \centering
  \begin{tabular}[b]{c}
    \includegraphics[width=.3\textwidth]{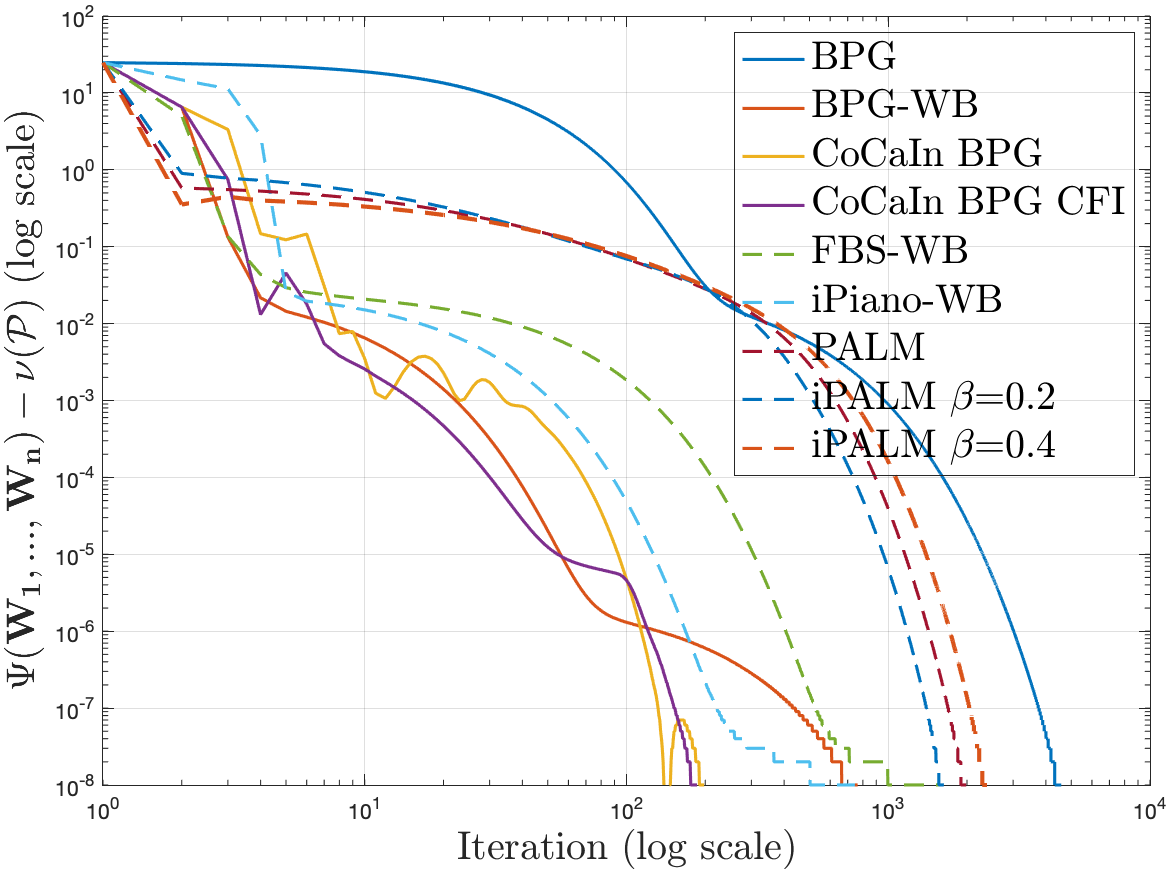} \\
    \small (a) L2-Regularization ($N=3$)
  \end{tabular}
  \begin{tabular}[b]{c}
    \includegraphics[width=.3\textwidth]{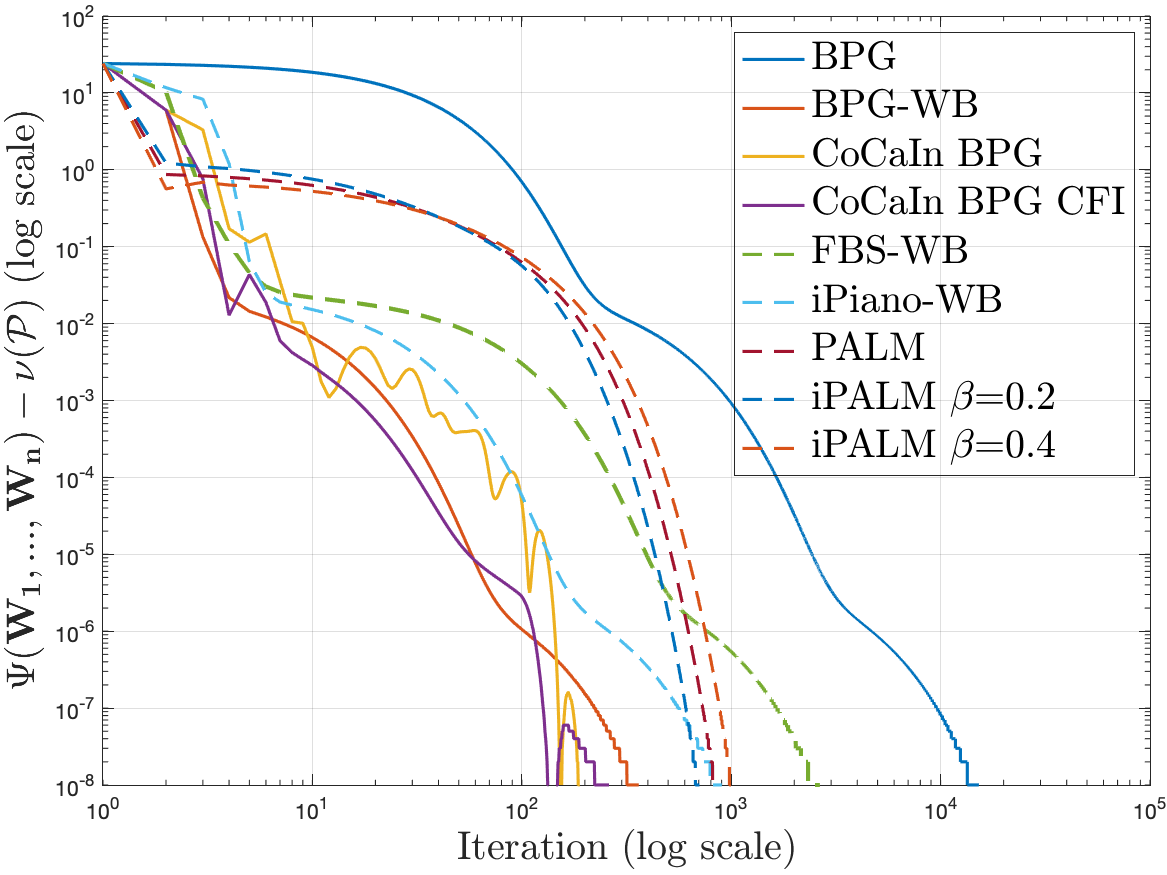} \\
    \small (b) L1-Regularization ($N=3$)
  \end{tabular}
    \begin{tabular}[b]{c}
    \includegraphics[width=.3\textwidth]{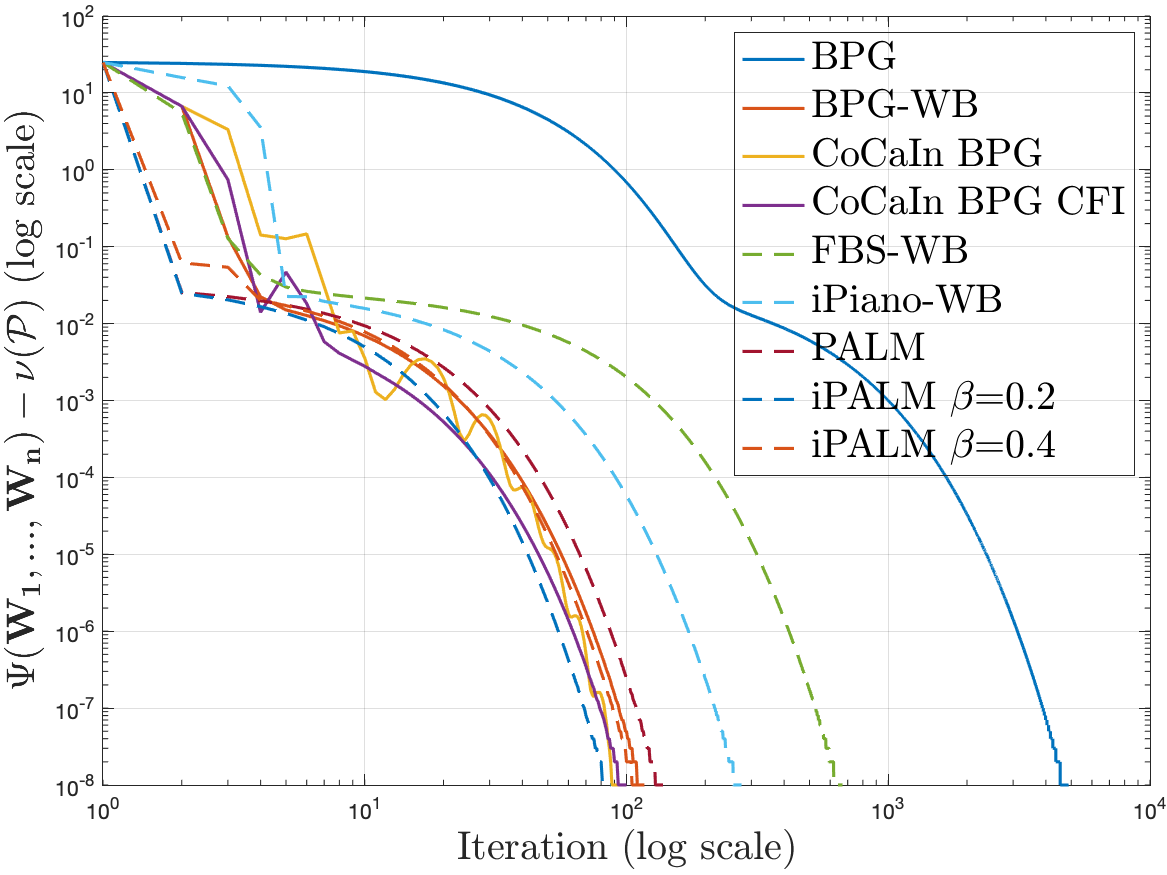} \\
    \small (c) No Regularization ($N=3$)
  \end{tabular} 
  \begin{tabular}[b]{c}
    \includegraphics[width=.3\textwidth]{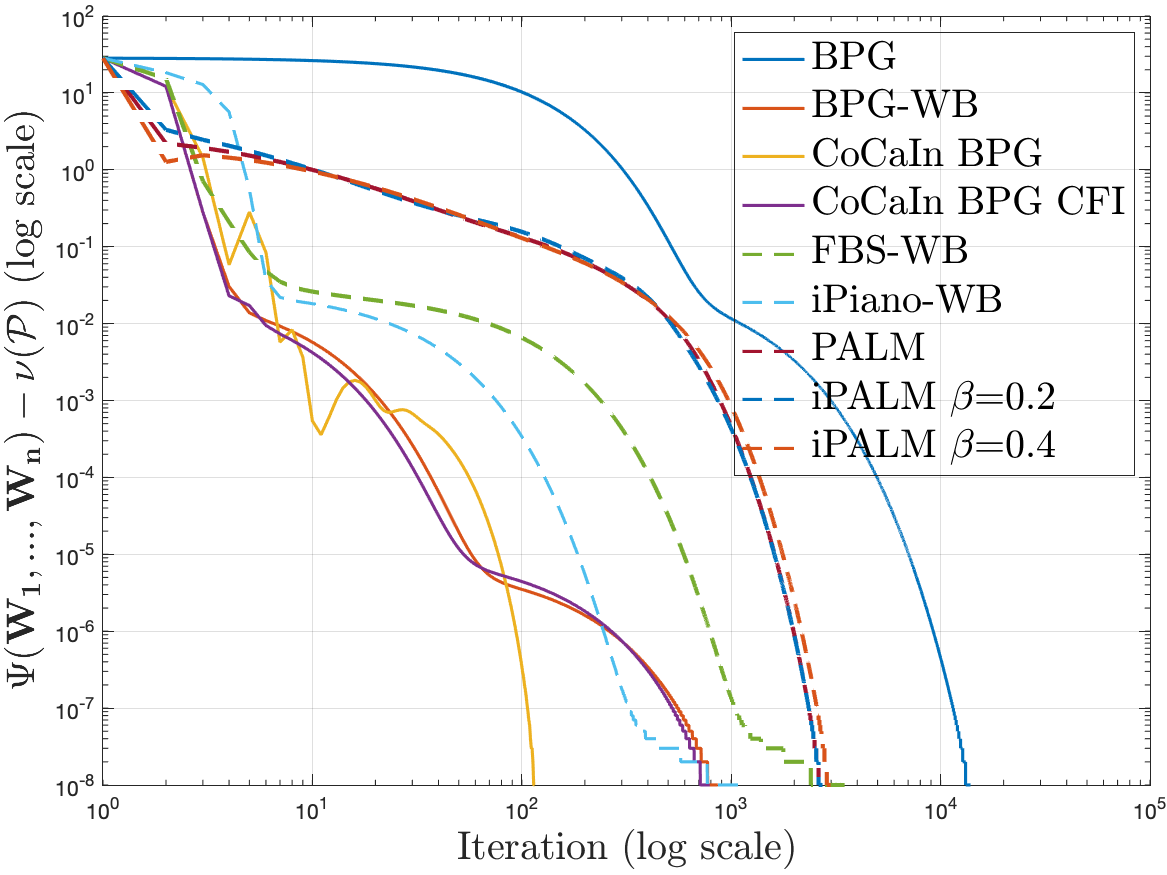} \\
    \small (d) L2-Regularization ($N=4$)
  \end{tabular}
   \begin{tabular}[b]{c}
   \includegraphics[width=.3\textwidth]{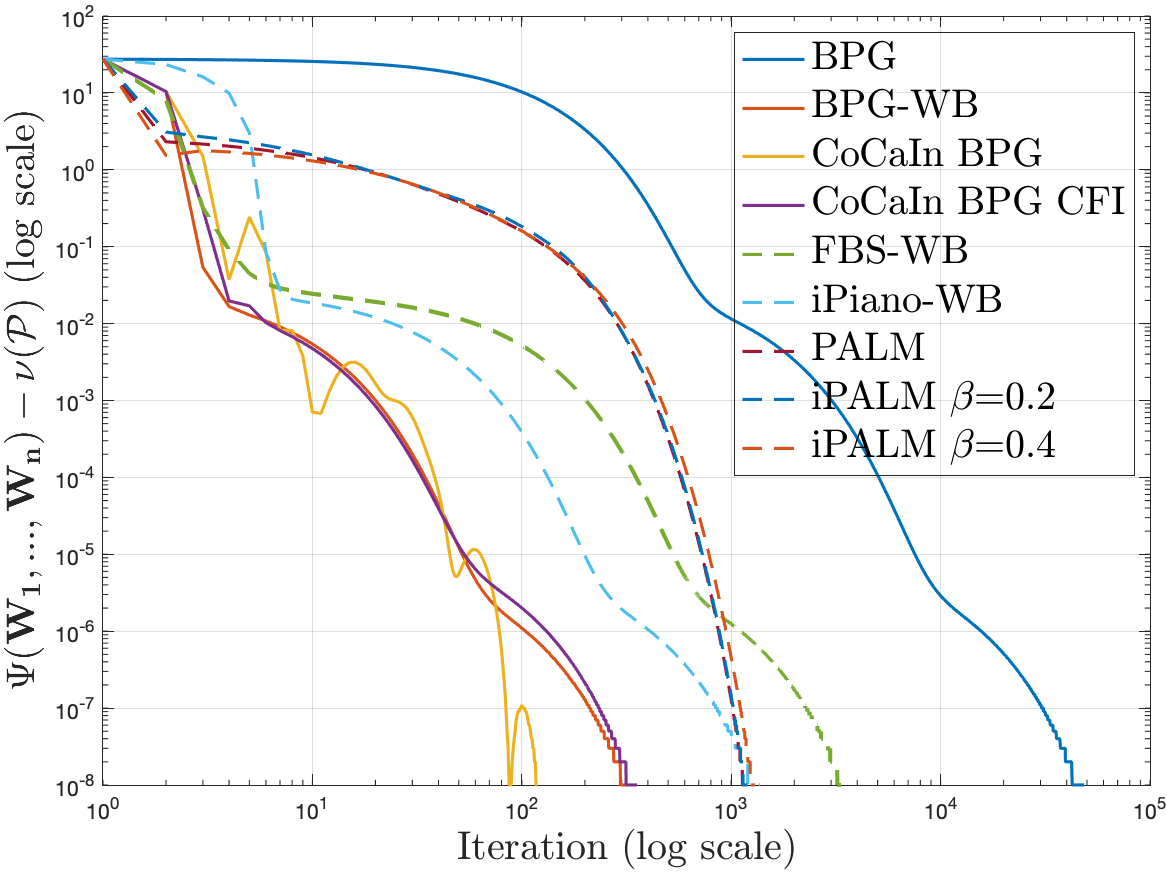} \\
    \small (e) L1-Regularization ($N=4$)
  \end{tabular}
    \begin{tabular}[b]{c}
    \includegraphics[width=.3\textwidth]{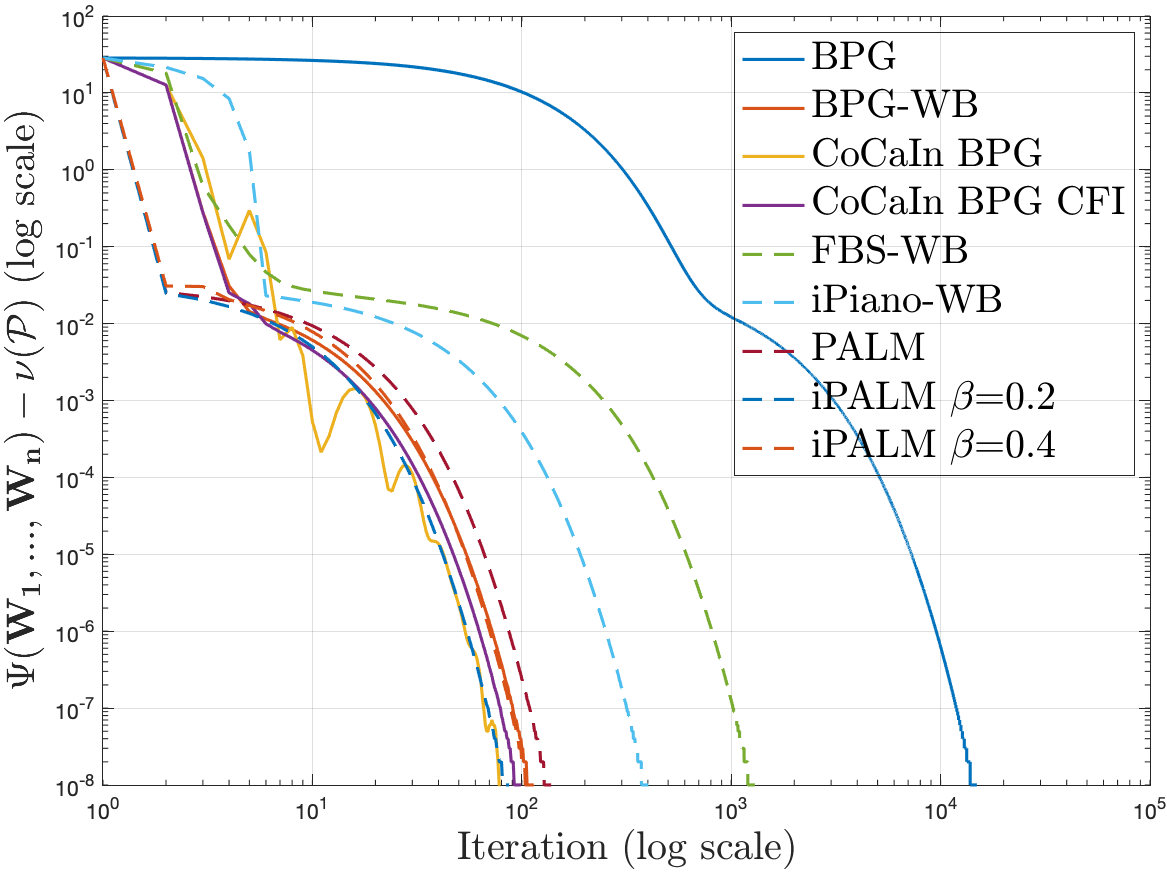} \\
    \small (f) No Regularization ($N=4$)
  \end{tabular} 
  \begin{tabular}[b]{c}
    \includegraphics[width=.3\textwidth]{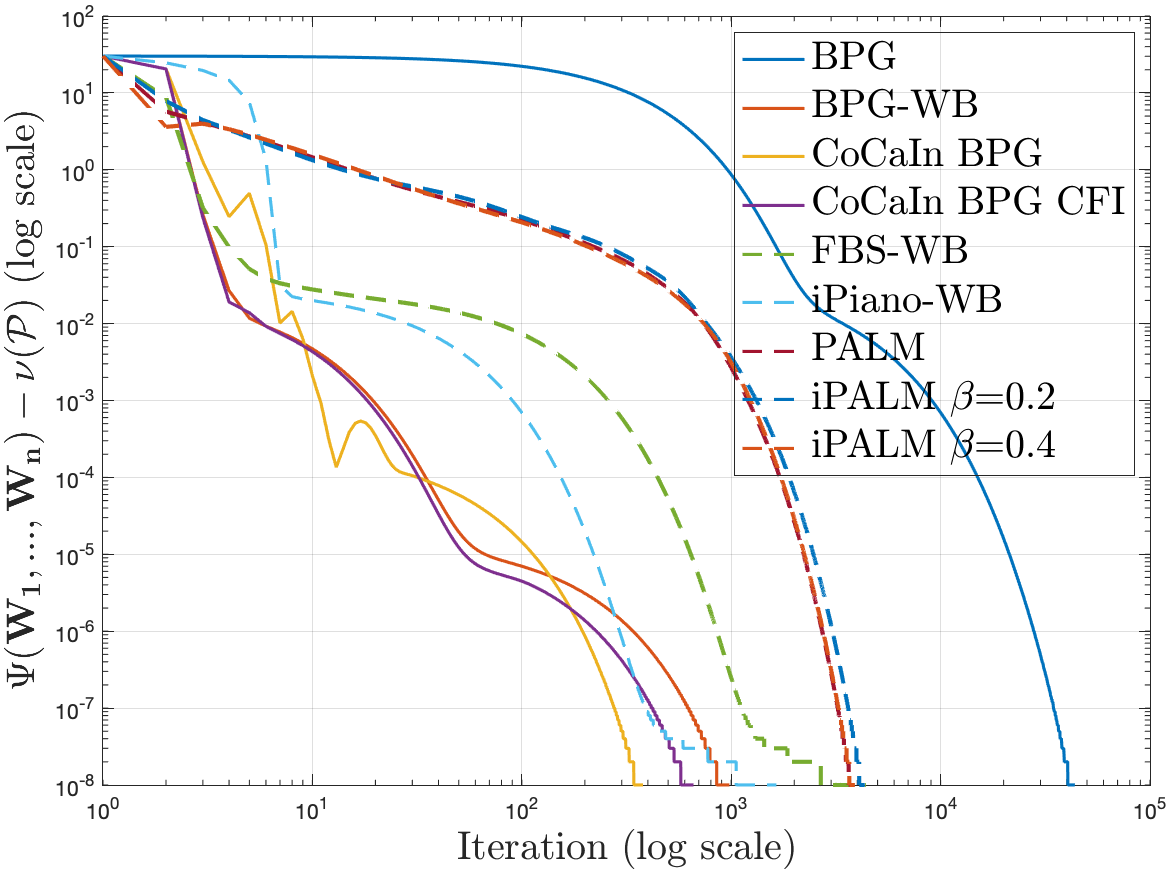} \\
    \small (g) L2-Regularization ($N=5$)
  \end{tabular}
   \begin{tabular}[b]{c}
   \includegraphics[width=.3\textwidth]{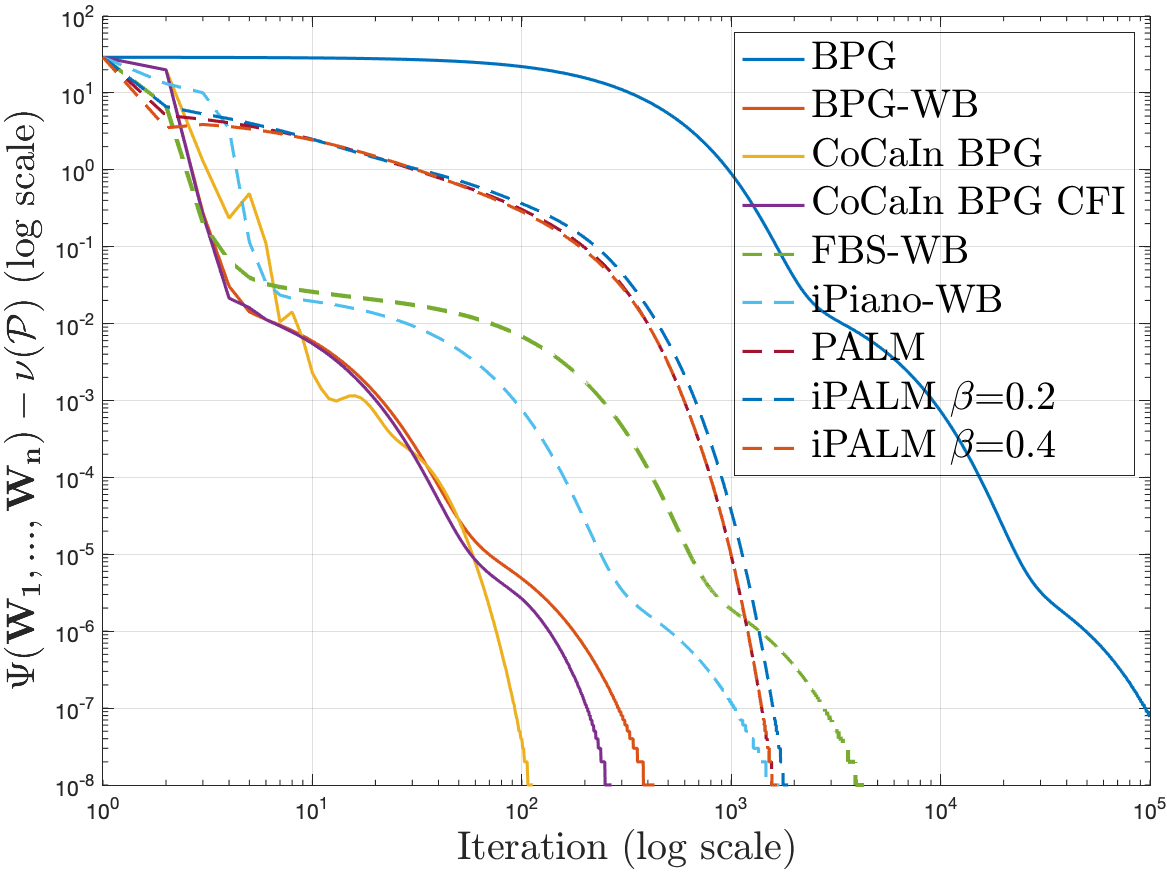} \\
    \small (h) L1-Regularization ($N=5$)
  \end{tabular}
      \begin{tabular}[b]{c}
    \includegraphics[width=.3\textwidth]{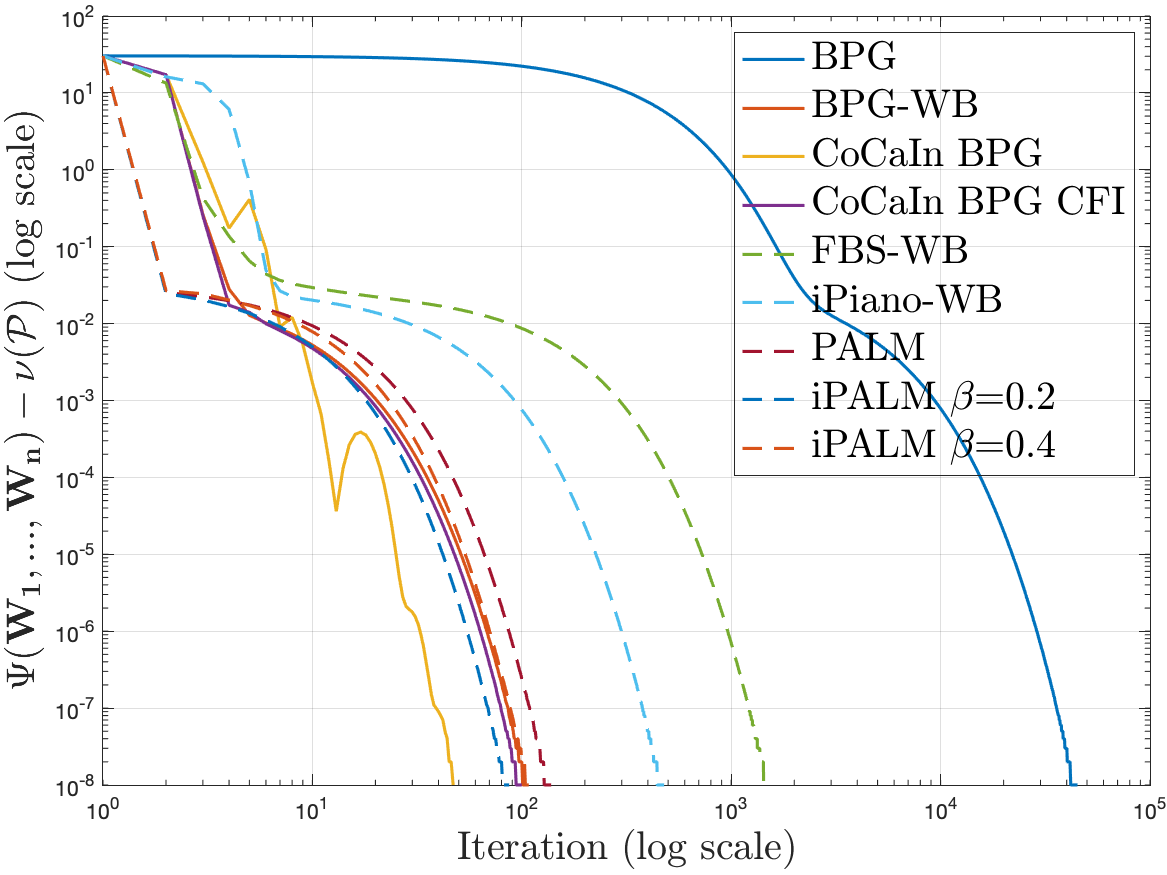} \\
    \small (i) No Regularization ($N=5$)
  \end{tabular} 
  \caption{Convergence plots illustrate the competitive performance of CoCaIn BPG variants for DLNN.}
  \label{fig:exp1_rel_obj}
\end{figure*}

We provide experiments for Deep Linear Neural Networks with squared L2-regularizer and L1-regularizers and a non-regularized setting \eqref{eq:prob-2}.\ifpaper\else\medskip\fi

\textbf{Algorithms.} In the experiments, we compare BPG (Algorithm~\ref{alg:bpg}) and CoCaIn BPG (Algorithm~\ref{alg:cocain}) with many existing optimization methods. We consider alternating strategies such as PALM \cite{BST2014} and iPALM \cite{PS2016}. As non-alternating algorithms, we use forward backward splitting  with backtracking (FBS-WB) and iPiano with backtracking (iPiano-WB) \cite{OCBP2014}.  Apart from BPG and backtracking based CoCaIn BPG, we also inspect CoCaIn BPG with closed form inertia denoted as CoCaIn BPG CFI (see Proposition~\ref{lem:even-extrapolation}) and the backtracking scheme BPG-WB, which is the same version as CoCaIn BPG, but with $\gamma_k \equiv 0$.\ifpaper\else\medskip\fi

\textbf{Experiment.} We set ${\bf W_i} \in \mathbb{R}^{5 \times 5}, ~ \forall i=1,...,N$ where all weights are initialized with $0.1$. Our dataset contains 50 data points with the input ${\bf X} \in \mathbb{R}^{5 \times 50}$ and the output ${\bf Y} \in \mathbb{R}^{5 \times 50}$ being randomly generated in the interval $[0,1]$. In this experiment, we work with a network consisting of three, four and five layers ($N=3,4,5$). The convergence plots are given in Figure \ref{fig:exp1_rel_obj}, where  the $y$-axis measures difference between the absolute objective and the least objective value attained by any of the methods. \ifpaper\else\medskip\fi

\textbf{Analysis.} The performance of CoCaIn BPG, CoCaIn BPG CFI and BPG-WB is mostly better than other methods. The next competitive algorithms include FBS-WB and iPiano-WB, followed by PALM and iPALM. The performance of the alternating algorithms strongly depends on the usage of a regularizer, whereas BPG-WB is competitive in both settings. At first glance, it might appear that the performance of BPG is weaker compared to CoCaIn BPG, BPG-WB, FBS-WB, iPiano-WB and other methods. However, note that line search techniques may not be always desirable in practical scenarios, because line search requires multiple objective evaluations, which can involve computationally expensive matrix multiplications (see Section~\ref{sec:discuss-settings-for-algs}). Moreover, PALM and iPALM require block-wise Lipschitz constant computations in each iteration, which can also be very expensive. \ifpaper\else\medskip\fi

In the appendix, we further illustrate the competitiveness of our methods with time plots, the statistical evaluation and results for an additional dataset. 
\section*{Conclusion and Extensions} 
	We proposed new Bregman distances suitable for deep linear neural networks. This result makes BPG and its inertial variant CoCaIn BPG applicable and enables the transfer of their convergence results to such problems.  Moreover, we develop update formulas, which are crucial for efficient large scale optimization. In general, the validity of inertial (or momentum) parameter requires to be checked via backtracking line search. To avoid expensive backtracking operation, we derive a analytic expression.  These contributions serve as a first step towards the optimization of deep (non-linear) neural networks by a new class of Bregman Proximal algorithms.

\ifpaper
{ 
\bibliographystyle{apalike}
\bibliography{notes}
}
\newpage
\else
	\section*{Acknowledgments}
	Mahesh Chandra Mukkamala and Peter Ochs acknowledge the financial support from German Research Foundation (DFG Grant OC 150/1-1).
\fi
\appendix
\ifpaper
\onecolumn
\else
\fi



\section*{Appendix} 
\section{Bregman distance and $L$-smad property}

\begin{proposition}\label{prop:main-prop}
Denote $g({\bf W_1},\ldots,{\bf W_N}) :=  \frac 12\norm{{\bf W_1W_2\ldots W_NX} - {\bf Y}}^2_{F}$ as in the setting of \eqref{eq:prob-2}. Then the gradient with respect to weights ${\bf W_i}$ is given by
\[
	\nabla_{{\bf W_i}} g({\bf W_1},\ldots,{\bf W_N}) = \left(\Pi_{j=1}^{i-1}{\bf W_j}\right)^T\left({\bf W_1W_2\ldots W_NX} - {\bf Y}\right)\left(\left(\Pi_{j=i+1}^N{\bf W_j}\right) {\bf X}\right)^T\,.
\]
We have for $N = 2$,
\begin{align*}
& \act{({\bf H}_{{\bf 1}},\ldots,{\bf H}_{{\bf N}}), \nabla^2 g({\bf W_1},\ldots,{\bf W_N}) ({\bf H}_{{\bf 1}},\ldots, {\bf H}_{{\bf N}})}\\
	& \leq 3\norm{\bf X}_F^2 \sum_{i=1}^N\norm{{\bf H_i}}_F^2\Pi_{j=1, j\neq i}^N \norm{{\bf W_j}}_F^2   + \norm{{\bf Y}}_F\norm{\bf X}_F \left( \norm{{\bf H}_1}_F^2 + \norm{{\bf H}_2}_F^2 \right)
\end{align*}
If $N>2$ and even, we have
\begin{align*}
	& \act{({\bf H}_{{\bf 1}},\ldots,{\bf H}_{{\bf N}}), \nabla^2 g({\bf W_1},\ldots,{\bf W_N}) ({\bf H}_{{\bf 1}},\ldots, {\bf H}_{{\bf N}})}\\
	& \leq (2N-1)\sum_{i=1}^N\norm{{\bf H_i}}_F^2\Pi_{j=1, j\neq i}^N \norm{{\bf W_j}}_F^2\norm{\bf X}_F^2   + \frac{\norm{{\bf Y}}_F\norm{\bf X}_F (N-1)}{ (N-2)^{\frac{N-2}{2}}} \left( \sum_{i=1}^{N}  \norm{{\bf H}_i}_F^2\right) \left(\sum_{k=1}^N\norm{\bf W_k}_F^2 \right)^{\frac{N-2}{2}}
\end{align*}
If $N>2$ and odd, we have
\begin{align*}
	& \act{({\bf H}_{{\bf 1}},\ldots,{\bf H}_{{\bf N}}), \nabla^2 g({\bf W_1},\ldots,{\bf W_N}) ({\bf H}_{{\bf 1}},\ldots, {\bf H}_{{\bf N}})}\\
	& \leq (2N-1)\sum_{i=1}^N\norm{{\bf H_i}}_F^2\Pi_{j=1, j\neq i}^N \norm{{\bf W_j}}_F^2\norm{\bf X}_F^2   + \frac{\norm{{\bf Y}}_F\norm{\bf X}_F(N-1)}{ (N-1)^{\frac{N-1}{2}}} \left(\sum_{i=1}^{N} \norm{{\bf H}_i}_F^2 \right)\left(\left(\sum_{k=1, k \notin \{i,j\}}^N\norm{\bf W_k}_F^2\right) +1 \right)^{\frac{N-1}{2}}
\end{align*}
\end{proposition}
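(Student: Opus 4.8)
The plan is to derive both the gradient and the Hessian bounds from the first and second directional derivatives of $g$ along a perturbation ${\bf H}=({\bf H_1},\ldots,{\bf H_N})$, using only submultiplicativity of the Frobenius norm ($\norm{AB}_F\le\norm{A}_F\norm{B}_F$), Cauchy--Schwarz, and AM--GM. First I would write $g=\tfrac12\norm{{\bf R}}_F^2$ with the residual ${\bf R}:={\bf W_1}\cdots{\bf W_N}{\bf X}-{\bf Y}$ and read off the gradient: differentiating $t\mapsto g({\bf W_1},\ldots,{\bf W_i}+t{\bf H_i},\ldots,{\bf W_N})$ at $t=0$ gives $\act{{\bf R},{\bf W_1}\cdots{\bf H_i}\cdots{\bf W_N}{\bf X}}$, and moving the surrounding factors across the trace inner product yields the stated formula for $\nabla_{{\bf W_i}}g$.

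For the Hessian quadratic form I would set $\phi(t):=g({\bf W}+t{\bf H})$ and ${\bf P}(t):=\Pi_k({\bf W_k}+t{\bf H_k}){\bf X}-{\bf Y}$, so that $\phi(t)=\tfrac12\norm{{\bf P}(t)}_F^2$ and $\phi''(0)=\norm{{\bf P}'(0)}_F^2+\act{{\bf P}(0),{\bf P}''(0)}$. Expanding the product gives ${\bf P}'(0)=\sum_i{\bf W_1}\cdots{\bf H_i}\cdots{\bf W_N}{\bf X}$ and ${\bf P}''(0)=2\sum_{i<j}{\bf W_1}\cdots{\bf H_i}\cdots{\bf H_j}\cdots{\bf W_N}{\bf X}$. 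Splitting ${\bf P}(0)={\bf W_1}\cdots{\bf W_N}{\bf X}-{\bf Y}$ decomposes $\phi''(0)$ into three groups: (a) $\norm{{\bf P}'(0)}_F^2$; (b) $\act{{\bf W_1}\cdots{\bf W_N}{\bf X},{\bf P}''(0)}$; and (c) $-\act{{\bf Y},{\bf P}''(0)}$. Groups (a) and (b) are homogeneous of degree $2N-2$ in ${\bf W}$ and produce the first stated term, while group (c) is of degree $N-2$, carries the $\norm{{\bf Y}}_F$ factor, and produces the second.

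For group (a) I would use $\norm{\sum_i a_i}_F^2\le N\sum_i\norm{a_i}_F^2$ together with $\norm{{\bf W_1}\cdots{\bf H_i}\cdots{\bf W_N}{\bf X}}_F\le\norm{{\bf X}}_F\norm{{\bf H_i}}_F\Pi_{j\ne i}\norm{{\bf W_j}}_F$, giving coefficient $N$ on $\sum_i\norm{{\bf H_i}}_F^2\Pi_{j\ne i}\norm{{\bf W_j}}_F^2\norm{{\bf X}}_F^2$. For group (b), Cauchy--Schwarz on the inner product followed by submultiplicativity leaves $\sum_{i<j}\norm{{\bf H_i}}_F\norm{{\bf H_j}}_F\norm{{\bf W_i}}_F\norm{{\bf W_j}}_F\Pi_{k\notin\{i,j\}}\norm{{\bf W_k}}_F^2$; applying $2\norm{{\bf H_i}}_F\norm{{\bf W_j}}_F\cdot\norm{{\bf H_j}}_F\norm{{\bf W_i}}_F\le\norm{{\bf H_i}}_F^2\norm{{\bf W_j}}_F^2+\norm{{\bf H_j}}_F^2\norm{{\bf W_i}}_F^2$ symmetrizes each summand into the same $\sum_i\norm{{\bf H_i}}_F^2\Pi_{j\ne i}\norm{{\bf W_j}}_F^2$ form, and counting that each index occurs in exactly $N-1$ pairs gives coefficient $N-1$. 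Thus (a)+(b) lands on exactly $2N-1$; I would sanity-check the bookkeeping on $N=2$, where only the pair $(1,2)$ survives and the coefficients collapse to $3$.

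For group (c), Cauchy--Schwarz and submultiplicativity give $2\norm{{\bf Y}}_F\norm{{\bf X}}_F\sum_{i<j}\norm{{\bf H_i}}_F\norm{{\bf H_j}}_F\Pi_{k\notin\{i,j\}}\norm{{\bf W_k}}_F$. I would then use $\sum_{i<j}\norm{{\bf H_i}}_F\norm{{\bf H_j}}_F\le\tfrac{N-1}{2}\sum_i\norm{{\bf H_i}}_F^2$ and bound the product of the $N-2$ surviving weight factors by AM--GM: in the even case $\Pi_{k\notin\{i,j\}}\norm{{\bf W_k}}_F\le(N-2)^{-(N-2)/2}\big(\sum_k\norm{{\bf W_k}}_F^2\big)^{(N-2)/2}$, which reproduces the coefficient together with its $(N-2)^{(N-2)/2}$ denominator. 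The main obstacle, and the reason the even and odd statements differ, is the parity of the number $N-2$ of surviving factors: when $N$ is odd, $N-2$ is odd and the clean step above is unavailable, so I would pad the family with a dummy $1$ and apply AM--GM to the $N-1$ quantities $\{\norm{{\bf W_k}}_F^2:k\notin\{i,j\}\}\cup\{1\}$, obtaining $\Pi_{k\notin\{i,j\}}\norm{{\bf W_k}}_F\le(N-1)^{-(N-1)/2}\big(\sum_{k\notin\{i,j\}}\norm{{\bf W_k}}_F^2+1\big)^{(N-1)/2}$; this is exactly the origin of the $+1$ and the exponent $(N-1)/2$ in the odd statement (and downstream of the constant term in the kernel $H_3$). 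Beyond this parity point, the only delicate work is tracking constants through the symmetrizations so that the pieces land precisely on $2N-1$ and on the stated $\norm{{\bf Y}}_F\norm{{\bf X}}_F$-coefficients.
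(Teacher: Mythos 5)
Your proposal is correct and follows essentially the same route as the paper's proof: the paper likewise expands the product to second order, splits the quadratic form into the three groups you call (a), (b), (c), bounds (a) with Cauchy--Schwarz and submultiplicativity (coefficient $N$), symmetrizes (b) to get coefficient $N-1$ so that (a)+(b) yields $2N-1$, and handles (c) with the identical parity-dependent AM--GM, padding with a dummy $1$ in the odd case. The only cosmetic difference is that you differentiate $\phi(t)=g({\bf W}+t{\bf H})$ twice (making the factor $2$ in ${\bf P}''(0)$ explicit), whereas the paper reads the same quantities off the Taylor expansion.
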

\begin{proof}
Consider the following
\begin{equation}\label{eq:main-exp-1}
	  \frac 12\norm{{\bf (W_1+H_1)(W_2+H_2)\ldots (W_N+H_N)X} - {\bf Y}}^2_{F} \,.
	\end{equation}
We are only interested in terms till second order, thus we have
\begin{align*}
  {\bf (W_1+H_1)(W_2+H_2)\ldots (W_N+H_N)X} &= {\bf W_1W_2\ldots W_NX} + \sum_{i=1}^N \left(\Pi_{j=1}^{i-1}{\bf W_j}\right){\bf H_i} \left(\Pi_{j=i+1}^N{\bf W_j}{\bf X}\right) \\
	& +  \sum_{i=1}^{N-1} \sum_{j>i}^{N}\left(\Pi_{k=1}^{i-1}{\bf W_k}\right){\bf H_i} \left(\Pi_{k=i+1}^{j-1}{\bf W_k}\right){\bf H_j}\left(\Pi_{k=j+1}^{N}{\bf W_k}{\bf X}\right)\,.
\end{align*}
Now expanding \eqref{eq:main-exp-1}, we have terms upto second order as following
\begin{align*}
&\frac 12\norm{{\bf W_1W_2\ldots W_NX} - {\bf Y}}_F^2 + \act{{\bf W_1W_2\ldots W_NX} - {\bf Y},\sum_{i=1}^N \left(\Pi_{j=1}^{i-1}{\bf W_j}\right){\bf H_i} \left(\Pi_{j=i+1}^N{\bf W_j}\right){\bf X}}\\
& + \frac 12\norm{\sum_{i=1}^N \left(\Pi_{j=1}^{i-1}{\bf W_j}\right){\bf H_i} \left(\Pi_{j=i+1}^N{\bf W_j}\right){\bf X}}_F^2  - \act{{\bf Y}, \sum_{i=1}^{N-1} \sum_{j>i}^{N}\left(\Pi_{k=1}^{i-1}{\bf W_k}\right){\bf H_i} \left(\Pi_{k=i+1}^{j-1}{\bf W_k}\right){\bf H_j}\left(\Pi_{k=j+1}^{N}{\bf W_k}\right){\bf X}}\\
& + \act{{\bf W_1W_2\ldots W_NX},\sum_{i=1}^{N-1} \sum_{j>i}^{N}\left(\Pi_{k=1}^{i-1}{\bf W_k}\right){\bf H_i} \left(\Pi_{k=i+1}^{j-1}{\bf W_k}\right){\bf H_j}\left(\Pi_{k=j+1}^{N}{\bf W_k}\right){\bf X}}\,.
\end{align*}
Consider the first order terms, we have
\begin{align*}
& \act{{\bf W_1W_2\ldots W_NX} - {\bf Y},\sum_{i=1}^N \left(\Pi_{j=1}^{i-1}{\bf W_j}\right){\bf H_i} \left(\Pi_{j=i+1}^N{\bf W_j}\right){\bf X}}\\
& = \sum_{i=1}^N \act{{\bf W_1W_2\ldots W_NX} - {\bf Y}, \left(\Pi_{j=1}^{i-1}{\bf W_j}\right){\bf H_i} \left(\Pi_{j=i+1}^N{\bf W_j}\right){\bf X}}\,,
\end{align*}
thus, the gradient is 
\[
	\nabla_{{\bf W_i}} g({\bf W_1},\ldots,{\bf W_N}) = \left(\Pi_{j=1}^{i-1}{\bf W_j}\right)^T\left({\bf W_1W_2\ldots W_NX} - {\bf Y}\right)\left(\left(\Pi_{j=i+1}^N{\bf W_j} \right){\bf X}\right)^T\,.
\]
Now, considering second order terms we have with repetitive application of Cauchy-Schwarz inequality, the following
\begin{align*}
\frac 12\norm{\sum_{i=1}^N \left(\Pi_{j=1}^{i-1}{\bf W_j}\right){\bf H_i} \left(\Pi_{j=i+1}^N{\bf W_j}\right){\bf X}}_F^2 &\leq \frac{N}{2}\sum_{i=1}^N\norm{\left(\Pi_{j=1}^{i-1}{\bf W_j}\right){\bf H_i} \left(\Pi_{j=i+1}^N{\bf W_j}\right){\bf X}}_F^2\\
 &\leq \frac{N}{2}\sum_{i=1}^N\norm{{\bf H_i}}_F^2\Pi_{j=1, j\neq i}^N \norm{{\bf W_j}}_F^2\norm{\bf X}_F^2 
\end{align*}
and 
\begin{align*}
&\act{{\bf W_1W_2\ldots W_NX},\sum_{i=1}^{N-1} \sum_{j>i}^{N}\left(\Pi_{k=1}^{i-1}{\bf W_k}\right){\bf H_i} \left(\Pi_{k=i+1}^{j-1}{\bf W_k}\right){\bf H_j}\left(\Pi_{k=j+1}^{N}{\bf W_k}\right){\bf X}}\\
&\leq  \sum_{i=1}^{N-1} \sum_{j>i}^{N} \norm{{\bf X}}_F^2\norm{{\bf H}_i}_F \norm{{\bf H}_j}_F \norm{{\bf W_i}}_F\norm{{\bf W_j}}_F \Pi_{k=1, k \notin \{i,j\}}^N\norm{\bf W_k}_F^2\\
&\leq  \sum_{i=1}^{N-1} \sum_{j>i}^{N}  \norm{{\bf X}}_F^2 \left( \frac{\norm{{\bf H}_i}_F^2 \norm{{\bf W_j}}_F^2 + \norm{{\bf H}_j}_F^2 \norm{{\bf W_i}}_F^2}{2} \right)  \Pi_{k=1, k \notin \{i,j\}}^N\norm{\bf W_k}_F^2\\
&\leq \norm{{\bf X}}_F^2 \left(\frac{N-1}{2}\right)\sum_{i=1}^{N}\norm{{\bf H}_i}_F^2\Pi_{k=1, k \notin \{i\}}^N\norm{\bf W_k}_F^2
\end{align*}

and we have
\begin{align}
&-\act{{\bf Y}, \sum_{i=1}^{N-1} \sum_{j>i}^{N}\left(\Pi_{k=1}^{i-1}{\bf W_k}\right){\bf H_i} \left(\Pi_{k=i+1}^{j-1}{\bf W_k}\right){\bf H_j}\left(\Pi_{k=j+1}^{N}{\bf W_k}\right){\bf X}}\nonumber\\
&\leq \norm{{\bf Y}}_F\sum_{i=1}^{N-1} \sum_{j>i}^{N} \norm{{\bf H}_i}_F\norm{{\bf H}_j}_F\Pi_{k=1, k \notin \{i,j\}}^N\norm{\bf W_k}_F\norm{\bf X}_F\label{eq:temp-1}
\end{align}
Now with the application of Generalized AM-GM inequality, we have the following three cases:
\begin{itemize}
\item  When $N=2$ then we have
\begin{align*}
\norm{{\bf H}_i}_F\norm{{\bf H}_j}_F\norm{\bf X}_F \leq \norm{\bf X}_F\left(  \frac{\norm{{\bf H}_j}_F^2 + \norm{{\bf H}_i}_F^2}{2} \right)\,,
\end{align*}
\item When $N$ is even and  $N>2$. 
\begin{align*}
&\norm{{\bf H}_i}_F\norm{{\bf H}_j}_F\Pi_{k=1, k \notin \{i,j\}}^N\norm{\bf W_k}_F\norm{\bf X}_F \leq \norm{\bf X}_F\left(  \frac{\norm{{\bf H}_j}_F^2 + \norm{{\bf H}_i}_F^2}{2} \right)\left(\frac{\sum_{k=1, k \notin \{i,j\}}^N\norm{\bf W_k}_F^2}{N-2} \right)^{\frac{N-2}{2}}\,,
\end{align*}
\item If $N$ is odd and $N>2$ we have
\begin{align*}
&\norm{{\bf H}_i}_F\norm{{\bf H}_j}_F\Pi_{k=1, k \notin \{i,j\}}^N\norm{\bf W_k}_F\norm{\bf X}_F  \leq \norm{\bf X}_F\left(  \frac{\norm{{\bf H}_j}_F^2 + \norm{{\bf H}_i}_F^2}{2} \right)\left(\frac{\left(\sum_{k=1, k \notin \{i,j\}}^N\norm{\bf W_k}_F^2 \right) + 1}{N-1} \right)^{\frac{N-1}{2}}\,.
\end{align*}
\end{itemize}
Now using the above given results, on extending the calculation of \eqref{eq:temp-1}, for even $N$ and $N\geq 2$, we have
\begin{align*}
& \norm{{\bf Y}}_F\sum_{i=1}^{N-1} \sum_{j>i}^{N} \norm{{\bf H}_i}_F\norm{{\bf H}_j}_F\Pi_{k=1, k \notin \{i,j\}}^N\norm{\bf W_k}_F\norm{\bf X}_F\\
&\leq \norm{{\bf Y}}_F\norm{\bf X}_F\sum_{i=1}^{N-1} \sum_{j>i}^{N} \left(  \frac{\norm{{\bf H}_j}_F^2 + \norm{{\bf H}_i}_F^2}{2} \right)\left(\frac{\sum_{k=1, k \notin \{i,j\}}^N\norm{\bf W_k}_F^2}{N-2} \right)^{\frac{N-2}{2}}\\
&\leq  \frac{\norm{{\bf Y}}_F\norm{\bf X}_F (N-1)}{ 2(N-2)^{\frac{N-2}{2}}} \left( \sum_{i=1}^{N}  \norm{{\bf H}_i}_F^2\right) \left(\sum_{k=1}^N\norm{\bf W_k}_F^2 \right)^{\frac{N-2}{2}}\,,
\end{align*}
where in the first step we used Cauchy-Schwarz inequality.
Similarly, we have for $N>2$ and odd, 
\begin{align*}
& \norm{{\bf Y}}_F\sum_{i=1}^{N-1} \sum_{j>i}^{N} \norm{{\bf H}_i}_F\norm{{\bf H}_j}_F\Pi_{k=1, k \notin \{i,j\}}^N\norm{\bf W_k}_F\norm{\bf X}_F\\
&\leq \norm{{\bf Y}}_F\norm{\bf X}_F\sum_{i=1}^{N-1} \sum_{j>i}^{N} \left(  \frac{\norm{{\bf H}_j}_F^2 + \norm{{\bf H}_i}_F^2}{2} \right)\left(\frac{\left(\sum_{k=1, k \notin \{i,j\}}^N\norm{\bf W_k}_F^2\right) + 1}{N-1}  \right)^{\frac{N-1}{2}}\\
&\leq  \frac{\norm{{\bf Y}}_F\norm{\bf X}_F(N-1)}{ 2(N-1)^{\frac{N-1}{2}}} \left(\sum_{i=1}^{N} \norm{{\bf H}_i}_F^2 \right)\left(\left(\sum_{k=1}^N\norm{\bf W_k}_F^2\right) +1 \right)^{\frac{N-1}{2}}\,.
\end{align*}\qedhere
\end{proof}
Before we start with the proof of  Proposition~\ref{prop:even-case}, we require the following technical results.
\subsection{Results for $H_1$.}
\begin{lemma}\label{lem:basic-ftc} Let $h \in \mathcal{G}(C)$ be twice continuously differentiable on $C$. Then, the following identity holds
\[
	D_h(x^k,y^k) = \int_{0}^1 \left(1-t\right)\int_{0}^1 \act{ \nabla^2h\left(x^k + (t_1+ (1-t_1)t)(y^k-x^k)\right)(x^k-y^k), x^k-y^k}dt_1dt\,.
\]
\end{lemma}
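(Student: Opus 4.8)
The plan is to reduce this multivariate identity to a one-dimensional computation along the segment joining $x^k$ and $y^k$, and then to apply the fundamental theorem of calculus twice. Since $C$ is open and convex and $h$ is twice continuously differentiable on $C$, the whole segment $\{x^k + t(y^k - x^k) : t \in [0,1]\}$ lies in $C$, so the auxiliary scalar function $\phi(t) := h\big(x^k + t(y^k - x^k)\big)$ is well-defined and $C^2$ on $[0,1]$, with $\phi'(t) = \act{\nabla h(x^k + t(y^k - x^k)), y^k - x^k}$.

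First I would rewrite the Bregman distance in terms of $\phi$. Observing that $\phi(0) = h(x^k)$, $\phi(1) = h(y^k)$ and $\phi'(1) = \act{\nabla h(y^k), y^k - x^k}$, definition \eqref{eq:bregman-distance} gives
\[
D_h(x^k, y^k) = \phi(0) - \phi(1) + \phi'(1) = \int_0^1 \big(\phi'(1) - \phi'(t)\big)\, dt,
\]
where the last equality uses $\phi(1) - \phi(0) = \int_0^1 \phi'(t)\,dt$ together with $\int_0^1 \phi'(1)\,dt = \phi'(1)$.

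Next I would expand the integrand $\phi'(1) - \phi'(t) = \act{\nabla h(y^k) - \nabla h(x^k + t(y^k - x^k)), y^k - x^k}$ by a second application of the fundamental theorem of calculus, this time to $\nabla h$ along the inner segment from $x^k + t(y^k - x^k)$ to $y^k$. Differentiating $s \mapsto \nabla h\big(x^k + (t + s(1-t))(y^k - x^k)\big)$ in $s$ yields
\[
\nabla h(y^k) - \nabla h(x^k + t(y^k - x^k)) = (1-t)\int_0^1 \nabla^2 h\big(x^k + (t + (1-t)s)(y^k - x^k)\big)(y^k - x^k)\, ds,
\]
and substituting this into the previous display produces a double integral whose integrand is $(1-t)\act{\nabla^2 h(\cdot)(y^k - x^k), y^k - x^k}$.

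Finally I would identify this with the claimed expression. Relabelling the inner variable $s$ as $t_1$, the interior point $x^k + (t + (1-t)t_1)(y^k - x^k)$ coincides with $x^k + (t_1 + (1-t_1)t)(y^k - x^k)$, since $t + (1-t)t_1 = t_1 + (1-t_1)t$; moreover the quadratic form is insensitive to the global sign of its argument, so $\act{\nabla^2 h(\cdot)(y^k - x^k), y^k - x^k} = \act{\nabla^2 h(\cdot)(x^k - y^k), x^k - y^k}$, matching the stated formula. I do not expect a serious obstacle here; the only points requiring care are the containment of both the outer and the inner segments in $C$, which follows from convexity, and the continuity of $\nabla^2 h$, which guarantees that the iterated integral is well-defined and that the fundamental theorem of calculus applies at each stage.
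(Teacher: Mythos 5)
Your proof is correct and follows essentially the same route as the paper's: both apply the fundamental theorem of calculus twice, first to reduce $D_h(x^k,y^k)$ to an integral of gradient differences along the segment from $x^k$ to $y^k$, and then to $\nabla h$ along the inner segment from $x^k + t(y^k-x^k)$ to $y^k$, with the same reparametrization identity $t + (1-t)t_1 = t_1 + (1-t_1)t$. The only cosmetic difference is that you phrase the first step via the scalar function $\phi$ and the direction $y^k - x^k$, while the paper works directly with the vector-valued identity and the direction $x^k - y^k$; the sign flip is absorbed by the quadratic form exactly as you note.
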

\begin{proof}
With repetitive application of fundamental theorem of calculus we have
\begin{align*}
&h(x^k) - h(y^k) - \act{\nabla h(y^k), x^k - y^k}\\
&= \int_{0}^1 \act{\nabla h(x^k+t(y^k-x^k)) - \nabla h(y^k), x^k-y^k}dt\,,\\
&= \int_{0}^1  \act{\int_{0}^1 \nabla^2h\left((1-t_1)(x^k+t(y^k-x^k)) + t_1 y^k\right)\left(1-t\right)(x^k-y^k) dt_1, x^k-y^k}dt\,,\\
&= \int_{0}^1  \act{\int_{0}^1 \nabla^2h\left(x^k + (t_1+ (1-t_1)t)(y^k-x^k)\right)\left(1-t\right)(x^k-y^k)dt_1, x^k-y^k}dt\,,\\
&= \int_{0}^1 \left(1-t\right)\int_{0}^1 \act{ \nabla^2h\left(x^k + (t_1+ (1-t_1)t)(y^k-x^k)\right)(x^k-y^k), x^k-y^k}dt_1dt\,.
\end{align*}\qedhere
\end{proof}

Henceforth, we use the following notation. Let $n$ be a positive integer and let $k_i$ be a non-negative integer for $i \in [m]$ satisfying $k_1+\ldots+k_m = n$, then we denote 
	\[
	\binom{n}{k_1,k_2,\ldots,k_m} := \frac{n!}{k_1!k_2!\ldots k_m!}\,,
	\]
	which is also known as multinomial coefficient. 
\begin{lemma}\label{lem:basic-h1}
With the following kernel generating distance
\[
	H_1({\bf W_1},\ldots,{\bf W_N}) = \left(\frac{\norm{{\bf W_1}}_F^2  + \ldots \norm{{\bf W_N}}_F^2}{N}\right)^N\,,
\]
the gradient with respect for ${\bf W_i}$, for any $i\in [N]$, is given by 
\begin{align*}
\nabla_{{\bf W_i}}H_1({\bf W_1},\ldots,{\bf W_N}) = \frac{2}{N^N}\binom{N}{N-1,1}\left(\norm{{\bf W_1 }}_F^2 + \ldots + \norm{{\bf W_N }}_F^2\right)^{N-1}{\bf W_i }\,,
\end{align*}
and the following lower bound holds true
\begin{align*}
\act{({\bf H}_{{\bf 1}},\ldots,{\bf H}_{{\bf N}}), \nabla^2 H_1({\bf W_1},\ldots,{\bf W_N}) ({\bf H}_{{\bf 1}},\ldots, {\bf H}_{{\bf N}})} \geq \frac{2N!}{N^N} \sum_{i=1}^N\norm{{\bf H}_i}_F^2\Pi_{k=1, k \notin \{i\}}^N\norm{\bf W_k}_F^2\,,
\end{align*}
and the following upper bound holds true
\begin{align*}
\act{({\bf H}_{{\bf 1}},\ldots,{\bf H}_{{\bf N}}), \nabla^2 H_1({\bf W_1},\ldots,{\bf W_N}) ({\bf H}_{{\bf 1}},\ldots, {\bf H}_{{\bf N}})} \leq \left(\frac{2(2N-1)}{N^{N-1}}\right)\left( \sum_{k=1}^{N}\norm{{\bf H_k}}_F^2 \right)\left(\sum_{k=1}^N\norm{\bf W_k}_F^2\right)^{N-1}\,.
\end{align*}
\end{lemma}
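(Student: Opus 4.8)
The plan is to exploit the fact that $H_1$ is a \emph{radial} function of the total squared norm. Writing $s := \norm{\bf W}_F^2 = \sum_{k=1}^N \norm{\bf W_k}_F^2$ and $\phi(t) := t^N/N^N$, we have $H_1({\bf W}) = \phi(s)$, so every derivative can be obtained from the one-variable chain rule applied to $t \mapsto H_1({\bf W}+t{\bf H})$. For the gradient I would simply use $\nabla_{\bf W_i} s = 2{\bf W_i}$ to get $\nabla_{\bf W_i} H_1 = \phi'(s)\,2{\bf W_i} = \tfrac{2N}{N^N}s^{N-1}{\bf W_i}$, which is the stated formula once one observes $\binom{N}{N-1,1} = N$.

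Next, for the Hessian quadratic form I would differentiate $f(t) := \phi(u(t))$ with $u(t) := \norm{{\bf W}+t{\bf H}}_F^2 = s + 2t\,\act{{\bf W},{\bf H}} + t^2\norm{\bf H}_F^2$ twice and evaluate at $t=0$, where $\act{{\bf W},{\bf H}} = \sum_i \act{{\bf W_i},{\bf H_i}}$ is the product-space inner product. Since $u'(0)=2\act{{\bf W},{\bf H}}$ and $u''(0)=2\norm{\bf H}_F^2$, this yields the clean identity
\[
\act{{\bf H}, \nabla^2 H_1({\bf W}){\bf H}} = 2\phi'(s)\norm{\bf H}_F^2 + 4\phi''(s)\act{{\bf W},{\bf H}}^2 = \tfrac{2s^{N-1}}{N^{N-1}}\norm{\bf H}_F^2 + \tfrac{4(N-1)s^{N-2}}{N^{N-1}}\act{{\bf W},{\bf H}}^2,
\]
using $\phi'(t)=t^{N-1}/N^{N-1}$ and $\phi''(t)=(N-1)t^{N-2}/N^{N-1}$. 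This identity is the crux, and I note that both coefficients are nonnegative for $s\geq 0$ (as $N\geq 2$); the two bounds are then read off from it.

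For the upper bound I would apply Cauchy--Schwarz, $\act{{\bf W},{\bf H}}^2 \leq s\,\norm{\bf H}_F^2$, to the second term so that both terms carry the common factor $s^{N-1}\norm{\bf H}_F^2/N^{N-1}$; collecting coefficients gives $(2 + 4(N-1))\tfrac{s^{N-1}}{N^{N-1}}\norm{\bf H}_F^2 = \tfrac{2(2N-1)}{N^{N-1}}\norm{\bf H}_F^2\, s^{N-1}$, which is exactly the claim after substituting $\norm{\bf H}_F^2 = \sum_k\norm{\bf H_k}_F^2$ and $s^{N-1} = (\sum_k\norm{\bf W_k}_F^2)^{N-1}$.

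The lower bound is where the only real work lies. I would drop the nonnegative second term to obtain $\act{{\bf H},\nabla^2 H_1{\bf H}} \geq \tfrac{2s^{N-1}}{N^{N-1}}\sum_i\norm{\bf H_i}_F^2$, and then reduce the claim to the termwise estimate $\tfrac{s^{N-1}}{N^{N-1}} \geq \tfrac{N!}{N^N}\prod_{k\neq i}\norm{\bf W_k}_F^2$. Here AM--GM applied to the $N-1$ factors gives $\prod_{k\neq i}\norm{\bf W_k}_F^2 \leq \bigl(\tfrac{s}{N-1}\bigr)^{N-1}$ (after bounding $\sum_{k\neq i}\norm{\bf W_k}_F^2 \leq s$), which reduces the estimate to the elementary factorial bound $N! \leq N(N-1)^{N-1}$, equivalently $(N-1)! \leq (N-1)^{N-1}$, valid since each of the $N-1$ factors of $(N-1)!$ is at most $N-1$. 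The main obstacle, though a mild one, is precisely this bookkeeping of the constant $2N!/N^N$: one must check that AM--GM together with the factorial inequality is sharp enough to close the gap, noting that both reduce to equalities at $N=2$, consistent with the remark that the bound should match the matrix factorization case.
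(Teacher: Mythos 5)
Your proof is correct, and its skeleton coincides with the paper's: writing $s := \sum_{k=1}^N \norm{{\bf W_k}}_F^2$, both arguments rest on the exact two-term identity
\[
\act{{\bf H},\nabla^2 H_1({\bf W})\,{\bf H}} \;=\; \frac{2}{N^{N-1}}\,s^{N-1}\sum_{k=1}^N\norm{{\bf H_k}}_F^2 \;+\; \frac{4(N-1)}{N^{N-1}}\,s^{N-2}\act{{\bf W},{\bf H}}^2,
\]
followed by Cauchy--Schwarz on the cross term for the upper bound and by discarding the (nonnegative) cross term for the lower bound. The differences lie in how you reach this identity and how you finish the lower bound. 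First, you derive the identity by the one-variable chain rule applied to the radial function $\phi(t)=t^N/N^N$, while the paper expands $H_1({\bf W}+{\bf H})$ multinomially and collects the second-order terms; these computations are equivalent, and yours is arguably cleaner. Second---and this is the only genuine divergence---your lower bound runs through AM--GM, $\prod_{k\neq i}\norm{{\bf W_k}}_F^2 \leq \bigl(s/(N-1)\bigr)^{N-1}$, combined with the factorial inequality $(N-1)!\leq (N-1)^{N-1}$, whereas the paper implicitly uses the one-line multinomial bound $s^{N-1}\geq (N-1)!\,\prod_{k\neq i}\norm{{\bf W_k}}_F^2$: expanding $s^{N-1}$, every term is nonnegative and the mixed term $\prod_{k\neq i}\norm{{\bf W_k}}_F^2$ appears with coefficient exactly $(N-1)!$, which is where the factorial in the constant $2N!/N^N$ actually comes from. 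Both routes close the gap with the same constant. The paper's step is shorter (and is stated so tersely that it is easy to miss), so your explicit constant bookkeeping is a genuine service; if you want to streamline, replace the AM--GM/factorial detour by the multinomial extraction, which also makes the appearance of $N!$ transparent rather than coincidental.
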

\begin{proof}
Consider the following 
\begin{align*}
&\left(\frac{\norm{{\bf W_1 }+ {\bf H_1}}_F^2  + \ldots \norm{{\bf W_N}+{\bf H_N}}_F^2}{N}\right)^N= \left(\frac{\norm{{\bf W_1 }}_F^2+ \norm{{\bf H_1}}_F^2 + 2\act{{\bf W_1 },{\bf H_1}}  + \ldots \norm{{\bf W_N}}_F^2+\norm{{\bf H_N}}_F^2}{N}\right)^N\,.
\end{align*}
Consider only the first order terms in the expansion, from which the following gradient with respect for ${\bf W_i}$, for any $i\in [N]$, is obtained 
\begin{align*}
\nabla_{{\bf W_i}}H_1({\bf W_1},\ldots,{\bf W_N}) = \frac{2}{N^N}\binom{N}{N-1,1}\left(\norm{{\bf W_1 }}_F^2 + \ldots + \norm{{\bf W_N }}_F^2\right)^{N-1}{\bf W_i }\,.
\end{align*}
Now considering only the second order terms, we have
\begin{align*}
& \frac{1}{2}\act{({\bf H}_{{\bf 1}},\ldots,{\bf H}_{{\bf N}}), \nabla^2 H_1({\bf W_1},\ldots,{\bf W_N}) ({\bf H}_{{\bf 1}},\ldots, {\bf H}_{{\bf N}})}\\
& = \frac{1}{2}\frac{2}{N^N} \sum_{i=1}^N \binom{N}{1,N-1}\norm{{\bf H}_i}_F^2\left(\sum_{k=1}^N\norm{\bf W_k}_F^2\right)^{N-1} \\
& + \frac{1}{2}\frac{2^3}{N^N} \binom{N}{2,N-2} \left(\act{{\bf W_1 },{\bf H_1}} +\ldots + \act{{\bf W_N },{\bf H_N}}\right)^2\left(\sum_{k=1}^N\norm{\bf W_k}_F^2\right)^{N-2}\,.
\end{align*}
Since, the second term in the right hand side is always non-negative, the following result holds
\begin{align*}
\frac{1}{2}\act{({\bf H}_{{\bf 1}},\ldots,{\bf H}_{{\bf N}}), \nabla^2 H_1({\bf W_1},\ldots,{\bf W_N}) ({\bf H}_{{\bf 1}},\ldots, {\bf H}_{{\bf N}})} \geq \frac{1}{2}\frac{2N!}{N^N} \sum_{i=1}^N\norm{{\bf H}_i}_F^2\Pi_{k=1, k \notin \{i\}}^N\norm{\bf W_k}_F^2\,.
\end{align*}
This proves the lower bound. Now, we prove the upper bound. With  application of Cauchy-Schwarz inequality,  we have
\begin{align*}
& \frac{1}{2}\frac{2^3}{N^N} \binom{N}{2,N-2} \left(\act{{\bf W_1 },{\bf H_1}} +\ldots + \act{{\bf W_N },{\bf H_N}}\right)^2\left(\sum_{k=1}^N\norm{\bf W_k}_F^2\right)^{N-2}\\
& \leq \frac{1}{2}\frac{2^3}{N^N} \binom{N}{2,N-2} \left(\sum_{k=1}^{N}\norm{{\bf W_k}}_F^2\right)\left( \sum_{k=1}^{N}\norm{{\bf H_k}}_F^2 \right)\left(\sum_{k=1}^N\norm{\bf W_k}_F^2\right)^{N-2}\\
& = \frac{1}{2}\frac{2^3}{N^N} \binom{N}{2,N-2} \left( \sum_{k=1}^{N}\norm{{\bf H_k}}_F^2 \right)\left(\sum_{k=1}^N\norm{\bf W_k}_F^2\right)^{N-1}\,.
\end{align*}
Now we finally have
\begin{align*}
\act{({\bf H}_{{\bf 1}},\ldots,{\bf H}_{{\bf N}}), \nabla^2 H_1({\bf W_1},\ldots,{\bf W_N}) ({\bf H}_{{\bf 1}},\ldots, {\bf H}_{{\bf N}})} &\leq  \frac{2}{N^N}\binom{N}{1,N-1} \left(\sum_{i=1}^N \norm{{\bf H}_i}_F^2\right)\left(\sum_{k=1}^N\norm{\bf W_k}_F^2\right)^{N-1} \\
& + \frac{2^3}{N^N} \binom{N}{2,N-2} \left( \sum_{k=1}^{N}\norm{{\bf H_k}}_F^2 \right)\left(\sum_{k=1}^N\norm{\bf W_k}_F^2\right)^{N-1}\\
&= \left(\frac{2(2N-1)}{N^{N-1}}\right)\left( \sum_{k=1}^{N}\norm{{\bf H_k}}_F^2 \right)\left(\sum_{k=1}^N\norm{\bf W_k}_F^2\right)^{N-1}\,.\qedhere
\end{align*}
\end{proof}
\begin{lemma}\label{lem:h1-inert}
Denote for any $k\geq 1$,  $x^k = ({\bf W_1^k},\ldots,{\bf W_N^k})$, $\Delta_k := {x^k - x^{k-1}}$ and the following 
\begin{align*}
\mathcal{B}_k := &\left(\frac{(2N-1)}{N^{N-1}}\right) \norm{\Delta_k}^2 \left(2\norm{x^k}^2 + 2\norm{\Delta_k}^2 \right)^{(N-1)} \,.
\end{align*}
The following upper bound holds true
\[
	D_{H_1}(x^k,y^k) \leq \gamma_k^2\mathcal{B}_k\,.
\]
\end{lemma}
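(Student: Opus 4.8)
The plan is to combine the integral (Taylor-type) representation of the Bregman distance from Lemma~\ref{lem:basic-ftc} with the pointwise upper bound on the Hessian quadratic form established in Lemma~\ref{lem:basic-h1}. Since $H_1$ is a polynomial, it is twice continuously differentiable, so Lemma~\ref{lem:basic-ftc} applies and gives
\[
	D_{H_1}(x^k,y^k) = \int_{0}^1 (1-t)\int_{0}^1 \act{\nabla^2 H_1(\xi_{t,t_1})(x^k-y^k), x^k-y^k}\, dt_1\, dt\,,
\]
where $\xi_{t,t_1} := x^k + (t_1 + (1-t_1)t)(y^k - x^k)$ is the evaluation point on the segment joining $x^k$ and $y^k$.

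First I would substitute $y^k - x^k = \gamma_k \Delta_k$, so the direction is $x^k - y^k = -\gamma_k\Delta_k$ and hence $\norm{x^k-y^k}^2 = \gamma_k^2\norm{\Delta_k}^2$. Applying the upper bound of Lemma~\ref{lem:basic-h1} with the direction $x^k - y^k$ and the base point $\xi_{t,t_1}$ yields
\[
	\act{\nabla^2 H_1(\xi_{t,t_1})(x^k-y^k), x^k-y^k} \le \left(\frac{2(2N-1)}{N^{N-1}}\right)\gamma_k^2\norm{\Delta_k}^2\left(\norm{\xi_{t,t_1}}^2\right)^{N-1}\,.
\]
It then remains to bound $\norm{\xi_{t,t_1}}^2$ uniformly in $t,t_1$. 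Writing $s := t_1 + (1-t_1)t \in [0,1]$, we have $\xi_{t,t_1} = x^k + s\gamma_k\Delta_k$, so using $\norm{a+b}^2 \le 2\norm{a}^2 + 2\norm{b}^2$ together with $s\gamma_k \le 1$,
\[
	\norm{\xi_{t,t_1}}^2 \le 2\norm{x^k}^2 + 2 s^2\gamma_k^2\norm{\Delta_k}^2 \le 2\norm{x^k}^2 + 2\norm{\Delta_k}^2\,,
\]
which is precisely the quantity raised to the power $N-1$ inside $\mathcal{B}_k$.

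Finally, since the resulting bound no longer depends on $t$ or $t_1$, I would pull it out of the double integral and use $\int_{0}^1 (1-t)\int_0^1 dt_1\, dt = \int_0^1 (1-t)\, dt = \frac{1}{2}$, whose factor $\frac{1}{2}$ cancels the $2$ in $\frac{2(2N-1)}{N^{N-1}}$, leaving
\[
	D_{H_1}(x^k,y^k) \le \left(\frac{2N-1}{N^{N-1}}\right)\gamma_k^2\norm{\Delta_k}^2\left(2\norm{x^k}^2 + 2\norm{\Delta_k}^2\right)^{N-1} = \gamma_k^2\mathcal{B}_k\,,
\]
as claimed. The only genuinely delicate step is the uniform control of $\norm{\xi_{t,t_1}}^2$ along the segment: this is where the restriction $\gamma_k \le 1$ on the extrapolation parameter is used, ensuring $s\gamma_k \le 1$ so that the coarse splitting $\norm{a+b}^2\le 2\norm{a}^2+2\norm{b}^2$ produces exactly the $\Omega_k$-term appearing in $\mathcal{B}_k$ rather than an extra $\gamma_k$-dependent factor; everything else is a direct substitution and the elementary integral evaluation.
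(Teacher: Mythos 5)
Your proposal is correct and follows essentially the same route as the paper's own proof: the integral representation from Lemma~\ref{lem:basic-ftc}, the Hessian upper bound from Lemma~\ref{lem:basic-h1}, the uniform bound $\norm{\xi_{t,t_1}}^2 \le 2\norm{x^k}^2 + 2\norm{\Delta_k}^2$ via $\gamma_k \le 1$ and $s \le 1$, and the factor $\int_0^1(1-t)\,dt = \tfrac12$ cancelling the $2$ in the Hessian constant. The only cosmetic difference is that the paper factors $\gamma_k^2$ out of the quadratic form before applying the Hessian bound, whereas you keep the direction $x^k-y^k$ and extract $\gamma_k^2$ afterwards — these are identical computations.
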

\begin{proof}
From Lemma~\ref{lem:basic-ftc}, we have
\begin{align*}
&\int_{0}^1 \left(1-t\right)\int_{0}^1 \act{ \nabla^2H_1\left(x^k + (t_1+ (1-t_1)t)(y^k-x^k)\right)(x^k-y^k), x^k-y^k}dt_1dt\\
= & \gamma_k^2\int_{0}^1 \left(1-t\right)\int_{0}^1 \act{ \nabla^2H_1\left(x^k + (t_1+ (1-t_1)t)(y^k-x^k)\right)(x^k-x^{k-1}), x^k-x^{k-1}}dt_1dt \,,\\
\leq& \gamma_k^2\int_{0}^1 \left(1-t\right)\int_{0}^1 \frac{2(2N-1)}{N^{N-1}} \norm{x^k-x^{k-1}}^2 \norm{x^k + (t_1+ (1-t_1)t)(y^k-x^k)}^{(2N-2)}  dt_1dt\,,
\end{align*}
where in the last step we used the upper bound from Lemma~\ref{lem:basic-h1}. Using the following inequality
\begin{align*}
	\norm{x^k + (t_1+ (1-t_1)t)(y^k-x^k)}^{2} &\leq 2\norm{x^k}^2 +  2(t_1+ (1-t_1)t)^{2}\gamma_k^2\norm{x^k - x^{k-1}}^2\leq 2\norm{x^k}^2 +  2\norm{x^k - x^{k-1}}^2
\end{align*}
where in the last step we used $\gamma_k^2 \leq 1$ and $(t_1+ (1-t_1)t)^{2} \leq 1$. With $\int_{0}^1(1-t)dt = \frac{1}{2}$ the result follows.\qedhere
\end{proof}
\subsection{Results for $H_2$.}
\begin{lemma}\label{lem:basic-h2}
With the following kernel generating distance
\[
	H_2({\bf W_1},\ldots,{\bf W_N}) = \left(\frac{\norm{{\bf W_1}}_F^2 + \norm{{\bf W_2}}_F^2 + \ldots \norm{{\bf W_N}}_F^2}{N}\right)^{\frac{N}{2}}\,,
\]
the gradient with respect for ${\bf W_i}$, for any $i\in [N]$, is given by 
\begin{align*}
\nabla_{{\bf W_i}}H_2({\bf W_1},\ldots,{\bf W_N}) = \frac{1}{N^{\frac{N}{2}-1}}\left(\norm{{\bf W_1 }}_F^2 + \ldots + \norm{{\bf W_N }}_F^2\right)^{{\frac{N}{2}}-1}{\bf W_i }\,,
\end{align*}
and the following lower bound holds true
\begin{align*}
	& \act{({\bf H}_{{\bf 1}},\ldots,{\bf H}_{{\bf N}}), \nabla^2 H_2({\bf W_1},\ldots,{\bf W_N}) ({\bf H}_{{\bf 1}},\ldots, {\bf H}_{{\bf N}})}\geq \frac{1}{N^{\frac{N}{2}-1}}  \left( \norm{{\bf H}_1}_F^2 +\ldots+ \norm{{\bf H}_N}_F^2 \right)\left(\sum_{k=1}^N\norm{\bf W_k}_F^2 \right)^{\frac{N-2}{2}}\,,
\end{align*}
and the following upper bound holds true
\begin{align*}
& \act{({\bf H}_{{\bf 1}},\ldots,{\bf H}_{{\bf N}}), \nabla^2 H_2({\bf W_1},\ldots,{\bf W_N}) ({\bf H}_{{\bf 1}},\ldots, {\bf H}_{{\bf N}})} \leq \left(\frac{N-1}{N^{\frac{N}{2} -1}}\right)\left(\sum_{k=1}^N\norm{{\bf H_k}}_F^2\right)\left(\sum_{k=1}^N\norm{\bf W_k}_F^2\right)^{\frac{N-2}{2}}\,.
\end{align*}
\end{lemma}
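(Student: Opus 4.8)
The plan is to mirror the computation in Lemma~\ref{lem:basic-h1} with the exponent $N$ replaced by $N/2$, exploiting that $H_2$ depends on $({\bf W_1},\ldots,{\bf W_N})$ only through the scalar $s := \sum_{k=1}^N \norm{{\bf W_k}}_F^2$, namely $H_2 = (s/N)^{N/2}$. First I would expand $\left(\frac{\sum_{k}\norm{{\bf W_k}+{\bf H_k}}_F^2}{N}\right)^{N/2}$, writing each $\norm{{\bf W_k}+{\bf H_k}}_F^2 = \norm{{\bf W_k}}_F^2 + 2\act{{\bf W_k},{\bf H_k}} + \norm{{\bf H_k}}_F^2$, so that the bracket becomes $s + a$ with perturbation $a = 2\sum_k \act{{\bf W_k},{\bf H_k}} + \sum_k \norm{{\bf H_k}}_F^2$. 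Applying the (finite, since $N$ is even) binomial expansion of $(s+a)^{N/2}$ and collecting the terms linear in ${\bf H}$ isolates the first-order part $\frac{1}{N^{N/2-1}} s^{N/2-1}\sum_k \act{{\bf W_k},{\bf H_k}}$, from which the stated gradient $\nabla_{{\bf W_i}}H_2 = \frac{1}{N^{N/2-1}} s^{N/2-1}{\bf W_i}$ follows by reading off the coefficient of ${\bf H_i}$.

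For the Hessian quadratic form I would collect all second-order terms in ${\bf H}$. These come from two sources: the term $\frac{N}{2}s^{N/2-1}a$ contributes its quadratic piece $\frac{N}{2}s^{N/2-1}\sum_k\norm{{\bf H_k}}_F^2$, and the term $\frac12\frac{N}{2}(\frac{N}{2}-1)s^{N/2-2}a^2$ contributes the square of the linear part of $a$, i.e. $\frac{N}{2}(\frac{N}{2}-1)s^{N/2-2}\cdot 2\big(\sum_k\act{{\bf W_k},{\bf H_k}}\big)^2$. After dividing by $N^{N/2}$ and identifying second-order terms with $\tfrac12\act{{\bf H},\nabla^2H_2\,{\bf H}}$, this yields
\[
\act{{\bf H},\nabla^2H_2\,{\bf H}} = \frac{s^{N/2-1}}{N^{N/2-1}}\sum_k\norm{{\bf H_k}}_F^2 + \frac{2(\tfrac{N}{2}-1)}{N^{N/2-1}}s^{N/2-2}\Big(\sum_k\act{{\bf W_k},{\bf H_k}}\Big)^2 .
\]

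The lower bound is then immediate: since $N$ is even and $N\geq2$ we have $\frac{N}{2}-1\geq0$, so the second (manifestly non-negative) term can be dropped, leaving exactly $\frac{1}{N^{N/2-1}}\big(\sum_k\norm{{\bf H_k}}_F^2\big) s^{N/2-1}$, which is the claim after rewriting $s^{N/2-1}=\big(\sum_k\norm{{\bf W_k}}_F^2\big)^{(N-2)/2}$. For the upper bound I would bound the cross term by Cauchy--Schwarz on the product space, $\big(\sum_k\act{{\bf W_k},{\bf H_k}}\big)^2 \leq \big(\sum_k\norm{{\bf W_k}}_F^2\big)\big(\sum_k\norm{{\bf H_k}}_F^2\big) = s\sum_k\norm{{\bf H_k}}_F^2$, so that the factor $s^{N/2-2}$ becomes $s^{N/2-1}$ and the two coefficients combine via $1 + 2(\tfrac{N}{2}-1) = N-1$, giving $\frac{N-1}{N^{N/2-1}}\big(\sum_k\norm{{\bf H_k}}_F^2\big)\big(\sum_k\norm{{\bf W_k}}_F^2\big)^{(N-2)/2}$, as claimed.

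There is no serious analytic obstacle here; the work is entirely bookkeeping of the multinomial coefficients, exactly as in Lemma~\ref{lem:basic-h1}. The only two points that require care are ensuring the cross term has the correct sign for the lower bound (this is where evenness of $N$, hence $\frac{N}{2}-1\geq0$, is used, and it also guarantees that the expansion of $(s+a)^{N/2}$ terminates as a polynomial, so no remainder estimate is needed), and verifying the coefficient arithmetic $1+2(\tfrac{N}{2}-1)=N-1$ in the upper bound. The degenerate case $N=2$ serves as a consistency check: there $\frac{N}{2}-1=0$, the cross term vanishes, and both bounds collapse to $\sum_k\norm{{\bf H_k}}_F^2$, consistent with $H_2$ reducing to the quadratic $\tfrac12\sum_k\norm{{\bf W_k}}_F^2$.
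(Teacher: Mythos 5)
Your proposal is correct and follows essentially the same route as the paper's proof: both expand the perturbed kernel to second order (your scalar substitution $s+a$ is just tidier bookkeeping of the paper's multinomial coefficients), arrive at the same Hessian quadratic form, drop the manifestly non-negative cross term for the lower bound, and apply Cauchy--Schwarz to that same term for the upper bound. The coefficient check $1+2(\tfrac{N}{2}-1)=N-1$ you perform is exactly the combination carried out in the paper.
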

\begin{proof}
Consider the following expansion
\begin{align*}
&\left(\frac{\norm{{\bf W_1 }+ {\bf H_1}}_F^2 + \ldots \norm{{\bf W_N}+{\bf H_N}}_F^2}{N}\right)^{\frac{N}{2}}= \left(\frac{\norm{{\bf W_1 }}_F^2+ \norm{{\bf H_1}}_F^2 + 2\act{{\bf W_1 },{\bf H_1}}  + \ldots \norm{{\bf W_N}}_F^2+\norm{{\bf H_N}}_F^2}{N}\right)^{\frac{N}{2}}\,.
\end{align*}
Consider only the first order terms in the expansion, from which the following gradient with respect for ${\bf W_i}$, for any $i\in [N]$, is obtained 
\begin{align*}
\nabla_{{\bf W_i}}H_2({\bf W_1},\ldots,{\bf W_N}) = \frac{2}{N^{\frac{N}{2}}}\binom{\frac{N}{2}}{{\frac{N}{2}}-1,1}\left(\norm{{\bf W_1 }}_F^2 + \ldots + \norm{{\bf W_N }}_F^2\right)^{{\frac{N}{2}}-1}{\bf W_i }\,.
\end{align*}
Now considering only the second order terms, we have
\begin{align*}
	&\frac{1}{2} \act{({\bf H}_{{\bf 1}},\ldots,{\bf H}_{{\bf N}}), \nabla^2 H_2({\bf W_1},\ldots,{\bf W_N}) ({\bf H}_{{\bf 1}},\ldots, {\bf H}_{{\bf N}})}\\
&= \frac{1}{2}\frac{2}{N^{\frac{N}{2}}} \binom{\frac{N}{2}}{\frac{N-2}{2},1} \left( \norm{{\bf H}_1}_F^2 +\ldots+ \norm{{\bf H}_N}_F^2 \right)\left(\sum_{k=1}^N\norm{\bf W_k}_F^2 \right)^{\frac{N-2}{2}}\\
& + \frac{1}{2} \frac{2^3}{N^{\frac{N}{2}}} \binom{\frac{N}{2}}{2,\frac{N}{2}-2} \left(\act{{\bf W_1 },{\bf H_1}}+\ldots + \act{{\bf W_N},{\bf H_N}}\right)^2\left(\sum_{k=1}^N\norm{\bf W_k}_F^2\right)^{\frac{N}{2}-2}
\end{align*}
Since, the second term in the right hand side is always non-negative, the following result holds
\begin{align*}
	& \act{({\bf H}_{{\bf 1}},\ldots,{\bf H}_{{\bf N}}), \nabla^2 H_2({\bf W_1},\ldots,{\bf W_N}) ({\bf H}_{{\bf 1}},\ldots, {\bf H}_{{\bf N}})}\geq \frac{2}{N^{\frac{N}{2}}} \binom{\frac{N}{2}}{\frac{N-2}{2},1} \left( \norm{{\bf H}_1}_F^2 +\ldots+ \norm{{\bf H}_N}_F^2 \right)\left(\sum_{k=1}^N\norm{\bf W_k}_F^2 \right)^{\frac{N-2}{2}}\,.
\end{align*}
This proves the lower bound as in the statement. Now we prove the upper bound. With application of Cauchy-Schwarz inequality,  we have
\begin{align*}
&\frac{1}{2} \frac{2^3}{N^{\frac{N}{2}}} \binom{\frac{N}{2}}{2,\frac{N}{2}-2} \left(\act{{\bf W_1 },{\bf H_1}}+\ldots + \act{{\bf W_N},{\bf H_N}}\right)^2\left(\sum_{k=1}^N\norm{\bf W_k}_F^2\right)^{\frac{N}{2}-2}\\
&\leq \frac{1}{2} \frac{2^3}{N^{\frac{N}{2}}} \binom{\frac{N}{2}}{2,\frac{N}{2}-2}\left( \sum_{k=1}^N\norm{{\bf W_k}}_F^2\right) \left(\sum_{k=1}^N\norm{{\bf H_k}}_F^2\right)\left(\sum_{k=1}^N\norm{\bf W_k}_F^2\right)^{\frac{N}{2}-2}\,,\\
&= \frac{1}{2} \frac{2^3}{N^{\frac{N}{2}}} \binom{\frac{N}{2}}{2,\frac{N}{2}-2} \left(\sum_{k=1}^N\norm{{\bf H_k}}_F^2\right)\left(\sum_{k=1}^N\norm{\bf W_k}_F^2\right)^{\frac{N}{2}-1}\,.
\end{align*}
Thus, we finally have
\begin{align*}
\act{({\bf H}_{{\bf 1}},\ldots,{\bf H}_{{\bf N}}), \nabla^2 H_2({\bf W_1},\ldots,{\bf W_N}) ({\bf H}_{{\bf 1}},\ldots, {\bf H}_{{\bf N}})} &\leq  \frac{2}{N^{\frac{N}{2}}} \binom{\frac{N}{2}}{\frac{N-2}{2},1} \left(\sum_{k=1}^N\norm{{\bf H_k}}_F^2\right)\left(\sum_{k=1}^N\norm{\bf W_k}_F^2 \right)^{\frac{N-2}{2}}\\
&+ \frac{2^3}{N^{\frac{N}{2}}} \binom{\frac{N}{2}}{2,\frac{N}{2}-2} \left(\sum_{k=1}^N\norm{{\bf H_k}}_F^2\right)\left(\sum_{k=1}^N\norm{\bf W_k}_F^2\right)^{\frac{N-2}{2}}\,,\\
&= \left(\frac{N-1}{N^{\frac{N}{2} -1}}\right)\left(\sum_{k=1}^N\norm{{\bf H_k}}_F^2\right)\left(\sum_{k=1}^N\norm{\bf W_k}_F^2\right)^{\frac{N-2}{2}}\,.\qedhere
\end{align*}
\end{proof}
\begin{lemma}\label{lem:h2-inert}
Denote for any $k\geq 1$,  $x^k = ({\bf W_1^k},\ldots,{\bf W_N^k})$, $\Delta_k := {x^k - x^{k-1}}$ and the following 
\begin{align*}
\mathcal{C}_k &:=   \left(\frac{N-1}{N^{\frac{N}{2} -1}}\right) \norm{\Delta_k}^2\left(2\norm{x^k}^2 +  2\norm{\Delta}^2\right)^{\frac{N-2}{2}}\,.
\end{align*}
The following holds
\[
	D_{H_2}(x^k,y^k) \leq \gamma_k^2\mathcal{C}_k\,.
\]
\end{lemma}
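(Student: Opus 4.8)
The plan is to mirror the argument of Lemma~\ref{lem:h1-inert} verbatim, substituting the kernel $H_2$ for $H_1$ and invoking the Hessian upper bound of Lemma~\ref{lem:basic-h2} in place of the one from Lemma~\ref{lem:basic-h1}. Since $H_2$ is twice continuously differentiable, I would first apply the integral representation of the Bregman distance from Lemma~\ref{lem:basic-ftc}:
\[
D_{H_2}(x^k,y^k) = \int_{0}^1 (1-t)\int_{0}^1 \act{\nabla^2 H_2\big(x^k + (t_1+(1-t_1)t)(y^k-x^k)\big)(x^k-y^k), x^k-y^k}\,dt_1\,dt\,.
\]
Using $y^k = x^k + \gamma_k(x^k - x^{k-1}) = x^k + \gamma_k\Delta_k$, so that $x^k-y^k = -\gamma_k\Delta_k$, the bilinearity of $\nabla^2 H_2(\cdot)$ applied to $x^k-y^k$ pulls out a factor $\gamma_k^2$, leaving the quadratic form evaluated on the fixed direction $\Delta_k$.

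Next I would apply the upper bound of Lemma~\ref{lem:basic-h2} to the integrand, identifying the direction $({\bf H_1},\ldots,{\bf H_N})$ with the blocks of $\Delta_k$ (so that $\sum_i\norm{{\bf H}_i}_F^2 = \norm{\Delta_k}^2$) and the base point with $x^k + (t_1+(1-t_1)t)(y^k-x^k)$. This contributes the constant $\tfrac{N-1}{N^{N/2-1}}$ together with the factor $\norm{x^k + (t_1+(1-t_1)t)(y^k-x^k)}^{N-2}$, the exponent $N-2$ arising because the bound of Lemma~\ref{lem:basic-h2} raises the squared-norm sum $\sum_k\norm{{\bf W}_k}_F^2$ to the power $\tfrac{N-2}{2}$. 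The only quantitative estimate needed is the uniform bound on the base point: for $t,t_1\in[0,1]$ and $\gamma_k\in[0,1]$ one has $(t_1+(1-t_1)t)^2\le 1$ and $\gamma_k^2\le 1$, whence
\[
\norm{x^k + (t_1+(1-t_1)t)(y^k-x^k)}^2 \le 2\norm{x^k}^2 + 2\norm{\Delta_k}^2\,,
\]
exactly as in Lemma~\ref{lem:h1-inert}. Raising this to the power $\tfrac{N-2}{2}$ makes the base-point factor independent of $t$ and $t_1$, so it exits both integrals.

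Finally, evaluating $\int_0^1(1-t)\,dt = \tfrac12$ and $\int_0^1 dt_1 = 1$ yields
\[
D_{H_2}(x^k,y^k) \le \tfrac12\,\gamma_k^2\,\frac{N-1}{N^{N/2-1}}\,\norm{\Delta_k}^2\big(2\norm{x^k}^2 + 2\norm{\Delta_k}^2\big)^{\frac{N-2}{2}} \le \gamma_k^2\,\mathcal{C}_k\,,
\]
which is the claim. I do not anticipate a genuine obstacle here, since the structure is a direct transcription of the $H_1$ argument; the only point deserving care is bookkeeping the exponent $\tfrac{N-2}{2}$ (not $N-2$) coming out of Lemma~\ref{lem:basic-h2}, and noting that the residual factor $\tfrac12$ from the $t$-integral is simply discarded as slack to match the stated, non-tight constant $\mathcal{C}_k$.
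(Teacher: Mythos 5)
Your proof is correct and follows the paper's own argument step for step: the integral representation from Lemma~\ref{lem:basic-ftc}, extraction of $\gamma_k^2$, the Hessian upper bound of Lemma~\ref{lem:basic-h2}, the uniform bound $\norm{x^k + (t_1+(1-t_1)t)(y^k-x^k)}^2 \le 2\norm{x^k}^2 + 2\norm{\Delta_k}^2$, and the evaluation $\int_0^1(1-t)\,dt = \tfrac12$. Your bookkeeping is in fact the more faithful one: the paper's intermediate step writes the constant $\tfrac{2N-3}{N^{N/2-1}}$ rather than the $\tfrac{N-1}{N^{N/2-1}}$ actually provided by Lemma~\ref{lem:basic-h2} (an apparent typo), but either constant yields the stated $\mathcal{C}_k$ after the factor $\tfrac12$ from the $t$-integral, exactly as you observe.
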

\begin{proof}
From Lemma~\ref{lem:basic-ftc}, we have
\begin{align*}
&\int_{0}^1 \left(1-t\right)\int_{0}^1 \act{ \nabla^2H_2\left(x^k + (t_1+ (1-t_1)t)(y^k-x^k)\right)(x^k-y^k), x^k-y^k}dt_1dt\\
&= \gamma_k^2\int_{0}^1 \left(1-t\right)\int_{0}^1 \act{ \nabla^2H_2\left(x^k + (t_1+ (1-t_1)t)(y^k-x^k)\right)(x^k-x^{k-1}), x^k-x^{k-1}}dt_1dt\\
& \leq \gamma_k^2\int_{0}^1 \left(1-t\right)\int_{0}^1 \left(\frac{2N-3}{N^{\frac{N}{2} -1}}\right) \norm{x^k-x^{k-1}}^2\norm{x^k + (t_1+ (1-t_1)t)(y^k-x^k)}^{N-2}   dt_1dt
\end{align*}
where in the last we used Lemma~\ref{lem:basic-h2}. Now, we use the following inequality
\begin{align*}
	\norm{x^k + (t_1+ (1-t_1)t)(y^k-x^k)}^{2} 	&\leq 2\norm{x^k}^2 +  2\norm{x^k - x^{k-1}}^2 = 2\norm{x^k}^2 +  2\norm{\Delta}^2\,.
\end{align*}
Thus, the result follows using $\int_{0}^1 (1-t)dt = \frac{1}{2}$.\qedhere
\end{proof}

\subsection{Proof of Proposition~\ref{prop:even-case}}\label{sec:even-case}

We need to prove the convexity of $LH_a-g$. 
From Lemma~\ref{lem:basic-h1} we obtain 
\begin{align*}
 \frac{N^N}{2N!} \act{({\bf H}_{{\bf 1}},\ldots,{\bf H}_{{\bf N}}), \nabla^2 H_1({\bf W_1},\ldots,{\bf W_N}) ({\bf H}_{{\bf 1}},\ldots, {\bf H}_{{\bf N}})} \geq \sum_{i=1}^N\norm{{\bf H}_i}_F^2\Pi_{k=1, k \notin \{i,j\}}^N\norm{\bf W_k}_F^2
\end{align*}
Similarly from Lemma~\ref{lem:basic-h2} we obtain
\begin{align*}
	& \frac{N^{\frac{N}{2}}}{2\binom{\frac{N}{2}}{\frac{N-2}{2},1}}\act{({\bf H}_{{\bf 1}},\ldots,{\bf H}_{{\bf N}}), \nabla^2 H_2({\bf W_1},\ldots,{\bf W_N}) ({\bf H}_{{\bf 1}},\ldots, {\bf H}_{{\bf N}})}\geq \left( \norm{{\bf H}_1}_F^2 +\ldots+ \norm{{\bf H}_N}_F^2 \right)\left(\sum_{k=1}^N\norm{\bf W_k}_F^2 \right)^{\frac{N-2}{2}}
\end{align*}
Thus, now invoking Proposition~\ref{prop:main-prop}, we obtain the result.\qed
\subsection{Results for $H_3$.}
\begin{lemma}\label{lem:basic-h3}
With the following kernel generating distance
\[
	H_3({\bf W_1},\ldots,{\bf W_N}) = \left(\frac{\norm{{\bf W_1}}_F^2 + \norm{{\bf W_2}}_F^2 + \ldots \norm{{\bf W_N}}_F^2 + 1}{N+1}\right)^{\frac{N+1}{2}}\,,
\]
the gradient with respect for ${\bf W_i}$, for any $i\in [N]$, is given by 
\begin{align*}
\nabla_{{\bf W_i}}H_3({\bf W_1},\ldots,{\bf W_N}) = \frac{2\binom{\frac{N+1}{2}}{{\frac{N-1}{2}},1}}{(N+1)^{\frac{N+1}{2}}}\left(\norm{{\bf W_1 }}_F^2 + \ldots + \norm{{\bf W_N }}_F^2 + 1\right)^{{\frac{N-1}{2}}} {\bf W_i }\,,
\end{align*}
and the following lower bound holds true
\begin{align*}
	& \act{({\bf H}_{{\bf 1}},\ldots,{\bf H}_{{\bf N}}), \nabla^2 H_3({\bf W_1},\ldots,{\bf W_N}) ({\bf H}_{{\bf 1}},\ldots, {\bf H}_{{\bf N}})}\\
&\geq \frac{2}{(N+1)^{\frac{N+1}{2}}} \binom{\frac{N+1}{2}}{\frac{N-1}{2},1} \left( \norm{{\bf H}_1}_F^2 +\ldots+ \norm{{\bf H}_N}_F^2 \right)\left(\left(\sum_{k=1}^N\norm{\bf W_k}_F^2\right) + 1 \right)^{\frac{N-1}{2}}\,,
\end{align*}
and the following upper bound holds true
\begin{align*}
	& \act{({\bf H}_{{\bf 1}},\ldots,{\bf H}_{{\bf N}}), \nabla^2 H_3({\bf W_1},\ldots,{\bf W_N}) ({\bf H}_{{\bf 1}},\ldots, {\bf H}_{{\bf N}})} \leq \frac{N}{(N+1)^{\frac{N-1}{2}}} \left(\sum_{k=1}^N\norm{{\bf H_k}}_F^2\right)^2\left(\left(\sum_{k=1}^N\norm{\bf W_k}_F^2\right)+1\right)^{\frac{N-1}{2}}\,.
\end{align*}
\end{lemma}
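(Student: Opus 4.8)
The plan is to mirror the proofs of Lemma~\ref{lem:basic-h1} and Lemma~\ref{lem:basic-h2} almost verbatim; the only genuinely new feature of $H_3$ is the additive constant inside the base of the power, which I would absorb by writing $s := \sum_{k=1}^N \norm{{\bf W_k}}_F^2 + 1$. First I would expand
\[
H_3({\bf W_1}+{\bf H_1},\ldots,{\bf W_N}+{\bf H_N}) = \left(\frac{s + \sum_{k=1}^N\left(\norm{{\bf H_k}}_F^2 + 2\act{{\bf W_k},{\bf H_k}}\right)}{N+1}\right)^{\frac{N+1}{2}}
\]
and collect the terms up to second order in ${\bf H}$ via the multinomial expansion of $(\cdot)^{(N+1)/2}$. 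The first-order contribution is linear in $\sum_k\act{{\bf W_k},{\bf H_k}}$ and yields the stated gradient, using the closed value $\binom{(N+1)/2}{(N-1)/2,1} = (N+1)/2$.

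For the Hessian I would isolate exactly the two second-order contributions that appeared in the earlier lemmas: one multinomial term proportional to $\binom{(N+1)/2}{(N-1)/2,1}\left(\sum_k\norm{{\bf H_k}}_F^2\right) s^{(N-1)/2}$, arising from a single quadratic factor $\norm{{\bf H_k}}_F^2$, and one proportional to $\binom{(N+1)/2}{2,(N+1)/2-2}\left(\sum_k\act{{\bf W_k},{\bf H_k}}\right)^2 s^{(N+1)/2-2}$, arising from a pair of linear factors. The lower bound then follows at once by discarding the second term, which is a perfect square and hence nonnegative; substituting $\binom{(N+1)/2}{(N-1)/2,1}$ reproduces the claimed coefficient $\frac{2}{(N+1)^{(N+1)/2}}\binom{(N+1)/2}{(N-1)/2,1}$.

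For the upper bound I would control the cross term by Cauchy--Schwarz, $\left(\sum_k \act{{\bf W_k},{\bf H_k}}\right)^2 \leq \left(\sum_k\norm{{\bf W_k}}_F^2\right)\left(\sum_k\norm{{\bf H_k}}_F^2\right)$, and then invoke $\sum_k\norm{{\bf W_k}}_F^2 \leq s$ to raise the remaining power from $s^{(N+1)/2-2}$ to $s^{(N-1)/2}$ — this is precisely where the additive constant $+1$ is exploited, in contrast to the $H_1,H_2$ arguments. Adding the two resulting coefficients and evaluating $\binom{(N+1)/2}{(N-1)/2,1}=\tfrac{N+1}{2}$ and $\binom{(N+1)/2}{2,(N+1)/2-2}=\tfrac{(N+1)(N-1)}{8}$ gives contributions $\frac{1}{(N+1)^{(N-1)/2}}$ and $\frac{N-1}{(N+1)^{(N-1)/2}}$, whose sum is the claimed $\frac{N}{(N+1)^{(N-1)/2}}$ multiplying $\left(\sum_k\norm{{\bf H_k}}_F^2\right)\left(s\right)^{(N-1)/2}$.

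The only delicate step is the multinomial bookkeeping: I must confirm that no further second-order terms survive once the expansion is truncated, and that the half-integer multinomial coefficients collapse to the clean closed forms above. This is routine but error-prone, and it is the one place where the presence of the constant $+1$ alters the exponent arithmetic relative to $H_1$ and $H_2$; everything else is a direct transcription of those two proofs.
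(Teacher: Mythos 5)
Your proposal is correct and follows essentially the same route as the paper's own proof: the same multinomial expansion of $H_3({\bf W}+{\bf H})$, the same identification of exactly two second-order contributions (one carrying $\sum_k\norm{{\bf H_k}}_F^2$, one carrying the perfect square $\bigl(\sum_k\act{{\bf W_k},{\bf H_k}}\bigr)^2$), dropping the square for the lower bound, and Cauchy--Schwarz followed by $\sum_k\norm{{\bf W_k}}_F^2\leq\sum_k\norm{{\bf W_k}}_F^2+1$ to lift the exponent for the upper bound, with identical coefficient arithmetic $\tfrac{1}{(N+1)^{(N-1)/2}}+\tfrac{N-1}{(N+1)^{(N-1)/2}}=\tfrac{N}{(N+1)^{(N-1)/2}}$. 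One remark: the exponent $2$ on $\bigl(\sum_k\norm{{\bf H_k}}_F^2\bigr)$ in the stated upper bound is a typo in the paper (the inequality would fail as ${\bf H}\to 0$); the first power that you derive is exactly what the paper's own intermediate steps produce and what is actually used downstream in Lemma~\ref{lem:h3-inert}.
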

\begin{proof}
In the expansion of  $H_3({\bf W_1}+{\bf H_1},\ldots,{\bf W_N}+{\bf H_N})$, consider only the first order terms in the expansion, from which the following gradient with respect for ${\bf W_i}$, for any $i\in [N]$, is obtained 
\begin{align*}
\nabla_{{\bf W_i}}H_3({\bf W_1},\ldots,{\bf W_N}) = \frac{\binom{\frac{N+1}{2}}{{\frac{N-1}{2}},1}}{(N+1)^{\frac{N+1}{2}}}\left(\norm{{\bf W_1 }}_F^2 + \ldots + \norm{{\bf W_N }}_F^2 + 1\right)^{{\frac{N-1}{2}}}(2{\bf W_i })\,.
\end{align*}
The second order terms are given by
\begin{align*}
	&\frac{1}{2} \act{({\bf H}_{{\bf 1}},\ldots,{\bf H}_{{\bf N}}), \nabla^2 H_3({\bf W_1},\ldots,{\bf W_N}) ({\bf H}_{{\bf 1}},\ldots, {\bf H}_{{\bf N}})}\\
&= \frac{1}{2}\frac{2}{(N+1)^{\frac{N+1}{2}}} \binom{\frac{N+1}{2}}{\frac{N-1}{2},1} \left( \norm{{\bf H}_1}_F^2 +\ldots+ \norm{{\bf H}_N}_F^2 \right)\left(\left(\sum_{k=1}^N\norm{\bf W_k}_F^2\right) + 1 \right)^{\frac{N-1}{2}}\\
& + \frac{1}{2} \frac{2^3}{(N+1)^{\frac{N+1}{2}}} \binom{\frac{N+1}{2}}{2,\frac{N-3}{2}} \left(\act{{\bf W_1 },{\bf H_1}}+\ldots + \act{{\bf W_N},{\bf H_N}}\right)^2\left(\left(\sum_{k=1}^N\norm{\bf W_k}_F^2\right)+1\right)^{\frac{N-3}{2}}\,.
\end{align*}
it is easy to see that the following lower holds true
\begin{align*}
	& \act{({\bf H}_{{\bf 1}},\ldots,{\bf H}_{{\bf N}}), \nabla^2 H_3({\bf W_1},\ldots,{\bf W_N}) ({\bf H}_{{\bf 1}},\ldots, {\bf H}_{{\bf N}})}\\
&\geq \frac{2}{(N+1)^{\frac{N+1}{2}}} \binom{\frac{N+1}{2}}{\frac{N-1}{2},1} \left( \norm{{\bf H}_1}_F^2 +\ldots+ \norm{{\bf H}_N}_F^2 \right)\left(\left(\sum_{k=1}^N\norm{\bf W_k}_F^2\right) + 1 \right)^{\frac{N-1}{2}}\,.
\end{align*}
Now we prove the upper bound. With application of Cauchy-Schwarz inequality,  we have 
\begin{align*}
& \frac{2^3}{(N+1)^{\frac{N+1}{2}}} \binom{\frac{N+1}{2}}{2,\frac{N-3}{2}} \left(\act{{\bf W_1 },{\bf H_1}}+\ldots + \act{{\bf W_N},{\bf H_N}}\right)^2\left(\left(\sum_{k=1}^N\norm{\bf W_k}_F^2\right)+1\right)^{\frac{N-3}{2}}\\
&\leq  \frac{2^3}{(N+1)^{\frac{N+1}{2}}} \binom{\frac{N+1}{2}}{2,\frac{N-3}{2}} \left(\sum_{k=1}^N\norm{\bf W_k}_F^2\right)\left(\sum_{k=1}^N\norm{\bf H_k}_F^2\right) \left(\left(\sum_{k=1}^N\norm{\bf W_k}_F^2\right)+1\right)^{\frac{N-3}{2}}\,,\\
&\leq \frac{2^3}{(N+1)^{\frac{N+1}{2}}} \binom{\frac{N+1}{2}}{2,\frac{N-3}{2}} \left(\left(\sum_{k=1}^N\norm{\bf W_k}_F^2\right) + 1\right)\left(\sum_{k=1}^N\norm{\bf H_k}_F^2\right) \left(\left(\sum_{k=1}^N\norm{\bf W_k}_F^2\right)+1\right)^{\frac{N-3}{2}}\,,\\
&=  \frac{2^3}{(N+1)^{\frac{N+1}{2}}} \binom{\frac{N+1}{2}}{2,\frac{N-3}{2}} \left(\sum_{k=1}^N\norm{\bf H_k}_F^2\right) \left(\left(\sum_{k=1}^N\norm{\bf W_k}_F^2\right)+1\right)^{\frac{N-1}{2}}\,,
\end{align*}
where in the second inequality we used $\left(\sum_{k=1}^N\norm{\bf W_k}_F^2\right) \leq \left(\sum_{k=1}^N\norm{\bf W_k}_F^2\right) + 1$. Now the full bound is
\begin{align*}
	&\frac{1}{2} \act{({\bf H}_{{\bf 1}},\ldots,{\bf H}_{{\bf N}}), \nabla^2 H_3({\bf W_1},\ldots,{\bf W_N}) ({\bf H}_{{\bf 1}},\ldots, {\bf H}_{{\bf N}})}\\
&\leq \frac{1}{2}\frac{2}{(N+1)^{\frac{N+1}{2}}} \binom{\frac{N+1}{2}}{\frac{N-1}{2},1} \left(\sum_{k=1}^N\norm{{\bf H_k}}_F^2\right)^2\left(\left(\sum_{k=1}^N\norm{\bf W_k}_F^2\right) + 1 \right)^{\frac{N-1}{2}} \,,\\
& + \frac{1}{2} \frac{2^3}{(N+1)^{\frac{N+1}{2}}} \binom{\frac{N+1}{2}}{2,\frac{N-3}{2}}  \left(\sum_{k=1}^N\norm{{\bf H_k}}_F^2\right)^2\left(\left(\sum_{k=1}^N\norm{\bf W_k}_F^2\right)+1\right)^{\frac{N-1}{2}} \,,\\
& =\frac{1}{2}\frac{N}{(N+1)^{\frac{N-1}{2}}} \left(\sum_{k=1}^N\norm{{\bf H_k}}_F^2\right)^2\left(\left(\sum_{k=1}^N\norm{\bf W_k}_F^2\right)+1\right)^{\frac{N-1}{2}}\,.\qedhere
\end{align*}
\end{proof}
\begin{lemma}\label{lem:h3-inert}
Denote for any $k\geq 1$,  $x^k = ({\bf W_1^k},\ldots,{\bf W_N^k})$, $\Delta_k := {x^k - x^{k-1}}$ and the following 
\begin{align*}
\mathcal{D}_k &:=  \frac{N}{(N+1)^{\frac{N-1}{2}}}  \norm{\Delta_k}^2 \left(2\norm{x^k}^2 +  2\norm{\Delta}^2 + 1 \right)^{\frac{N-1}{2}}\,.
\end{align*}
Then, the condition $D_{H_3}(x^k,y^k) \leq \gamma_k^2\mathcal{D}_k$ holds true.
\end{lemma}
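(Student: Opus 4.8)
The plan is to mirror the proofs of Lemma~\ref{lem:h1-inert} and Lemma~\ref{lem:h2-inert} essentially verbatim, replacing the kernel $H_1$ (resp.\ $H_2$) by $H_3$ and invoking the Hessian upper bound established in Lemma~\ref{lem:basic-h3}. First I would apply the integral representation of the Bregman distance from Lemma~\ref{lem:basic-ftc}, which expresses $D_{H_3}(x^k,y^k)$ as
\[
\int_0^1 (1-t)\int_0^1 \act{\nabla^2 H_3\big(x^k + (t_1+(1-t_1)t)(y^k-x^k)\big)(x^k-y^k),\, x^k-y^k}\,dt_1\,dt\,.
\]

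Next, using $y^k - x^k = \gamma_k(x^k - x^{k-1}) = \gamma_k\Delta_k$, I would pull the scalar factor $\gamma_k^2$ out of the bilinear form, so that the argument of $\nabla^2 H_3$ becomes $x^k + (t_1+(1-t_1)t)\gamma_k\Delta_k$ while both slots are filled by $\Delta_k$. Applying the upper bound from Lemma~\ref{lem:basic-h3} to this quadratic form then controls the integrand by
\[
\frac{N}{(N+1)^{\frac{N-1}{2}}}\norm{\Delta_k}^2\left(\norm{x^k + (t_1+(1-t_1)t)\gamma_k\Delta_k}^2 + 1\right)^{\frac{N-1}{2}}\,.
\]

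The only new ingredient compared with the $H_1,H_2$ cases is the constant $+1$ inside the kernel, but this is harmless: since $t_1+(1-t_1)t\in[0,1]$ and $\gamma_k\le 1$, the elementary estimate $\norm{x^k + s\Delta_k}^2 \le 2\norm{x^k}^2 + 2\norm{\Delta_k}^2$ for $s\in[0,1]$ gives
\[
\norm{x^k + (t_1+(1-t_1)t)\gamma_k\Delta_k}^2 + 1 \le 2\norm{x^k}^2 + 2\norm{\Delta_k}^2 + 1\,,
\]
which is exactly the bracketed quantity appearing in $\mathcal{D}_k$. Finally the remaining integral factors as $\int_0^1(1-t)\,dt = \tfrac12$ and $\int_0^1 dt_1 = 1$, producing a bound of the form $\gamma_k^2\cdot\tfrac12\,\mathcal{D}_k \le \gamma_k^2\,\mathcal{D}_k$; the last inequality is valid because every quantity involved is nonnegative, so the (possibly loose) factor $\tfrac12$ only strengthens the claim.

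I expect no genuine obstacle here, as the argument is entirely parallel to the previous two inertial lemmas. The only points requiring attention are the bookkeeping of the multinomial constants coming from Lemma~\ref{lem:basic-h3} and the correct handling of the $+1$ shift inside the power, both of which are routine.
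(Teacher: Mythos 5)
Your proposal is correct and follows essentially the same route as the paper's proof: the integral representation of Lemma~\ref{lem:basic-ftc}, extraction of $\gamma_k^2$ via $y^k-x^k=\gamma_k\Delta_k$, the Hessian upper bound of Lemma~\ref{lem:basic-h3}, the estimate $\norm{x^k+s\gamma_k\Delta_k}^2\leq 2\norm{x^k}^2+2\norm{\Delta_k}^2$, and finally $\int_0^1(1-t)\,dt=\tfrac12$. Your observation that the resulting factor $\tfrac12$ only strengthens the claimed bound is also consistent with the paper's argument.
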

\begin{proof}
From Lemma~\ref{lem:basic-ftc} and using $y^k = x^k + \gamma_k (x^k - x^{k-1})$ we have
\begin{align*}
&\int_{0}^1 \left(1-t\right)\int_{0}^1 \act{ \nabla^2H_3\left(x^k + (t_1+ (1-t_1)t)(y^k-x^k)\right)(x^k-y^k), x^k-y^k}dt_1dt\\
= & \gamma_k^2\int_{0}^1 \left(1-t\right)\int_{0}^1 \act{ \nabla^2H_3\left(x^k + (t_1+ (1-t_1)t)(y^k-x^k)\right)(x^k-x^{k-1}), x^k-x^{k-1}}dt_1dt \,,\\
\leq&  \frac{\gamma_k^2N}{(N+1)^{\frac{N-1}{2}}}\int_{0}^1 \left(1-t\right)\int_{0}^1 \norm{x^k-x^{k-1}}^2 (\norm{x^k + (t_1+ (1-t_1)t)(y^k-x^k)}^2 + 1 )^{\frac{N-1}{2}}dt_1dt.
\end{align*}
where in the last we used Lemma~\ref{lem:basic-h3}. Now, we use the following inequality
\begin{align*}
	\norm{x^k + (t_1+ (1-t_1)t)(y^k-x^k)}^{2} 	&\leq 2\norm{x^k}^2 +  2\norm{x^k - x^{k-1}}^2 = 2\norm{x^k}^2 +  2\norm{\Delta}^2\,.
\end{align*}
Thus, the result follows using $\int_{0}^1 (1-t)dt = \frac{1}{2}$.\qedhere
\end{proof}
\subsection{Proof of Proposition~\ref{prop:odd-case}.}\label{sec:odd-case}

We need to prove the convexity of $LH_b-g$.  
From Lemma~\ref{lem:basic-h1} we obtain  
\begin{align*}
 \frac{N^N}{2N!} \act{({\bf H}_{{\bf 1}},\ldots,{\bf H}_{{\bf N}}), \nabla^2 H_1({\bf W_1},\ldots,{\bf W_N}) ({\bf H}_{{\bf 1}},\ldots, {\bf H}_{{\bf N}})} \geq \sum_{i=1}^N\norm{{\bf H}_i}_F^2\Pi_{k=1, k \notin \{i,j\}}^N\norm{\bf W_k}_F^2
\end{align*}
Similarly, from Lemma~\ref{lem:basic-h3} we obtain
\begin{align*}
& (N+1)^{\frac{N-1}{2}}  \act{({\bf H}_{{\bf 1}},\ldots,{\bf H}_{{\bf N}}), \nabla^2 H_3({\bf W_1},\ldots,{\bf W_N}) ({\bf H}_{{\bf 1}},\ldots, {\bf H}_{{\bf N}})}  \left( \norm{{\bf H}_1}_F^2 +\ldots+ \norm{{\bf H}_N}_F^2 \right)\left(\left(\sum_{k=1}^N\norm{\bf W_k}_F^2\right) + 1 \right)^{\frac{N-1}{2}}
\end{align*}
 and invoking Proposition~\ref{prop:main-prop}, we obtain the result. The proof of $LH_b+g$ is similar (see \cite[Remark 2.1]{BSTV2018}). \qed

\section{Closed Form Update Steps}
\begin{lemma}\label{lem:helper-1} Let ${\bf Q} \in \R^{A \times B}$ for some positive integers $A$ and $B$. Let $t \geq 0$ and $\norm{{{\bf Q}}}_F \neq 0$ then 
	\[
	\min_{{\bf X} \in \R^{A \times B}}\left\{ \act{{{\bf Q}}, {\bf X}} : \norm{\bf X}_F^2 = t^2 \right\} \equiv \min_{{\bf X} \in \R^{A \times B}}\left\{ \act{{{\bf Q}}, {\bf X}} : \norm{\bf X}_F^2 \leq t^2 \right\} = -{t}\norm{{{\bf Q}}}_F\,,
	 \]
	  with the minimizer at  ${\bf X}^* = -t{{\bf Q}}/\norm{{{\bf Q}}}_F$\,.
\end{lemma}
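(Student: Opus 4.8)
The plan is to reduce everything to the Cauchy--Schwarz inequality in the Frobenius inner product. First I would record the elementary bound: for any ${\bf X} \in \R^{A\times B}$,
\[
	\act{{\bf Q},{\bf X}} \geq -\norm{{\bf Q}}_F\norm{{\bf X}}_F\,,
\]
which is just Cauchy--Schwarz applied to the vectorizations of ${\bf Q}$ and ${\bf X}$. Since $\norm{{\bf Q}}_F \neq 0$, the standard equality case tells us that this inequality is tight precisely when ${\bf X}$ is a nonpositive scalar multiple of ${\bf Q}$.

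Next I would treat the two constrained problems in turn. For the equality constraint $\norm{{\bf X}}_F^2 = t^2$, substituting $\norm{{\bf X}}_F = t$ into the bound gives $\act{{\bf Q},{\bf X}} \geq -t\norm{{\bf Q}}_F$, so $-t\norm{{\bf Q}}_F$ is a valid lower bound on the optimal value. To show it is attained, I would simply plug in the candidate ${\bf X}^* = -t\,{\bf Q}/\norm{{\bf Q}}_F$, check feasibility ($\norm{{\bf X}^*}_F = t$ since $\norm{{\bf Q}/\norm{{\bf Q}}_F}_F = 1$), and compute $\act{{\bf Q},{\bf X}^*} = -t\norm{{\bf Q}}_F^2/\norm{{\bf Q}}_F = -t\norm{{\bf Q}}_F$, matching the bound. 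Hence the equality-constrained minimum equals $-t\norm{{\bf Q}}_F$ with minimizer ${\bf X}^*$.

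For the inequality constraint $\norm{{\bf X}}_F^2 \leq t^2$, I would observe that for any feasible ${\bf X}$ we have $\norm{{\bf X}}_F \leq t$, and combining this with $\norm{{\bf Q}}_F \geq 0$ in the Cauchy--Schwarz bound yields $\act{{\bf Q},{\bf X}} \geq -\norm{{\bf Q}}_F\norm{{\bf X}}_F \geq -t\norm{{\bf Q}}_F$. The same point ${\bf X}^*$ is feasible for this relaxed problem and attains $-t\norm{{\bf Q}}_F$, so the inequality-constrained minimum has the identical optimal value and minimizer. This establishes the claimed equivalence of the two problems and the common value $-t\norm{{\bf Q}}_F$.

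I do not anticipate a genuine obstacle here; the result is a direct application of Cauchy--Schwarz. The only point requiring a word of care is the equivalence claim: one must note that although the linear objective $\act{{\bf Q},\cdot}$ is unbounded below on all of $\R^{A\times B}$, its restriction to the closed ball is minimized on the boundary sphere, which is exactly why relaxing the equality constraint to an inequality does not change the optimal value.
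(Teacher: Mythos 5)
Your proof is correct. The paper itself states Lemma~\ref{lem:helper-1} without any proof, treating it as an elementary fact (its L1 analogue, Lemma~\ref{lem:helper-2}, is imported from the matrix-factorization reference), and your Cauchy--Schwarz argument is exactly the standard reasoning being implicitly relied upon: the lower bound $\act{{\bf Q},{\bf X}} \geq -\norm{{\bf Q}}_F\norm{{\bf X}}_F$, attainment at ${\bf X}^* = -t{\bf Q}/\norm{{\bf Q}}_F$, and the observation that the minimum over the ball is achieved on the sphere, which gives the equivalence of the two constrained problems. Your closing remark on why the relaxation to an inequality constraint is lossless is precisely the point that matters for its use in Proposition~\ref{prop:closed-form-1}, where the equality constraints $\norm{{\bf W_i}}_F = t_i$ are relaxed to $\norm{{\bf W_i}}_F \leq t_i$ without changing the optimal value.
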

Consider the following non-convex optimization problem
	\begin{equation}
	 \min_{{\bf W_i}\in \mathcal W_i\, \forall i\in [K]}\, \left\{ \Psi({\bf W_1},\ldots,{\bf W_N}) :=  \frac 12\norm{{\bf W_1W_2\ldots W_NX} - {\bf Y}}^2_{F} \right\}\,,
	\end{equation}
	Recall that $g = \frac 12\norm{{\bf W_1W_2\ldots W_NX} - {\bf Y}}^2_{F}$, $f :=0$ and $h$ as explained in Section~\ref{ssec:closed-form}.
\subsection{Proof of Proposition~\ref{prop:closed-form-1}}\label{sec:closed-form}
We use the same proof strategy as \cite[Proposition C.1]{MO2019a}.
	Consider the following subproblem, involved in the update step
	\begin{align*}
	({\bf W_1^{k+1}},\ldots, {\bf W_N^{k+1}}) &\in \underset{{({\bf W_1},\ldots,{\bf W_N}) \in C}}{\argmin} \left\{ \left(\sum_{i=1}^N\act{ {\bf P^k_i}, {\bf W_i}} \right)  + c_1(N) \left(\frac{\norm{{\bf W}}_F^2}{N} \right)^N + c_2(N)\left( \frac{\norm{{\bf W}}_F^2}{N}\right)^{\frac{N}{2}}  + \rho \left( \frac{\norm{{\bf W}}_F^2}{N}\right) \right\}\,.
	\end{align*}
	In order to solve the above minimization problem, we introduce additional optimization variables $t_1,\ldots,t_N\geq0$ and the constraint $\norm{{\bf W_i}}_F=t_i$ for all $i$. This splits the optimization problem, where the constraints of the inner problem with respect to ${\bf W_1},\ldots,{\bf W_N}$  can be relaxed to $\norm{{\bf W_i}}_F \leq t_i$ without changing the minimal value thanks to Lemma~\ref{lem:helper-1} . We arrive at 
 	\begin{align*}
		\min_{t_i \geq 0, \forall i\in [N]} &\left\{ \sum_{i=1}^N  \min_{{\bf W_i} \in \mathcal{W}_i} \left\{ \act{ {\bf P^k_i}, {\bf W_i}  } : \norm{{\bf W_i}}_F^2 \leq t_i^2\right\} + c_1(N)\left(\frac{\sum_{i=1}^{N}t_i^2}{N}\right)^N + c_2(N)\left(\frac{\sum_{i=1}^N t_i^2}{N}\right)^{\frac{N}{2}}  + \rho \left(\frac{\sum_{i=1}^N t_i^2}{N}\right)\right\}\,.
	\end{align*}
	 Then the solution to the subproblem for the $i$-th block due to Lemma~\ref{lem:helper-1}, in each iteration is as follows
	\begin{align*}
	{\bf W_i^{k+1}} 
	&= \left.
	\begin{cases}
	    t_i^*\frac{-{\bf P^k_i}}{\norm{{\bf P^k_i}}_F}, & \text{for } \norm{{\bf P^k_i}}_F \neq 0\,, \\
	    {\bf 0} &  otherwise\,.
	\end{cases}\right.
	\end{align*}
	We solve for $t_i^*$ with the following  minimization problem
	\begin{align*}
	\underset{t_i\geq 0, \forall i \in [N]}{\mathrm{argmin}}  \left\{  -\sum_{i=1}^Nt_i\norm{{\bf P^k_i}}_F   + c_1(N)\left(\frac{\sum_{i=1}^{N}t_i^2}{N}\right)^N + c_2(N)\left(\frac{\sum_{i=1}^N t_i^2}{N}\right)^{\frac{N}{2}}  + \rho \left(\frac{\sum_{i=1}^N t_i^2}{N}\right)\right\}.
	\end{align*}
	Thus, the solutions $t_i^*$ are the non-negative real roots of the following equations
	\begin{equation}\label{eq:system-of-eq}
	-\norm{{\bf P^k_i}}_F  + 2c_1(N)\left(\frac{\sum_{i=1}^{N}t_i^2}{N}\right)^{N-1}t_i + c_2(N)\left(\frac{\sum_{i=1}^N t_i^2}{N}\right)^{\frac{N}{2} - 1}t_i  + \frac{2\rho }{N} t_i = 0\,,\quad  \forall i \in [N]
	\end{equation}
	Substitute the following 
	\[
	t_i = r \frac{\sqrt{N}\,\norm{{\bf P^k_i}}_F}{\sqrt{\sum_{i=1}^{N}\norm{{\bf P^k_i}}_F^2}} \,, 
	\]
	which implies that $\frac{\sum_{i=1}^{N}t_i^2}{N} = r^2$ for certain $r>0$. Now, we find $r$ via substituting $t_i$ in \eqref{eq:system-of-eq}, which results in 
	\begin{equation}
	  2c_1(N) r^{2N-1} + c_2(N)r^{N - 1}  + \frac{2\rho }{N}r -\frac{\sqrt{\sum_{i=1}^{N}\norm{{\bf P^k_i}}_F^2}}{\sqrt{N}} = 0\,.
	\end{equation}

	The proof is similar for $N>2$ and $N$ being odd.
	\qed

\subsection{Weight decay or L2-Regularization}
Consider the following non-convex optimization problem
	\begin{equation}
	 \min_{{\bf W_i}\in \mathcal W_i\, \forall i\in [K]}\, \left\{ \Psi({\bf W_1},\ldots,{\bf W_N}) :=  \frac 12\norm{{\bf W_1W_2\ldots W_NX} - {\bf Y}}^2_{F} +\frac{\lambda_0}{2}\left(\sum_{i=1}^N\norm{{\bf W_i}}_F^2\right)\right\}\,.
	\end{equation}
	Denote $g := \frac 12\norm{{\bf W_1W_2\ldots W_NX} - {\bf Y}}^2_{F}$, $f :=\frac{\lambda_0}{2}\left(\sum_{i=1}^N\norm{{\bf W_i}}_F^2\right)$ and $h$ as explained in Section~\ref{ssec:closed-form}.

\begin{proposition}\label{prop:closed-form-2}
	In BPG, with above defined $g,f,h$, using the notation  ${\bf P^k_i} = {\bf P^k}_i\left({\bf W_1}^{{\bf k}},\ldots, {\bf W_N}^{{\bf k}} \right) = \lambda\nabla_{\bf W_i} g\left({\bf W_1}^{{\bf k}},\ldots, {\bf W_N}^{{\bf k}} \right) - \nabla_{\bf W_i} h({\bf W_1}^{{\bf k}},\ldots,{\bf W_N}^{{\bf k}})$\,. the update steps in each iteration are given by ${\bf W_{i}^{k+1}} = -r\,\frac{\sqrt{N}\,{\bf P^k_{i}}}{\norm{{\bf P}}_F}$ for all $i \in [N]$ where $r$ is the non-negative real root of for $N=2$
	\begin{equation}
	  2c_1(2) r^{3} + (c_2(2)+ \lambda\lambda_0)r  -\frac{\sqrt{\sum_{i=1}^{2}\norm{{\bf P^k_i}}_F^2}}{\sqrt{2}} = 0\,,
	\end{equation}
	If $N>2$ and even, we have
	\begin{equation}
		  2c_1(N) r^{2N-1} + c_2(N)r^{N - 1}  + \left(\frac{2\rho }{N} + \lambda\lambda_0\right)r -\frac{\sqrt{\sum_{i=1}^{N}\norm{{\bf P^k_i}}_F^2}}{\sqrt{N}} = 0\,,
	\end{equation}
	and if $N>2$ and odd, then 
	\begin{equation}
		  2c_1(N) r^{2N-1} + c_3(N)\left(\frac{N r^2+1}{N+1}\right)^{\frac{N-1}{2}}r  + \left(\frac{2\rho }{N} + \lambda\lambda_0\right)r -\frac{\sqrt{\sum_{i=1}^{N}\norm{{\bf P^k_i}}_F^2}}{\sqrt{N}} = 0\,.
	\end{equation}
	\end{proposition}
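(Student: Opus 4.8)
The plan is to mirror the argument of Proposition~\ref{prop:closed-form-1}, since the only structural change here is the addition of the smooth convex regularizer $f({\bf W}) = \frac{\lambda_0}{2}\sum_{i=1}^N \norm{{\bf W_i}}_F^2$. First I would write out the BPG subproblem defining $T_\lambda({\bf W}^k)$, expand $D_h(\cdot,{\bf W}^k)$, discard the terms constant in the optimization variable, and multiply through by $\lambda$. Using ${\bf P^k_i} = \lambda\nabla_{{\bf W_i}} g({\bf W}^k) - \nabla_{{\bf W_i}} h({\bf W}^k)$, this reduces the update to
\[
({\bf W_1^{k+1}},\ldots,{\bf W_N^{k+1}}) \in \argmin_{{\bf W}} \left\{ \sum_{i=1}^N \act{{\bf P^k_i},{\bf W_i}} + h({\bf W}) + \frac{\lambda\lambda_0}{2}\sum_{i=1}^N \norm{{\bf W_i}}_F^2 \right\}.
\]
The crucial observation, exactly as in Proposition~\ref{prop:closed-form-1}, is that both $h$ and the regularizer depend on ${\bf W}$ only through $\sum_{i=1}^N \norm{{\bf W_i}}_F^2$.

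Next I would introduce radial variables $t_i = \norm{{\bf W_i}}_F \geq 0$ and, via Lemma~\ref{lem:helper-1}, relax each inner constraint to $\norm{{\bf W_i}}_F \leq t_i$ without changing the optimal value. This yields the minimizer ${\bf W_i^{k+1}} = -t_i^* {\bf P^k_i}/\norm{{\bf P^k_i}}_F$ (and ${\bf 0}$ when ${\bf P^k_i} = {\bf 0}$) and reduces the problem to minimizing
\[
-\sum_{i=1}^N t_i \norm{{\bf P^k_i}}_F + c_1(N)\Big(\tfrac{\sum_i t_i^2}{N}\Big)^N + c_2(N)\Big(\tfrac{\sum_i t_i^2}{N}\Big)^{\frac N2} + \rho\, \tfrac{\sum_i t_i^2}{N} + \frac{\lambda\lambda_0}{2}\sum_{i=1}^N t_i^2
\]
over $t_i \geq 0$ in the even case (with $H_3$ replacing $H_2$ and $c_3$ replacing $c_2$ in the odd case). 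Writing the first-order optimality condition in each $t_i$, the regularizer contributes exactly the single extra term $\lambda\lambda_0 t_i$, which combines additively with the $\frac{2\rho}{N}t_i$ term already coming from $\rho H_4$; this is precisely the announced substitution $\frac{2\rho}{N} \mapsto \frac{2\rho}{N} + \lambda\lambda_0$.

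Finally I would perform the same radial substitution $t_i = r\,\sqrt{N}\,\norm{{\bf P^k_i}}_F / \sqrt{\sum_j \norm{{\bf P^k_j}}_F^2}$, which forces $\frac{1}{N}\sum_i t_i^2 = r^2$ and, after dividing each stationarity equation by $\norm{{\bf P^k_i}}_F$, collapses the $N$ coupled equations into a single scalar equation in $r$ for each parity of $N$ (for $N=2$ there is no $\rho H_4$ term, so $\frac{2\rho}{N}$ is absent and the new $\lambda\lambda_0 r$ merges with $c_2(2)r$, giving the displayed cubic). The only point requiring an argument beyond bookkeeping is well-definedness of $r$: I would note that the left-hand side of each scalar equation is strictly increasing on $[0,\infty)$—a sum of monomials with positive coefficients, with the factor $c_3(N)(\frac{Nr^2+1}{N+1})^{\frac{N-1}{2}}r$ likewise increasing in the odd case—added to the negative constant $-\norm{{\bf P}}_F/\sqrt{N}$, hence it possesses a unique non-negative real root. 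I do not anticipate a genuine obstacle: the entire work lies in tracking the one new term through the reduction already established in Proposition~\ref{prop:closed-form-1}.
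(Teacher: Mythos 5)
Your proposal is correct and follows essentially the same route as the paper: the paper's own proof simply states that the argument of Proposition~\ref{prop:closed-form-1} carries over verbatim, with the regularizer's contribution $\lambda\lambda_0 t_i$ absorbed into $\frac{2\rho}{N}$ for $N>2$ and into $c_2(2)$ for $N=2$, which is exactly the bookkeeping you carry out. Your added observation that each scalar equation has a unique non-negative root (strictly increasing left-hand side plus a negative constant) is a harmless extra that the paper leaves implicit.
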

	\begin{proof}
	The proof is exactly the same as Proposition~\ref{prop:closed-form-1} and the only change is in the value $\rho$ for $N>2$ and $c_2$ for $N=2$. For $N=2$, the results coincide with \cite{MO2019a}. \qedhere
	\end{proof}
\subsection{Closed Form Updates for L1 Regularization}\label{sec:closed-form-l1}
	Recall that the soft-thresholding operator is defined as follows $	\SSS_{\theta}(x) = \max\{|x|-\theta,0\}\sgn(x)\,,$
	where the operations are performed coordinate-wise. We consider below an extension of \eqref{eq:prob-2},
	\begin{equation}
	 \min_{{\bf W_i}\in \mathcal W_i\, \forall i\in [K]}\, \left\{ \Psi({\bf W_1},\ldots,{\bf W_N}) :=  \frac 12\norm{{\bf W_1W_2\ldots W_NX} - {\bf Y}}^2_{F} + \sum_{i=1}^N\mu_i \norm{{\bf W_i}}_1 \right\}\,,
	\end{equation}
	where  $\mu_i >0$ for all $i \in [N]$ and $\norm{{\bf W_i}}_1$ is the standard L1-norm, which denotes the sum of absolute of values of the all the elements in ${\bf W_i}$. We require the following technical result from \cite{MO2019a} before we provide the closed form solutions. 
	\begin{lemma}\label{lem:helper-2} Let ${\bf Q} \in \R^{A \times B}$ for some positive integers $A$ and $B$. Let $t_0 >0$ and let $t \geq 0$ then 
	\[
	 \min_{{\bf X} \in \R^{A \times B}}\left\{ \act{{{\bf Q}}, {\bf X}} + t_0\norm{{\bf X}}_1 : \norm{{\bf X}}_F^2 \leq t^2 \right\} = -t\norm{S_{t_0}(-{\bf Q})}_F\,.
	 \]
	 with the minimizer at ${\bf X^*} = t\frac{S_{t_0}(-{\bf Q})}{\norm{S_{t_0}(-{\bf Q})}_F}$ for $\norm{S_{t_0}(-{\bf Q})}_F \neq 0$ and otherwise all ${\bf X}$ such that $\norm{{\bf X}}_F^2 \leq t^2$ are minimizers. Moreover  we have the following equivalence,
	 \begin{equation}\label{eq:equivalence}
	 \min_{{\bf X} \in \R^{A \times B}}\left\{ \act{{{\bf Q}}, {\bf X}} + t_0\norm{{\bf X}}_1 : \norm{{\bf X}}_F^2 \leq t^2 \right\} \equiv \min_{{\bf X} \in \R^{A \times B}}\left\{ \act{{{\bf Q}}, {\bf X}} + t_0\norm{{\bf X}}_1 : \norm{{\bf X}}_F^2 = t^2 \right\} \,.
	 \end{equation}
	\end{lemma}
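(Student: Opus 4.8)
The plan is to reduce the claim to the unregularized projection already solved in Lemma~\ref{lem:helper-1} by absorbing the $\ell_1$ term entrywise through sign alignment and soft-thresholding. Writing $\psi({\bf X}) := \act{{\bf Q},{\bf X}} + t_0\norm{{\bf X}}_1$ and ${\bf Z} := -{\bf Q}$, I would first pass to the equivalent maximization $-\min\psi = \max\{ \act{{\bf Z},{\bf X}} - t_0\norm{{\bf X}}_1 : \norm{{\bf X}}_F^2 \le t^2\}$. The key structural observations are that the objective is separable across the entries $X_{ij}$, that the constraint couples them only through $\sum_{ij} X_{ij}^2$, and that $\psi$ is positively homogeneous of degree one; the latter will also drive the equivalence \eqref{eq:equivalence}.

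Next I would optimize in two stages, signs before magnitudes. For fixed magnitudes $a_{ij} := |X_{ij}| \ge 0$ the $(i,j)$ entry contributes $Z_{ij}X_{ij} - t_0|X_{ij}|$, which is largest when $\sgn(X_{ij}) = \sgn(Z_{ij})$, giving $(|Z_{ij}| - t_0)a_{ij}$. Because $a_{ij}\ge0$, any entry with $|Z_{ij}|\le t_0$ should be set to zero, so the effective nonnegative coefficient of $a_{ij}$ becomes $\max\{|Z_{ij}|-t_0,0\} = |S_{t_0}(Z_{ij})|$. This is precisely where soft-thresholding enters: the problem collapses to $\max\{\act{{\bf c},{\bf a}} : {\bf a}\ge0,\ \norm{{\bf a}}_F \le t\}$ with $c_{ij} := |S_{t_0}(Z_{ij})|$ and $\norm{{\bf c}}_F = \norm{S_{t_0}(-{\bf Q})}_F$.

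Now I would apply Cauchy--Schwarz exactly as in Lemma~\ref{lem:helper-1}: since ${\bf c}\ge0$, the maximum is $t\norm{{\bf c}}_F = t\norm{S_{t_0}(-{\bf Q})}_F$, attained at ${\bf a}^* = t{\bf c}/\norm{{\bf c}}_F$, which is feasible exactly because ${\bf c}\ge0$. Reinstating signs gives $X_{ij}^* = \sgn(Z_{ij})a_{ij}^* = t\,S_{t_0}(-Q_{ij})/\norm{S_{t_0}(-{\bf Q})}_F$, hence ${\bf X}^* = t\,S_{t_0}(-{\bf Q})/\norm{S_{t_0}(-{\bf Q})}_F$ and the minimal value $-t\norm{S_{t_0}(-{\bf Q})}_F$, as claimed. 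In the degenerate case $S_{t_0}(-{\bf Q}) = 0$ all effective coefficients vanish, the minimum value is $0$, and I would describe the resulting minimizer set directly.

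Finally, for the equivalence \eqref{eq:equivalence} I would use positive homogeneity: the substitution ${\bf X} = t{\bf U}$ shows the ball minimum equals $t\min_{\norm{{\bf U}}_F\le1}\psi({\bf U})$, and whenever this value is negative (the generic case $S_{t_0}(-{\bf Q})\ne0$) no interior point can be optimal, since for any ${\bf U}_0\ne0$ with $\psi({\bf U}_0)<0$ the rescaled point ${\bf U}_0/\norm{{\bf U}_0}_F$ lies on the sphere and satisfies $\psi({\bf U}_0)/\norm{{\bf U}_0}_F < \psi({\bf U}_0)$; thus the two problems attain the same value. The main obstacle is the bookkeeping of this two-stage decoupling: I must check that optimizing signs first and magnitudes second gives the global optimum — which holds because the sign of each entry is chosen independently and without constraint — and, crucially, that the Cauchy--Schwarz maximizer has nonnegative entries and therefore remains feasible, a property guaranteed exactly by the thresholding that forces ${\bf c}\ge0$.
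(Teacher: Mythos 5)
Your argument is correct wherever the lemma itself is correct, but note that there is no in-paper proof to compare it against: the paper imports Lemma~\ref{lem:helper-2} verbatim as a technical result from \cite{MO2019a} and never proves it, using it only inside the proof of Proposition~\ref{prop:closed-form-l1}. Your proof is therefore a self-contained addition rather than a variant of an existing argument. The route you take is sound: splitting each entry as $X_{ij}=s_{ij}a_{ij}$ and optimizing the signs first is legitimate because the constraint depends only on the magnitudes $a_{ij}$; after discarding entries with $|Z_{ij}|\leq t_0$ the effective coefficient of $a_{ij}$ is $\max\{|Z_{ij}|-t_0,0\}=|S_{t_0}(Z_{ij})|$, and the Cauchy--Schwarz step of Lemma~\ref{lem:helper-1} then applies, with feasibility of the maximizer guaranteed exactly by the nonnegativity of the thresholded coefficients. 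Reinstating signs gives $\sgn(Z_{ij})\max\{|Z_{ij}|-t_0,0\}=S_{t_0}(Z_{ij})$, hence the claimed minimizer and value.

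One point you should state rather than defer: the degenerate case $S_{t_0}(-{\bf Q})={\bf 0}$ is not mere bookkeeping; it is where the lemma as printed fails. There the minimal value is $0$, but it is not true that every feasible ${\bf X}$ is a minimizer (take ${\bf Q}={\bf 0}$: the objective is $t_0\norm{{\bf X}}_1$, which vanishes only at ${\bf X}={\bf 0}$), and the equivalence \eqref{eq:equivalence} also fails (same ${\bf Q}$ with $t>0$: the sphere-constrained value is $t_0t>0$ while the ball-constrained value is $0$). Your homogeneity argument, which establishes \eqref{eq:equivalence} precisely when the optimal value is negative, i.e.\ when $S_{t_0}(-{\bf Q})\neq{\bf 0}$ (the case $t=0$ being trivial), thus covers exactly the situations in which the claimed equivalence is true; in the paper's application this gap is harmless, since whenever the thresholded gradient block vanishes the outer minimization over the radii selects $t_i=0$ anyway.
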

	Denote $g := \frac 12\norm{{\bf W_1W_2\ldots W_NX} - {\bf Y}}^2_{F}$, $f := \sum_{i=1}^N\mu_i \norm{{\bf W_i}}_1$ and $h$ as explained in Section~\ref{ssec:closed-form}.
	\begin{proposition}\label{prop:closed-form-l1}
	In BPG, with above defined $g,f,h$, with the notation  ${\bf P^k_i} = {\bf P^k_i}\left({\bf W_1}^{{\bf k}},\ldots, {\bf W_N}^{{\bf k}} \right) = \lambda\nabla_{\bf W_i} g\left({\bf W_1}^{{\bf k}},\ldots, {\bf W_N}^{{\bf k}} \right) - \nabla_{\bf W_i} h({\bf W_1}^{{\bf k}},\ldots,{\bf W_N}^{{\bf k}})$\,, the update steps in each iteration are given by ${\bf W_{i}^{k+1}} = r\frac{\sqrt{N}\,\SSS_{\lambda \mu_i}(-{\bf P^k_i})}{\sqrt{\sum_{i=1}^{N}\norm{\SSS_{\lambda \mu_i}(-{\bf P^k_i})}_F^2}}$ for all $i \in [N]$  where for $N=2$, $r$ is the non-negative real root of 
	\begin{equation}
	  2c_1(2) r^{3} + c_2(2)r  -\frac{\sqrt{\sum_{i=1}^{2}\norm{\SSS_{\lambda \mu_i}(-{\bf P^k_i})}_F^2}}{\sqrt{2}} = 0\,.
	\end{equation}
	If $N>2$ and even, we have
	\begin{equation}
		  2c_1(N) r^{2N-1} + c_2(N)r^{N - 1}  +  \frac{2\rho }{N}r -\frac{\sqrt{\sum_{i=1}^{N}\norm{\SSS_{\lambda \mu_i}(-{\bf P^k_i})}_F^2}}{\sqrt{N}} = 0\,,
	\end{equation}
	and if $N>2$ and odd, then 
	\begin{equation}
		  2c_1(N) r^{2N-1} + c_3(N)\left(\frac{N r^2+1}{N+1}\right)^{\frac{N-1}{2}}r  + \frac{2\rho }{N}r -\frac{\sqrt{\sum_{i=1}^{N}\norm{\SSS_{\lambda \mu_i}(-{\bf P^k_i})}_F^2}}{\sqrt{N}} = 0\,.
	\end{equation}

	\end{proposition}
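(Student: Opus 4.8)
The plan is to follow the proof of Proposition~\ref{prop:closed-form-1} in Section~\ref{sec:closed-form} almost verbatim, with the single linear subproblem in each block replaced by an L1-regularized one and Lemma~\ref{lem:helper-1} replaced by Lemma~\ref{lem:helper-2}. First I would write the BPG step as
\[
{\bf W}^{k+1} \in \argmin_{u}\left\{ f(u) + \act{\nabla g({\bf W}^k), u} + \tfrac{1}{\lambda}D_h(u,{\bf W}^k)\right\},
\]
multiply the objective by $\lambda$ (which leaves the $\argmin$ unchanged) and discard the terms of $D_h(u,{\bf W}^k)$ that do not depend on $u$. This reduces the update to
\[
\argmin_{u}\left\{ \lambda f(u) + \act{{\bf P}^k, u} + h(u)\right\}, \qquad {\bf P}^k_i = \lambda\nabla_{{\bf W_i}}g({\bf W}^k) - \nabla_{{\bf W_i}}h({\bf W}^k).
\]
Since $\lambda f(u) = \sum_{i}\lambda\mu_i\norm{{\bf W_i}}_1$ separates over the blocks, while $h$ depends on ${\bf W}$ only through $\sum_i\norm{{\bf W_i}}_F^2$, the objective becomes $\sum_i\big(\act{{\bf P}^k_i,{\bf W_i}} + \lambda\mu_i\norm{{\bf W_i}}_1\big) + h({\bf W})$.

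Next I would introduce auxiliary radii $t_i\geq 0$ together with the constraints $\norm{{\bf W_i}}_F = t_i$, which by the equivalence \eqref{eq:equivalence} of Lemma~\ref{lem:helper-2} may be relaxed to $\norm{{\bf W_i}}_F \leq t_i$ without changing the optimal value. Because $h$ sees only $\sum_i t_i^2$, this splits the problem into $N$ inner minimizations over Frobenius balls and one outer minimization over $t=(t_1,\dots,t_N)$. Lemma~\ref{lem:helper-2} solves each inner problem exactly: its value is $-t_i\norm{\SSS_{\lambda\mu_i}(-{\bf P}^k_i)}_F$, attained at ${\bf W_i} = t_i\,\SSS_{\lambda\mu_i}(-{\bf P}^k_i)/\norm{\SSS_{\lambda\mu_i}(-{\bf P}^k_i)}_F$ (and at any admissible point when the soft-thresholded quantity vanishes). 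Substituting these values, the outer problem for even $N>2$ reads
\[
\min_{t_i\geq 0}\left\{ -\sum_{i=1}^N t_i\norm{\SSS_{\lambda\mu_i}(-{\bf P}^k_i)}_F + c_1(N)\Big(\tfrac{\sum_i t_i^2}{N}\Big)^{N} + c_2(N)\Big(\tfrac{\sum_i t_i^2}{N}\Big)^{\frac{N}{2}} + \rho\,\tfrac{\sum_i t_i^2}{N}\right\},
\]
with the $\rho$-term absent for $N=2$ (where $h=H_a$) and with $H_3$ in place of $H_2$ for odd $N$.

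I would then take first-order conditions in each $t_i$, obtaining
\[
\norm{\SSS_{\lambda\mu_i}(-{\bf P}^k_i)}_F = \Big[2c_1(N)\big(\tfrac{\sum_j t_j^2}{N}\big)^{N-1} + c_2(N)\big(\tfrac{\sum_j t_j^2}{N}\big)^{\frac{N}{2}-1} + \tfrac{2\rho}{N}\Big]t_i.
\]
Since the bracketed factor is common to all blocks, $t_i$ is proportional to $\norm{\SSS_{\lambda\mu_i}(-{\bf P}^k_i)}_F$; writing $t_i = r\sqrt{N}\,\norm{\SSS_{\lambda\mu_i}(-{\bf P}^k_i)}_F\big/\sqrt{\sum_j\norm{\SSS_{\lambda\mu_j}(-{\bf P}^k_j)}_F^2}$ forces $\sum_i t_i^2/N = r^2$, and dividing through by the common factor collapses the $N$ equations to the single scalar equation in $r$ stated in the proposition. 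Back-substitution into the inner minimizer yields the announced formula for ${\bf W_i^{k+1}}$; the odd case is identical with $H_3$, producing the $\big((Nr^2+1)/(N+1)\big)^{(N-1)/2}$ factor.

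The main obstacle I expect is not the algebra but justifying the two structural reductions cleanly rather than re-deriving them: that the equality constraint $\norm{{\bf W_i}}_F=t_i$ may be relaxed to an inequality (exactly the content of \eqref{eq:equivalence}, which must be invoked), and that the block-independent multiplicative factor legitimately forces $t_i\propto\norm{\SSS_{\lambda\mu_i}(-{\bf P}^k_i)}_F$ and that the resulting scalar equation admits a unique admissible root. For the latter I would observe that at $r=0$ the expression equals $-\sqrt{\sum_j\norm{\SSS_{\lambda\mu_j}(-{\bf P}^k_j)}_F^2}/\sqrt{N}\leq 0$, while the polynomial part has strictly positive coefficients and is thus strictly increasing to $+\infty$ on $[0,\infty)$, giving a unique nonnegative root; the degenerate case where every $\SSS_{\lambda\mu_i}(-{\bf P}^k_i)$ vanishes gives $r=0$ and ${\bf W_i^{k+1}}=0$.
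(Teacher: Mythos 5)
Your proposal is correct and follows essentially the same route as the paper's own proof: the same reduction of the BPG step to the subproblem in ${\bf W}$, the same introduction of radial variables $t_i$ with equality constraints relaxed to inequalities via Lemma~\ref{lem:helper-2}, the same soft-thresholding solution of the inner block problems with value $-t_i\norm{\SSS_{\lambda\mu_i}(-{\bf P^k_i})}_F$, and the same substitution $t_i \propto \norm{\SSS_{\lambda\mu_i}(-{\bf P^k_i})}_F$ collapsing the first-order conditions to a single scalar equation in $r$. Your added remarks on the uniqueness of the nonnegative root and on the degenerate case where all soft-thresholded gradients vanish go slightly beyond what the paper records, but they do not alter the argument.
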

	\begin{proof} We use the same proof strategy as \cite[Proposition C.1]{MO2019a}.The subproblem is
	\begin{align*}
	{\bf W^{k+1}} &\in \underset{{({\bf W_1},\ldots,{\bf W_N}) \in C}}{\argmin} \left\{ \sum_{i=1}^N\left(\lambda\mu_i \norm{{\bf W_i}}_1 + \act{ {\bf P^k_i} , {\bf W_i}}\right) + c_1(N) \left(\frac{\norm{{\bf W}}_F^2 }{N} \right)^N + c_2(N)\left( \frac{\norm{{\bf W}}_F^2}{N}\right)^{\frac{N}{2}}  + \rho \left( \frac{\norm{{\bf W}}_F^2 }{N}\right) \right\}\,.
	\end{align*}
	In order to solve the above minimization problem, we introduce additional optimization variables $t_1,\ldots,t_N\geq0$ and the constraint $\norm{{\bf W_i}}_F=t_i$ for all $i$. This splits the optimization problem, where the constraints of the inner problem with respect to ${\bf W_1},\ldots,{\bf W_N}$  can be relaxed to $\norm{{\bf W_i}}_F \leq t_i$ without changing the minimal value thanks to Lemma~\ref{lem:helper-2}.  We arrive at 
	\begin{align*}
				\min_{t_1 \geq 0, \ldots, t_N \geq 0} &\left\{ \sum_{i=1}^N  \min_{{\bf W_i} \in \mathcal{W}_i} \left\{ \act{ {\bf P^k_i}, {\bf W_i}  } + \lambda \mu_i\norm{{\bf W_i}}_1 : \norm{{\bf W_i}}_F^2 \leq  t_i^2\right\} \right. \\
	&\left. + c_1(N)\left(\frac{\sum_{i=1}^{N}t_i^2}{N}\right)^N + c_2(N)\left(\frac{\sum_{i=1}^N t_i^2}{N}\right)^{\frac{N}{2}}  + \rho \left(\frac{\sum_{i=1}^N t_i^2}{N}\right)\right\}\,.
	\end{align*}
	Lemma~\ref{lem:helper-1} provides for the $i$-th block  the optimal solution ${\bf{\widetilde W_i^*(t_i)}}$ and minimal function value $-t_i\norm{\mathcal S_{\lambda\mu_i}({\bf -P_i^k})}_F$ of the inner problem depending on $t_1,\ldots, t_N$. Thus, we obtain the solution as 
	\begin{align*}
	{\bf W_i^{k+1}} 
	&= \left.
	\begin{cases}
	    t_i^*\frac{\SSS_{\lambda \mu_i}(-{\bf P^k_i})}{\norm{\SSS_{\lambda \mu_i}(-{\bf P^k_i})}_F}, & \text{for } \norm{\SSS_{\lambda \mu_i}(-{\bf P^k_i})}_F \neq 0\,, \\
	    {\bf 0} &  otherwise\,.
	\end{cases}\right.
	\end{align*}
	We solve for $t_i^*$ with the following  minimization problem
	\begin{align*}
	\underset{t_i\geq 0, \forall i \in [N]}{\mathrm{argmin}}  \left\{  -\sum_{i=1}^Nt_i\norm{\SSS_{\lambda \mu_i}(-{\bf P^k_i})}_F   + c_1(N)\left(\frac{\sum_{i=1}^{N}t_i^2}{N}\right)^N + c_2(N)\left(\frac{\sum_{i=1}^N t_i^2}{N}\right)^{\frac{N}{2}}  + \rho \left(\frac{\sum_{i=1}^N t_i^2}{N}\right)\right\}.
	\end{align*}
	Thus, the solutions $t_i^*$ are the non-negative real roots of the following equations
	\begin{align*}
	&-\norm{\SSS_{\lambda \mu_i}(-{\bf P^k_i})}_F  + 2c_1(N)\left(\frac{\sum_{i=1}^{N}t_i^2}{N}\right)^{N-1}t_i + c_2(N)\left(\frac{\sum_{i=1}^N t_i^2}{N}\right)^{\frac{N}{2} - 1}t_i  + \frac{2\rho }{N} t_i = 0\,, \forall i \in [N]\,.
	\end{align*}
	Substitute the following 
	\[
	t_i = r \frac{\sqrt{N}\,\norm{\SSS_{\lambda \mu_i}(-{\bf P^k_i})}_F}{\sqrt{\sum_{i=1}^{N}\norm{\SSS_{\lambda \mu_i}(-{\bf P^k_i})}_F^2}} \,, 
	\]
	which implies that $\frac{\sum_{i=1}^{N}t_i^2}{N} = r^2$ for certain $r>0$. Now, we find $r$ via substituting $t_i$ in \eqref{eq:system-of-eq}, which results in 
	\begin{equation}
	  2c_1(N) r^{2N-1} + c_2(N)r^{N - 1}  + \frac{2\rho }{N}r -\frac{\sqrt{\sum_{i=1}^{N}\norm{\SSS_{\lambda \mu_i}(-{\bf P^k_i})}_F^2}}{\sqrt{N}} = 0\,.
	\end{equation}
	 The proof is similar for $N>2$ and $N$ being odd.

	\qedhere
	\end{proof}
\section{Closed Form Inertia}
\subsection{Proof of Proposition~\ref{lem:even-extrapolation}}\label{proof:even-extrapolation}

We use 
\[
	h({\bf W_1},\ldots,{\bf W_N}) = H_a({\bf W_1},\ldots,{\bf W_N}) + \rho H_4({\bf W_1},\ldots,{\bf W_N})\,,
\]
where 
\[
	H_a({\bf W_1},\ldots,{\bf W_N}) = c_1(N) H_1({\bf W_1},\ldots,{\bf W_N}) + c_2(N) H_2({\bf W_1},\ldots,{\bf W_N})\,.
\]
Now for any $x \in \barc,y \in C$, we have $D_{h_1 + h_2}(x,y) = D_{h_1}(x,y) + D_{h_2}(x,y)$ for any $h_1, h_2 \in \mathcal{G}(C)$. Thus,
\[
	D_{h}(x,y) = c_1(N) D_{H_1}(x,y) + c_2(N)D_{H_2}(x,y) + \rho D_{H_4}(x,y)\,.
\]
We solve $D_{h}\left(x^{k} , y^{k}\right) \leq \kappa D_{h}\left(x^{k - 1} , x^{k}\right)$ using the results from Lemma~\ref{lem:h1-inert}\,,\ref{lem:h2-inert}, to obtain
\[
	D_{h}\left(x^{k} , y^{k}\right) \leq \gamma_k^2\left(c_1(N) \mathcal{B}_k + c_2(N)\mathcal{C}_k + \rho \norm{\Delta_k}^2\right) \leq \kappa D_{h}\left(x^{k - 1} , x^{k}\right)\,.
\]
The proof for $N>2$ and $N$ being odd is similar.\qed

\subsection{Closed Form Inertia for Matrix Factorization}\label{proof:mat-fac-closed-form-inertia}
\begin{lemma}\label{lem:l-smad-helper-1}
	Given $h_1({\bf W_1},{\bf W_2}) := \left(\frac{\norm{\bf W_1}_F^2 +\norm{\bf W_2}_F^2 }{2} \right)^2$, then we have the following
	\begin{align*}
	\act{({\bf H}_{{\bf 1}},{\bf H}_{{\bf 2}}), \nabla^2 h_1({\bf W_1},{\bf W_2}) ({\bf H}_{{\bf 1}}, {\bf H}_{{\bf 2}})}  \leq 3\left( \norm{{\bf H_1}}_F^2 + \norm{{\bf H_2}}_F^2 \right)\left(\norm{{\bf W_1}}_F^2 + \norm{{\bf W_2}}_F^2\right)\,.
	\end{align*}
	Given $h_2 := \left(\frac{\norm{\bf W_1}_F^2 +\norm{\bf W_2}_F^2 }{2} \right)$, then we have the following
	\[
	\act{({\bf H}_{{\bf 1}},{\bf H}_{{\bf 2}}), \nabla^2 h_2({\bf W_1},{\bf W_2}) ({\bf H}_{{\bf 1}}, {\bf H}_{{\bf 2}})} = \norm{{\bf H}_{{\bf 1}}}_F^2 + \norm{{\bf H}_{{\bf 2}}}_F^2 \,.
	\]
	Then, with $h_a({\bf W_1},{\bf W_2}) =  3h_1({\bf W_1},{\bf W_2}) + \norm{{\bf Y}}_F h_2({\bf W_1},{\bf W_2})$ we have the following
	\begin{align*}
	&\act{({\bf H}_{{\bf 1}},{\bf H}_{{\bf 2}}), \nabla^2 h_a({\bf W_1},{\bf W_2}) ({\bf H}_{{\bf 1}}, {\bf H}_{{\bf 2}})}\leq 9\left( \norm{{\bf H_1}}_F^2 + \norm{{\bf H_2}}_F^2 \right)\left(\norm{{\bf W_1}}_F^2 + \norm{{\bf W_2}}_F^2\right) + \norm{{\bf Y}}_F \left(\norm{{\bf H}_{{\bf 1}}}_F^2 + \norm{{\bf H}_{{\bf 2}}}_F^2\right)\,.
	\end{align*}
	\end{lemma}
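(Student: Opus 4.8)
The plan is to observe that $h_1$ and $h_2$ are precisely the $N=2$ instances of the kernels $H_1$ and $H_2$ from Lemmas~\ref{lem:basic-h1} and~\ref{lem:basic-h2}: the expressions $\left(\frac{\norm{\bf W}_F^2}{N}\right)^N$ and $\left(\frac{\norm{\bf W}_F^2}{N}\right)^{N/2}$ reduce exactly to $h_1$ and $h_2$ when $N=2$. Hence the first two claims follow by specializing the upper bounds of those lemmas (for $N=2$ the constant $\frac{2(2N-1)}{N^{N-1}}$ equals $3$), and the third follows by linearity of the Hessian together with the first two. Since the $N=2$ case is simple enough, I would give a short self-contained verification rather than merely invoking the general lemmas.

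For $h_1$, first I would set $s:=\norm{\bf W_1}_F^2+\norm{\bf W_2}_F^2$ and expand $h_1({\bf W_1}+{\bf H_1},{\bf W_2}+{\bf H_2})=\tfrac{1}{4}\big(s+2a+b\big)^2$, where $a:=\act{{\bf W_1},{\bf H_1}}+\act{{\bf W_2},{\bf H_2}}$ collects the first-order part and $b:=\norm{\bf H_1}_F^2+\norm{\bf H_2}_F^2$ the purely second-order part of the perturbed argument. Collecting the terms quadratic in $({\bf H_1},{\bf H_2})$ gives $\tfrac{1}{2}\act{({\bf H_1},{\bf H_2}),\nabla^2 h_1({\bf W_1},{\bf W_2})({\bf H_1},{\bf H_2})}=a^2+\tfrac{1}{2}sb$, so that the Hessian quadratic form equals $2a^2+sb$. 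The one genuinely nontrivial estimate is the Cauchy--Schwarz bound $a^2\le (\norm{\bf W_1}_F^2+\norm{\bf W_2}_F^2)(\norm{\bf H_1}_F^2+\norm{\bf H_2}_F^2)=sb$, which yields $2a^2+sb\le 3sb$, exactly the first claimed inequality.

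For $h_2$ the argument is even more direct: since $h_2=\tfrac{1}{2}\norm{\bf W}_F^2$ is a scaled squared norm on the product space, its Hessian is the identity, so the quadratic form equals $b=\norm{\bf H_1}_F^2+\norm{\bf H_2}_F^2$ with equality, giving the second claim. Finally, by linearity of the Hessian, $\nabla^2 h_a=3\nabla^2 h_1+\norm{\bf Y}_F\nabla^2 h_2$; substituting the two bounds just obtained produces $9sb+\norm{\bf Y}_F b$, which is the third claim. I do not anticipate any real obstacle here: the only place an inequality (rather than an identity) enters is the Cauchy--Schwarz step on the cross term $a^2$, and everything else is bookkeeping of second-order terms plus additivity of the Hessian.
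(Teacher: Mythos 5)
Your proposal is correct and follows essentially the same route as the paper: the paper's proof simply invokes Lemma~\ref{lem:basic-h1} with $N=2$ for $h_1$, notes $h_2$ is trivial, and gets $h_a$ by linearity, while your self-contained expansion (quadratic form $2a^2+sb$ followed by Cauchy--Schwarz $a^2\le sb$) is exactly the $N=2$ instance of that lemma's proof. No gaps; the constants $3$, $1$, and $9+\norm{{\bf Y}}_F$ all check out.
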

	\begin{proof}
	The result regarding $h_1$ is from Lemma~\ref{lem:basic-h1} with $N=2$. The results for $h_2$ follows trivially (see for example \cite{MO2019a}). The statement for $h_a$ holds trivially.\qedhere
	\end{proof}
In the context of matrix factorization problem, where $N=2$, ${\bf X} =1$, $\norm{{\bf X}}_F = 1$, we obtain the following result on the extrapolation parameter.
\begin{lemma}\label{lem:mat-fac-closed-form-inertia} Denote $x^k = ({\bf W_1^k},\ldots,{\bf W_N^k})$. For $\kappa>0$, $y^k:=x^k+\gamma_k(x^k - x^{k-1})$ and $x^k \neq x^{k-1}$, the parameter $\gamma_k \in [0,1]$ such that  
\[
	0\leq\gamma_k \leq \sqrt{\frac{\kappa}{\left(\xi_1^k + \xi_2^k\right)}D_h(x^{k-1},x^k)}\,,
\]
satisfies the condition \eqref{eq:inertial-step}, where  $\xi_1^k = 42\norm{x^k-x^{k-1}}^4$ and $\xi_2^k = 15\left(\norm{x^k}^2+\frac{\norm{{\bf Y}}_F}{30}\right)\norm{x^k-x^{k-1}}^2$.
\end{lemma}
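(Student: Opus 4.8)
The plan is to reduce everything to the additive structure of the kernel. For the matrix factorization instance $N=2$, $\norm{{\bf X}}_F = 1$, the coefficients specialize to $c_1(2) = 3$ and $c_2(2) = \norm{{\bf Y}}_F$, so that the kernel is $h = h_a = 3h_1 + \norm{{\bf Y}}_F h_2$ with $h_1,h_2$ as in Lemma~\ref{lem:l-smad-helper-1}. Since Bregman distances are additive in the generating kernel (as already used in the proof of Proposition~\ref{lem:even-extrapolation}), I would write $D_h(x^k,y^k) = 3 D_{h_1}(x^k,y^k) + \norm{{\bf Y}}_F D_{h_2}(x^k,y^k)$. The whole lemma then follows from a single pointwise estimate $D_h(x^k,y^k) \le \gamma_k^2(\xi_1^k + \xi_2^k)$: substituting the hypothesis $\gamma_k \le \sqrt{\kappa D_h(x^{k-1},x^k)/(\xi_1^k+\xi_2^k)}$ gives $\gamma_k^2(\xi_1^k+\xi_2^k) \le \kappa D_h(x^{k-1},x^k)$, which is exactly condition~\eqref{eq:inertial-step}.

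The quadratic term is handled exactly. Because $h_2$ has constant Hessian equal to the identity on the product space, $D_{h_2}(x^k,y^k) = \tfrac{1}{2}\norm{x^k - y^k}^2 = \tfrac{1}{2}\gamma_k^2\norm{\Delta_k}^2$, using $x^k - y^k = -\gamma_k\Delta_k$. Hence the $h_2$-part contributes precisely $\tfrac{1}{2}\norm{{\bf Y}}_F\gamma_k^2\norm{\Delta_k}^2$, which is the $\tfrac{\norm{{\bf Y}}_F}{30}$ piece of $\xi_2^k$ (note $15\cdot\tfrac{1}{30} = \tfrac{1}{2}$). This exact evaluation, rather than the cruder Hessian bound used in Proposition~\ref{lem:even-extrapolation}, is where the improvement in the $\norm{{\bf Y}}_F$-dependent term comes from, which is precisely the regime $\norm{\Delta_k}\to 0$ that is relevant near convergence.

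For the quartic term I would use the integral representation of Lemma~\ref{lem:basic-ftc} together with the Hessian upper bound $\langle \nabla^2 h_1(z) H, H\rangle \le 3\norm{H}^2\norm{z}^2$ from Lemma~\ref{lem:l-smad-helper-1}. Writing $x^k - y^k = -\gamma_k\Delta_k$ pulls out a global factor $\gamma_k^2$, and along the segment $z = x^k + s\gamma_k\Delta_k$ with $s\in[0,1]$ I would bound $\norm{z}^2$ in terms of $\norm{x^k}^2$ and $\norm{\Delta_k}^2$, using $\gamma_k\le 1$ to discard the higher powers $\gamma_k^3,\gamma_k^4$ that otherwise arise, and $\int_0^1(1-t)\,dt = \tfrac{1}{2}$. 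The cross terms $\langle x^k,\Delta_k\rangle$ are controlled by Cauchy--Schwarz and Young's inequality, collapsing the estimate into a combination of $\norm{x^k}^2\norm{\Delta_k}^2$ and $\norm{\Delta_k}^4$; multiplying by $3$ produces the $15\norm{x^k}^2\norm{\Delta_k}^2$ and $42\norm{\Delta_k}^4$ contributions of $\xi_1^k+\xi_2^k$.

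The main obstacle is the constant bookkeeping: choosing the segment bound on $\norm{z}^2$ and the weights in Young's inequality for the mixed terms so that the quartic estimates aggregate to exactly the stated coefficients, while invoking $\gamma_k\le 1$ consistently to reduce every power $\gamma_k^j$ with $j\ge 2$ to the common factor $\gamma_k^2$. A secondary point to verify is that the prescribed interval is compatible with $\gamma_k\in[0,1]$, i.e. that one effectively takes $\gamma_k \le \min\{1,\sqrt{\kappa D_h(x^{k-1},x^k)/(\xi_1^k+\xi_2^k)}\}$, since the inequality $\gamma_k\le 1$ is itself used inside the derivation of the pointwise bound.
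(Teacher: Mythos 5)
Your plan is essentially the paper's own proof: both reduce condition \eqref{eq:inertial-step} to the pointwise estimate $D_h(x^k,y^k)\le \gamma_k^2\left(\xi_1^k+\xi_2^k\right)$ by combining the integral representation of Lemma~\ref{lem:basic-ftc} with the Hessian bound of Lemma~\ref{lem:l-smad-helper-1} (the quadratic part of which is exact, giving the same $\tfrac{1}{2}\norm{{\bf Y}}_F\gamma_k^2\norm{\Delta_k}^2$ term you obtain by evaluating $D_{h_2}$ directly), the segment estimate $\norm{x^k+s(y^k-x^k)}^2\le 2\norm{x^k}^2+2s^2\norm{x^k-y^k}^2$, and $\gamma_k\le 1$ to absorb $\gamma_k^4$ into $\gamma_k^2$. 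Your worry about the constant bookkeeping is harmless: the paper's own computation produces the tighter values $6\norm{\Delta_k}^4$ and $9\left(\norm{x^k}^2+\tfrac{\norm{{\bf Y}}_F}{18}\right)\norm{\Delta_k}^2$, so any reasonable execution of your plan lands at or below the stated coefficients $42$ and $15$, and the lemma follows a fortiori.
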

 
\begin{proof}
From Lemma~\ref{lem:basic-ftc} we obtain
\begin{align*}
&\int_{0}^1 \left(1-t\right)\int_{0}^1 \act{ \nabla^2h\left(x^k + (t_1+ (1-t_1)t)(y^k-x^k)\right)(x^k-y^k), x^k-y^k}dt_1dt\\
& \leq \int_{0}^1 \left(1-t\right)\int_{0}^1  9\norm{x^k-y^k}^2\norm{x^k + (t_1+ (1-t_1)t)(y^k-x^k)}^2 + \norm{{\bf Y}}_F \norm{x^k-y^k}^2dt_1dt\\
&\leq \int_{0}^1\int_{0}^1 18\left(1-t\right) \left(\norm{x^k}^2+\frac{\norm{{\bf Y}}_F}{18}\right)\norm{x^k-y^k}^2dt_1dt + \int_{0}^1\int_{0}^1 + 18 \left(1-t\right)(t_1+ (1-t_1)t)^2\norm{x^k-y^k}^4 dt_1dt\\
& = 9\left(\norm{x^k}^2+\frac{\norm{{\bf Y}}_F}{18}\right)\norm{x^k-y^k}^2  + \int_{0}^1 18\left(1-t\right)(2t^2 + \frac{1}{3})\norm{x^k-y^k}^4dt\\
& = 9\left(\norm{x^k}^2+\frac{\norm{{\bf Y}}_F}{18}\right)\norm{x^k-y^k}^2  + 6\norm{x^k-y^k}^4\\
& = 9\gamma_k^2\left(\norm{x^k}^2+\frac{\norm{{\bf Y}}_F}{18}\right)\norm{x^k-x^{k-1}}^2  + 6\gamma_k^4\norm{x^k-x^{k-1}}^4\,,
\end{align*}
where in the first inequality we used Lemma~\ref{lem:l-smad-helper-1} and the second inequality is due to the following
\begin{align*}
	\norm{x^k + (t_1+ (1-t_1)t)(y^k-x^k)}^{2} &\leq 2\norm{x^k}^2 +  2(t_1+ (1-t_1)t)^{2}\norm{x^k - y^{k}}^2\,.
\end{align*}
Denote $\xi_2^k = 9\left(\norm{x^k}^2+\frac{\norm{{\bf Y}}_F}{18}\right)\norm{x^k-x^{k-1}}^2$ and $\xi_1^k = 6\norm{x^k-x^{k-1}}^4$ we have 
\[
	\xi_1^k\gamma_k^4 + \xi_2^k\gamma_k^2 \leq \kappa D_h(x^{k-1},x^k)\,,
\]
and the result follows due to the condition  $0\leq \gamma \leq 1$. \qedhere
\end{proof}
Note that for a general ${\bf X}$, we need to set $\xi_2^k := 15\left(\norm{x^k}^2+\frac{\norm{{\bf Y}}_F\norm{{\bf X}}_F}{30}\right)\norm{x^k-x^{k-1}}^2$.

\section{Additional Experiments}

We provide time plots and statistical evaluation for the same experimental setting introduced in section \ref{experiment1}. In these experiments, we set the regularization parameter $\lambda_0=0.1$, the step size $\lambda$ of BPG to $0.99$ and $\rho=1$.  For iPALM we use two settings $\beta=0.2$ and $\beta=0.4$. Furthermore, we present convergence plots of a second experiment.

\subsection{Time plots}
The results for time comparison are given in Figure \ref{fig:exp1_time}. For better visualization an offset of $10^{-2}$ is used in the time plots. \ifpaper\else\medskip\fi

In most of the settings the convergence speed of CoCaIn BPG is similar to iPiano-WB. The alternating schemes PALM and iPALM do not include a time consuming backtracking mechansim. In terms of speed, this results in a better performance for the non-regularized DLNN problem. However, in the regularized setting BPG based methods with a possibly more effective update step remain superior together with iPiano-WB. In this experiment, there is no clear speed advantage of CoCaIn BPG over CoCaIn BPG CFI. The size of the used data is small yet and the strength of the closed form intertial BPG might lie in large scale datasets.

\begin{figure*}[hbt!]
  \centering
  \begin{tabular}[b]{c}
    \includegraphics[width=.3\textwidth]{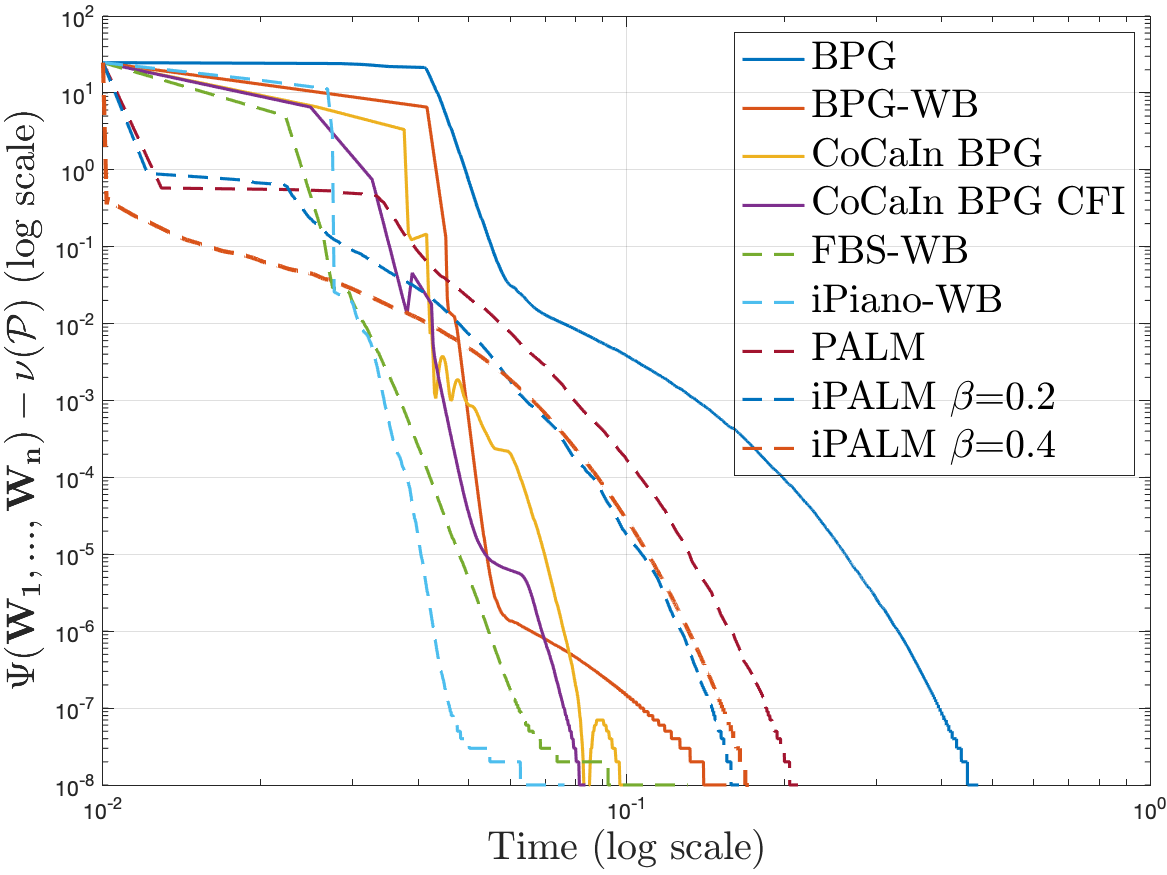} \\
    \small (a) L2-Regularization ($N=3$)
  \end{tabular}
  \begin{tabular}[b]{c}
    \includegraphics[width=.3\textwidth]{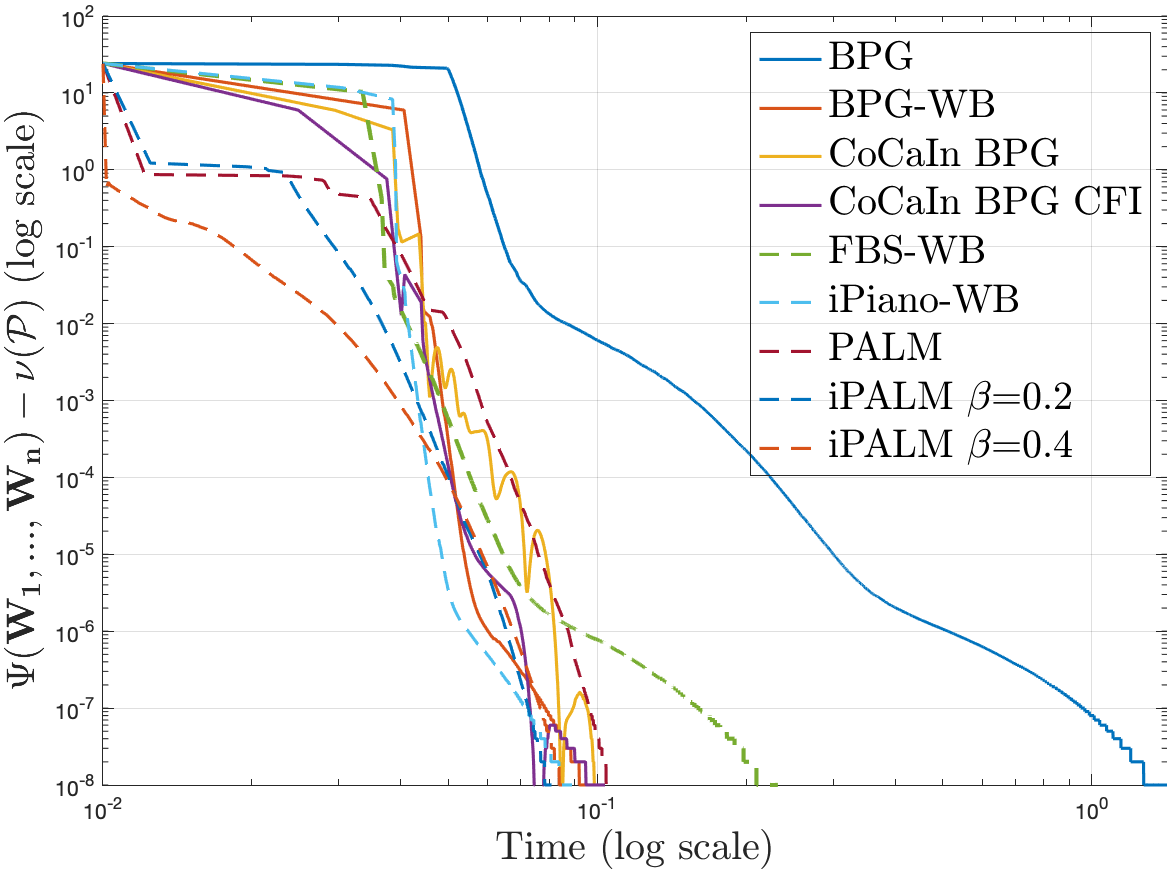} \\
    \small (b) L1-Regularization ($N=3$)
  \end{tabular} 
    \begin{tabular}[b]{c}
    \includegraphics[width=.3\textwidth]{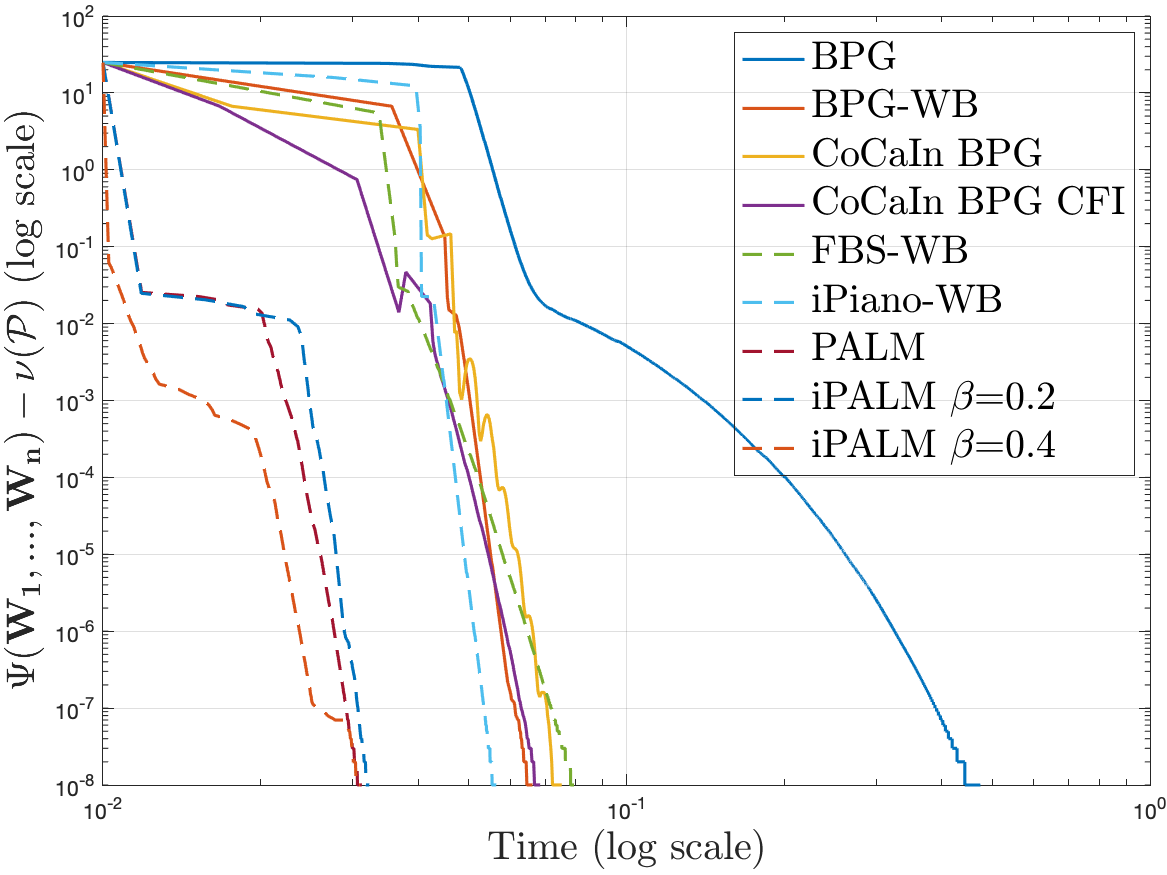} \\
    \small (c) No Regularization ($N=3$)
  \end{tabular} 
  \begin{tabular}[b]{c}
    \includegraphics[width=.3\textwidth]{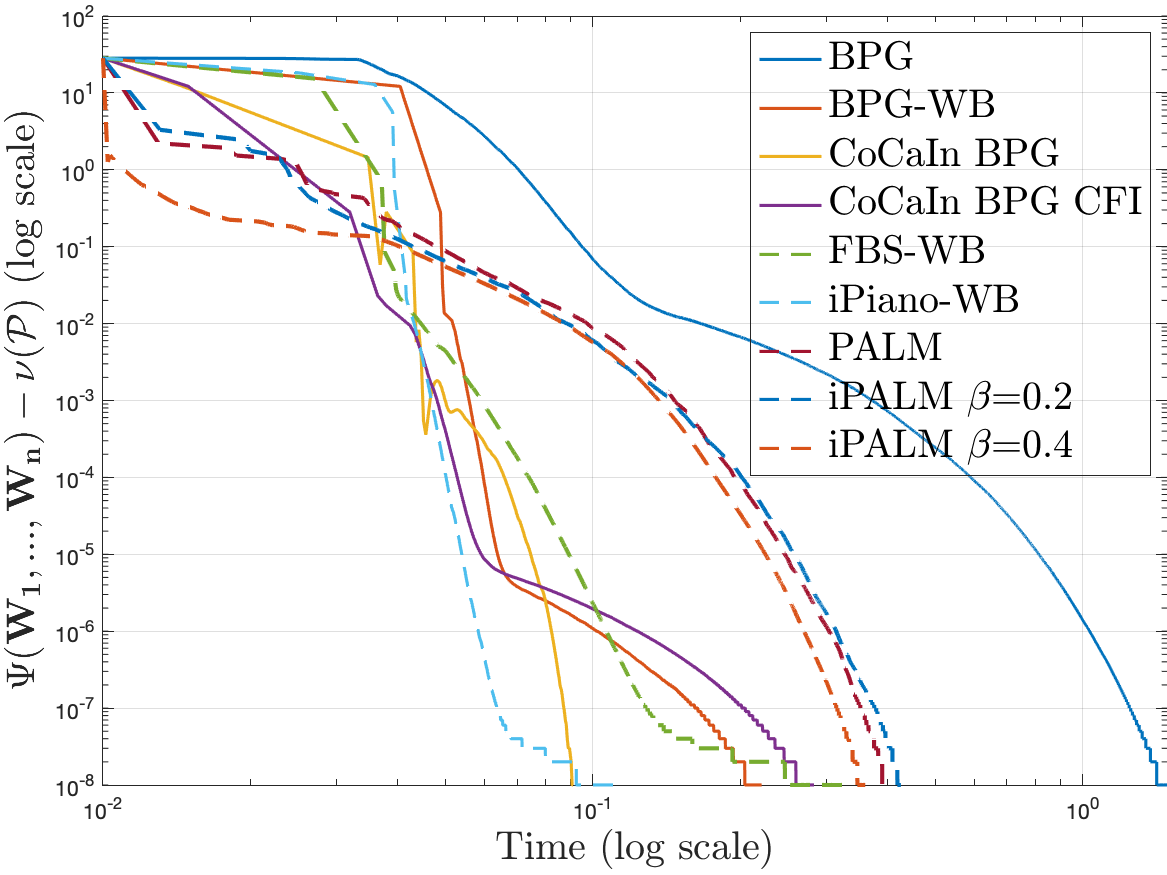} \\
    \small (d) L2-Regularization ($N=4$)
  \end{tabular}
  \begin{tabular}[b]{c}
    \includegraphics[width=.3\textwidth]{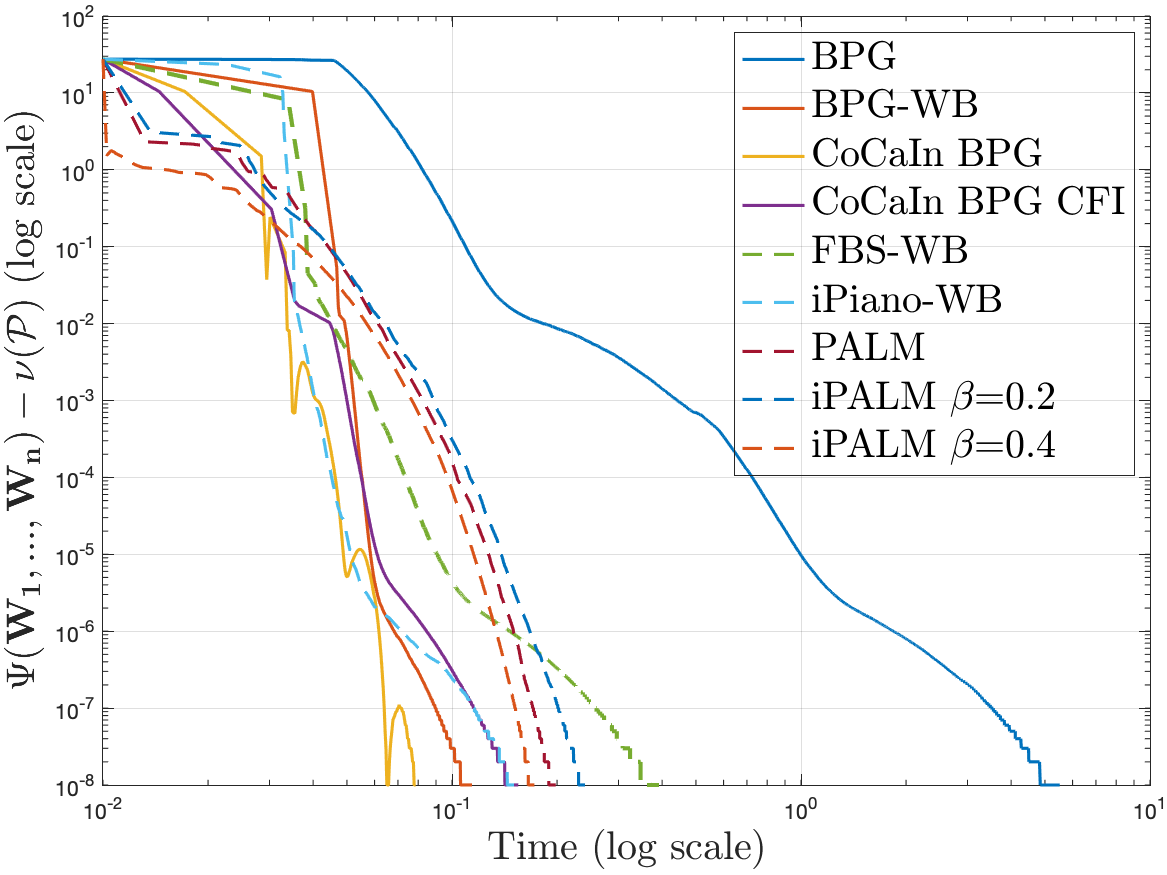} \\
    \small (e) L1-Regularization ($N=4$)
  \end{tabular}
      \begin{tabular}[b]{c}
    \includegraphics[width=.3\textwidth]{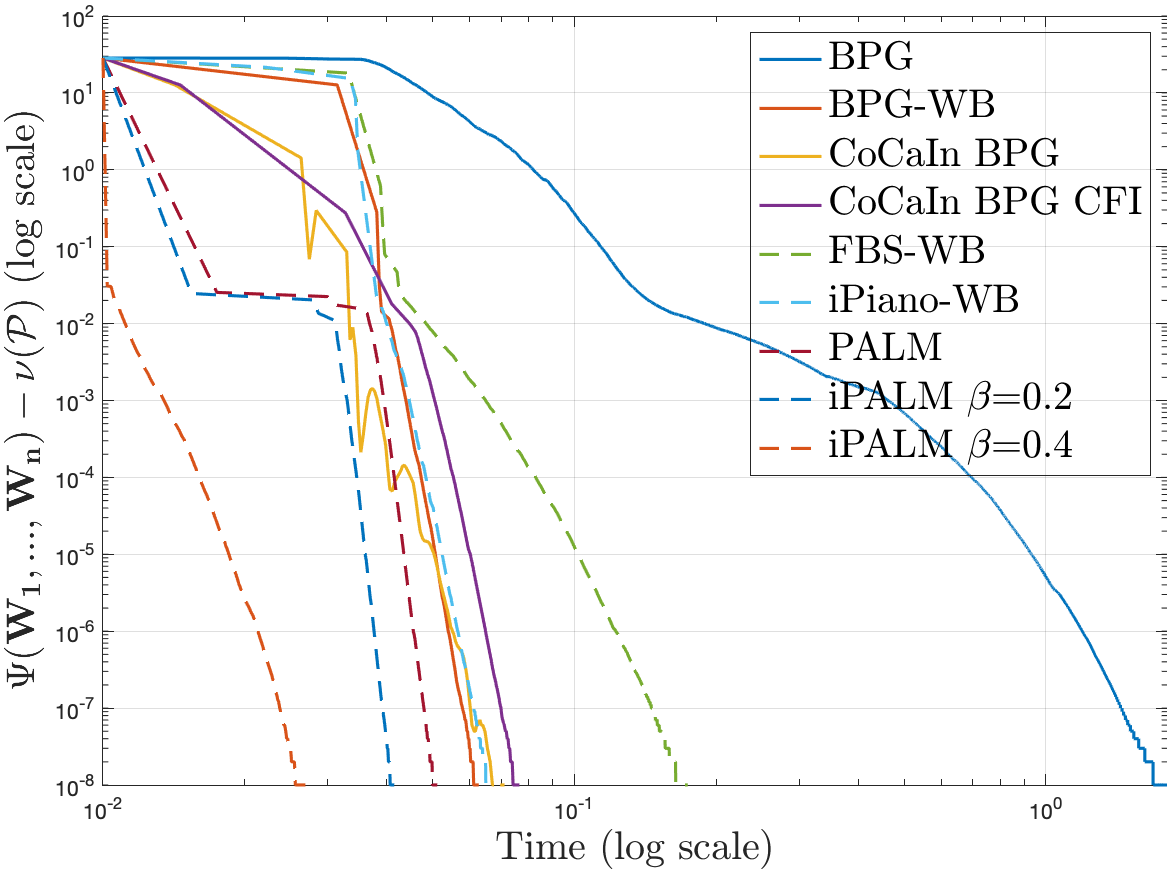} \\
    \small (f) No Regularization ($N=4$)
  \end{tabular} 
  \begin{tabular}[b]{c}
    \includegraphics[width=.3\textwidth]{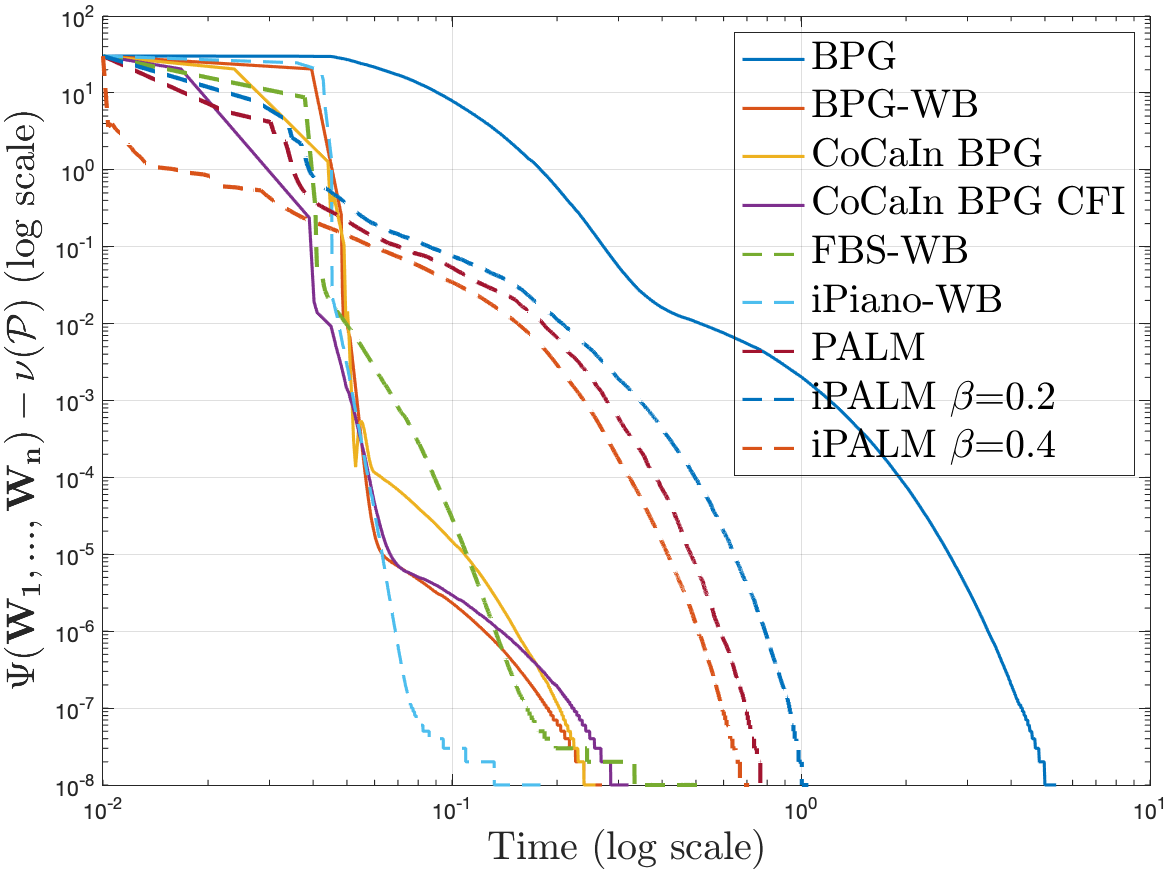} \\
    \small (g) L2-Regularization ($N=5$)
  \end{tabular}
  \begin{tabular}[b]{c}
    \includegraphics[width=.3\textwidth]{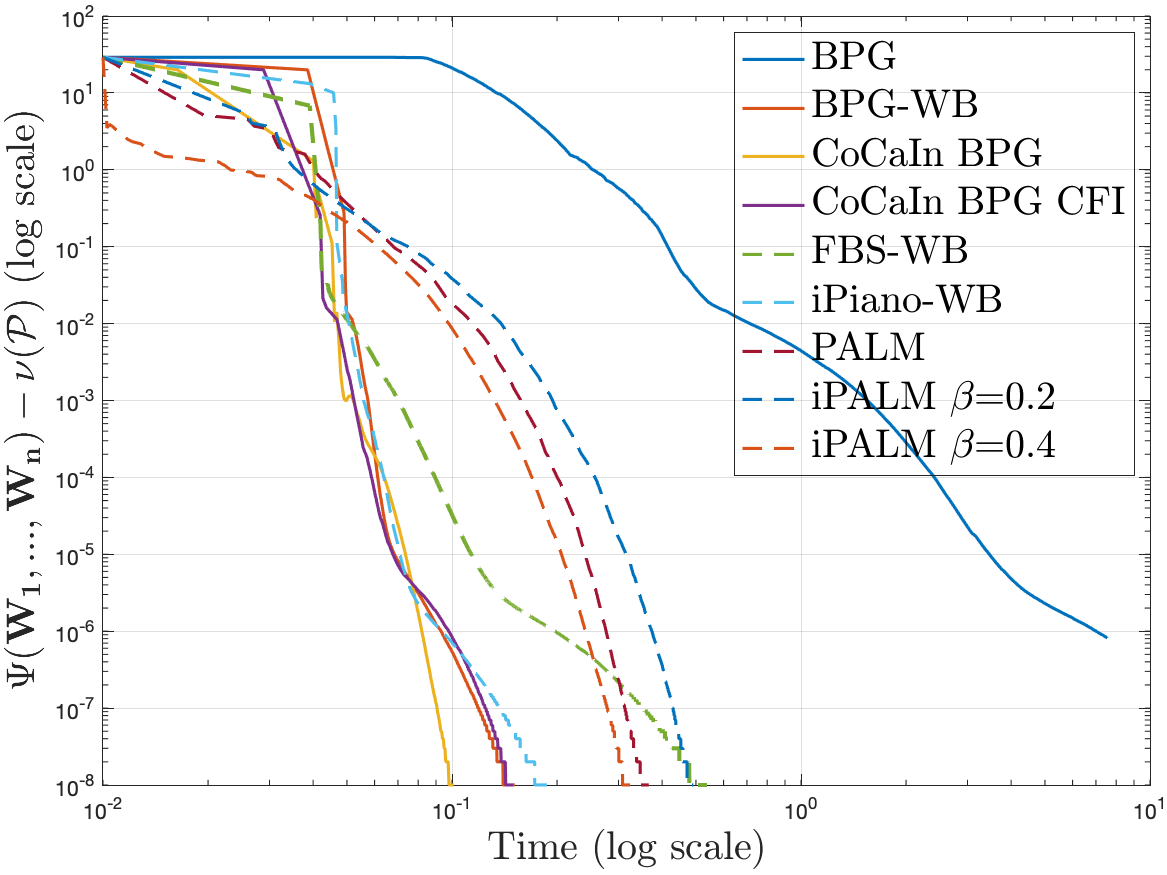} \\
    \small (h) L1-Regularization ($N=5$)
  \end{tabular} 
    \begin{tabular}[b]{c}
    \includegraphics[width=.3\textwidth]{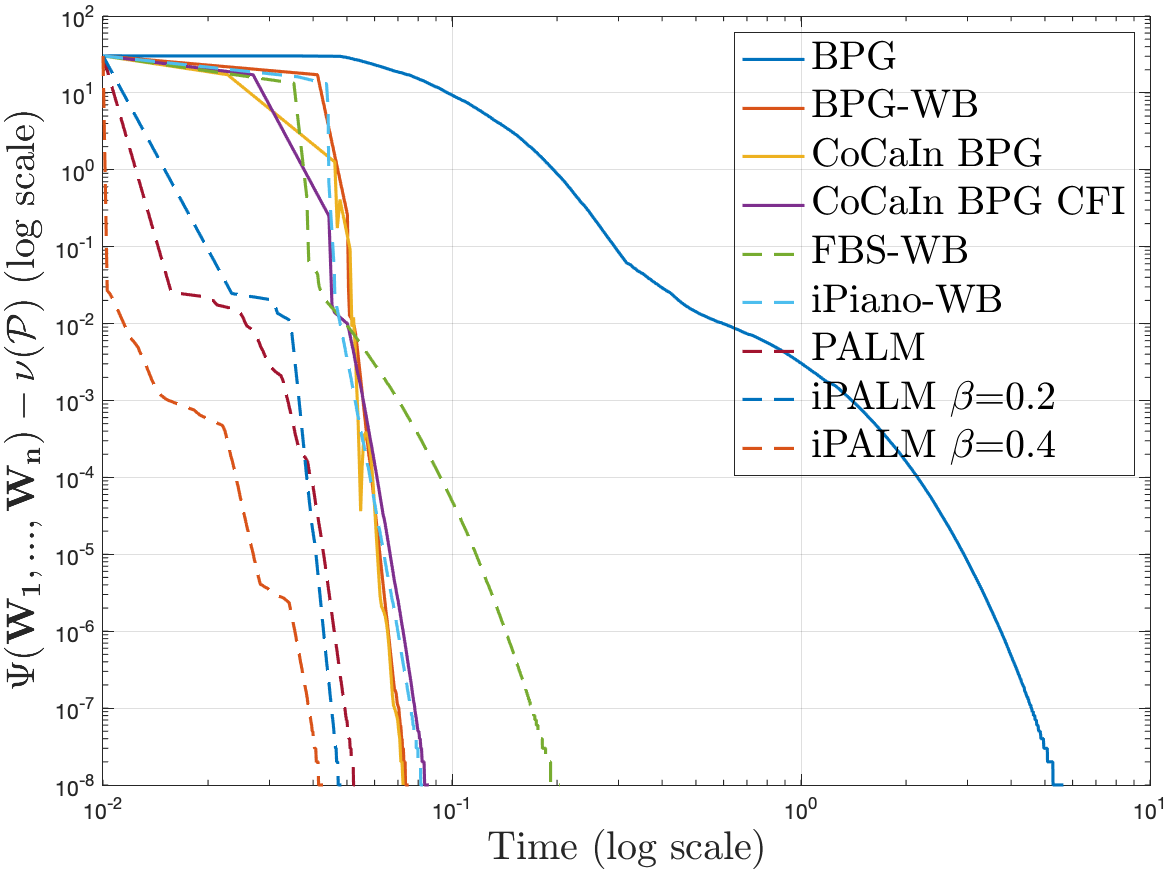} \\
    \small (g) No Regularization ($N=5$)
  \end{tabular} 
  \caption{Time plots illustrating the competitive performance of BPG methods.}
  \label{fig:exp1_time}
\end{figure*}

\subsection{Statistical evaluation}
For the statistical evaluation, we used the same experimental setting as before but varied the weight initialization: The weights are initialized randomly with values in $[0,0.1]$. We conduct 40 experiments with different seeds and plot the final result of the algorithms after 10,000 iterations. The results for three layers ($N=3$) are provided in Figures \ref{fig:exp1_stat_eval_l2reg}, \ref{fig:exp1_stat_eval_l1reg} and \ref{fig:exp1_stat_eval_noreg}. Note the different range of the $x$-axis for each algorithm.\ifpaper\else\medskip\fi

The performance of CoCaIn BPG is significantly better relative to the other algorithms in case of L2-Regularization. Here, BPG, BPG-WB and CoCaIn BPG CFI converge more often to a worse solution than FBS-based and the alternating algorithms. However, when L1-Regularization is used, both CoCaIn BPG and CoCaIn BPG CFI are superior, with CoCaIn BPG CFI being more stable than CoCaIn BPG. BPG does not fully converge within 10,000 steps.\ifpaper\else\medskip\fi

Without a regularizer, PALM and iPALM constantly generates the best results. CoCaIn BPG is competitive to FBS-WB but not to iPiano-WB.

\begin{figure*}[phbt!]
  \centering
  \scalebox{1}{\begin{tabular}[b]{c}
    \includegraphics[width=.3\textwidth]{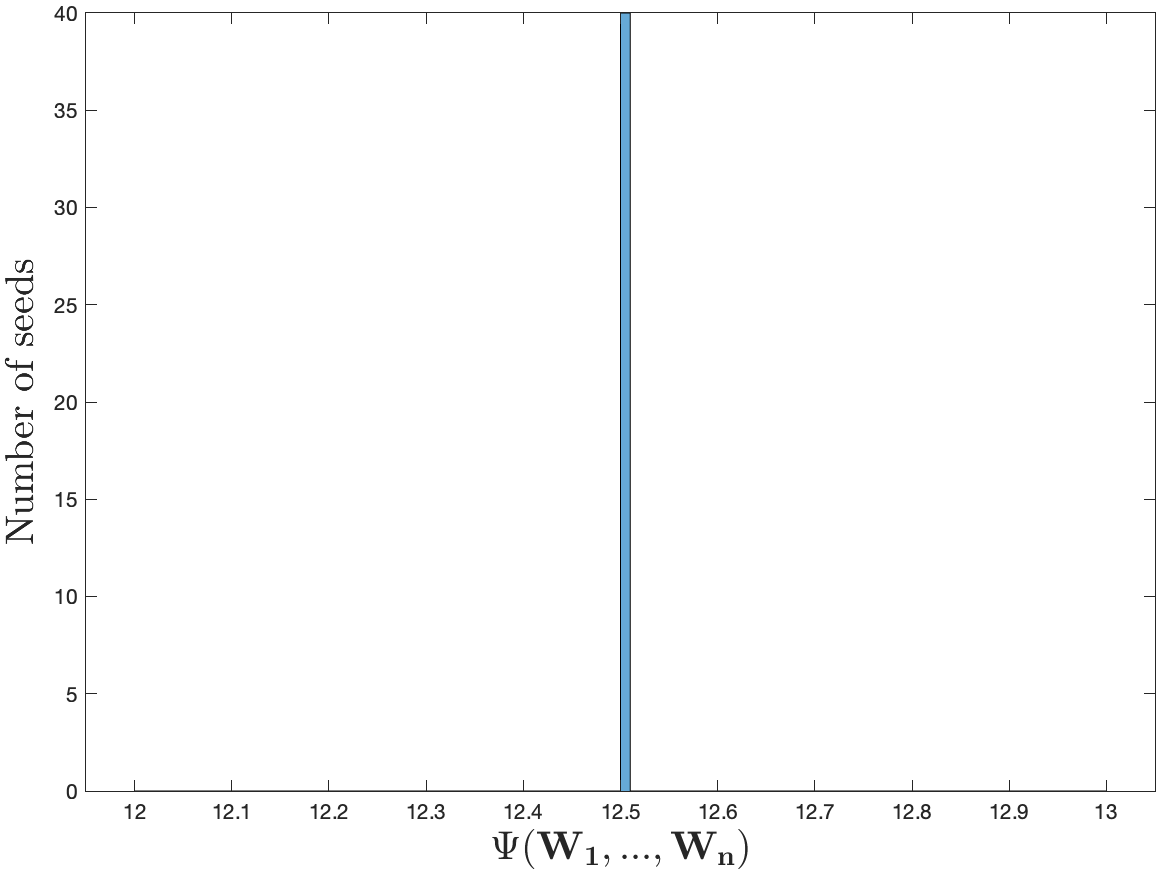} \\
    \small (a) BPG
  \end{tabular}} 
  \scalebox{1}{\begin{tabular}[b]{c}
    \includegraphics[width=.3\textwidth]{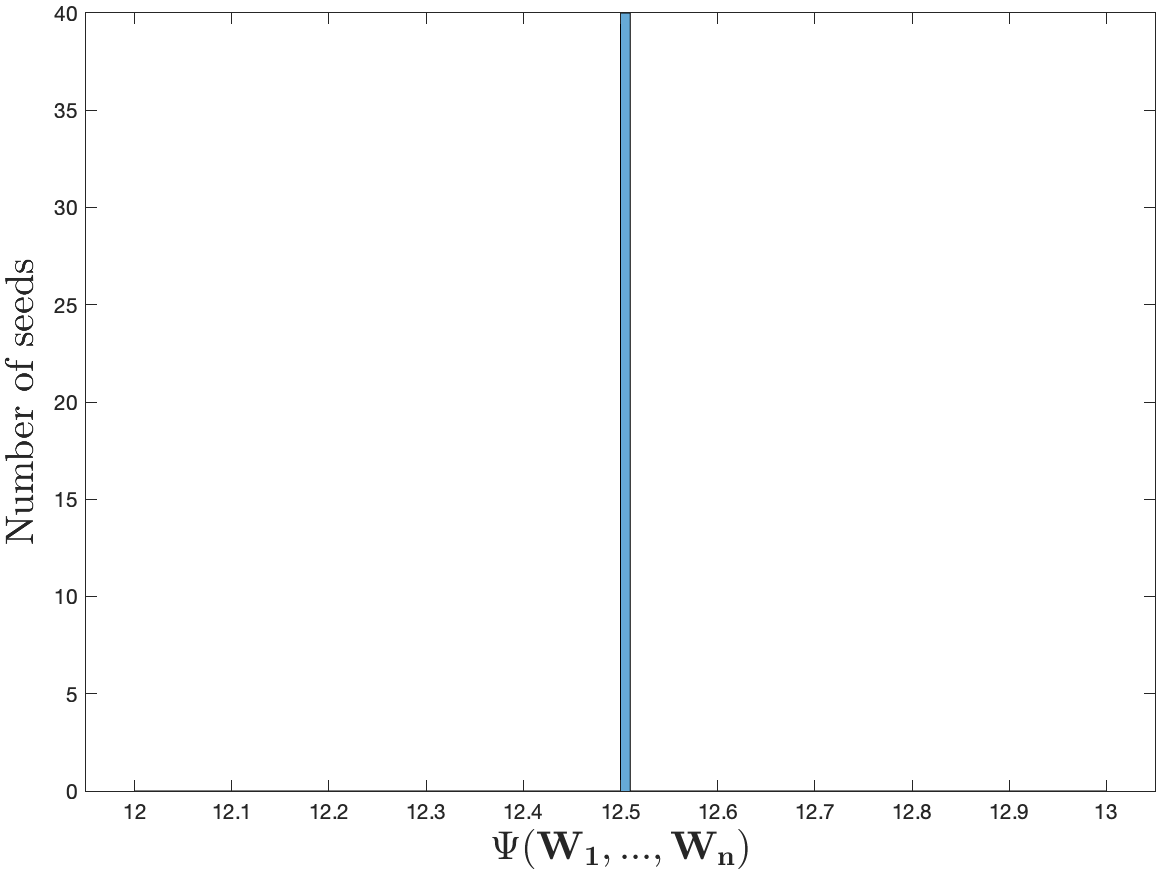} \\
    \small (b) BPG-WB
  \end{tabular}} 
  \scalebox{1}{\begin{tabular}[b]{c}
    \includegraphics[width=.3\textwidth]{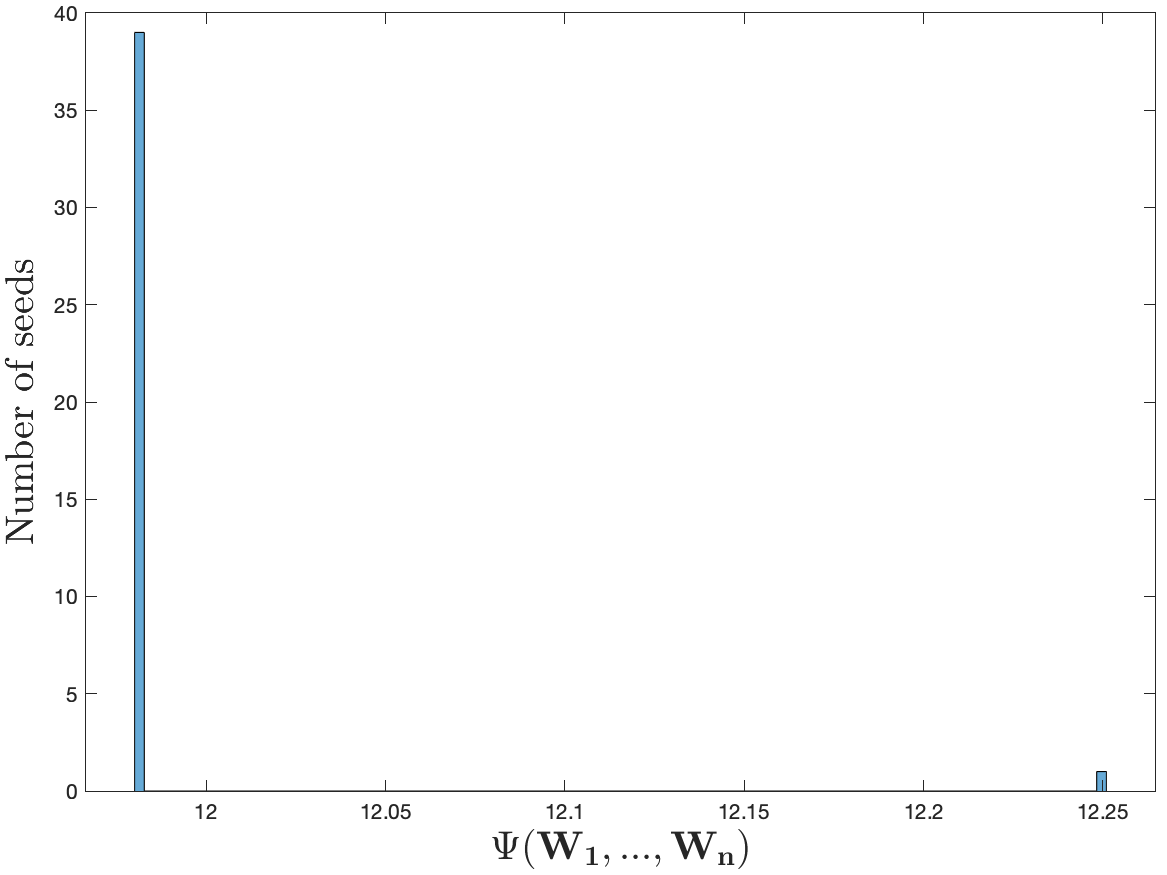} \\
    \small (c) CoCaIn BPG
  \end{tabular} } 
  \scalebox{1}{\begin{tabular}[b]{c}
    \includegraphics[width=.3\textwidth]{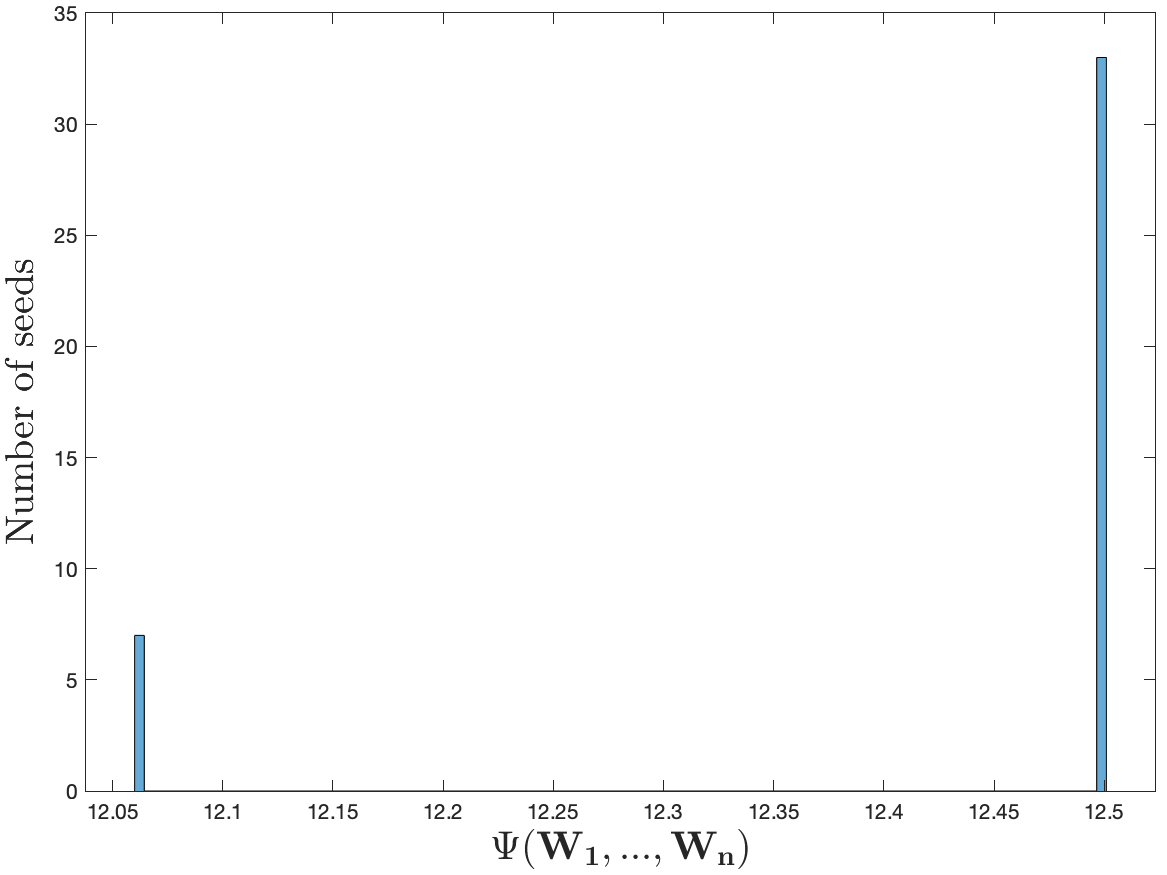} \\
    \small (d) CoCaIn BPG CFI
  \end{tabular} } 
  \scalebox{1}{\begin{tabular}[b]{c}
    \includegraphics[width=.3\textwidth]{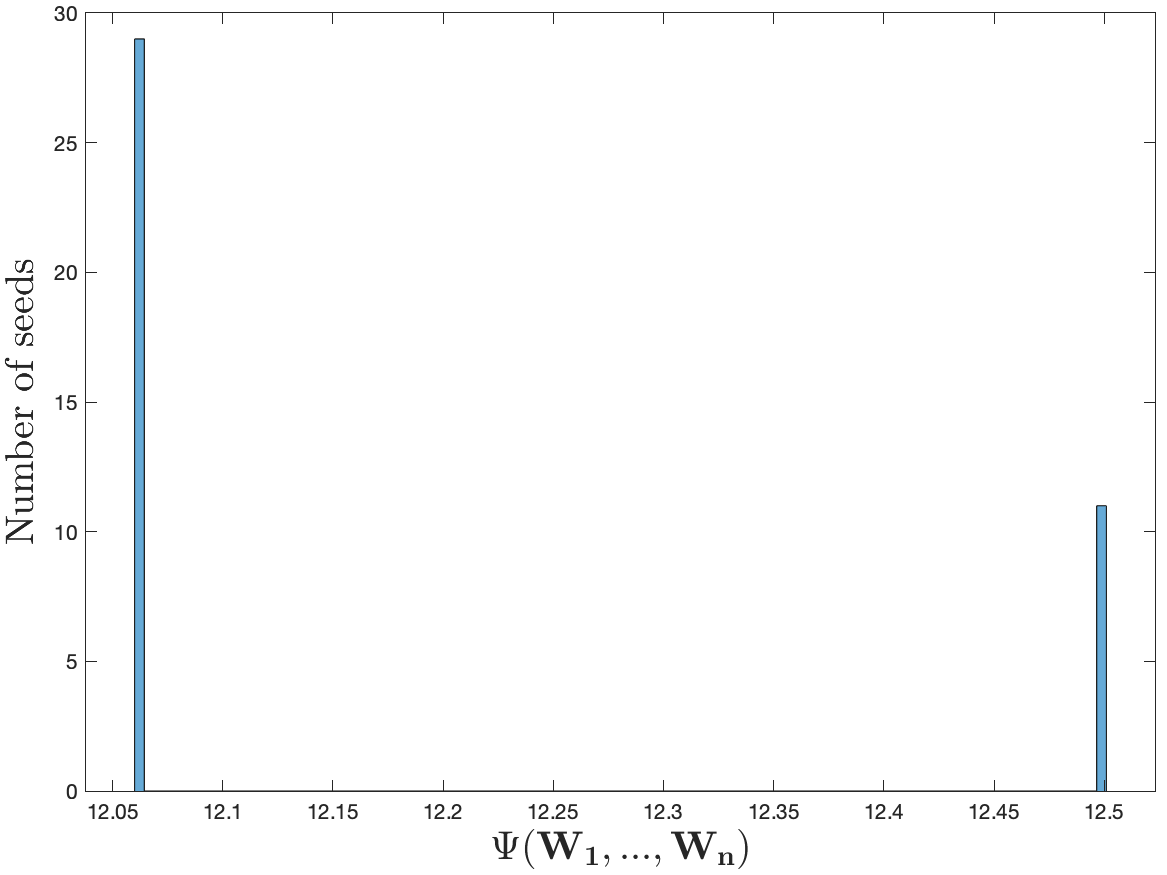} \\
    \small (e) PALM
  \end{tabular}} 
  \scalebox{1}{\begin{tabular}[b]{c}
    \includegraphics[width=.3\textwidth]{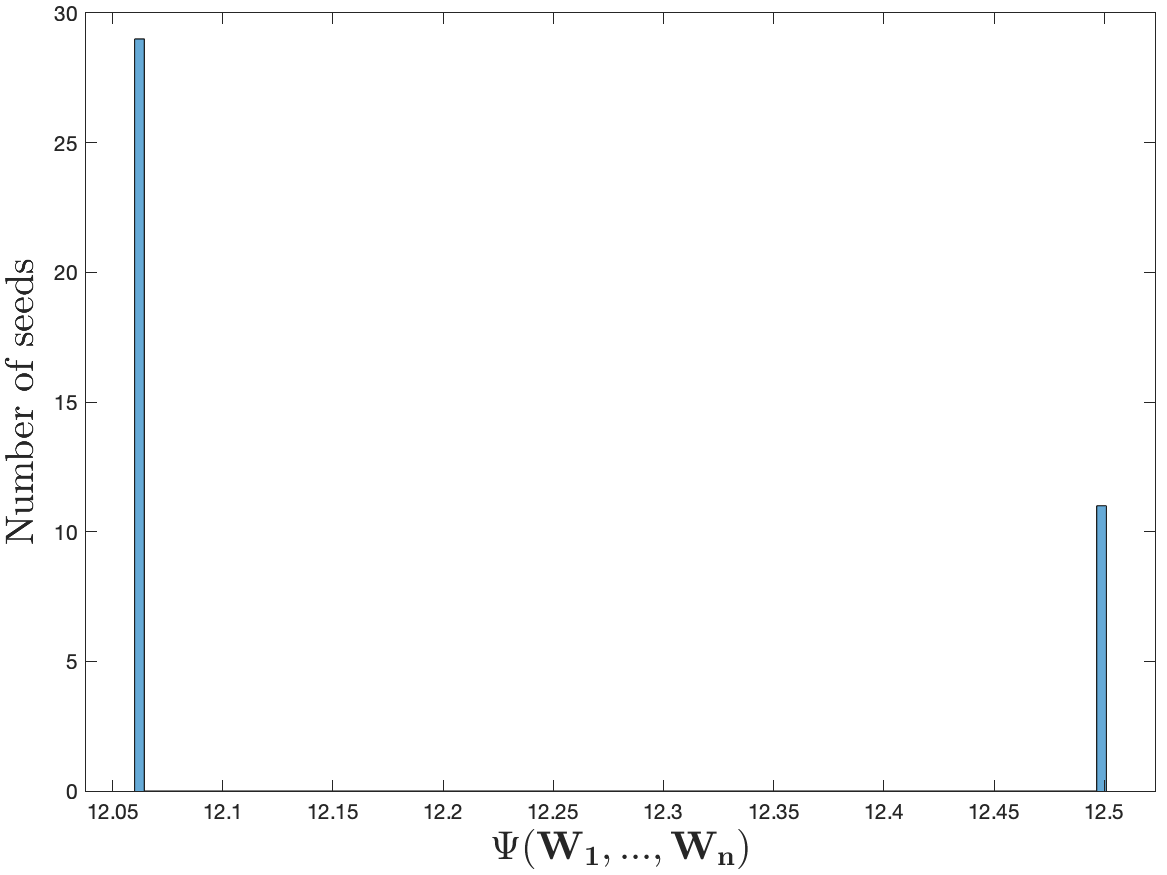} \\
    \small (f) iPALM ($\beta=0.2$)
  \end{tabular}} 
  \scalebox{1}{\begin{tabular}[b]{c}
    \includegraphics[width=.3\textwidth]{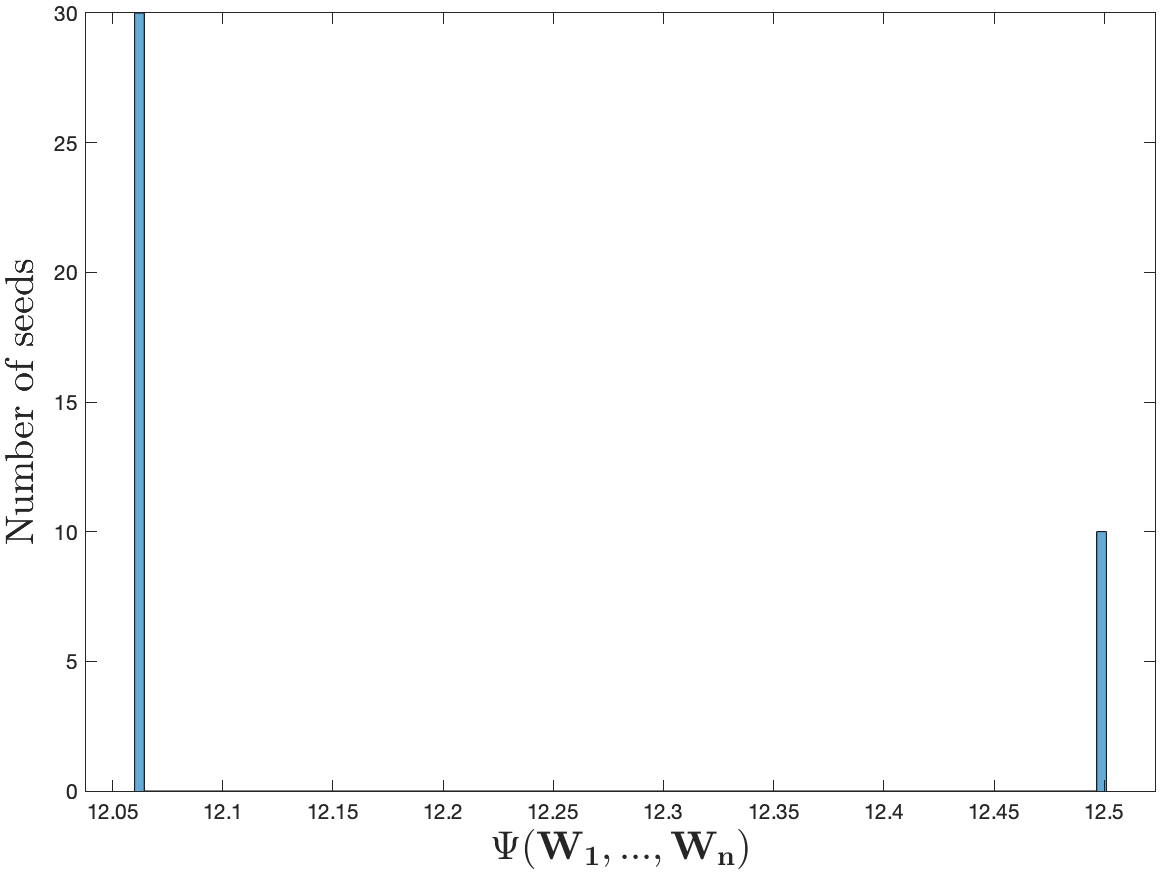} \\
    \small (g) iPALM ($\beta=0.4$)
  \end{tabular}} 
  \scalebox{1}{\begin{tabular}[b]{c}
    \includegraphics[width=.3\textwidth]{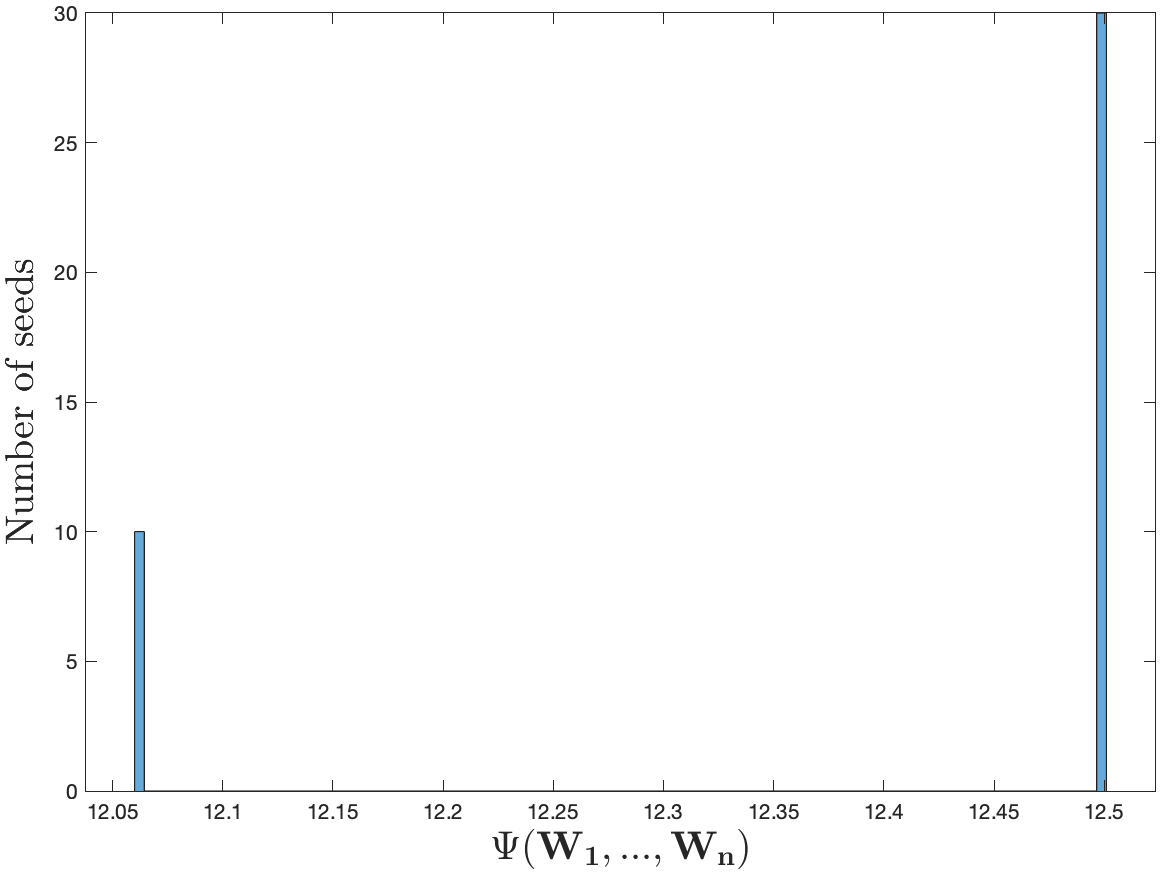} \\
    \small (h) FBS-WB
  \end{tabular}} 
  \scalebox{1}{\begin{tabular}[b]{c}
    \includegraphics[width=.3\textwidth]{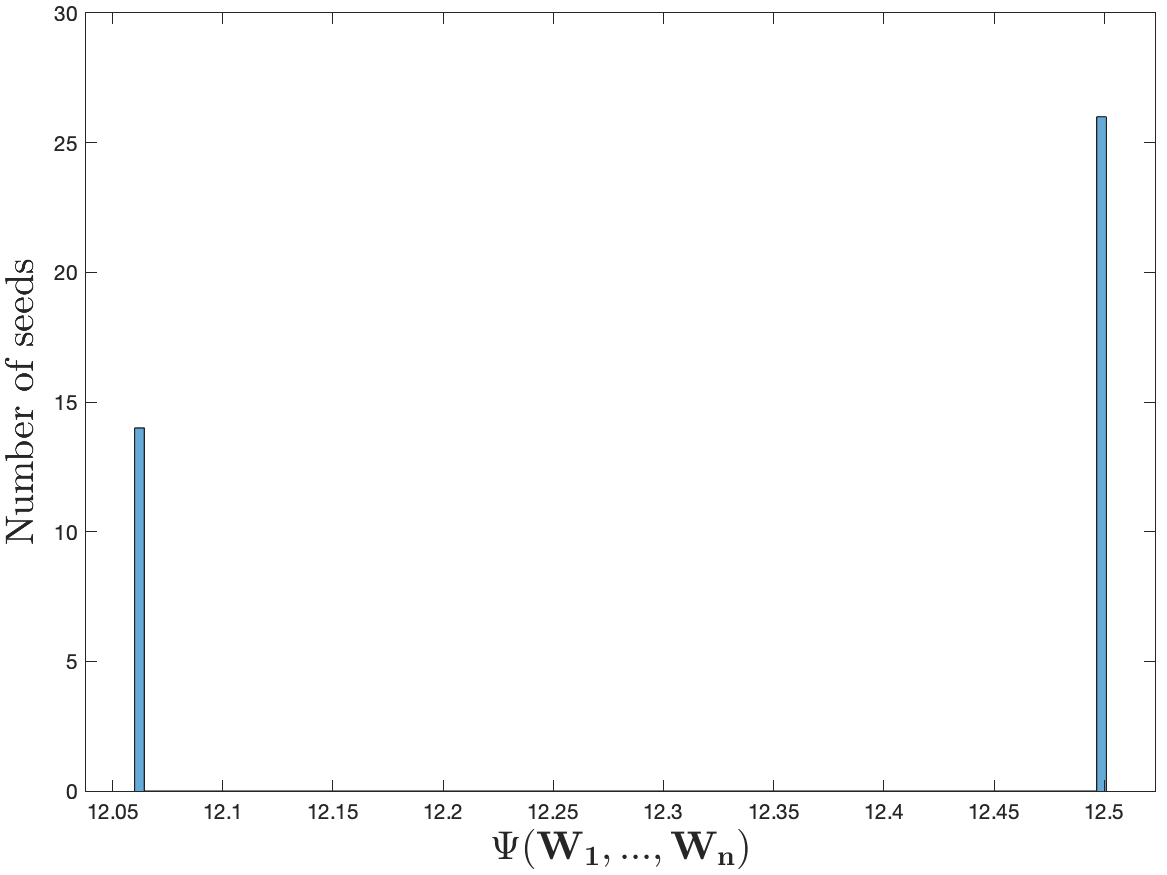} \\
    \small (i) iPiano-WB
  \end{tabular}} 
  \caption{Statistical evaluation - L2-regularization, $N=3$}
  \label{fig:exp1_stat_eval_l2reg}
\end{figure*}

\begin{figure*}[phbt!]
  \centering
  \scalebox{1}{\begin{tabular}[b]{c}
    \includegraphics[width=.3\textwidth]{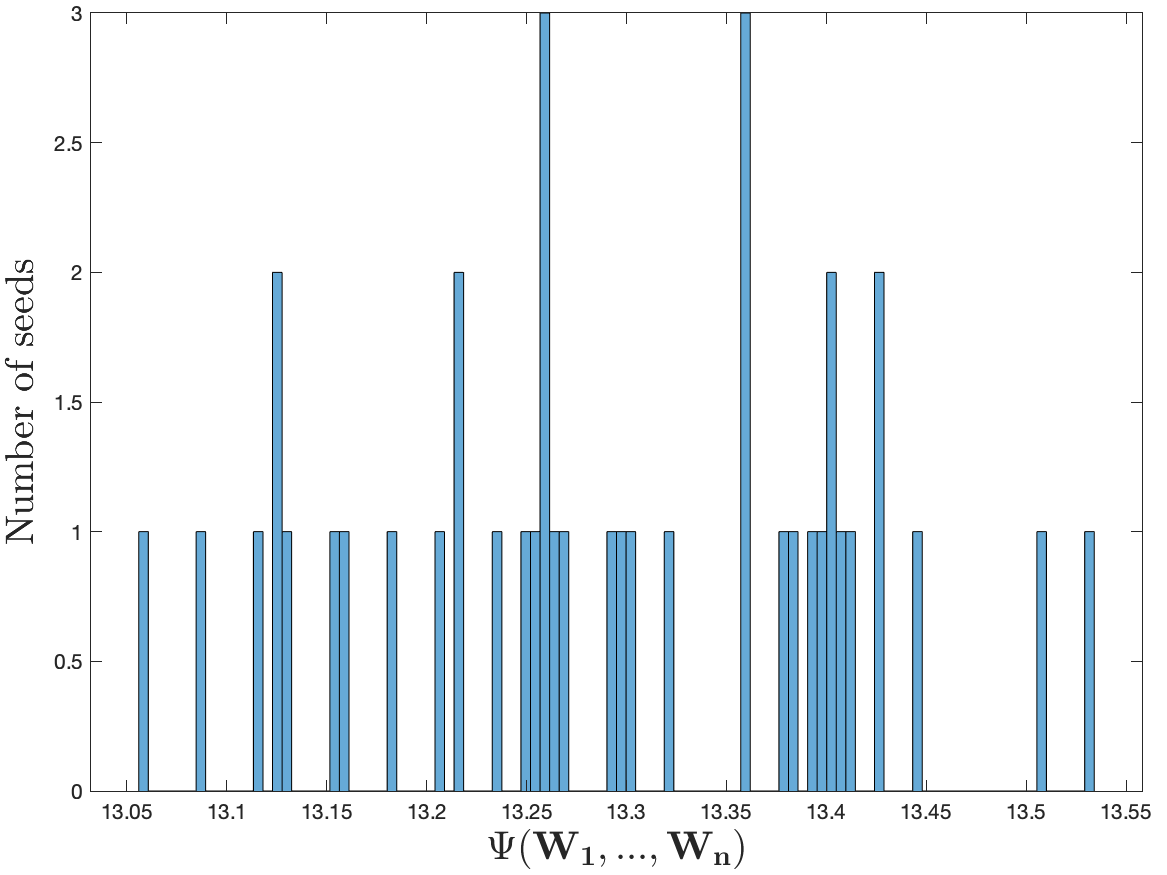} \\
    \small (a) BPG
  \end{tabular}} 
  \scalebox{1}{\begin{tabular}[b]{c}
    \includegraphics[width=.3\textwidth]{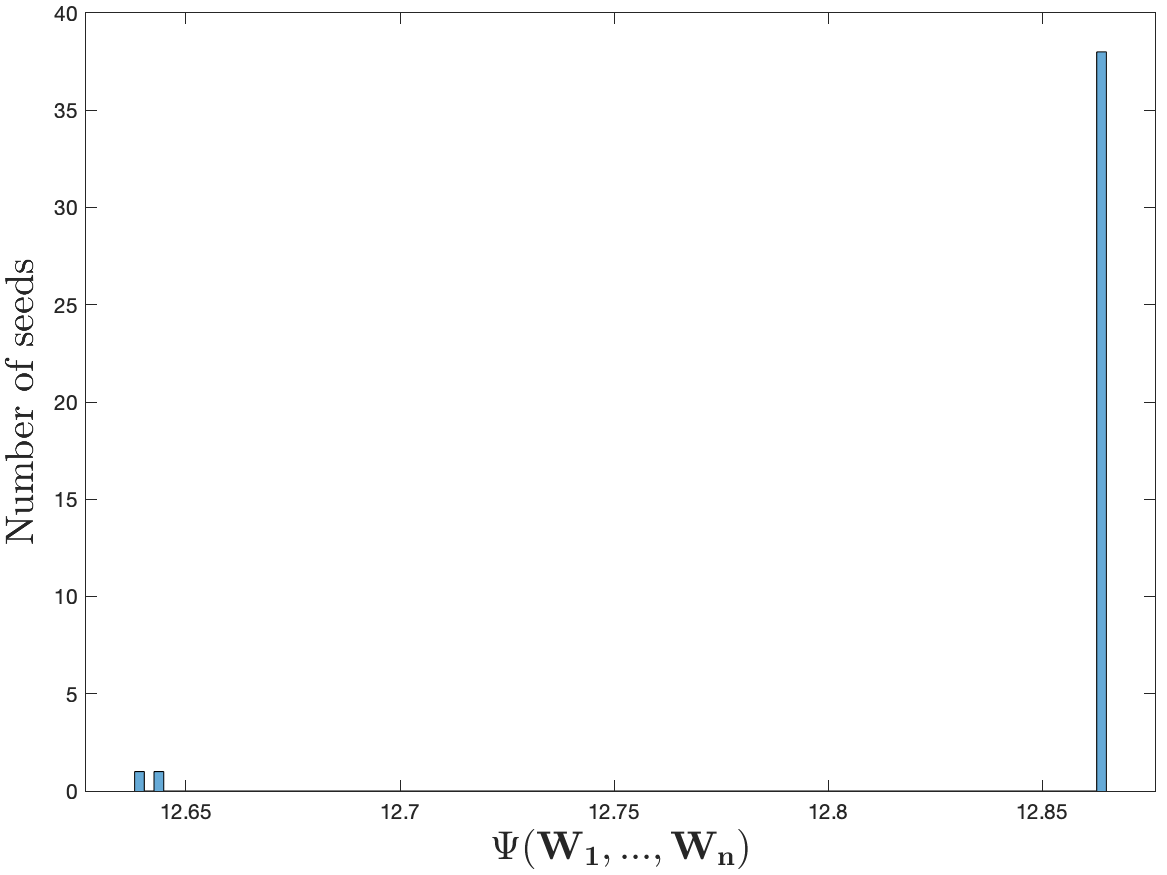} \\
    \small (b) BPG-WB
  \end{tabular}} 
  \scalebox{1}{\begin{tabular}[b]{c}
    \includegraphics[width=.3\textwidth]{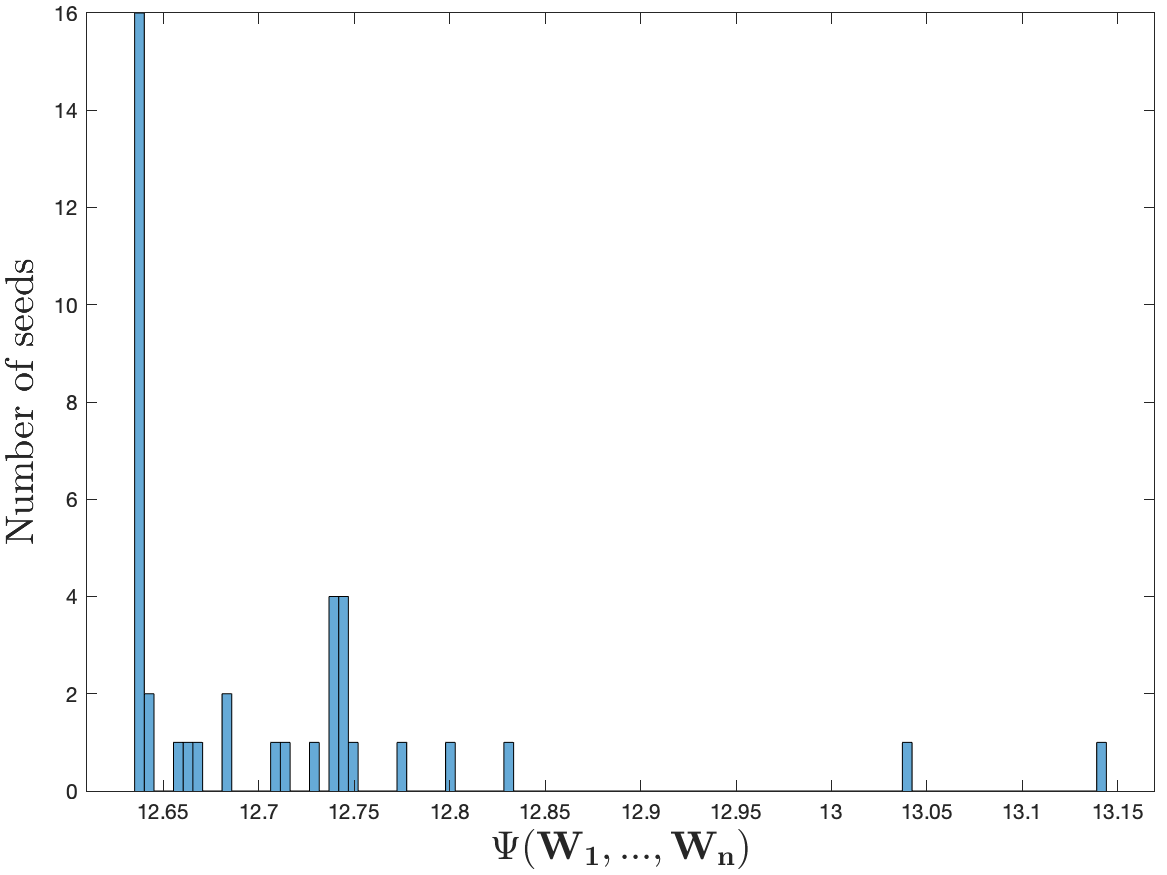} \\
    \small (c) CoCaIn BPG
  \end{tabular}} 
  \scalebox{1}{\begin{tabular}[b]{c}
    \includegraphics[width=.3\textwidth]{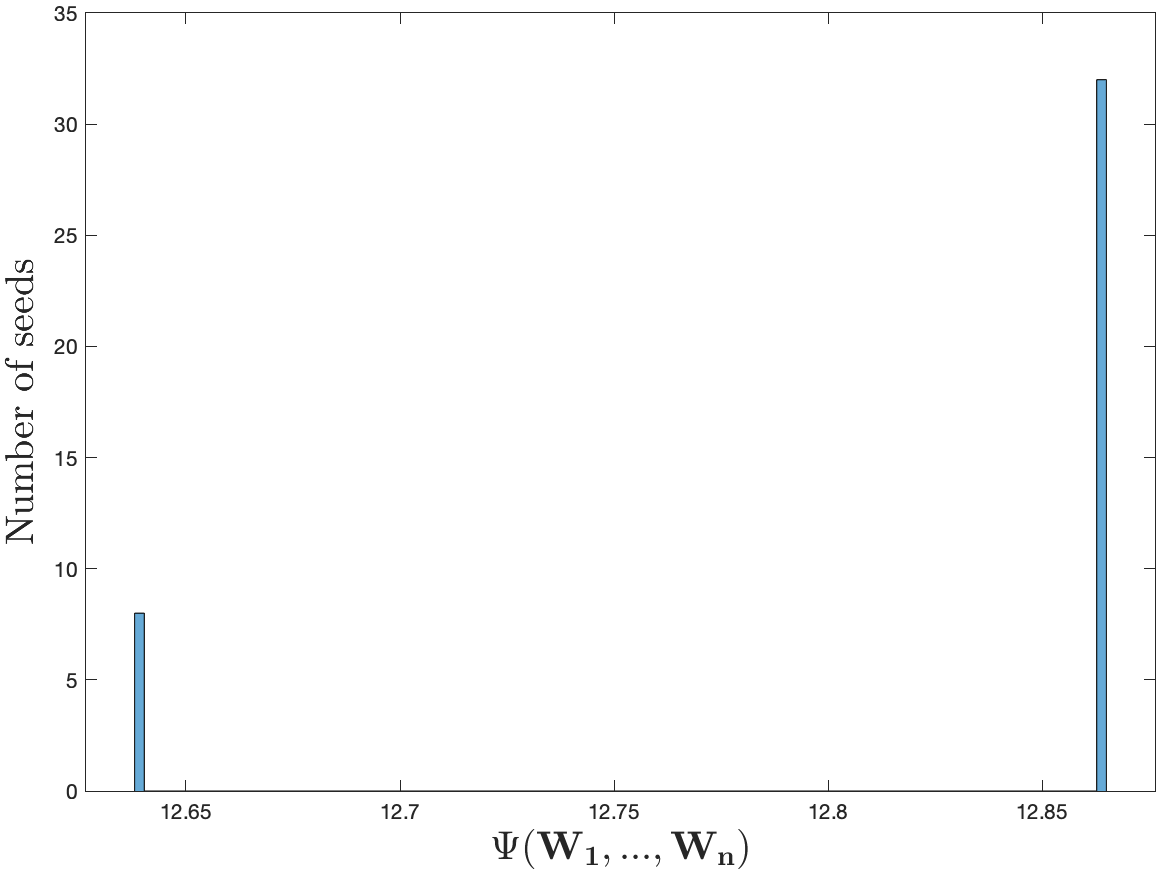} \\
    \small (d) CoCaIn BPG CFI
  \end{tabular}} 
  \scalebox{1}{\begin{tabular}[b]{c}
    \includegraphics[width=.3\textwidth]{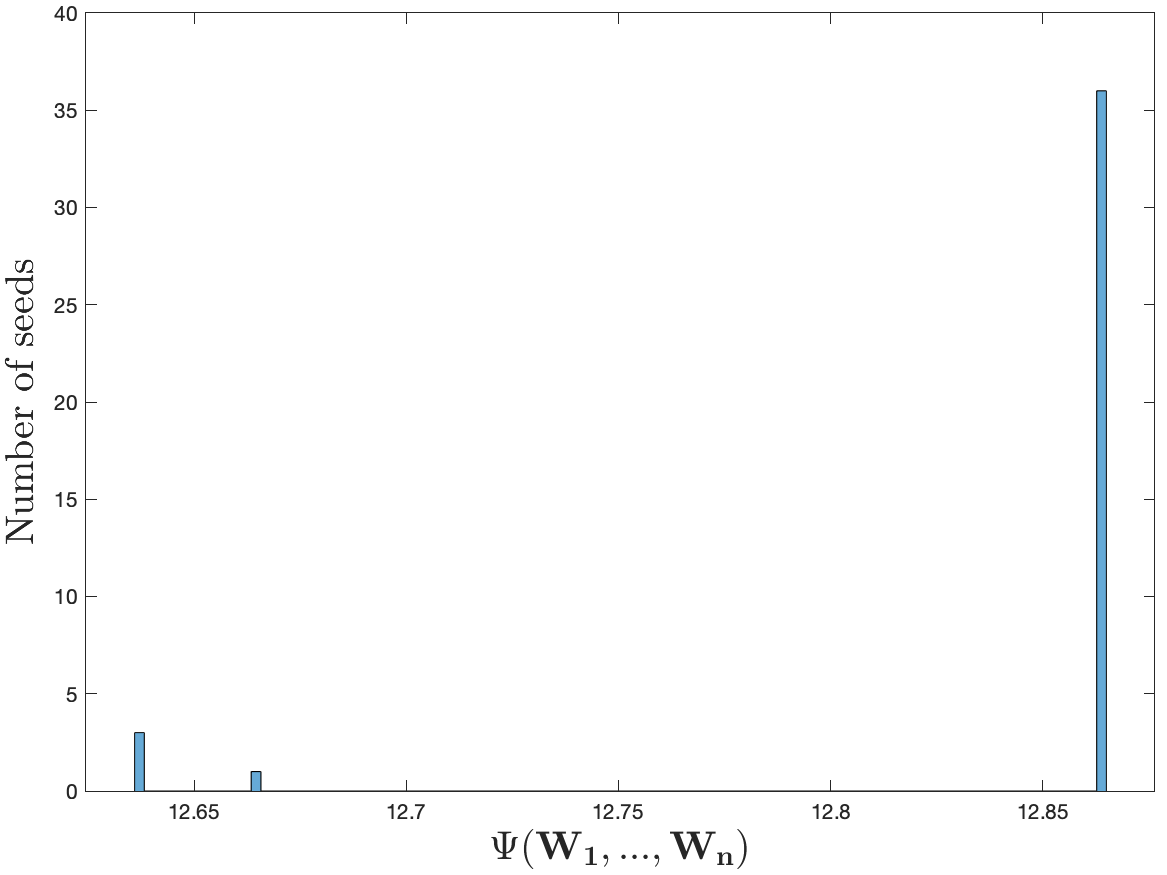} \\
    \small (e) PALM
  \end{tabular}} 
  \scalebox{1}{\begin{tabular}[b]{c}
    \includegraphics[width=.3\textwidth]{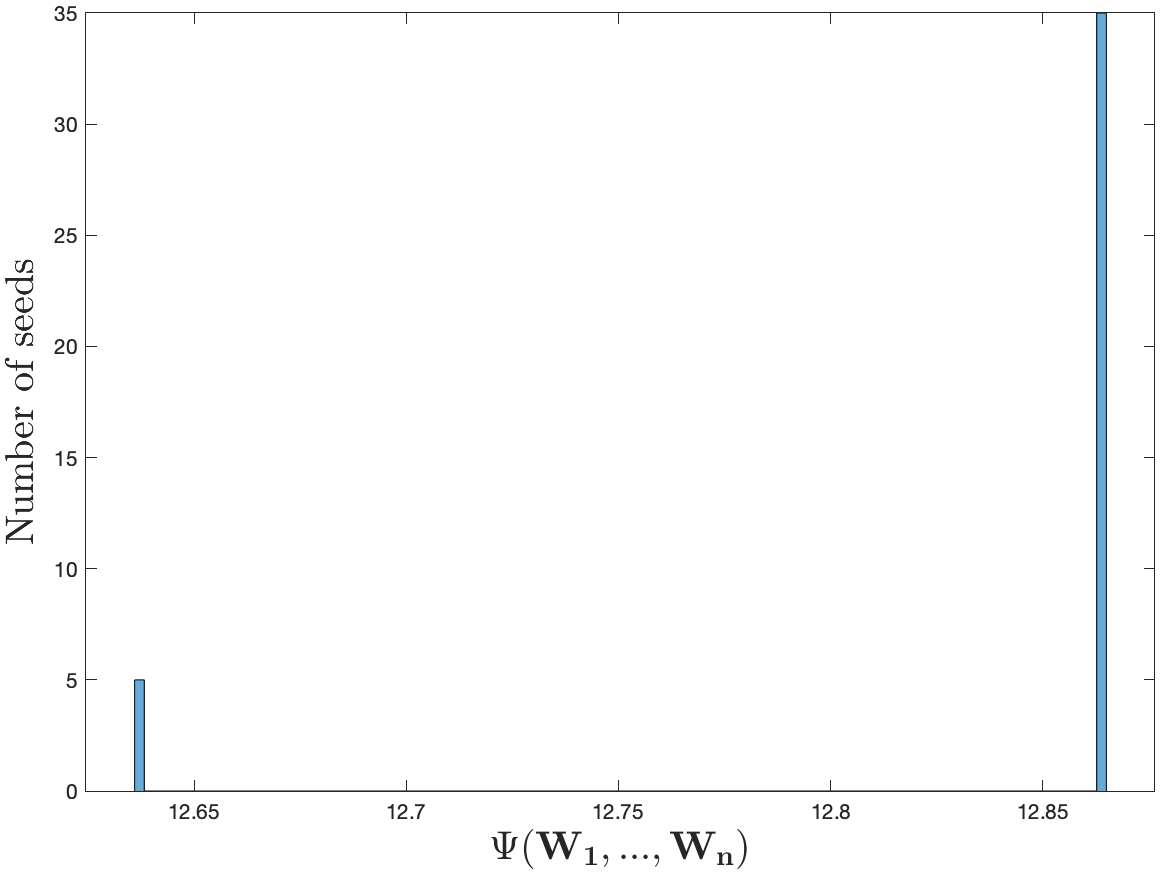} \\
    \small (f) iPALM ($\beta=0.2$)
  \end{tabular}} 
  \scalebox{1}{\begin{tabular}[b]{c}
    \includegraphics[width=.3\textwidth]{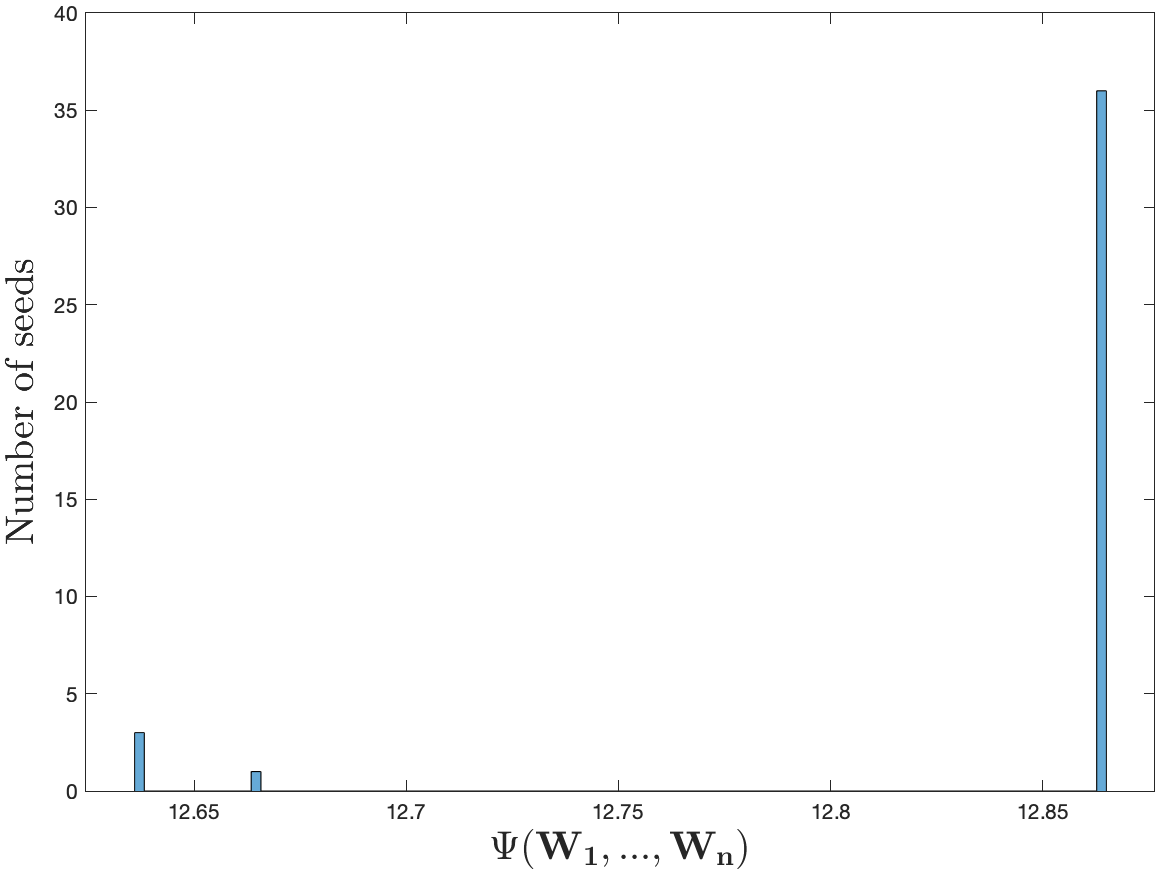} \\
    \small (g) iPALM ($\beta=0.4$)
  \end{tabular}} 
  \scalebox{1}{\begin{tabular}[b]{c}
    \includegraphics[width=.3\textwidth]{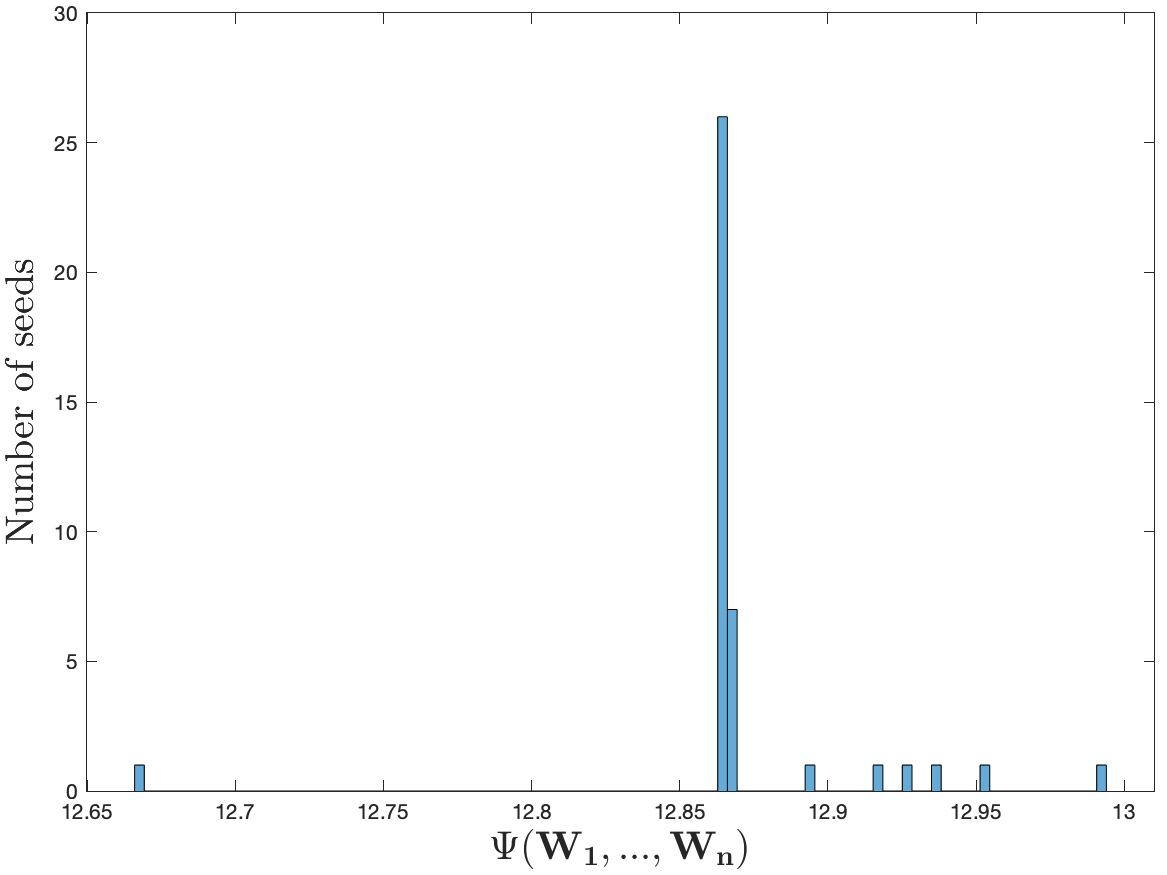} \\
    \small (h) FBS-WB
  \end{tabular}} 
  \scalebox{1}{\begin{tabular}[b]{c}
    \includegraphics[width=.3\textwidth]{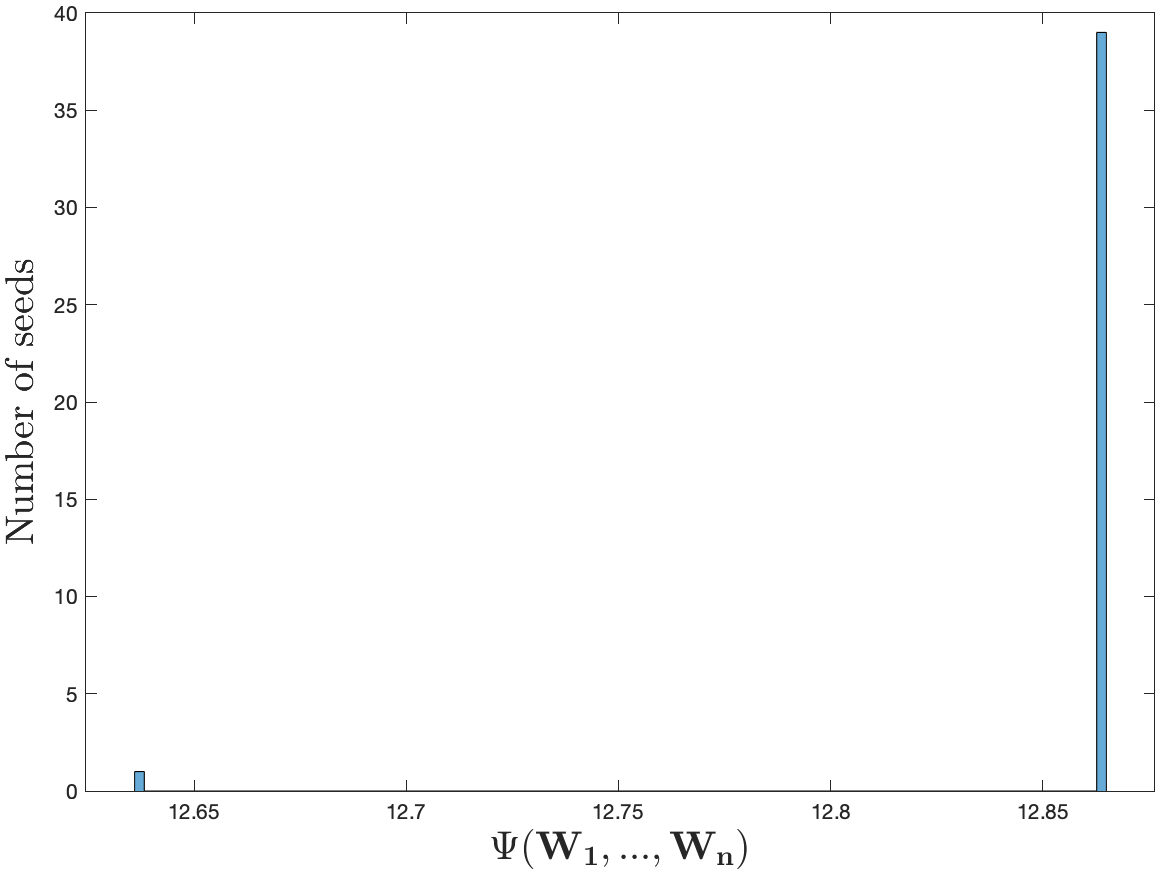} \\
    \small (i) iPiano-WB
  \end{tabular}} 
  \caption{Statistical evaluation - L1-regularization, $N=3$}
  \label{fig:exp1_stat_eval_l1reg}
\end{figure*}

\begin{figure*}[phbt!]
  \centering
  \scalebox{1}{\begin{tabular}[b]{c}
    \includegraphics[width=.3\textwidth]{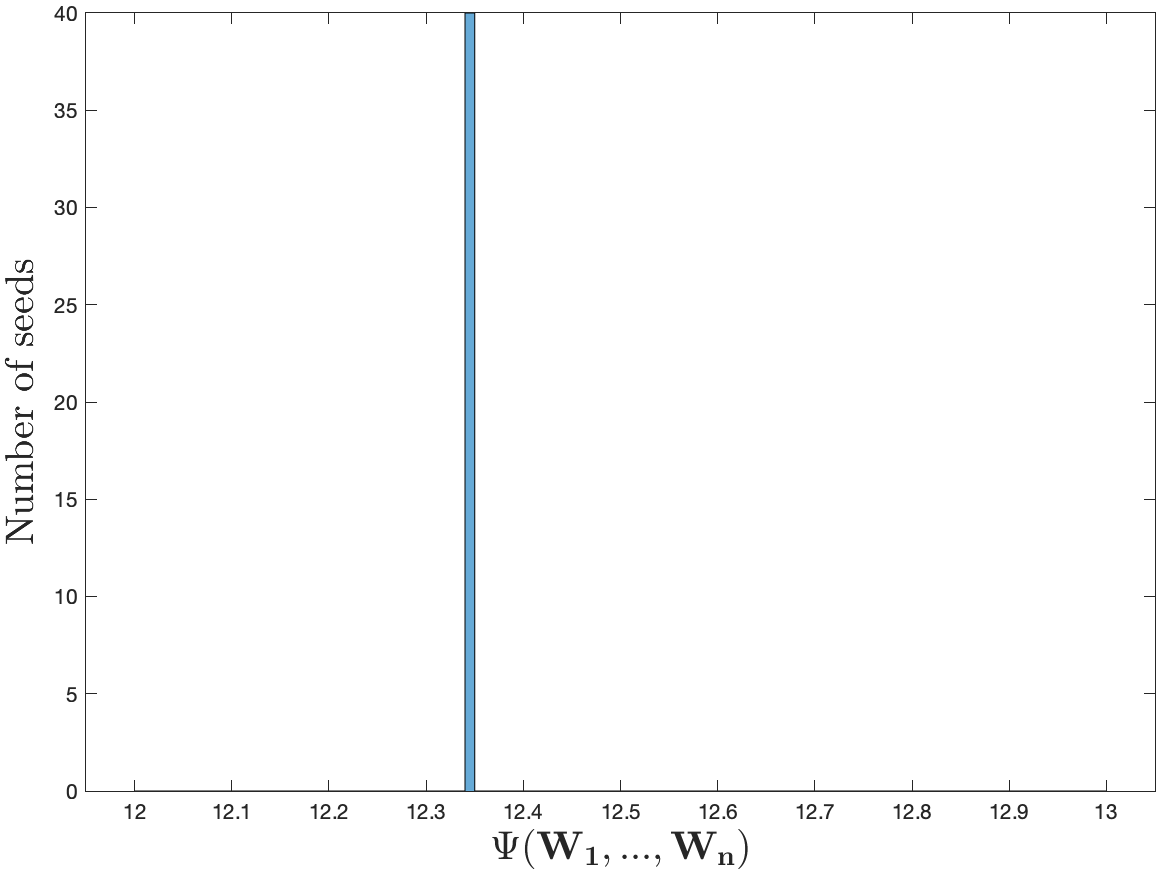} \\
    \small (a) BPG
  \end{tabular}} 
  \scalebox{1}{\begin{tabular}[b]{c}
    \includegraphics[width=.3\textwidth]{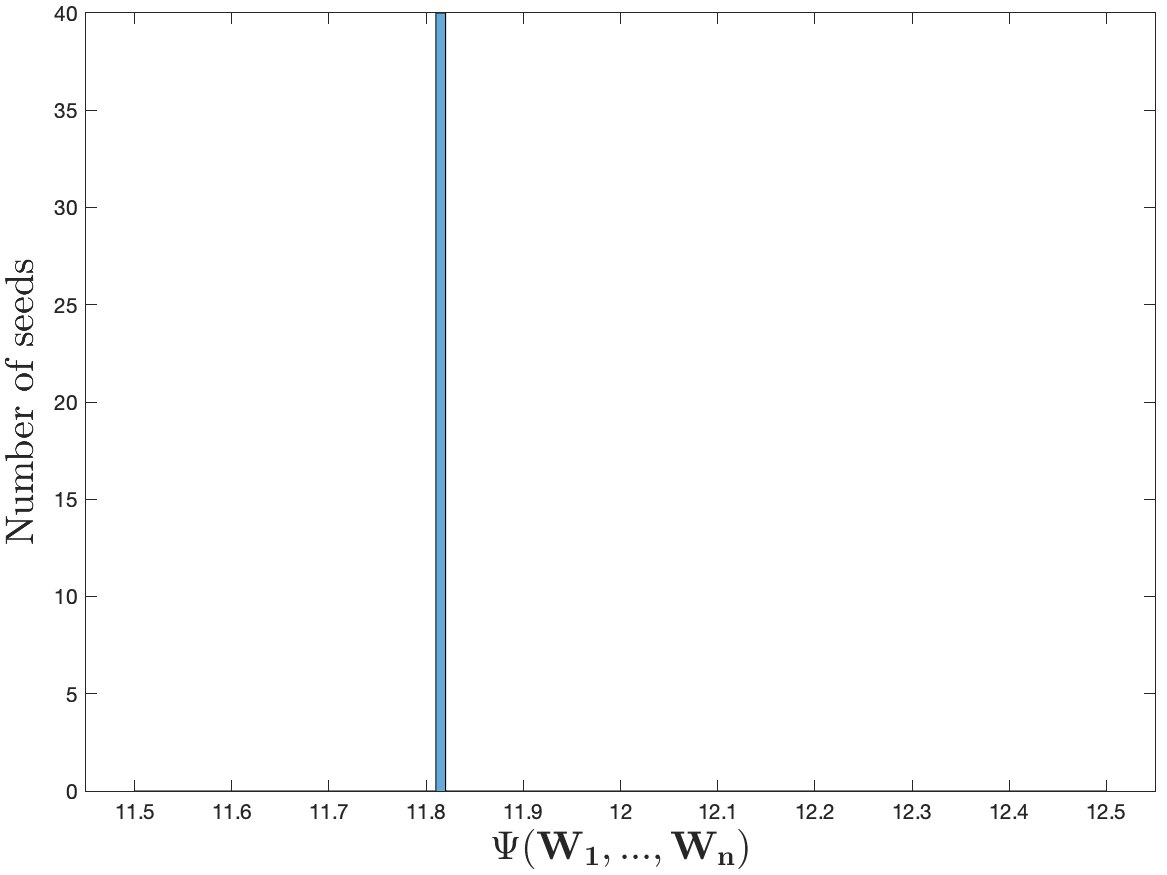} \\
    \small (b) BPG-WB
  \end{tabular}} 
  \scalebox{1}{\begin{tabular}[b]{c}
    \includegraphics[width=.3\textwidth]{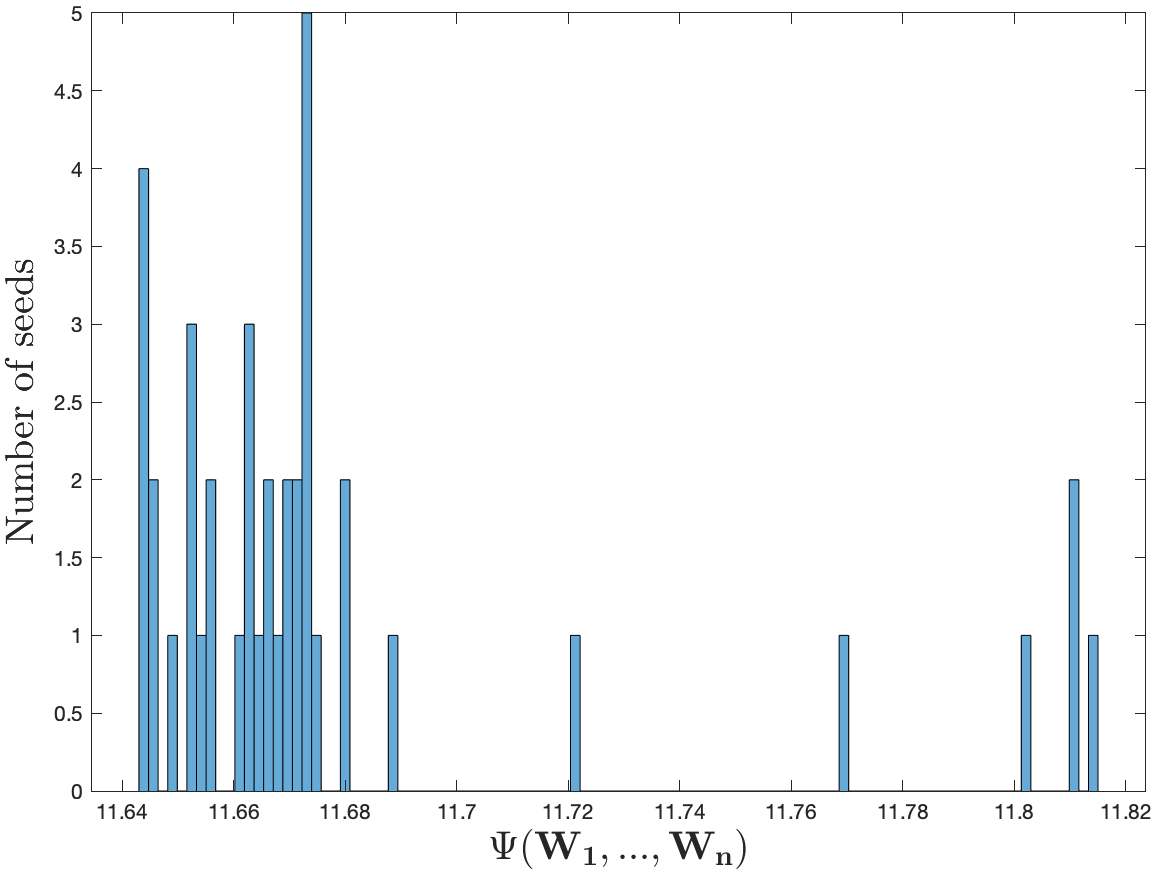} \\
    \small (c) CoCaIn BPG
  \end{tabular}} 
  \scalebox{1}{\begin{tabular}[b]{c}
    \includegraphics[width=.3\textwidth]{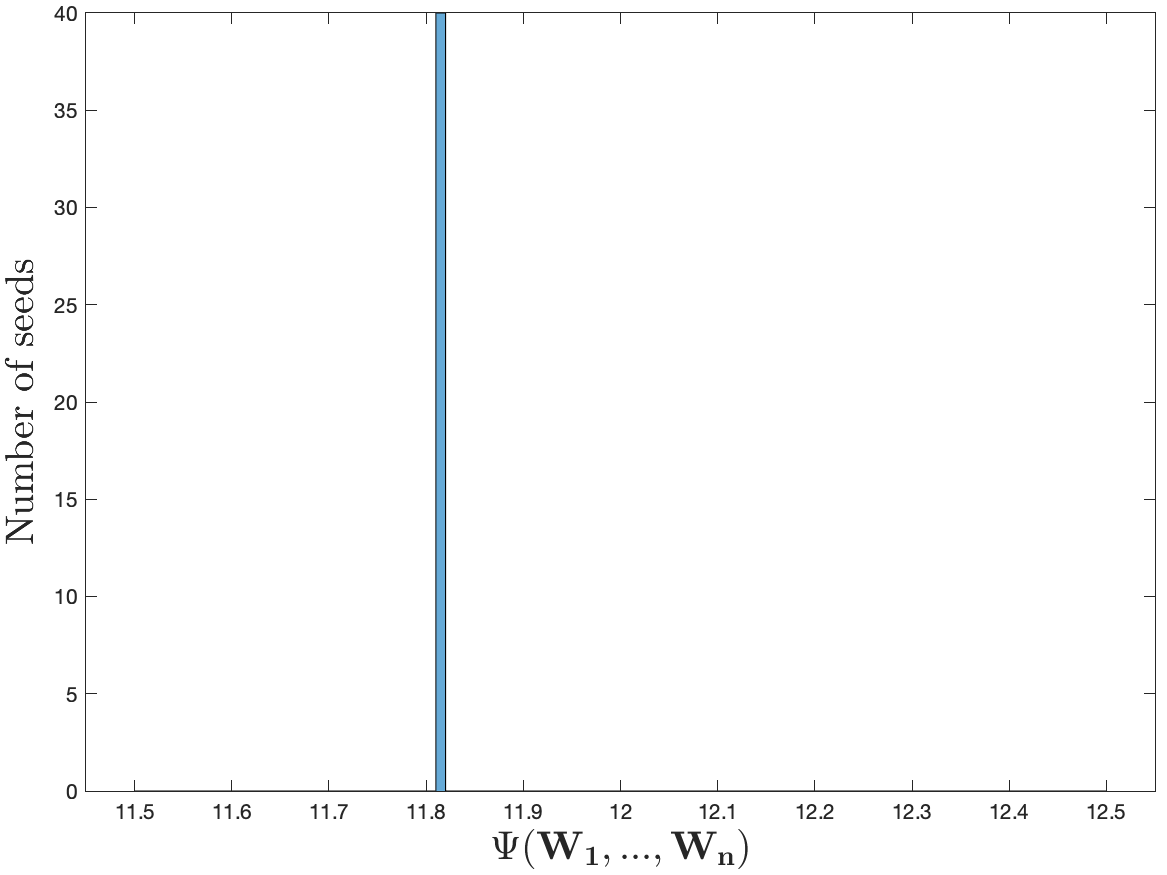} \\
    \small (d) CoCaIn BPG CFI
  \end{tabular} } 
  \scalebox{1}{\begin{tabular}[b]{c}
    \includegraphics[width=.3\textwidth]{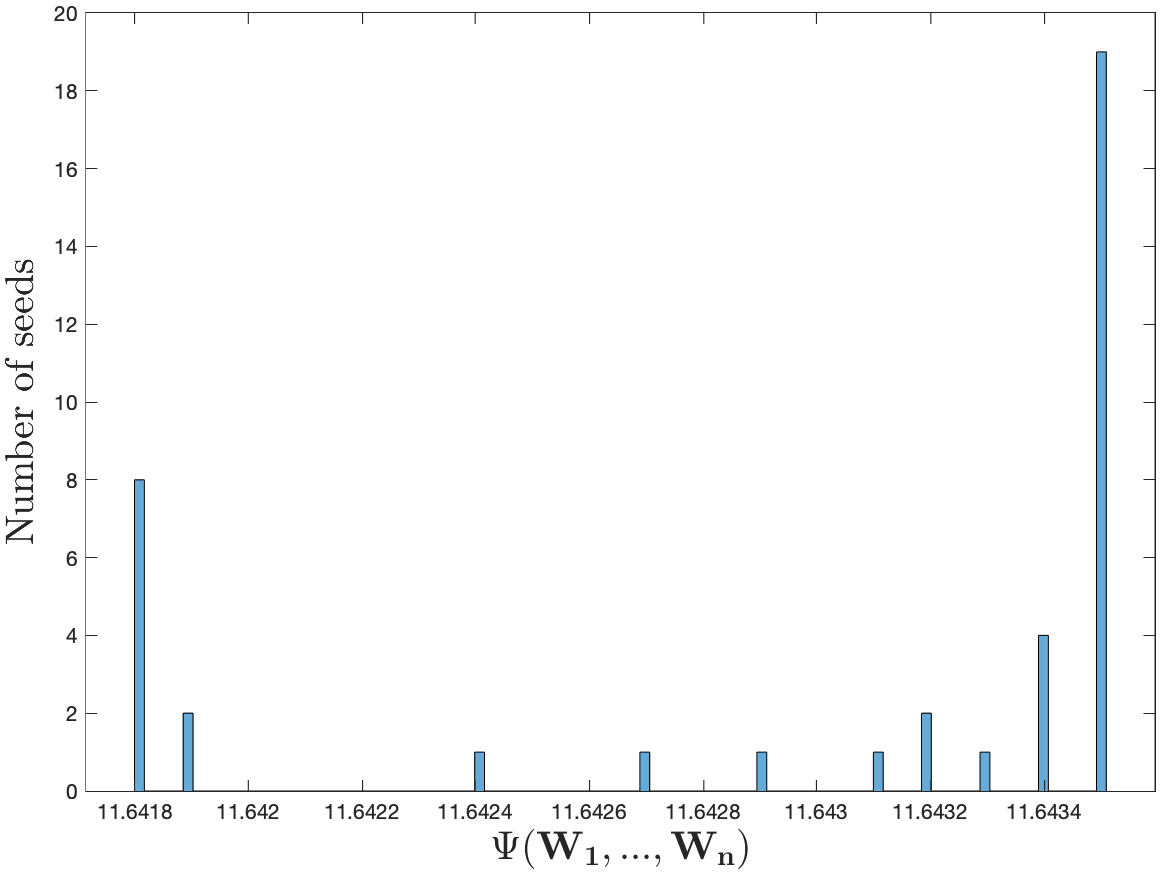} \\
    \small (e) PALM
  \end{tabular}} 
  \scalebox{1}{\begin{tabular}[b]{c}
    \includegraphics[width=.3\textwidth]{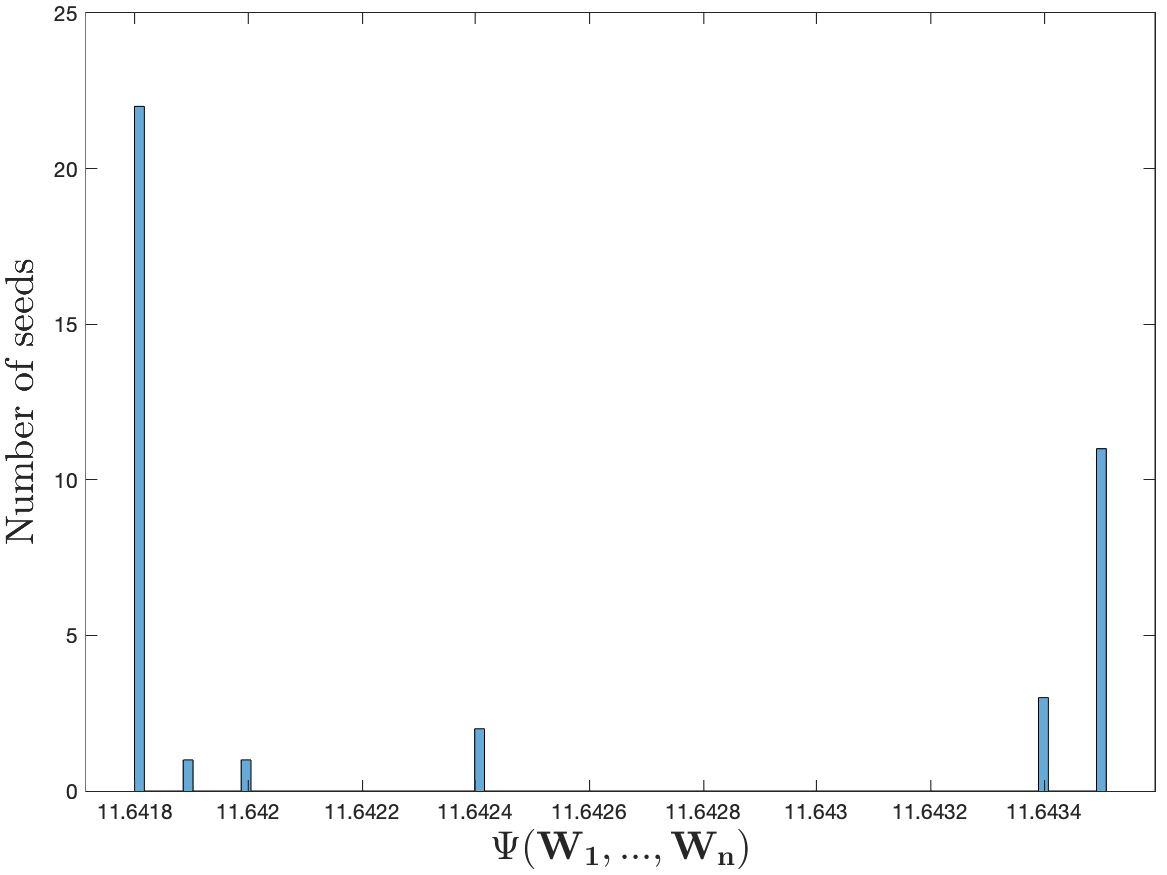} \\
    \small (f) iPALM ($\beta=0.2$)
  \end{tabular}} 
  \scalebox{1}{\begin{tabular}[b]{c}
    \includegraphics[width=.3\textwidth]{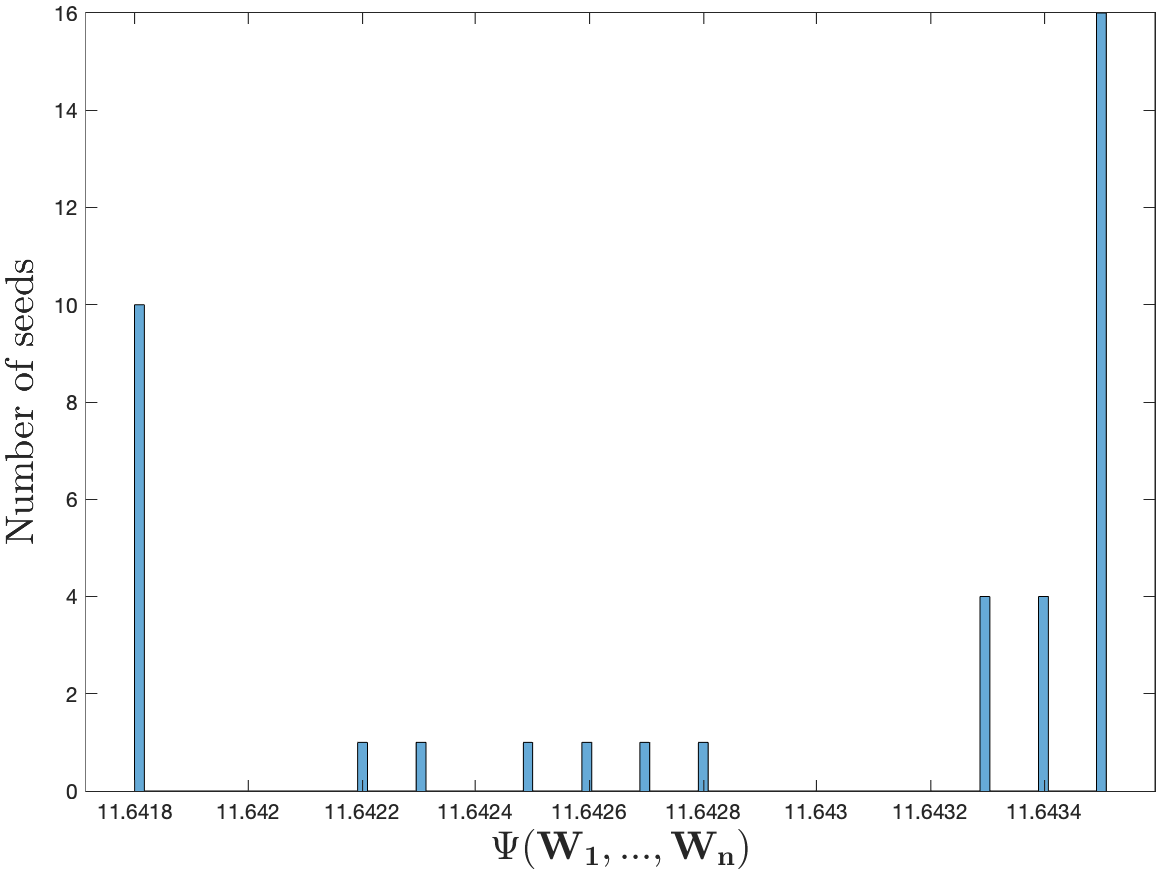} \\
    \small (g) iPALM ($\beta=0.4$)
  \end{tabular} } 
  \scalebox{1}{ \begin{tabular}[b]{c}
    \includegraphics[width=.3\textwidth]{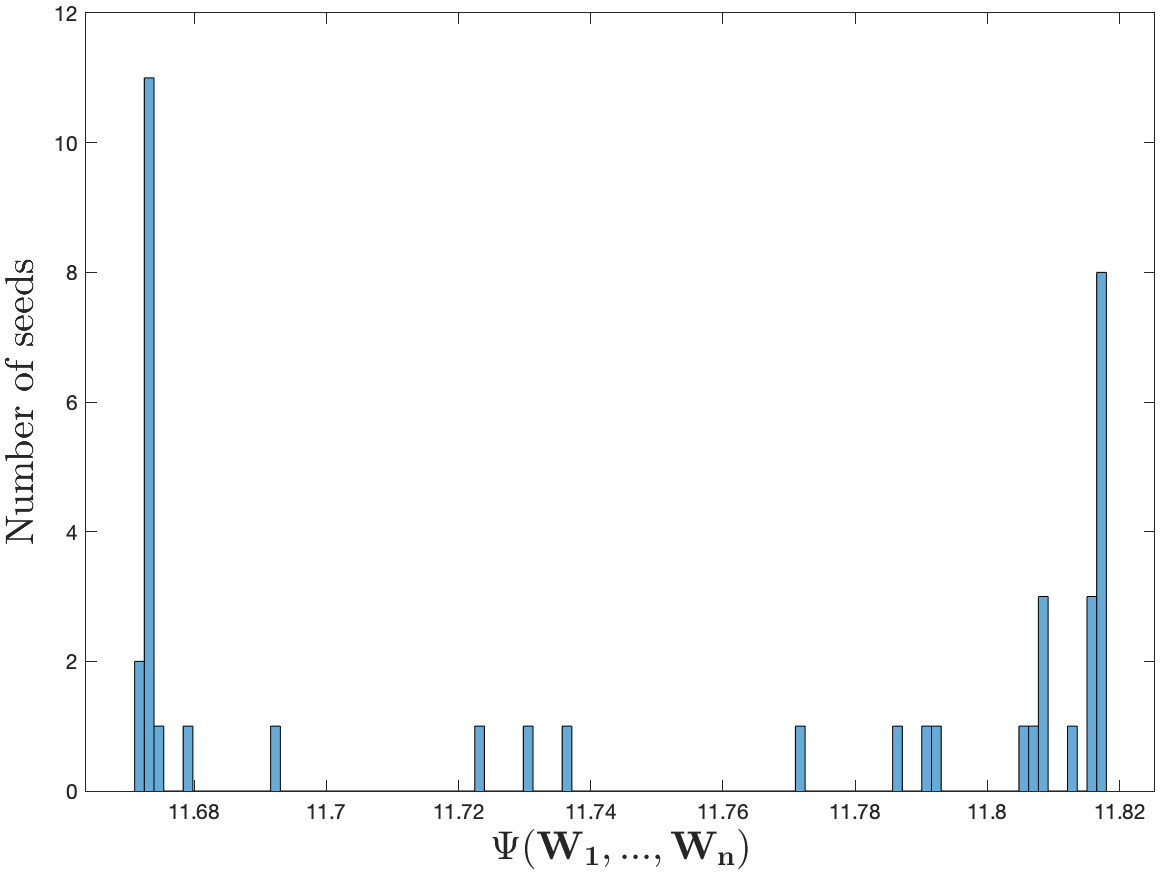} \\
    \small (h) FBS-WB
  \end{tabular}} 
  \scalebox{1}{\begin{tabular}[b]{c}
    \includegraphics[width=.3\textwidth]{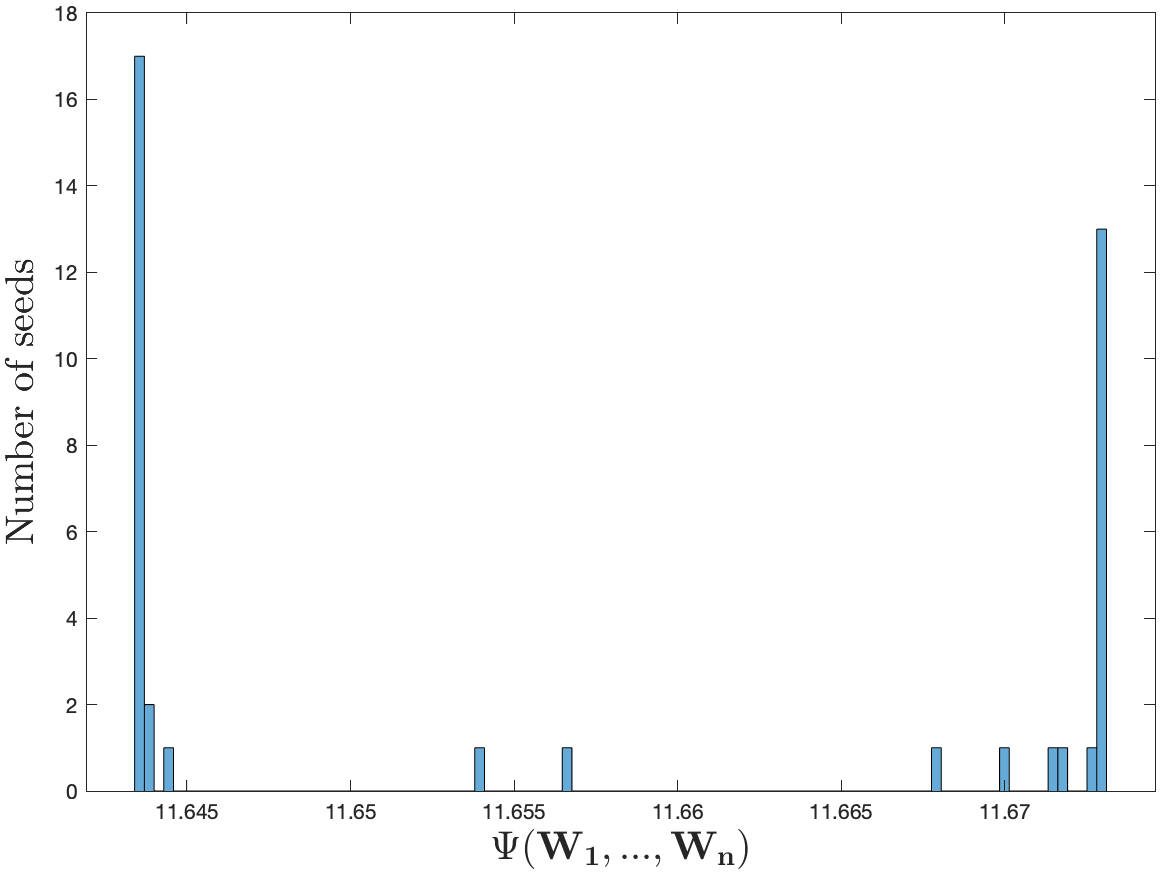} \\
    \small (i) iPiano-WB
  \end{tabular}} 
  \caption{Statistical evaluation - No regularization, $N=3$}
  \label{fig:exp1_stat_eval_noreg}
\end{figure*}

\subsection{Experiment 2}
In the second experiment we use the same hyperparameters, weight initialization and input ${\bf X} \in \mathbb{R}^{7 \times 50}$  as in Experiment 1. While we used independently generated input and output data in Experiment 1, the output data is now generated with ${\bf Y}={\bf AX}+0.0001{\bf N}$, where ${\bf A}$ is a randomly generated matrix in $[0,0.1]^{2 \times 7}$ and ${\bf N} \sim \mathcal{N}(0,1)$. Additionally, the weights are not squared matrices, i.e ${\bf W_1} \in \mathbb{R}^{2 \times 3}$. The results are provided in \ref{fig:exp2_rel_obj}.
While BPG-WB and CoCaIn BPG CFI achieve the best performance in a setting with L2-regualrizer or no regularizer, both algorithms can not compete with the alternating algorithms PALM and iPALM as well as iPiano-WB in case of L1-regularizers. Here, CoCaIn BPG is strong with a convergence better than iPiano-WB.  \ifpaper\else\medskip\fi

Finally, note that the proposed Bregman distances involve the norms of the weights, which can be very large for large $N$ and might result in numerically instability. An important open research problem, is to develop numerically stable Bregman distances.

\begin{figure*}[phbt!]
  \centering
  \begin{tabular}[b]{c}
    \includegraphics[width=.3\textwidth]{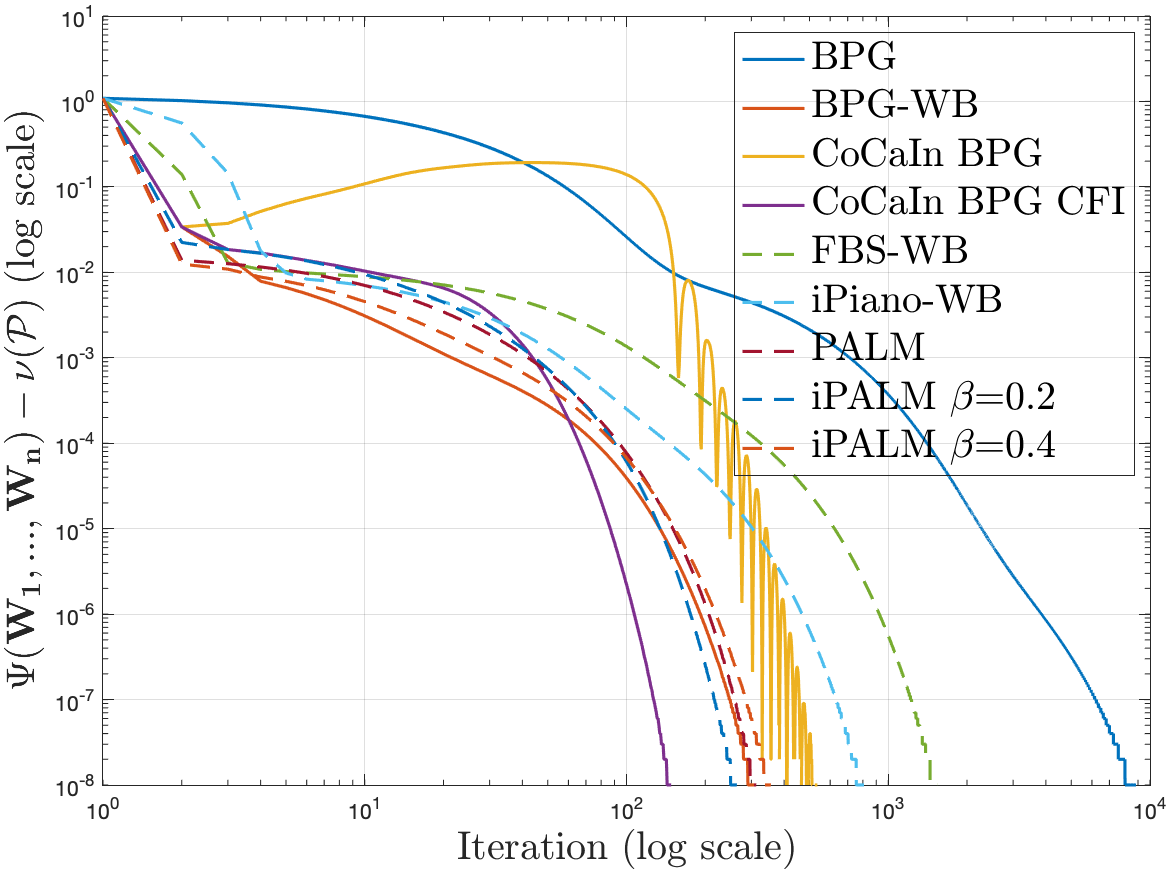} \\
    \small (a) L2-Regularization ($N=3$)
  \end{tabular}
  \begin{tabular}[b]{c}
    \includegraphics[width=.3\textwidth]{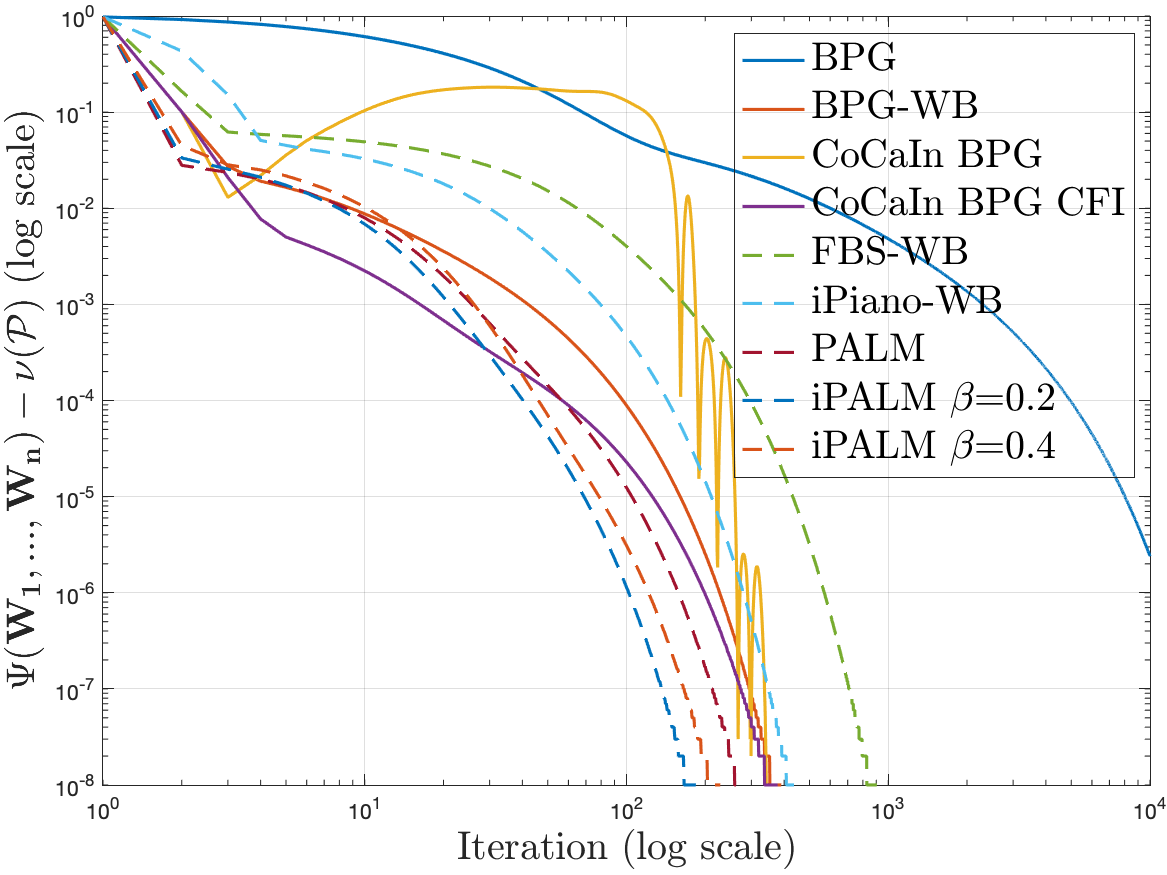} \\
    \small (b) L1-Regularization ($N=3$)
  \end{tabular}
  \begin{tabular}[b]{c}
    \includegraphics[width=.3\textwidth]{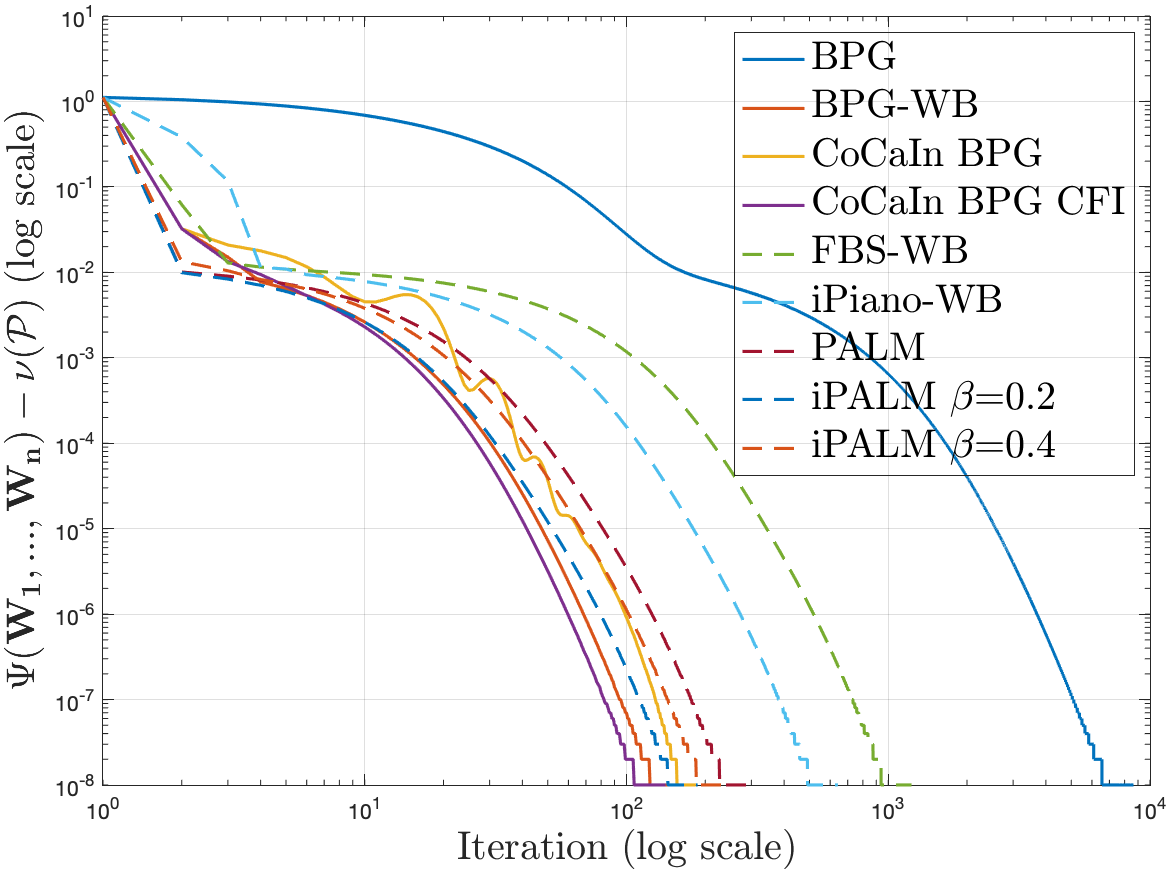} \\
    \small (c) No Regularization ($N=3$)
  \end{tabular} 
  \begin{tabular}[b]{c}
    \includegraphics[width=.3\textwidth]{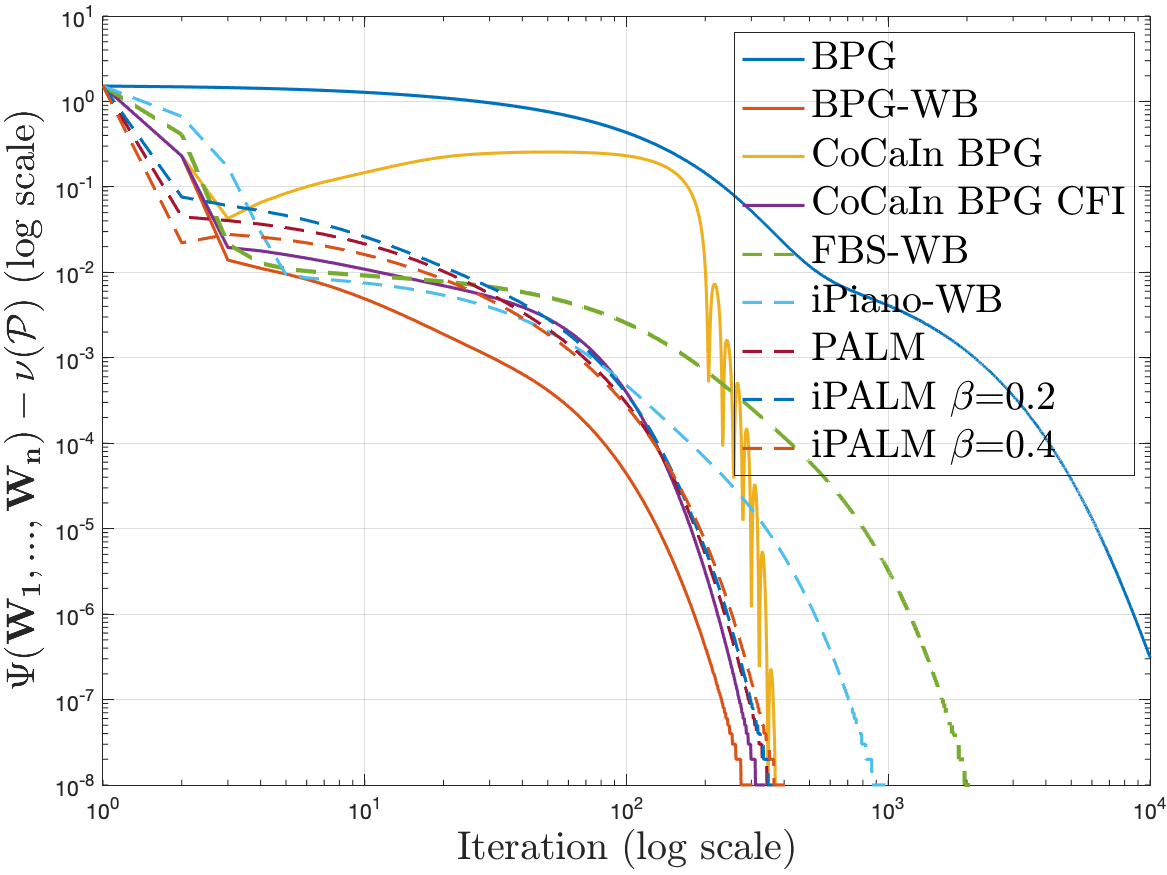} \\
    \small (d) L2-Regularization ($N=4$)
  \end{tabular}
  \begin{tabular}[b]{c}
    \includegraphics[width=.3\textwidth]{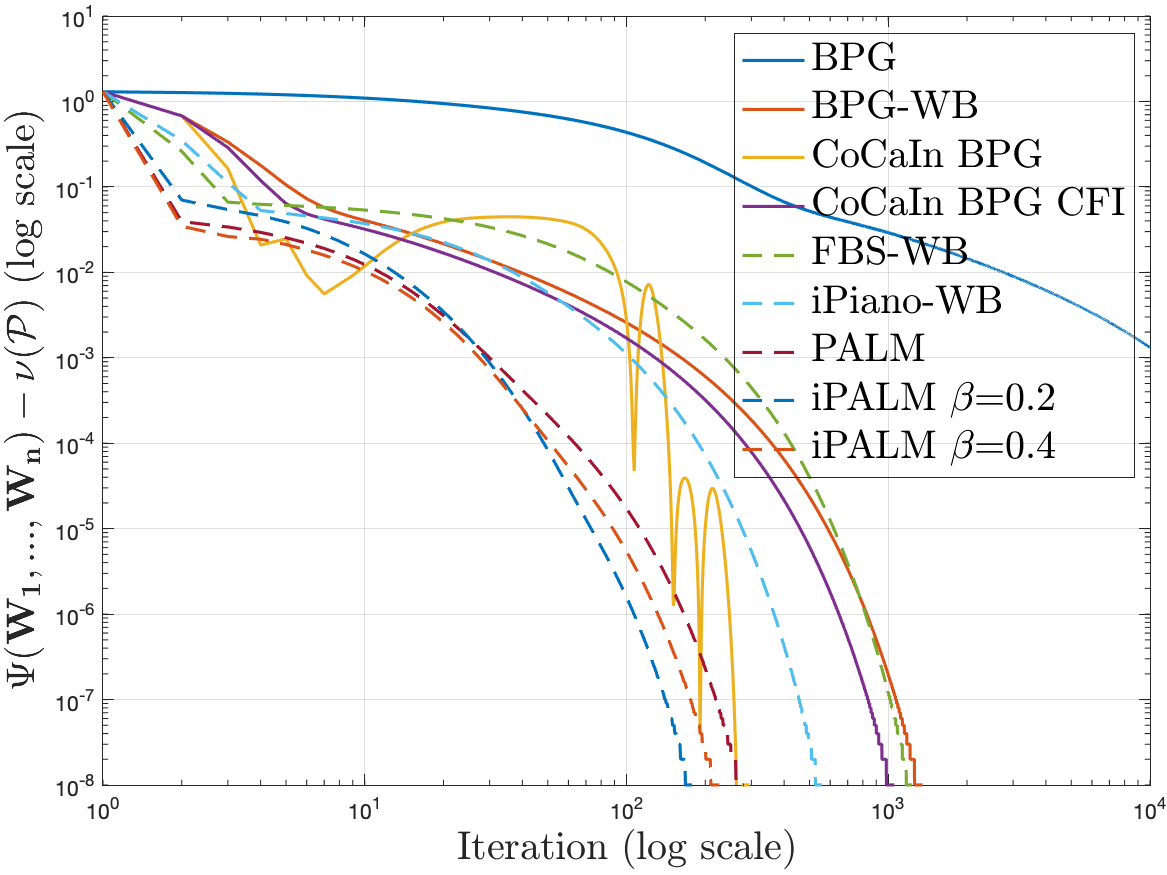} \\
    \small (e) L1-Regularization ($N=4$)
  \end{tabular}
  \begin{tabular}[b]{c}
  \includegraphics[width=.3\textwidth]{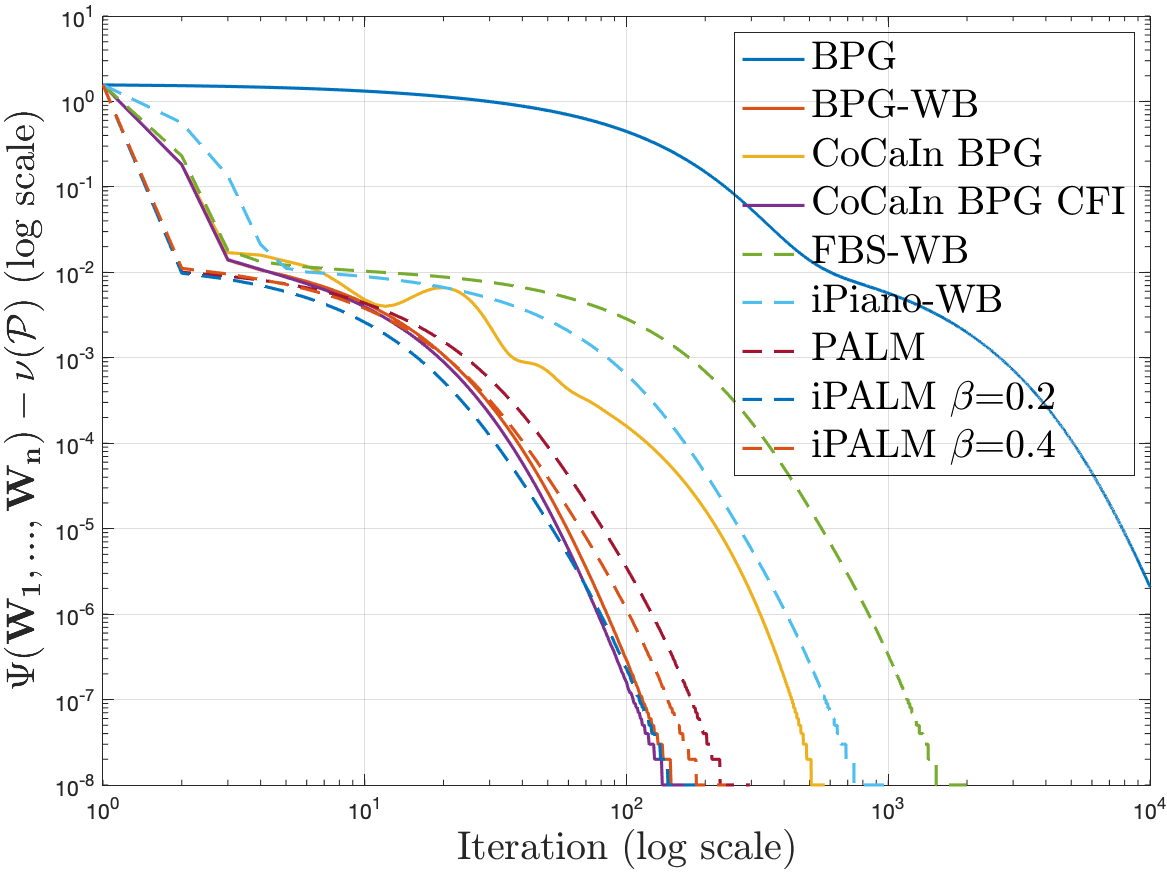} \\
    \small (f) No Regularization ($N=4$)
  \end{tabular} 
  \begin{tabular}[b]{c}
    \includegraphics[width=.3\textwidth]{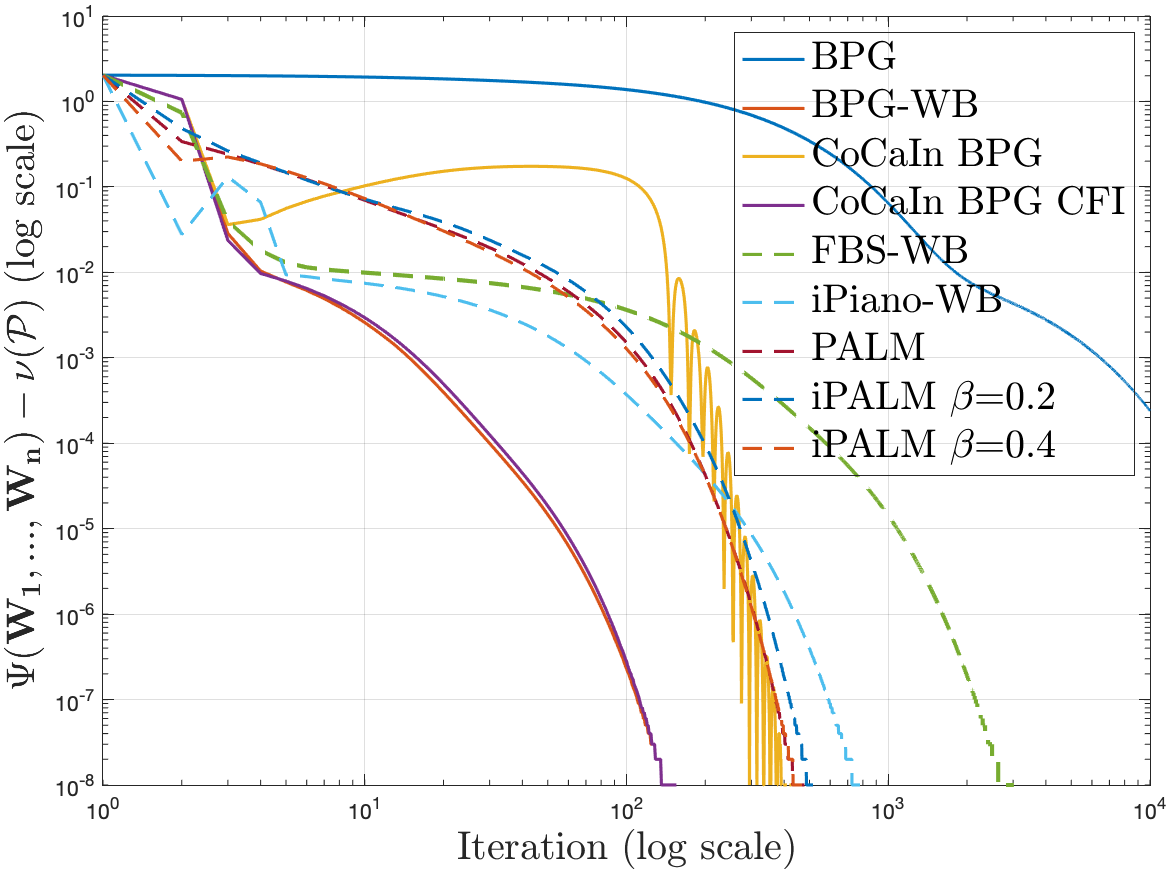} \\
    \small (g) L2-Regularization ($N=5$)
  \end{tabular}
  \begin{tabular}[b]{c}
    \includegraphics[width=.3\textwidth]{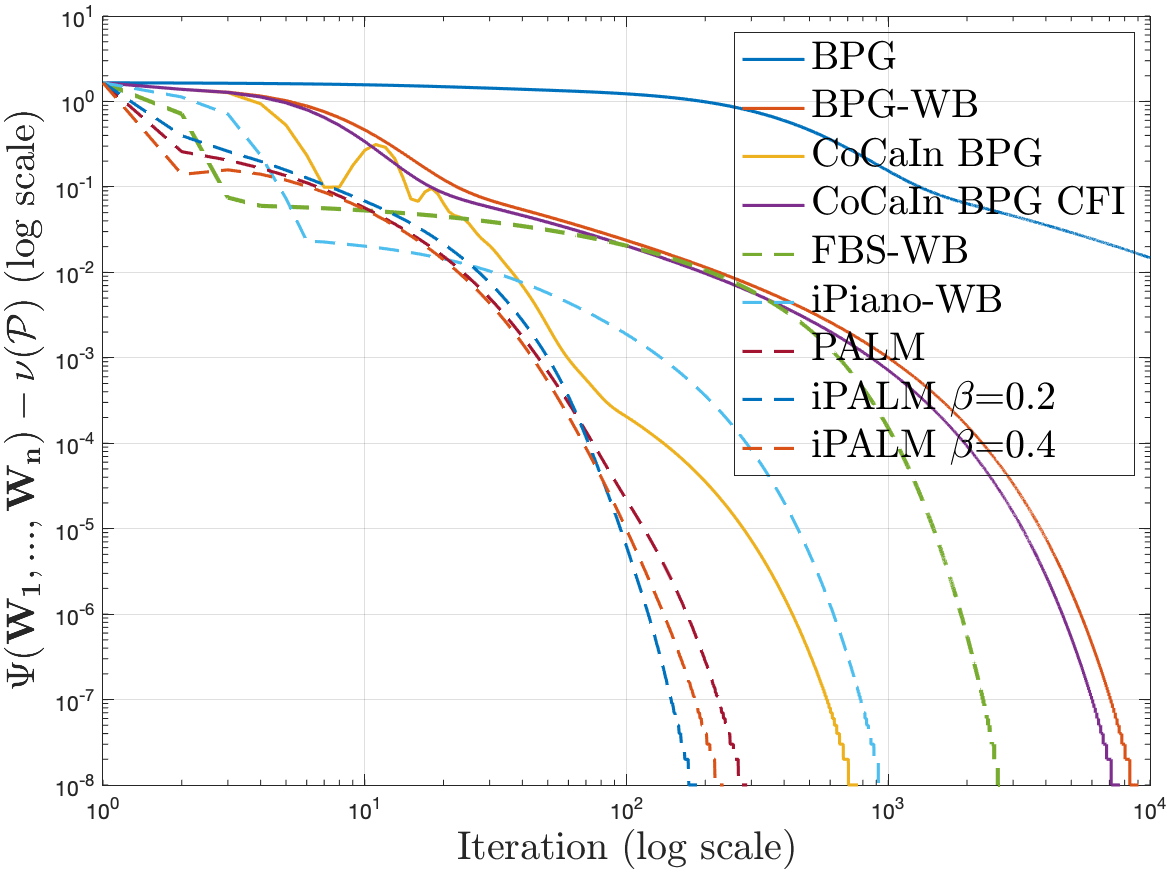} \\
    \small (h) L1-Regularization ($N=5$)
  \end{tabular}
  \begin{tabular}[b]{c}
  \includegraphics[width=.3\textwidth]{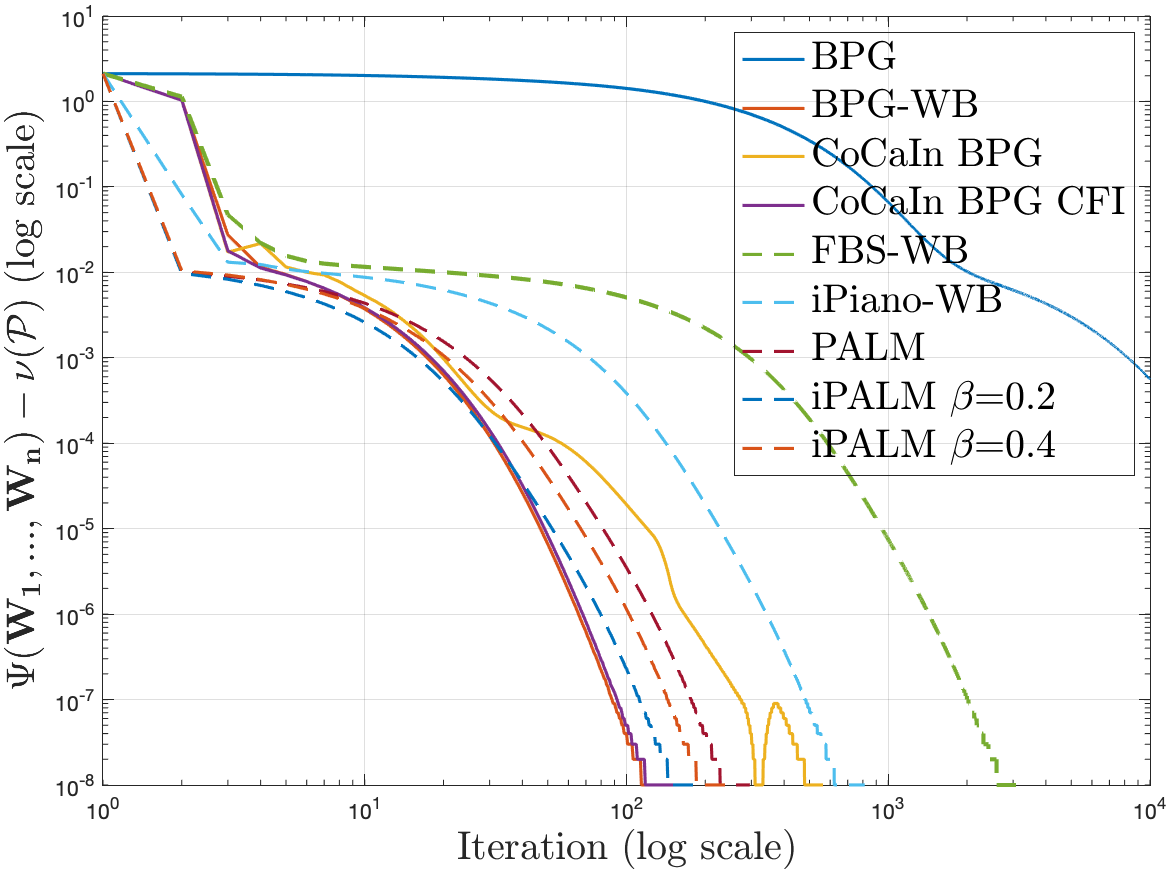} \\
    \small (i) No Regularization ($N=5$)
  \end{tabular} 
  \caption{Convergence plots for Experiment 2}
  \label{fig:exp2_rel_obj}
\end{figure*}

\ifpaper
\else
\newpage
\bibliographystyle{plain}
\bibliography{notes}

\def\cprime{$'$}
\begin{thebibliography}{10}

\bibitem{AHGP2019}
M.~Ahookhosh, L.~T.~K. Hien, N.~Gillis, and P.~Patrinos.
\newblock Multi-block {B}regman proximal alternating linearized minimization
  and its application to sparse orthogonal nonnegative matrix factorization.
\newblock {\em arXiv preprint arXiv:1908.01402}, 2019.

\bibitem{ACHL2019}
S.~Arora, N.~Cohen, W.~Hu, and Y.~Luo.
\newblock Implicit regularization in deep matrix factorization.
\newblock {\em ArXiv preprint arXiv:1905.13655}, 2019.

\bibitem{AB2009}
H.~Attouch and J.~Bolte.
\newblock On the convergence of the proximal algorithm for nonsmooth functions
  involving analytic features.
\newblock {\em Mathematical Programming}, 116(1-2):5--16, 2009.

\bibitem{ABRS2010}
H.~Attouch, J.~Bolte, P.~Redont, and A.~Soubeyran.
\newblock Proximal alternating minimization and projection methods for
  nonconvex problems: an approach based on the {K}urdyka-{{\L}}ojasiewicz
  inequality.
\newblock {\em Mathematics of Operations Research}, 35(2):438--457, 2010.

\bibitem{BBT2016}
H.~H. Bauschke, J.~Bolte, and M.~Teboulle.
\newblock A descent lemma beyond {L}ipschitz gradient continuity: first-order
  methods revisited and applications.
\newblock {\em Mathematics of Operations Research}, 42(2):330--348, 2017.

\bibitem{BT2003}
A.~Beck and M.~Teboulle.
\newblock Mirror descent and nonlinear projected subgradient methods for convex
  optimization.
\newblock {\em Operations Research Letters}, 31(3):167--175, 2003.

\bibitem{bellkligler2019blind}
Sefi Bell-Kligler, Assaf Shocher, and Michal Irani.
\newblock Blind super-resolution kernel estimation using an internal-gan, 2019.

\bibitem{BDLS2007}
J.~Bolte, A.~Daniilidis, A.S. Lewis, and M.~Shiota.
\newblock Clarke subgradients of stratifiable functions.
\newblock {\em SIAM Journal on Optimization}, 18(2):556--572, 2007.

\bibitem{BST2014}
J.~Bolte, S.~Sabach, and M.~Teboulle.
\newblock Proximal alternating linearized minimization for nonconvex and
  nonsmooth problems.
\newblock {\em Mathematical Programming}, 146(1-2):459--494, 2014.

\bibitem{BSTV2018}
J.~Bolte, S.~Sabach, M.~Teboulle, and Y.~Vaisbourd.
\newblock First order methods beyond convexity and {L}ipschitz gradient
  continuity with applications to quadratic inverse problems.
\newblock {\em SIAM Journal on Optimization}, 28(3):2131--2151, 2018.

\bibitem{choromanska2015loss}
Anna Choromanska, Mikael Henaff, Michael Mathieu, G{\'e}rard~Ben Arous, and
  Yann LeCun.
\newblock The loss surfaces of multilayer networks.
\newblock In {\em Artificial Intelligence and Statistics}, pages 192--204,
  2015.

\bibitem{DDM2018}
D.~Davis, D.~Drusvyatskiy, and K.~J. MacPhee.
\newblock Stochastic model-based minimization under high-order growth.
\newblock {\em ArXiv preprint arXiv:1807.00255}, 2018.

\bibitem{DBA2019}
R.~A. Dragomir, A.~d'Aspremont, and J.~Bolte.
\newblock Quartic first-order methods for low rank minimization.
\newblock {\em ArXiv preprint arXiv:1901.10791}, 2019.

\bibitem{DHS2011}
J.~Duchi, E.~Hazan, and Y.~Singer.
\newblock Adaptive subgradient methods for online learning and stochastic
  optimization.
\newblock {\em Journal of Machine Learning Research}, 12(Jul):2121--2159, 2011.

\bibitem{GBS2019}
G.~Gidel, F.~Bach, and S.~Lacoste-Julien.
\newblock Implicit regularization of discrete gradient dynamics in deep linear
  neural networks.
\newblock {\em arXiv preprint arXiv:1904.13262}, 2019.

\bibitem{goodfellow2016deep}
Ian Goodfellow, Yoshua Bengio, and Aaron Courville.
\newblock {\em Deep learning}.
\newblock MIT press, 2016.

\bibitem{GWBNS2017}
S.~Gunasekar, B.~E. Woodworth, S.~Bhojanapalli, B.~Neyshabur, and N.~Srebro.
\newblock Implicit regularization in matrix factorization.
\newblock In {\em Advances in Neural Information Processing Systems}, pages
  6151--6159, 2017.

\bibitem{hanzely2018fastest}
Filip Hanzely and Peter Richt{\'a}rik.
\newblock Fastest rates for stochastic mirror descent methods.
\newblock {\em ArXiv preprint arXiv:1803.07374}, 2018.

\bibitem{HGP2019}
L.~T.~K. Hien, N.~Gillis, and P.~Patrinos.
\newblock Inertial block mirror descent method for non-convex non-smooth
  optimization.
\newblock {\em ArXiv preprint arXiv:1903.01818}, 2019.

\bibitem{K2016}
K.~Kawaguchi.
\newblock Deep learning without poor local minima.
\newblock In {\em Advances in neural information processing systems}, pages
  586--594, 2016.

\bibitem{KB2014}
D.~P. Kingma and J.~Ba.
\newblock Adam: A method for stochastic optimization.
\newblock {\em ArXiv preprint arXiv:1412.6980}, 2014.

\bibitem{LOC2019}
E.~Laude, P.~Ochs, and D.~Cremers.
\newblock Bregman proximal mappings and {B}regman-{M}oreau envelopes under
  relative prox-regularity.
\newblock {\em ArXiv preprint arXiv:1907.04306}, 2019.

\bibitem{LZTW2019}
Q.~Li, Z.~Zhu, G.~Tang, and M.~B. Wakin.
\newblock Provable {B}regman-divergence based methods for nonconvex and
  non-lipschitz problems.
\newblock {\em arXiv preprint arXiv:1904.09712}, 2019.

\bibitem{LFN2018}
H.~Lu, R.~M. Freund, and Y.~Nesterov.
\newblock Relatively smooth convex optimization by first-order methods, and
  applications.
\newblock {\em SIAM Journal on Optimization}, 28(1):333--354, 2018.

\bibitem{MH2017}
M.~C. Mukkamala and M.~Hein.
\newblock Variants of {RMSP}rop and {A}dagrad with logarithmic regret bounds.
\newblock In {\em Proceedings of the 34th International Conference on Machine
  Learning}, pages 2545--2553, 2017.

\bibitem{MO2019a}
M.~C. Mukkamala and P.~Ochs.
\newblock Beyond alternating updates for matrix factorization with inertial
  {B}regman proximal gradient algorithms.
\newblock {\em ArXiv preprint arXiv:1905.09050}, 2019.

\bibitem{MOPS2019}
M.~C. Mukkamala, P.~Ochs, T.~Pock, and S.~Sabach.
\newblock Convex-concave backtracking for inertial {B}regman proximal gradient
  algorithms in non-convex optimization.
\newblock {\em ArXiv preprint arXiv:1904.03537}, 2019.

\bibitem{N1998}
Y.~Nesterov.
\newblock Introductory lectures on convex optimization: a basic course, 2004.

\bibitem{OCBP2014}
P.~Ochs, Y.~Chen, T.~Brox, and T.~Pock.
\newblock i{P}iano: inertial proximal algorithm for nonconvex optimization.
\newblock {\em SIAM Journal on Imaging Sciences}, 7(2):1388--1419, 2014.

\bibitem{PS2016}
T.~Pock and S.~Sabach.
\newblock Inertial proximal alternating linearized minimization (i{PALM}) for
  nonconvex and nonsmooth problems.
\newblock {\em SIAM Journal on Imaging Sciences}, 9(4):1756--1787, 2016.

\bibitem{RW1998-B}
R.~T. Rockafellar and R.~J.-B. Wets.
\newblock {\em Variational Analysis}, volume 317 of {\em Fundamental Principles
  of Mathematical Sciences}.
\newblock Springer-Verlag, Berlin, 1998.

\bibitem{pmlr-v89-wu19b}
Yifan Wu, Barnabas Poczos, and Aarti Singh.
\newblock Towards understanding the generalization bias of two layer
  convolutional linear classifiers with gradient descent.
\newblock In Kamalika Chaudhuri and Masashi Sugiyama, editors, {\em Proceedings
  of Machine Learning Research}, volume~89 of {\em Proceedings of Machine
  Learning Research}, pages 1070--1078. PMLR, 16--18 Apr 2019.

\bibitem{XY2013}
Y.~Xu and W.~Yin.
\newblock A block coordinate descent method for regularized multiconvex
  optimization with applications to nonnegative tensor factorization and
  completion.
\newblock {\em SIAM Journal on imaging sciences}, 6(3):1758--1789, 2013.

\bibitem{YSJ2018}
C.~Yun, S.~Sra, and A.~Jadbabaie.
\newblock Global optimality conditions for deep neural networks.
\newblock In {\em International Conference on Learning Representations}, 2018.

\bibitem{ZBMJC2019}
X.~Zhang, R.~Barrio, M.~Martinez, H.~Jiang, and L.~Cheng.
\newblock {B}regman proximal gradient algorithm with extrapolation for a class
  of nonconvex nonsmooth minimization problems.
\newblock {\em ArXiv preprint arXiv:1904.11295}, 2019.

\end{thebibliography}
\fi
\end{document}